\numberwithin{equation}{section}
\newtheorem{theorem}{Theorem}[section]
\newtheorem{lemma}[theorem]{Lemma}
\newtheorem{proposition}[theorem]{Proposition}
\newtheorem{corollary}[theorem]{Corollary}
\newtheorem{fact}[theorem]{Fact}
\newtheorem{facts}[theorem]{Facts}
\theoremstyle{definition}
\newtheorem{definition}[theorem]{Definition}
\newtheorem{example}[theorem]{Example}
\theoremstyle{remark}
\newtheorem{remark}[theorem]{\bf{Remark}}
\newtheorem{convention}[theorem]{\bf{Convention}}
\newtheorem{defconv}[theorem]{\bf{Definition-Convention}}
\newtheorem{notation}[theorem]{\bf{Notation}}
\newtheorem*{prooftheorem}{Proof of Theorem}
\newcommand{\Gcal}{{\mathcal{G}}}
\newcommand{\Kcal}{{\mathcal{K}}}
\newcommand{\Lcal}{{\mathcal{L}}}
\newcommand{\Rcal}{{\mathcal{R}}}
\newcommand{\la}{{\triangleright}}
\renewcommand{\ker}{\mathop{\mathrm{ker}}}
\newcommand{\tens}{\otimes}
\renewcommand{\la}{{\triangleright}}
\newcommand{\soc}{\mathop{\mathrm{Soc}}}
\newcommand{\id}{\mathord{\mathrm{id}}}
\newcommand{\mpl}{\mathop{\mathrm{mpl}}}
\newcommand{\sol}{\mathop{\mathrm{sl}}} 
\newcommand{\Sym}{\mathop{\mathrm{Sym}}}
\newcommand{\Ret}{\mathop{\mathrm{Ret}}}
\newcommand{\stu}{\mathbin{\natural}}
\begin{document}

\title[YBE, Braces and Symmetric
groups]{{Set-theoretic solutions of the Yang-Baxter equation, Braces and
Symmetric groups}}

\subjclass[2010]{Primary   16T25, 81R50, 16W22,  Secondary   16S37, 20B35, 81R60, 16W50}
\keywords{Yang-Baxter; Quantum groups; Multipermutation solution; Matched pair of groups; Braided groups; Braces; Permutation groups}

\thanks{The author was partially supported by Grant I 02/18
"Computational and Combinatorial Methods in Algebra and Applications"
 of the Bulgarian National Science Fund, by IH\'{E}S, by The Abdus Salam
International Centre for
Theoretical
Physics (ICTP), Trieste,  and by Max-Planck Institute for Mathematics, Bonn}

\author{Tatiana Gateva-Ivanova}
\address{Institute of Mathematics and Informatics\\
Bulgarian Academy of Sciences\\
Sofia 1113, Bulgaria\\
and American University in Bulgaria\\
Blagoevgrad 2700, Bulgaria}
\email{ tatyana@aubg.bg, tatianagateva@gmail.com
}
\date{\today}
\begin{abstract}
We involve simultaneously the theory of matched pairs of groups
and the theory of braces to study set-theoretic solutions of the
Yang-Baxter equation (YBE). We show the intimate relation between
the notions of a symmetric group (a braided involutive group) and
a left brace, and find new results on  symmetric groups of finite
multipermutation level and the corresponding braces. We introduce
a new invariant of a symmetric group $(G,r)$, \emph{the derived
chain of ideals of} $G$, which gives a precise information about
the recursive process of retraction of $G$. We prove that every
symmetric group $(G,r)$ of finite multipermutation level $m$ is a
solvable group of solvable length $\leq m$. To each set-theoretic
solution $(X,r)$  of YBE we associate two invariant sequences of
symmetric groups: (i) the sequence of its \emph{derived symmetric
groups}; (ii) the sequence of its \emph{derived permutation
groups} and explore these for explicit descriptions of the
recursive process of retraction. We 
find new criteria
necessary and sufficient to claim that $(X, r)$ is
a multipermutation solution.
\end{abstract}
\maketitle

\setcounter{tocdepth}{1}
\tableofcontents

\section{Introduction }

Let $V$ be a vector space over a field $k$. It is well-known that the
``Yang--Baxter equation'' on a linear map $R:V\tens V\to V\tens V$,
the equation
\[ R_{12}R_{23}R_{12}=R_{23}R_{12}R_{23}\]
(where $R_{i,j}$ denotes $R$ acting in the $i,j$ place in $V\tens V\tens V$),
gives rise to a linear representation of the braid group on tensor powers of
$V$. When $R^2=\id$ one says that the solution is involutive, and in this
case one has a representation of the symmetric group on tensor powers.
A particularly nice class of solutions is provided by set-theoretic solutions,
where $X$ is a set and $r:X\times X\to X\times X$ obeys similar relations on
$X\times X\times X$, \cite{Dri}. Of course, each such solution extends linearly to $V=kX$
with matrices in this natural basis having only entries from 0,1 and many
other nice properties.

Associated to each set-theoretic solution are several algebraic
constructions:
the monoid $S(X,r),$ the group $G(X,r)$,  the semigroup algebra
$k S(X,r)= k_R[V]$ generated by $X$ with relations $xy=\cdot r(x,y)$
(where $\cdot$ denotes product in the free semigroup, resp free group)
and the permutation group $\Gcal(X,r) \subset \Sym (X)$ defined by the
corresponding left translations $y\mapsto {}^xy$ for $x\in X$, where
$r(x,y)=({}^xy,y^x)$ (under assumptions which will be given later).

 Lu, Yan and Zhu, \cite{LYZ} proposed a general way of constructing set-theoretical solutions of the
 Yang-Baxter equation using braiding operators on groups, or essentially,  the matched pairs of groups.
In his  survey  Takeuchi \cite{Takeuchi}  gave an introduction to
 the ESS-LYZ theory, reviewing the main results in \cite{LYZ,ESS},  from a matched pair of groups point of view, among them a good way
 to
 think about the
 properties of the group $G(X,r)$ universally generated from $(X,r)$. In particular it is known that the group $G(X,r)$ is itself a
 braided set
 in an induced manner and this is the starting point of these works. Following Takeuchi, \cite{Takeuchi},  in this paper "\emph{a
 symmetric group}"
 means "\emph{a braided group} $(G, \sigma)$ \emph{with an involutive braiding operator} $\sigma$".
In 2006 Rump introduced new algebraic structures called "braces" as a generalization of radical rings.
In a series of  works, see for example \cite{Ru06, Ru07, Ru14},   he suggested systematic study of braces and developed,  "a brace
theory
approach" to cycle sets and set-theoretic solutions of YBE.
Recently the study of braces   intensified and the "braces approach" to YBE produced new interesting results,
 \cite{CJO14, BC, BCJ15, BCJO, Matsumoto, Ru14, Smok, CGIS16, LV16, AGV}, et all.

We propose a new integral approach: to involve simultaneously the theory of matched pairs of groups and the theory of braces in the
study of set-theoretic solutions of  YBE. In the paper we follow  two main directions.
 1. We study general symmetric groups and general braces.
 More specifically, we are interested in the properties of symmetric groups $(G,r)$ which have finite multipermutation level,
 or satisfy various special conditions.  2. We study symmetric sets $(X,r)$
 and the associated symmetric groups $G=G(X,r)$ and $\Gcal= \Gcal(X,r)$, each endowed canonically with a left brace structure.
 In this case we find a strong correlation between certain properties of the generating symmetric set and the properties of $G$ and
 $\Gcal$, respectively, each of them
 considered as a symmetric group and as a brace.
 We find new criteria, each  necessary and sufficient to claim that $(X,r)$ is a multipermutation solution (we write  $m= \mpl
 X< \infty$ in this case).
The paper is organized as follows.
 Section 2 contains preliminary material and collects some known results and  facts which are used throughout the paper.
 In Section 3 we discuss  matched pairs of groups, symmetric groups in the sense of \cite{Takeuchi} and braces.
  Theorem \ref{Thm_BracesBraidedGp} verifies the equivalence between the notion of  a symmetric group  $(G, \sigma)$,  and the notion of
  a left brace $(G, +, \cdot)$.
It is known that each of these two structures on $G$ implies that
$(G, \sigma)$ forms a (non-degenerate) symmetric set in the sense
of \cite{ESS}, see \cite{LYZ, Ru06}. Sections
\ref{sec_derived_chain of ideals} and \ref{sec_symgrG(x,r)} are
central for the paper.  In Section \ref{sec_derived_chain of
ideals} we study the recursive process of retraction of general
symmetric groups and braces. Subsect. 4.1 gives some useful
technical results,
 Subsect. 4.2, discusses briefly the retraction of a general symmetric group $(G, r)$.
 In Subsect. 4.3 we introduce  \emph{the derived chain of ideals of}  $G$ (or abbreviated: \emph{DCI}), study its properties, and show
 that it is an invariant of the group which encodes a precise information about the recursive process of retraction on $(G, r)$,
 see Proposition   \ref{derived_chainProp}. Theorem  \ref{derived_chainThm1} gives some new conditions equivalent to $\mpl(G,r) = m
 <\infty.$ An important application of DCI of a group $(G,r)$ is Theorem
 \ref{slG_theorem} which proves
 that every symmetric group $(G ,r)$ of finite multipermutation level $m$ is a solvable group of solvable length $\leq m$.
In Section \ref{sec_symgrG(x,r)} we describe the retractions of
the symmetric group $(G, r_G)$ associated with a general solution
$(X,r)$ and apply these to study the process of retraction of a
solution in the most general case. Our starting point is the
isomorphism of symmetric groups $\Ret (G, r_G) \simeq (\Gcal,
r_{\Gcal})$, where $\Gcal= \Gcal(X,r)$ is the corresponding
permutation group. To iterate this and describe higher retractions
we introduce the notions of \emph{derived symmetric groups} $(G_j,
r_{G_j})= G(\Ret^j(X, r))$, $j \geq 0,$ and \emph{derived
permutation groups}, $\Gcal_j = \Gcal(\Ret^j(X, r)), j \geq 0,$
related to a solution $(X,r)$.
Theorem \ref{longsequenceprop} verifies our new formula: $\Ret^j G(X, r)\simeq (\Gcal (\Ret^{j-1}(X, r)), j \geq 1$.
 Theorem \ref{Th_mplG_mplX} shows that the symmetric group $G(X,r)$
has finite multipermutation level \emph{iff} $(X, r)$ is a
multipermutation solution. In this case $\mpl \Gcal = m-1\leq \mpl
X \leq \mpl G=m$. Moreover, if $(X,r)$ is  a square-free solution
of arbitrary cardinality, then $G$ and $X$ have the same
multipermutation level, $\mpl G= \mpl X$. In Section
\ref{sec_two-sided_brace}  we study the brace $(G, +, \cdot)$
canonically associated to a symmetric set $(X,r)$, where
$G=G(X,r)$. Theorem \ref{VIPthm} shows that conditions like "$G$
is a two-sided brace" or "$(G, r_G)$ is square-free" are too
rigid,
 each of them implies that $(X,r)$
is a trivial solution. This motivates our study of some, in
general,  milder conditions on symmetric groups and their
associated left braces. In Sec \ref{sec_actions} we introduce
special conditions on the actions of a symmetric group $(G, r)$
upon itself, such as \textbf{lri}, or \textbf{Raut} and study the
effect of these on the properties of $(G, r)$ and the associated
brace $(G, +,\cdot)$. In Section \ref{sect_TheoremGcal} we study
square-free solutions $(X,r)$ with condition \textbf{lri} on a
derived symmetric group, or on a derived permutation group.
Theorem \ref{Thm_main} shows that the symmetric group $G=G(X,r)$
of a nontrivial square-free solution $(X,r)$ satisfies
\textbf{lri} \emph{iff} $(X,r)$ is a multipermutation solution of
level $2$. More generally, $\mpl X = m \geq 2$ \emph{iff} its
derived group $(G_{m-2}, r_{G_{m-2}})$ satisfies \textbf{lri}.
Theorem \ref{Thm_gcal(X,r)} verifies that $\mpl (X,r)< \infty$,
whenever $(X,r)$ is a finite square-free solution whose
permutation group
  $(\Gcal, r_{\Gcal})$ satisfies \textbf{lri}. The result is generalized by Corollary \ref{Cor_main}.

\section{Preliminaries on set-theoretic solutions of YBE}
\label{Preliminaries}

During the last three decade the study of
set-theoretic solutions and related
structures notably intensified, a relevant selection of works for the
interested reader
is \cite{W, TM, ESS, LYZ, GI04, Carter, Ru05, Takeuchi, GIM08, GI11, GI12, GIC,  Dehornoy15, Matsumoto, CJO14, Ru14, V15, MS}, et all.
In this section we recall basic notions and
results which will be used in the paper.
 We shall use the terminology,  notation and some results from
\cite{GI04,  GIM07, GIM08, GIM11, GIC}.

 \begin{definition}
Let $X$ be a nonempty set (not necessarily finite) and let
$r: X\times X \longrightarrow X\times X$ be a bijective map.
In
this case we use notation $(X, r )$ and refer to it as \emph{a quadratic set}.
The image
of $(x,y)$ under $r$ is presented as
\[
r(x,y)=({}^xy,x^{y}).
\]
This formula defines a ``left action'' $\Lcal: X\times X
\longrightarrow X,$ and a ``right action'' $\Rcal: X\times X
\longrightarrow X,$ on $X$ as:
$\Lcal_x(y)={}^xy$, $\Rcal_y(x)= x^{y}$, for all $x, y \in X$.
(i) $r$ is \emph{non-degenerate}, if
the maps $\Lcal_x$ and $\Rcal_x$ are bijective for each $x\in X$.
(ii) $r$ is involutive if $r^2 = id_{X\times X}$.
(iii) $(X,r)$ is said to be \emph{square-free} if $r(x,x)=(x,x)$ for all $x\in
X.$
(iv) $r$ is \emph{a set-theoretic solution of the
Yang--Baxter equation} (YBE) if  the braid relation
\[r^{12}r^{23}r^{12} = r^{23}r^{12}r^{23}\]
holds in $X\times X\times X,$  where  $r^{12} = r\times\id_X$, and
$r^{23}=\id_X\times r$. In this
case we  refer to  $(X,r)$ also as   \emph{a braided set}.
A braided set $(X,r)$ with $r$ involutive is called \emph{a
symmetric set}.
\end{definition}

\begin{remark}
\label{rem_Invol-Sqfree}
Let $(X,r)$  be a  quadratic set.
The map  $r$ is involutive   \emph{iff} the actions satisfy:
\begin{equation}
\label{involeq}
{}^{{}^uv}{(u^v)}= u, \; ({}^uv)^{u^v} = v, \; \forall u, v \in X.
\end{equation}
Clearly, an involutive quadratic set $(X,r)$ is square-free
\emph{iff} ${}^aa= a, \;  \; \forall  a \in X$. For  a finite
symmetric set $(X,r)$ one-sided non-degeneracy implies
non-degeneracy.
\end{remark}
\begin{convention}
In this paper we shall consider only  the case when $r$ is non-degenerate. By
"\emph{solution}" we mean a non-degenerate symmetric set $(X,r)$,
where $X$  is a set of arbitrary cardinality. We shall also refer
to it as "\emph{a symmetric set}", keeping the convention that we
consider only non-degenerate symmetric sets. It is known that in this case $X$ is embedded in $G(X,r)$.
We do not assume
(unless stated explicitly)
 that  $(X,r)$ satisfies additional special conditions on the actions.
\end{convention}
To each quadratic set $(X,r)$  we associate canonical algebraic
objects  generated by $X$ and with quadratic defining relations
$\Re=\Re(r)$, defined by  $xy=zt \in \Re(r)$, \emph{iff} $r(x,y) =
(z,t)$, and $(x,y)\neq (z,t)$.
(i) The monoid  $S =S(X, r) =
\langle X; \Re \rangle$, with a set of generators $X$ and a set of
defining relations $ \Re(r),$ is called \emph{the monoid
associated with $(X, r)$}. (ii) The \emph{group $G=G(X, r)$
associated with} $(X, r)$ is defined as $G=G(X, r)={}_{gr} \langle
X; \Re \rangle$. (iii) Furthermore, to each non-degenerate braided
set $(X,r)$ we also associate \emph{a permutation group}, denoted
$\Gcal= \Gcal(X,r)$, see Definition \ref{Gcaldef}.
 If $(X,r)$ is a solution, then $S(X,r)$, resp. $G(X,r)$,
 $\Gcal(X,r)$ is called the
{\em{Yang--Baxter monoid}}, resp. the {\em{Yang--Baxter group}},
resp. the {\em{Yang--Baxter permutation group}},  or shortly
\emph{the YB permutation group}, associated to $(X,r)$. The YB
groups $G(X,r)$ and  $\Gcal(X,r)$ will be of particular importance
in this paper. In \cite{ESS} $G(X,r)$ is called \emph{the
structure group of} $(X,r)$, whenever $(X,r)$ is a solution.

Each element $a \in G$ can be presented as a monomial
\begin{equation}
\label{keyeq2} a = \zeta_1\zeta_2 \cdots \zeta_n,\quad \zeta_i \in X
\bigcup X^{-1}.
\end{equation}
We shall consider  \emph{a reduced form of} $a$, that is a
presentation (\ref{keyeq2}) with minimal length $n$. By convention, \emph{length
of} $a$,
denoted by $|a|$ means the length of a reduced form of $a$.

\begin{example}
\label{trivialsolex} For arbitrary set $X$, $|X|\geq 2,$ denote by
$\tau_X = \tau$ the flip map $\tau(x,y)= (y,x)$ for all $x,y \in
X.$ Then  $(X, \tau)$ is a solution called \emph{the trivial
solution} on $X$. Clearly, an involutive quadratic set $(X,r)$ is
the trivial solution if and only if ${}^xy =y$,  for all $x,y \in
X,$ or equivalently $\Lcal_x= \id_X$ for all $x\in X$. In this
case $S(X,r)$ is the free abelian monoid generated by $X$,
$G(X,r)$ is the free abelian group, and $\Gcal(X,r) = \{\id_X\}$
is the trivial group.
\end{example}

It is well
known that if $(X,r)$   is a braided
set, and $G=G(X,r)$, then the equalities
\[
\begin{array}{lclc}
 {\bf l1:}\quad& {}^x{({}^yz)}={}^{{}^xy}{({}^{x^y}{z})},
 \quad\quad\quad
 & {\bf r1:}\quad&
{(x^y)}^z=(x^{{}^yz})^{y^z},
\end{array}\]
 hold for all $x,y,z \in X$. So
the assignment $x \longrightarrow \Lcal_x$ for
$x\in X$ extends canonically to a group homomorphism
$\Lcal: G \longrightarrow \Sym(X)$,
which defines  the \emph{canonical left action} of $G$  on the set  $X$.
Analogously,  there is a \emph{canonical right action} of
$G$  on~$X$.

\begin{definition}
\label{Gcaldef} Let $(X,r)$ be a non-degenerate braided set, let
$\Lcal:G(X,r) \longrightarrow \Sym(X)$ be the canonical group
homomorphism defined via the left action. We set $K := \ker
\Lcal$. The image $\Lcal(G(X,r))$ is denoted by $\Gcal=\Gcal(X,r)$
and called \emph{the YB-permutation group of} $(X,r),$ \cite{GI04,
GIC}). We shall often refer to it simply as "$\Gcal(X,r)$". The
group $\Gcal$ is generated by the set $\{\Lcal_x \mid x \in X\}$.
\end{definition}

In a series of works
we have shown that the combinatorial properties of a solution $(X,r)$ are closely related
to the algebraic properties of its YB structures.  Especially interesting are  solutions satisfying some of the following
conditions.
\begin{definition} \cite{GIM08, GI04}
\label{lri&cl}
Let $(X,r)$ be a quadratic set.
\begin{enumerate}
\item  The following are called \emph{cyclic conditions on}  $X$.
\[\begin{array}{lclc}
 {\rm\bf cl1:}\quad&{}^{(y^x)}x= {}^yx, \quad\text{for all}\; x,y \in
 X;
 \quad\quad&{\rm\bf cr1:}\quad &x^{({}^xy)}= x^y, \quad\text{for all}\; x,y
 \in
X;\\
 {\rm\bf cl2:}\quad&{}^{({}^xy)}x= {}^yx,
\quad\text{for all}\; x,y \in X; \quad\quad &{\rm\bf cr2:}\quad
&x^{(y^x)}= x^y, \quad\text{for all}\; x,y \in X.
\end{array}\]
\item Condition \textbf{lri} on $(X,r)$ is defined as
 \[ \textbf{lri:}\quad
\quad ({}^xy)^x= y={}^x{(y^x)} \;\text{for all} \quad
x,y \in X.\]
In other words \textbf{lri} holds if and only if
$(X,r)$ is non-degenerate, $\Rcal_x=\Lcal_x^{-1}$, and $\Lcal_x =
\Rcal_x^{-1}$
\end{enumerate}
\end{definition}
It is known that  \emph{every  square-free solution  $(X,r)$
satisfies the cyclic conditions \textbf{cc} and condition
\textbf{lri}, so it is  uniquely determined by the left action:
$r(x,y) = (\Lcal _x(y), \Lcal^{-1}_y(x))$}, see \cite{GI04,
GIM08}. Solutions with \textbf{lri} are studied in sections 7 and
8.

The cyclic conditions was introduced by the author in \cite{GI94, GI96}, and was crucial for the proof that every binomial skew
polynomial algebra defines canonically (via its relations)
a set-theoretic solution of YBE, see \cite{TM}.  Algebraic objects with relations
satisfying the cyclic condition were studied later, \cite{GIJO, JO04, CJO06}.


\begin{remark} \cite{GIM08}, Proposition 2.25.
Suppose $(X,r)$ is a quadratic set.  Then any two of the following conditions
 imply the remaining third
 condition: (i) $(X,r)$ is involutive;  (ii) $(X,r)$ is non-degenerate and
 cyclic; (iii) $(X,r)$ satisfies \textbf{lri}.
 \end{remark}
 \begin{corollary}
 \label{cor_lri_cc}
 Let $(X,r)$ be a solution of arbitrary cardinality. Then $(X,r)$ satisfies condition \textbf{lri} if and only if it
satisfies the cyclic
 conditions \textbf{cc}.
 \end{corollary}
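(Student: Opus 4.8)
The plan is to deduce this directly from the Remark immediately preceding the corollary (Proposition 2.25 of \cite{GIM08}), which states that for a quadratic set $(X,r)$ any two of the conditions (i) $(X,r)$ is involutive, (ii) $(X,r)$ is non-degenerate and cyclic, (iii) $(X,r)$ satisfies \textbf{lri} imply the remaining third. The one preliminary observation I would record is that, by our Convention, a ``solution'' $(X,r)$ is by definition a non-degenerate symmetric set, hence it automatically satisfies condition (i) (involutivity) and the non-degeneracy half of condition (ii); moreover the word ``cyclic'' in (ii) means exactly that the four cyclic conditions \textbf{cc} $=\{\textbf{cl1},\textbf{cl2},\textbf{cr1},\textbf{cr2}\}$ of Definition \ref{lri&cl} all hold. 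With this in place the corollary is a bookkeeping consequence of the Remark.

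For the implication \textbf{lri} $\Rightarrow$ \textbf{cc}: assuming \textbf{lri} we have condition (iii), and since $(X,r)$ is a solution we have condition (i); the Remark then yields (ii), and in particular the cyclic conditions \textbf{cc} hold.

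For the converse \textbf{cc} $\Rightarrow$ \textbf{lri}: assuming \textbf{cc}, and using that a solution is non-degenerate, we have condition (ii); we also have condition (i), since a solution is involutive; the Remark now yields condition (iii), i.e. \textbf{lri}.

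I do not expect any genuine obstacle here: the entire content is packaged in the Remark, and the only point worth double-checking is that in each direction the word ``solution'' really does supply two of the three conditions of the Remark (non-degeneracy and involutivity), so that the Remark is applicable. If a self-contained argument were preferred instead, one could verify both implications by hand from the involutivity relations \eqref{involeq} together with the identities \textbf{l1}, \textbf{r1} and the definitions in \ref{lri&cl}, but that would merely re-prove a special case of Proposition 2.25 and is unnecessary.
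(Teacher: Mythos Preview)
Your proposal is correct and matches the paper's own reasoning exactly: the corollary is stated immediately after the Remark (Proposition 2.25 of \cite{GIM08}) with no separate proof, precisely because it follows from that Remark together with the standing Convention that a ``solution'' is a non-degenerate involutive quadratic set. Your write-up simply makes explicit the two applications of the Remark that the paper leaves to the reader.
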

The notions of \emph{retraction of a symmetric set} and \emph{multipermutation solutions}
were introduced in the general case
in \cite{ESS}. The active study of multipermutation solutions started in 2004, when
the author
conjectured that every finite square-free solution $(X,r)$ has a finite multipermutation level, \cite{GI04}.
Multipermutation
solutions were studied  in \cite{GI04,
GIM07,
Ru07,  GIM08, GIM11, GIC, CJO10, CJO14}, et all.

Let
$(X,r)$ be a non-degenerate symmetric set. An equivalence relation
$\sim$ is defined on $X$ as
 $x \sim y$ if and only if
$\Lcal_x = \Lcal_y$.  In this case we also have $\Rcal_x = \Rcal_y,$ see
\cite{ESS}.
Denote by  $[x]$ the equivalence class of $x\in X$ and by  $[X]=
X/_{\sim}$ the set of equivalence classes.
The following  can be extracted
from \cite{GIM08}.
The left and the right actions of $X$ onto itself naturally induce
left and right actions on the retraction $[X],$ via
\[
{}^{[\alpha]}{[x]}:= [{}^{\alpha}{x}],\quad [\alpha]^{[x]}:=
[\alpha^x], \; \forall\; \alpha, x \in X.
\]
The new actions define a canonical map $r_{[X]}: [X]\times[X]
\longrightarrow [X]\times[X]$, where $r_{[X]}([x], [y])=
({}^{[x]}{[y]}, [x]^{[y]}).$ Then $([X], r_{[X]})$ is a
non-degenerate symmetric set. Note that the canonical map $\mu : X
\longrightarrow [X], x \mapsto [x]$ is \emph{a braiding-preserving
map}, (epimorphism of solutions) that is, $(\mu\times \mu) \circ r
= r_{[X]}\circ (\mu\times \mu)$. Furthermore, various special
properties of $(X,r)$ are inherited by its retraction: (i)
$(X,r)\; \text{is}\; {\bf lri} \Longrightarrow([X],
r_{[X]})\;\text{is}\; {\bf lri}.$ (ii) $(X,r)\; \text{satisfies
the cyclic conditions} \Longrightarrow   ([X], r_{[X]})$ does so.
(iii) $ (X,r)\; \text{is square-free} \Longrightarrow ([X],
r_{[X]}) \; \text{is square-free}.$
The solution $\Ret(X,r)=([X],r_{[X]})$ is
called the \emph{retraction of $(X,r)$}. For all integers $m \geq
0$, $\Ret^m(X,r)$ is defined recursively as $\Ret^0(X,r)=(X,r), \; \Ret^1(X,r)=\Ret(X,r), \;  \Ret^m(X,r)=
\Ret(\Ret^{m-1}(X,r)).$
$(X,r)$ is  \emph{a multipermutation solution of level} $m$,
 if $m$ is
the minimal number (if any), such that $\Ret^m(X,r)$ is the trivial
solution on a set of one element. In this case we write $\mpl(X,r)=m$.
By definition $(X,r)$ is \emph{a multipermutation solution of level}
$0$ if and only if $X$ is a one element set.

The trivial solution $(X, \tau)$ is \emph{a multipermutation solution of
level} $1$. More generally, a symmetric set $(X,r)$ has
 $\mpl X =1$ if and only if it is an involutive permutation solution of
 Lyubashenko, see Example \ref{exlri}.

 We made the following conjecture in \cite{GI04}:
  \emph{Every finite  square-free solution  $(X,r)$ is retractable, and therefore
 it is a multipermutation solution
  with $\mpl(X,r)< |X|$.}

Several independent results verify the Conjecture for the case when the
YB-permutation group $\Gcal(X,r)$ is abelian, see \cite{CJO10, CJO14},
and \cite{GIC}, where we give also an upper bound for the multipermutation level of $X$:
$\mpl X$ is at most the number of $\Gcal$-orbits of
$X$. The Conjecture is also true for all square-free solutions $(X,r)$ of cardinality $\leq 7$.
Leandro Vendramin was the first who constructed an infinite family of
square free-solutions which are not multipermutation solutions, see \cite{V15}, hence the Conjecure is not true for general finite
square-free solutions.  Moreover Vendramin's counter example
of minimal order ($|X|= 8$) satisfies  both: (i) $(X,r)$ is an irretractable square-free solution of finite order $8$ and (ii) $(X,r)$
is a strong twisted union of two solutions of multipermutation level $2$, see for details Remark \ref{Rem_ex_Ve}.
A natural question arises:

\emph{Where is the borderline between the classes  of retractable and the nonretractable solutions? More generally,
what can be said about symmetric sets which are not multipermutation solutions?}

 Some answers are given by  Proposition   \ref{derived_chainProp}, which presents a precise information about the recursive process of
 retraction on $(G, r)$ in terms of its derived chain of ideals.

It is also natural  \emph{to search new conditions which imply
$\mpl(X,r)< \infty$}  and this is one of the main themes discussed
in the paper.

\section{Matched pairs of groups, symmetric groups, and braces}
\label{MP_brace}

\subsection{Matched pairs of groups, braided groups, and symmetric groups}
The notion of matched pairs of groups in relation to group
factorisation has a classical origin. For finite groups it was
used in the 1960's in the construction of certain Hopf algebras
\cite{KP66}. Later such 'bismash product' or 'bicrossed product'
Hopf algebras were rediscovered by Takeuchi \cite{Tak1}  and Majid
\cite{Ma90, Ma90a, Ma90b}. By now there have been many works on
matched pairs in different contexts, see \cite{Ma:book}  and
references therein. This notion was used by Lu, Yan, and Zhu  to
study the set-theoretic solutions of YBE and the associated
'braided group', see \cite{LYZ} and the  survey \cite{Takeuchi}.
Matched pairs of monoids were studied in \cite{GIM08}, and used
to characterise general solutions of YBE and extensions  of
solutions in terms of matched pairs of their associated monoids.
\begin{definition}
\label{matchedpairsdef1} \cite{Takeuchi, Ma:book} \emph{A matched
pair of groups} is a triple $(S, T, \sigma)$, where $S$ and $T$
are groups and $\sigma: T\times S \longrightarrow S\times T, \quad
\sigma(a, u) = ({}^au, a^u)$ is a bijective map satisfying the
following conditions
\begin{equation}
\label{matchedpairseq}
\begin{array}{llll}
\textbf{ML0}:\quad&{}^a1 =1, \; {}^1u =u, \quad &\textbf{MR0}:\quad & 1^u =1,
\; a^1 =a,
\\
\textbf{ML1}:&{}^{ab}u ={}^a{({}^bu)},\quad &\textbf{MR1}:& a^{uv} =(a^u)^v,
\\
\textbf{ML2}:&{}^a{(u.v)} =({}^au)({}^{a^u}v),\quad &\textbf{MR2}:& (a.b)^u
=(a^{{}^bu})(b^u), \\
\end{array}
\end{equation}
for all $ a,b \in T, u,v\in S.$
\end{definition}
\begin{definition}
\label{def_SymmetricGr}
(1) \cite{LYZ}
\emph{A braided group} is a pair  $(G, \sigma)$, where $G$ is a group and
$\sigma: G \times G \longrightarrow G \times G$  is a map
such that the triple $(G, G, \sigma)$  forms a matched pair of groups, and the
left and the right  actions induced by $\sigma$  satisfy
\emph{the compatibility condition}:
\begin{equation}
\label{compatibilityeq} \textbf{M3}: \quad uv =({}^uv)(u^v)
\;\text{is an equality in}\; G, \quad \forall u, v \in G.
\end{equation}

(2) \cite{Takeuchi} A braided group $(G, \sigma)$, with an
involutive braiding operator $\sigma$ is called a \emph{symmetric
group}   (\emph{in the sense of Takeuchi}).
\end{definition}
The following results can be extracted from \cite{LYZ}, Theorems 1,  2, and
4.
\begin{facts}
\label{factLYZ}
(1)  The pair $(G, \sigma)$ is a braided group \emph{iff} conditions
\textbf{ML1}, \textbf{MR1} and the compatibility condition \textbf{M3}
(for $S=T=G$)  are satisfied.
(2)  If $(G, \sigma)$ is a braided group, then $ \sigma$ satisfies the braid
relations and is non-degenerate. So  $(G, \sigma)$ forms a
non-degenerate
braided set in the sense of \cite{ESS}.
(3)  Let $(X,r)$ be a non-degenerate braided set, let $G = G(X,r)$ be the
associated YB-group, and let $i : X\longrightarrow G(X,r)$ be the canonical map. 
 Then there is unique braiding operator
$r_G: G \times G \longrightarrow G \times G $, such that $r_G\circ (i\times i)= (i\times i)\circ r_G$.
In particular, if $X$ is embedded in $G$
then the restriction of $r_G$ on $X\times X$ is exactly the map $r$.
Furthermore, $(r_G)^2 = id_{G\times G}$ \emph{iff} $r^2 = id_{X\times X}$, so $(G, r_G)$ is a
symmetric group \emph{iff} $(X,r)$ is a symmetric set.
\end{facts}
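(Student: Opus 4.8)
These three assertions are the foundational Lu--Yan--Zhu results, and the plan is to treat them in the order (1), (2), (3), only the last carrying real content. For part~(1) the forward implication is immediate, since a braided group satisfies all of \textbf{ML0}--\textbf{MR2} together with \textbf{M3}. For the converse I must manufacture \textbf{ML0}, \textbf{ML2}, \textbf{MR0}, \textbf{MR2} out of \textbf{ML1}, \textbf{MR1}, \textbf{M3}. Writing $\Lcal_a$ for $u\mapsto {}^au$ and $\Rcal_u$ for $a\mapsto a^u$, I would first observe that the specializations $b=1$ and $a=1$ of \textbf{ML1} make $\Lcal_1$ an idempotent two-sided identity for composition on the $\Lcal$-semigroup, and dually for $\Rcal_1$ from \textbf{MR1}; feeding the specializations $u=1$ and $v=1$ of \textbf{M3} back into these relations forces $\Lcal_1=\id=\Rcal_1$, ${}^a1=1$ and $1^u=1$, i.e.\ \textbf{ML0} and \textbf{MR0}. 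Then \textbf{ML2} and \textbf{MR2} come from expanding the group identity $(uv)w=u(vw)$ with \textbf{M3} applied twice on each side: after using \textbf{ML1} the left side reads ${}^u({}^vw)\cdot(uv)^w$ and after using \textbf{MR1} the right side reads ${}^u(vw)\cdot(u^v)^w$; comparing these, together with the remark that once \textbf{ML0} and \textbf{ML1} hold each $\Lcal_a$ is bijective with inverse $\Lcal_{a^{-1}}$ (dually each $\Rcal_u$), isolates the two $1$-cocycle identities. That same bijectivity remark is exactly the non-degeneracy asserted in part~(2).

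For part~(2) it then remains to verify the braid relation $\sigma^{12}\sigma^{23}\sigma^{12}=\sigma^{23}\sigma^{12}\sigma^{23}$ on $G\times G\times G$. I would do this by the classical direct computation: evaluate both composites on a triple $(a,u,x)$ and rewrite each coordinate with \textbf{ML1}, \textbf{ML2}, \textbf{MR1}, \textbf{MR2} and \textbf{M3} until the two sides collapse to the same triple; equivalently, realize $\sigma$ as the restriction to $G\times G$ of the conjugation braiding on the double cross product $G\bowtie G$, where the braid relation is automatic. Combined with the non-degeneracy already obtained and with part~(1), this says precisely that $(G,\sigma)$ is a non-degenerate braided set in the sense of \cite{ESS}.

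For part~(3), write $G=G(X,r)={}_{gr}\langle X;\Re(r)\rangle$ with $i:X\to G$ the canonical map. Uniqueness is formal: a braiding operator $r_G$ on $G$ must satisfy \textbf{ML0}--\textbf{MR2}, and since $G$ is generated by $i(X)$ these axioms determine ${}^ab$ and $a^b$ for all $a,b\in G$ by recursion on the lengths of reduced words from the values on the generators, which the required compatibility $(i\times i)\circ r=r_G\circ(i\times i)$ forces to equal $i({}^xy)$ and $i(x^y)$; hence there is at most one such $r_G$, and its restriction to $i(X)\times i(X)$ is $r$. For existence I would run this recursion as a definition and check that it descends to $G$, i.e.\ is independent of the word representing a group element; the only relations to respect are $xy=({}^xy)(x^y)$, and the compatibility of the forced formulas with these relations is exactly the Yang--Baxter relation for $r$ --- this is the Lu--Yan--Zhu construction (their Theorem~2). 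Finally, involutivity: I expect $(r_G)^2=\id_{G\times G}$ to follow because $r^2=\id_{X\times X}$ and the recursion rebuilding $r_G$ from its restriction to $i(X)\times i(X)$ transports this identity through \textbf{ML1}, \textbf{ML2}, \textbf{MR1}, \textbf{MR2} (this is \cite{LYZ}, Theorem~4); the converse is just restriction of $(r_G)^2=\id$ to $i(X)\times i(X)\cong X\times X$.

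The step I expect to be the genuine obstacle is the well-definedness in part~(3): showing that the recursively forced formulas for the two actions of $G$ on itself respect the defining relations $xy=({}^xy)(x^y)$ of $G(X,r)$, which is where one really needs the braid relation for $r$. In part~(1) the only delicate point is cleanly separating \textbf{ML2} from \textbf{MR2} in the single equation supplied by associativity; everything else there is bookkeeping with the matched-pair axioms.
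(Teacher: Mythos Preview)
The paper does not prove this statement at all: it is recorded as a \emph{Fact} and attributed to \cite{LYZ}, Theorems~1, 2 and~4, with no argument given. Your outline is a faithful reconstruction of the Lu--Yan--Zhu proofs and is essentially correct.

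One small sharpening for part~(1), at precisely the point you flag as delicate. The single equation you extract from associativity,
\[
{}^u({}^vw)\cdot (uv)^w \;=\; {}^u(vw)\cdot (u^v)^w,
\]
does not by itself separate \textbf{ML2} from \textbf{MR2}. The clean way is to expand $u(vw)=(uv)w$ \emph{asymmetrically}: write $u(vw)=u\bigl(({}^vw)(v^w)\bigr)$ and apply \textbf{M3} to the pair $u,\,{}^vw$ to get ${}^u({}^vw)\cdot u^{{}^vw}\cdot v^w$, then compare with $(uv)w={}^u({}^vw)\cdot(uv)^w$ to read off \textbf{MR2} directly; the mirror expansion $(uv)w=({}^uv)(u^v)w$ followed by \textbf{M3} on $u^v,\,w$ yields \textbf{ML2}. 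No appeal to bijectivity of $\Lcal_a$ or $\Rcal_u$ is needed at that stage. Everything else in your plan, including the identification of well-definedness in part~(3) as the genuine obstacle, matches the LYZ argument the paper is invoking.
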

We shall refer to the group $(G,r_G)$ as \emph{the symmetric group
associated to} $(X,r)$.

\subsection{Braces, definition and basic properties}
\label{brace}
In 2006 Rump introduced  and initiated a systematic study of a new algebraic structure called \emph{a brace} as a
generalization
of radical rings.
In a series of papers he developed
  "a brace theory approach" to cycle sets and set-theoretic solutions of YBE,
\cite{Ru06, Ru07, Ru14}, et all. He proved that given a brace $(G, +, \cdot)$ one can define explicitly a
left and a right actions on $G$  (considered as a set),
which define a non-degenerate involutive set-theoretic solution of YBE denoted by
$(G, r)$, see   \cite{Ru07} and also   \cite{CJO14}.
 Rump has shown that for every non-degenerate symmetric set $(X,r)$ one can define canonically an operation $+$ on the YB group
 $G=G(X,r)$, which makes $(G, +, \cdot)$ a
left brace, and therefore a set-theoretic solution of YBE.
Recently the number of works on braces increased rapidly and the "braces approach" to YBE produced new interesting results,
 \cite{CJO14, BC, LV16, Matsumoto, Ru14, BCJ15,  Smok, CGIS16,  LV16,  AGV}, et all.
In this paper we work with a definition of a brace, given explicitly  in
\cite{CJO14}. It is equivalent
to Rump's original definition, \cite{Ru06, Ru07}.
\begin{definition}
\label{defbrace}
Let $G$ be a set  with two operations "$\cdot$" and "$+$"
such that  $(G,\cdot)$ is a group, and $(G,+)$ is an abelian  group.
(i)  $(G, +, \cdot)$ is called \emph{a left brace} if
\begin{equation}
\label{eqleftbrace}
a(b+c)+a = ab+ac, \quad\text{for all}\; a,b,c \in G.
\end{equation}
(ii)  $  (G, +, \cdot)$ is \emph{a right brace} if
$(a+b)c+c = ac+bc, \quad\text{for all}\; a,b,c \in G$.
(iii) $  (G, +, \cdot )$ is \emph{a two-sided brace} if
(i) and (ii) are in force.
The groups $(G,\cdot)$ and $(G,+)$ are called, respectively, \emph{the
multiplicative }and \emph{the additive groups} \emph{of the
brace.}
\end{definition}
Denote by $0$ and $e$, respectively,  the neutral elements
with respect to the two operations
"$+$" and "$\cdot$" in $G$. Setting $b=0$ in  (\ref{eqleftbrace}) one obtains $0 = e$
in $G$.
In any left brace $(G,+,\cdot)$ one defines another operation $*$ by the
rule
\begin{equation}
\label{eqleftbrace*}
a*b =a\cdot b-a-b,\quad \forall \;   a,b\in G.
\end{equation}
In general, the operation $*$ in a left brace $G$ is not associative,
but it is left distributive with respect to the sum, that is
\[a*(b+c)=a*b+a*c, \quad \forall\; a,b,c\in B.\]
It was proven in \cite{Ru07}  that
if $(G, +, \cdot)$ is a two-sided brace, then the operation $*$
makes $(G, +, *)$ a Jacobson radical ring.
The following facts can be extracted from Rump's works  \cite{Ru06, Ru07}.
\emph{(1)  If $(G, +, \cdot)$ is a left brace, then there is a canonically associated solution  $(G,r)$  uniquely determined by the
operations in
$G$. (2) If G is a finite non-trivial two-sided brace, then
the (non-degenerate) symmetric set (G, r ) associated to G is a multipermutation solution. }

We shall prove  that,  more generally, every symmetric group $(G, r)$ (in the sense of Takeuchi) has a left brace
structure, and conversely, on every left brace $G$, one can define a braiding operator $r$, so that $(G,r)$ is a symmetric group, and
therefore it is a solution to YBE, see Theorem \ref{Thm_BracesBraidedGp} (this is independent of Rump's result).

Suppose $(G,  +  , \cdot)$  is a set  with two operations,
such that  $(G,\cdot)$ is a group, and $(G,+)$ is an abelian  group,
but no more restrictions on $G$ are imposed.
We define
a ``left action'' $G\times G \longrightarrow G, $ on $G$ as:
\begin{equation}
\label{LcalRcal} \Lcal_a(b)={}^ab := ab -a, \quad  a, b \in G.
\end{equation}
It is straightforward that the map $\Lcal_a$ is bijective for
each $a \in G$, so we have a map (of sets)
\[\Lcal: G \longrightarrow \Sym(G), \quad a\mapsto \Lcal_a, \]
where $\Sym (G)$ denotes the symmetric group on the set $G$.

We  shall verify that $G$ is a left brace \emph{iff } the map $\Lcal$  defines
a left action of the multiplicative group  $(G, \cdot)$ upon itself.
In this case  the equality (\ref{eqlbr3}) given below  presents  the
 operation "$+$" in terms of the left
 action and the operation
 "$\cdot$".
 The following proposition contains
 formulae which will be used throughout the paper. The statement extends results of Rump, see also
\cite{CJO14}, Lemma 1. More precisely,
the implications (1)$\Longrightarrow$ (2),  (1)$\Longrightarrow$
 (3) and (1) $\Longrightarrow$
 (4) can be extracted from  \cite{Ru07},  see also
 \cite{CJO14}.
We suggest a direct and independent verification of the equivalence of the four
 conditions.

\begin{proposition}
 \label{Thm_braces1}
Let $(G, +, \cdot )$ be a set with two operations,
"$\cdot$" and "$+$"  such that
$(G, \cdot)$ is a group
and $(G,+)$ is an abelian  group.
Assume that the map
$\Lcal: G \longrightarrow \Sym(G), \quad a\mapsto \Lcal_a, $
is defined via
(\ref{LcalRcal}).
The following conditions are equivalent.
(1) $(G, +, \cdot)$ is a left brace, i.e. condition
(\ref{eqleftbrace}) holds.
(2) $G$ satisfies:
\begin{equation}
\label{eqleftbrace1}
a(b-c)-a = ab-ac, \quad \forall  a,b,c \in G.
\end{equation}
(3) The map $\Lcal$ determines a left action of the multiplicative group
$(G,\cdot)$ on the set $G$, that is
\begin{equation}
\label{eq1.iii}
{}^{(ab)}c = {}^a({}^bc),\quad \forall\; a,b,c \in G.\end{equation}
(Equivalently, $\Lcal:
(G, \cdot) \longrightarrow \Sym(G),\; a \mapsto \Lcal_a$,
is a homomorphism of groups).
 In this case
 the  operation "$+$" can be presented as
\begin{equation}
\label{eqlbr3}
a+b = a({}^{a^{-1}}b), \; \text{or equivalently}, \;  a+ {}^ab = ab, \quad \forall a,b \in G.
\end{equation}
(4) For every $a\in G$, the map  $\Lcal_a$ is an automorphism of the additive
group $(G, +)$ that is  condition \textbf{Laut} defined below is in force:
\begin{equation}
\label{Laut}
\textbf{Laut}:\quad \quad \quad {}^a(b+c) = {}^ab+ {}^ac, \quad  \forall \; a,b,c \in G.
\end{equation}
 In this case the map $\Lcal: (G, \cdot) \longrightarrow Aut
 (G, +)$
is a homomorphism of groups.
\end{proposition}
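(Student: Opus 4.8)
The plan is to prove the equivalences via the cycle $(1)\Longrightarrow(2)\Longrightarrow(3)\Longrightarrow(1)$, to treat $(1)\Longleftrightarrow(4)$ separately, and then to read off the displayed identity (\ref{eqlbr3}) and the final assertion that $\Lcal$ is a group homomorphism into $\Aut(G,+)$. Everything reduces to elementary bookkeeping in the abelian group $(G,+)$ together with associativity in $(G,\cdot)$; the only step that is not purely formal is a substitution exploiting the bijectivity of each $\Lcal_b$, which is precisely where the ``$\Lcal$ is an action'' hypothesis gets used. I will also record that $0=e$ (e.g.\ put $b=c=0$ in (\ref{eqleftbrace}) to get $a\cdot 0=a$ for all $a$, hence $e=0$; alternatively this is forced by $\Lcal_e=\id$ once (3) is in hand), although it is not strictly needed for the equivalences.

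I would start with $(1)\Longleftrightarrow(2)$: substituting $b\mapsto b-c$ in (\ref{eqleftbrace}) and simplifying the left side to $ab+a$ gives (\ref{eqleftbrace1}), and conversely substituting $b\mapsto b+c$ in (\ref{eqleftbrace1}) returns (\ref{eqleftbrace}). For $(2)\Longrightarrow(3)$ I expand both sides of (\ref{eq1.iii}) by the definition (\ref{LcalRcal}): ${}^a({}^bc)=a(bc-b)-a$, and applying (\ref{eqleftbrace1}) to the pair $(bc,b)$ turns the right-hand side into $a(bc)-ab=abc-ab=(ab)c-ab={}^{(ab)}c$, which is exactly (\ref{eq1.iii}).

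The main point is $(3)\Longrightarrow(1)$. Expanding (\ref{eq1.iii}) via (\ref{LcalRcal}) gives $abc-ab=a(bc-b)-a$, i.e.\ $a(bc-b)=abc-ab+a$ for all $a,b,c$. Fix $a$ and $b$; since $\Lcal_b$ is a bijection of $G$, as $c$ runs over $G$ the element $v:={}^bc=bc-b$ runs over all of $G$, with $bc=b+v$. Substituting, $av=a(b+v)-ab+a$, hence $a(b+v)+a=ab+av$ for every $v\in G$, which upon renaming $v$ as $c$ is (\ref{eqleftbrace}). This closes the cycle, establishing the equivalence of (1), (2) and (3).

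Finally, for $(1)\Longleftrightarrow(4)$: from (\ref{eqleftbrace}) one has $a(b+c)=ab+ac-a$, so ${}^a(b+c)=a(b+c)-a=(ab-a)+(ac-a)={}^ab+{}^ac$, which is \textbf{Laut}; running the computation backwards yields $(4)\Longrightarrow(1)$. For the identities asserted under item (3): from (\ref{LcalRcal}) the equality $a+{}^ab=a+(ab-a)=ab$ holds with no hypothesis, and once (3) holds $\Lcal_{a^{-1}}=\Lcal_a^{-1}$, so ${}^{a^{-1}}b=\Lcal_a^{-1}(b)$ and therefore $a+b=a+{}^a({}^{a^{-1}}b)=a\,({}^{a^{-1}}b)$, i.e.\ (\ref{eqlbr3}). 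For the last sentence of the proposition: (3) says $\Lcal:(G,\cdot)\to\Sym(G)$ is a homomorphism and (4) says its image lies in $\Aut(G,+)$, hence $\Lcal:(G,\cdot)\to\Aut(G,+)$ is a homomorphism of groups. I do not anticipate any genuine obstacle; the only place that needs care is the bijectivity substitution in $(3)\Longrightarrow(1)$, and keeping track that the additive and multiplicative identities coincide.
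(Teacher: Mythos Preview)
Your proof is correct and follows essentially the same approach as the paper's: both treat $(1)\Leftrightarrow(2)$ and $(1)\Leftrightarrow(4)$ by the same direct substitutions, and both link $(2)$ with $(3)$ by expanding the action identity to $(ab)c-ab=a(bc-b)-a$ and then making a bijective change of variable (you use $v={}^bc$ via bijectivity of $\Lcal_b$, the paper uses $d=bc$ via bijectivity of left multiplication; these amount to the same thing). One small nicety in your write-up is the observation that $a+{}^ab=ab$ holds unconditionally from the definition and commutativity of $+$, before any brace axiom is invoked.
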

\begin{proof}
(1) $\Longrightarrow$ (2). Assume (1) holds. Let $a,b,c \in G$. We set $d= b-c$,  so $b=c+d$,
and the left brace condition   $ac + ad   =a(c+d) + a$, see (\ref{eqleftbrace}),  can be written as $ac+
a(b-c)-a= ab$
or, equivalently, $a(b-c)-a = ab-ac$, as desired.
(2) $\Longrightarrow$ (1).
Suppose, $a,c, d \in G,$  set $b = d+c$, so $d = b-c.$
The equality  (\ref{eqleftbrace1}) is in force for $a,b,c$, hence replacing $b-c$ with $d$
we obtain  $ad-a = a(c+d)-ac$, or equivalently
$a(c+d) +a  = ac+ad$, as desired.
(3) $\Longrightarrow$ (2). The equalities
${}^{ab}c = (ab)c-ab$ and $ {}^{a}{({}^bc)}=  a(bc-b)-a$
show that  $\Lcal$ is a left action (i.e.
 (\ref{eq1.iii}) holds)  if and only if
$(ab)c-ab= a(bc-b)-a$, for all $a,b,c \in G$.
Now choose arbitrary $a,b,c\in G$  and set $d = bc$ in the above equality to deduce that it is equivalent to
$ad-ab = a(d-b)-a, \quad\forall a,b,d \in G,$ which  is exactly condition (\ref{eqleftbrace1}).
(\ref{eq1.iii}) $\Longrightarrow$ (\ref{eqlbr3}). Assume  (\ref{eq1.iii}) is in force. We have to show that the operation $+$
and the left action defined via (\ref{LcalRcal}) are related
by the identity (\ref{eqlbr3}).
We set
$c = {}^{a^{-1}}b$.
Then
$ ac-a = {}^ac
= {}^a{({}^{a^{-1}}b)}  = {}^{(a.a^{-1})}b = {}^1b=b,$ and therefore
$a+b=ac = a({}^{a^{-1}}b)$, as desired.
(4) $\Longleftrightarrow$ (1).
Suppose $a,b,c \in G.$ It follows from the definition of the left action that the equality
${}^a(b+c) = {}^ab+{}^ac$
is equivalent to
$a(b+c)-a = (ab-a)+ (ac-a)$,
which, after canceling $-a$ from both sides implies (and is equivalent to)  the equality  (\ref{eqleftbrace}) defining a left brace.
\end{proof}

Note that  the left brace $(G, +, \cdot)$ is a two-sided brace \emph{iff}
\begin{equation}
\label{eqrightbracea}
(a+{}^ab)c+c = ac+({}^ab)c, \quad \forall\; a,b,c \in G.
\end{equation}
Lemma \ref{prop_twosidedbrace}, (\ref{prop_twosidedbrace_eq})
provides an identity in the multiplicative group  $(G, \cdot)$,
which is equivalent to  (\ref{eqrightbracea}).

\subsection{The close relation between symmetric groups and braces}
\begin{theorem}
  \label{Thm_BracesBraidedGp}  The following two structures on a group $(G, \cdot)$ are
  equivalent.
 \begin{enumerate}
\item
The pair $(G, \sigma)$ is a symmetric group, i.e. a braided group with
an involutive braiding operator $\sigma$.
\item
$(G,+, \cdot )$ is a left brace.
\end{enumerate}
Furthermore, each of these conditions implies that $(G, \sigma)$
forms a non-degenerate symmetric set, so $(G, \sigma)$ is a
solution of YBE.
    \end{theorem}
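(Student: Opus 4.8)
The plan is to route both implications through the left action $\Lcal:(G,\cdot)\to\Sym(G)$, $a\mapsto\Lcal_a$, $\Lcal_a(b)={}^ab$. By Facts \ref{factLYZ}(1) a braided group is precisely a pair $(G,\sigma)$ satisfying \textbf{ML1}, \textbf{MR1}, \textbf{M3}, and by Proposition \ref{Thm_braces1} a left brace is precisely a triple for which $\Lcal$, defined by ${}^ab:=ab-a$, is a group homomorphism (equivalently, satisfies \textbf{Laut}). In both descriptions \textbf{ML1} asserts that $\Lcal$ is a homomorphism, so the theorem amounts to saying that the remaining data — the operation $+$ on one side, the right action together with involutivity on the other — determine each other. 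The ``Furthermore'' assertion is then immediate: by Facts \ref{factLYZ}(2) any braided group forms a non-degenerate braided set, and once $\sigma$ is involutive this is a non-degenerate symmetric set, hence a solution of YBE.

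$(2)\Longrightarrow(1)$: given a left brace $(G,+,\cdot)$, set ${}^ab:=ab-a$ and, so as to force \textbf{M3}, put $a^b:=({}^ab)^{-1}ab$ and $\sigma(a,b):=({}^ab,a^b)$. Then \textbf{ML0}, \textbf{MR0} hold because $e=0$, \textbf{ML1} is Proposition \ref{Thm_braces1}(3), and, granted \textbf{M3} and the definition of $a^b$, a short rearrangement turns \textbf{MR1} into the statement ${}^a(bc)=({}^ab)({}^{a^b}c)$ (that is, \textbf{ML2}); the latter follows from $ab=a+{}^ab$, \textbf{Laut}, \textbf{ML1} and the auxiliary identity ${}^x(yz)={}^xy+{}^x({}^yz)$. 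By Facts \ref{factLYZ}(1) it follows that $(G,\sigma)$ is a braided group, and $\sigma^2=\id$ follows from Remark \ref{rem_Invol-Sqfree}: ${}^{{}^ab}{(a^b)}=({}^ab)(a^b)-({}^ab)=ab-({}^ab)=a$ using \textbf{M3} and $ab=a+{}^ab$, whence $({}^ab)^{a^b}=({}^{{}^ab}{(a^b)})^{-1}({}^ab)(a^b)=a^{-1}ab=b$. So $(G,\sigma)$ is a symmetric group.

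$(1)\Longrightarrow(2)$: given a symmetric group $(G,\sigma)$, \textbf{ML1} makes $\Lcal:(G,\cdot)\to\Sym(G)$ a homomorphism with $\Lcal_1=\id$ and $\Lcal_{a^{-1}}=\Lcal_a^{-1}$; define $a+b:=a\cdot{}^{a^{-1}}b$ (so that $ab=a+{}^ab$). One checks that $(G,+)$ has neutral element $e$ and inverse $-a:={}^a(a^{-1})$ using only \textbf{ML0}/\textbf{ML1}, while associativity is a longer but routine computation from \textbf{ML1}, \textbf{MR1}, \textbf{M3}. Once $(G,+)$ is known to be an abelian group, $ab=a+{}^ab$ gives ${}^ab=ab-a$, and Proposition \ref{Thm_braces1} identifies the left brace law (\ref{eqleftbrace}) with \textbf{ML1}, which holds; hence $(G,+,\cdot)$ is a left brace. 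The main obstacle is the commutativity of $+$: rewriting ${}^{a^{-1}}b$ by \textbf{M3} one gets $a+b=b\cdot((a^{-1})^b)^{-1}$, and proving this expression symmetric in $a,b$ forces the use of \textbf{MR1} together with \emph{both} involutivity relations ${}^{{}^uv}{(u^v)}=u$ and $({}^uv)^{u^v}=v$ of Remark \ref{rem_Invol-Sqfree}. This is exactly the step where involutivity of $\sigma$ is indispensable — without it one obtains only a skew left brace — and it is the only genuinely delicate point in the argument.
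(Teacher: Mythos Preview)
Your proposal is correct and follows the same overall architecture as the paper: both directions are routed through the left action $\Lcal_a(b)={}^ab$ and the identity $ab=a+{}^ab$, with Proposition~\ref{Thm_braces1} and Facts~\ref{factLYZ} doing the structural work, and involutivity entering only in the commutativity of~$+$.

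A few differences in the details are worth flagging. In $(2)\Rightarrow(1)$ you define $a^b:=({}^ab)^{-1}ab$, which forces \textbf{M3} by construction and reduces \textbf{MR1} to \textbf{ML2}; the paper instead sets $a^b:={}^{({}^ab)^{-1}}a$ and verifies \textbf{M3} separately via the commutativity of $+$. Your packaging here is a bit cleaner. In $(1)\Rightarrow(2)$ the paper does not treat associativity as a black box: it first proves \textbf{Laut} from \textbf{ML1}, \textbf{ML2}, \textbf{M3} (see (\ref{braceeq6})--(\ref{braceeq8})) and then derives associativity from \textbf{Laut}. Your ``routine computation'' is exactly this, so you should name \textbf{Laut} as the intermediate step. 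Finally, the paper's commutativity argument is shorter than the one you sketch: applying \textbf{M3} to $a\cdot{}^{a^{-1}}b$ and using \textbf{ML1} gives $a+b=b\cdot a^{({}^{a^{-1}}b)}$, and the single involutivity relation ${}^{{}^uv}(u^v)=u$ (rewritten as $a^c={}^{({}^ac)^{-1}}a$) immediately yields $a^{({}^{a^{-1}}b)}={}^{b^{-1}}a$, hence $a+b=b+a$. Neither \textbf{MR1} nor the second involutivity relation is needed at this step.
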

\begin{proof}
(1) $\Longrightarrow$ (2). Define on $G$ a second operation "$+$" via the
formula
\begin{equation}
\label{braceq1}
a+b := a({}^{a^{-1}}b),  \; \forall\; a,b \in G.
\end{equation}
This is equivalent to
$ab  = a+{}^ab,   \;\forall\;  a,b \in G$, which is exactly (\ref{eqlbr3}).
We shall prove that the operation $+$ is commutative and associative.
By hypothesis $\sigma$ is involutive, it follows then from  (\ref{involeq})  that
the actions satisfy:
\begin{equation}
\label{braceq4}
a^b = {}^{({}^ab)^{-1}}a.
\end{equation}
The operation $+$ defined above is commutative. This follows from the equalities
\[
\begin{array}{llll}
a+b &= &a({}^{a^{-1}}b)
= ({}^a{({}^{a^{-1}}b)}).(a^{({}^{a^{-1}}b)})\quad\quad  & \text{by (\ref{braceq1}) and
\textbf{M3}}\\
&= &b.( {}^{({}^a{({}^{a^{-1}}b)})^{-1}} a)
= b({}^{b^{-1}}a) = b+a \quad &\text{by (\ref{braceq4}) and (\ref{braceq1})}.
\end{array}
\]
Clearly,  $a + e = e +a =a$, so the identity element \emph{e} of the group $G$
is the zero-element of $(G, +)$.
Moreover,  $a +({}^a{(a^{-1})}) = a(a^{-1})=e$,  for each $a \in G$,
so ${}^a{(a^{-1})} = -a$ is the inverse of $a$ in
$(G, +)$.
We shall verify that the operation $+$ is associative, but before proving this
we shall show
 that the group $G$ acts from the left on $(G, +)$ as automorphisms, that is
 condition \textbf{Laut}, see (\ref{Laut}),  is in force.
Due to the non-degeneracy condition \textbf{Laut} is equivalent to
\begin{equation}
\label{braceeq6}
{}^a{(b+{}^bc)}= {}^ab+{}^a {({}^bc)}, \quad \forall a,b,c \in
G.
\end{equation}
We apply (\ref{braceq1}) and  \textbf{ML2}  to the LHS and obtain:
\begin{equation}
\label{braceeq7}
{}^a{(b+{}^bc)}= {}^a{(bc)}=({}^ab )({}^{a^b}c).
\end{equation}
Compute the RHS applying first \textbf{ML1} and \textbf{M3}, and then  (\ref{eqlbr3}):
\begin{equation}
\label{braceeq8}
{}^ab +{}^a{({}^bc)}={}^ab +{}^{{}^ab}{({}^{a^b}c)}=
                    ({}^ab)({}^{a^b}c).
\end{equation}
Now  (\ref{braceeq7})  and (\ref{braceeq8}) imply (\ref{braceeq6}),  therefore
condition \textbf{Laut} is in force.
The associativity of the operation $+$ follows from (\ref{braceeq9}) and
(\ref{braceeq10}):
\begin{equation}
\label{braceeq9}
\begin{array}{llll}
a+(b+c) &= &a({}^{a^{-1}}(b+c)) = a({}^{a^{-1}}b+ {}^{a^{-1}}c) & \text{by (\ref{braceq1})
and \textbf{Laut}}\\
&= &a({}^{a^{-1}}b). ({}^{({}^{a^{-1}}b)^{-1}}{({}^{a^{-1}}c)})= a({}^{a^{-1}}b)({}^{(a({}^{a^{-1}}b))^{-1}}c)\quad
&\text{by (\ref{braceq1})}.
\end{array}
\end{equation}
\begin{equation}
\label{braceeq10}
(a+b)+c = (a+b)({}^{(a+b)^{-1}}c)
        =a({}^{a^{-1}}b)({}^{(a({}^{a^{-1}}b))^{-1}}c).
\end{equation}
We have verified that $(G, +)$ is an abelian group, and the multiplicative
group $(G,\cdot)$ acts from the left on  $(G, +)$
as automorphisms.
By (\ref{eqlbr3}) the left action can be
presented as
${}^ab  = ab -a, \quad  \forall a,b \in G$. In other words
one has ${}^ab= \Lcal_a(b)$, for all $a,b
\in G$,  where  the maps  $\Lcal_a$ are defined in
(\ref{LcalRcal}).
This implies  that $(G, +, \cdot)$ satisfies the hypothesis of Proposition
\ref{Thm_braces1}, and condition \textbf{Laut}
is in force, hence $G$ is a left brace, see Prop.
\ref{Thm_braces1} (4). The implication (1)
$\Longrightarrow$ (2) has been proved.

(2) $\Longrightarrow$ (1).
Some steps in this implication can be extracted from \cite{CJO14}.
For  completeness (and simplicity) we give a different proof which is compatible with
our settings and involves   relations and conditions that will be used
throughout  the paper. Assume $(G, +, \cdot)$ is  a left brace.
We first define a left and a right action of $(G, \cdot)$ upon itself.
As in the previous subsection
 define the bijective maps
$\Lcal_a: G\times G
\longrightarrow G$ as $\Lcal_a(b)={}^ab := ab -a$,  see (\ref{LcalRcal}).
 It follows from Proposition \ref{Thm_braces1} that
the map
\[\Lcal: G \longrightarrow \Sym(G) \quad a\mapsto \Lcal_a \quad
\]
defines a left action  of $(G, \cdot)$ upon itself, that is condition \textbf{ML1}
and \textbf{ML0} hold.
Furthermore,  condition \textbf{Laut} and the equalities (\ref{braceq1}),  (\ref{eqlbr3}) are in force.
We next define a right "action" of $(G, \cdot)$ upon itself,
$\Rcal: G
\longrightarrow Sym(X),  \; b\mapsto \Rcal_b,$
by the equality
\begin{equation}
\label{braceq4a}
\Rcal_b(a) = a^b: = {}^{({}^ab)^{-1}}a,  \quad \forall \; a, b \in G.
\end{equation}
We shall verify that this is, indeed, an action. Clearly, \textbf{MR0} is in force, we have to verify \textbf{MR1}.
The argument will involve the compatibility condition \textbf{M3} , see
(\ref{compatibilityeq}) and
condition \textbf{ML2}, so we shall prove these first.
Condition \textbf{M3} is verified by the following equalities:
\begin{equation}
\label{braceeq12}
\begin{array}{llll}
ab= a + {}^ab &=& {}^ab + a\quad& \text{by  (\ref{eqlbr3}), and "$+$ commutative"} \\
        &=& ({}^ab)({}^{({}^ab)^{-1}}a) = ({}^ab)(a^b)\quad&  \text{by  (\ref{braceq1}) and (\ref{braceq4a})}.
\end{array}
\end{equation}
Next we show that \textbf{ML2} is also in force. Indeed, one has
\begin{equation}
\label{braceeq13}
\begin{array}{llll}
{}^a{(bc)}&=& {}^a{(b+ {}^bc)}= {}^ab + {}^a {({}^bc)} \quad& \text{by (\ref{eqlbr3}) and \textbf{Laut}}\\
 &=& {}^ab + {}^{({}^ab.a^b)}c = ({}^ab) ({}^{(a^b)}c) \quad& \text{by \textbf{ML1},
 \textbf{M3}, and (\ref{eqlbr3})}.
\end{array}
\end{equation}
Finally we verify \textbf{MR1}.
Let $a,b,c \in G.$ One has
\begin{equation}
\label{braceeq14}
\begin{array}{llll}
a^{(bc)}&=& {}^{({}^a{(bc)})^{-1}}a  = {}^{(({}^ab)({}^{a^b}c))^{-1}}a \quad&\text{by (\ref{braceq4a}) and by \textbf{ML2}}\\
        &=& {}^{({}^{a^b}c)^{-1}}{({}^{({}^ab)^{-1}}a)} = {}^{({}^{a^b}c)^{-1}}{(a^b)} = (a^b)^c \quad&\text{by
        \textbf{ML1}  and (\ref{braceq4a})}.
\end{array}
\end{equation}
Hence condition \textbf{MR1} is in force, and therefore
the formula (\ref{braceq4a}) defines a right action of
$(G,\cdot)$ upon itself.

We have shown that the (multiplicative) group $(G, \cdot)$  acts upon itself from
the left and from the right via the actions defined above. These actions satisfy \textbf{ML1},
\textbf{MR1},  \textbf{ML2} and the compatibility condition
\textbf{M3}.
Define a map
\[\sigma: G\times G \longrightarrow G\times G, \quad \sigma(a, b): = ({}^ab,
a^b).\] It follows from the results of \cite{LYZ}, see  Fact
\ref{factLYZ}, that the pair $(G, \sigma)$ is a braided group, so
condition \textbf{MR2} is also in force. To show that the map
$\sigma$ is involutive we verify conditions (\ref{involeq}). The
definition (\ref{braceq4a}) of the right action implies
straightforwardly that ${}^{{}^ab}{(a^b)}=
{}^{{}^ab}{({}^{({}^ab)^{-1}}a)}= a$, for all $a,b  \in G$. We
leave the reader to verify the remaining equality $({}^ab)^{a^b} =
b,$  for all $a,b  \in G$. We have shown that $(G, \sigma)$ is a
symmetric group (in the sense of Takeuchi), so the implication (2)
$\Longrightarrow$ (1) is in force.

It follows from Fact \ref{factLYZ} again that each of the
equivalent conditions (1) and (2) implies that $(G, \sigma)$ forms
a non-degenerate symmetric set in the sense of \cite{ESS}.
\end{proof}
Till the end of the paper we shall assume that the following is in force.
\begin{defconv}
\label{convention_actions}
\begin{enumerate}
\item
Let $(G, \sigma)$ be a symmetric group. We shall always consider the left
and the right actions of $G$ upon itself
defined via the formula $\sigma(u,v)= ({}^uv, u^v)$.
We call the left brace $(G, +, \cdot)$ with operation "$+$" defined via
(\ref{braceq1})
\emph{the left brace associated to} $(G, \sigma)$.
\item
Conversely, given a left brace $(G, +, \cdot)$, \emph{the associated symmetric
group} $(G, \sigma)$ is "built" on the multiplicative group
of the brace $(G, \cdot)$ with a left action $\Lcal$ and a right action $\Rcal$ defined as in the proof of
Theorem
\ref{Thm_BracesBraidedGp}, see (\ref{LcalRcal}) and (\ref{braceq4a}),
respectively.
\item Let $(X,r)$ be a (non-degenerate) symmetric set with YB-group $G = G(X,r)$, let
$(G,r_G)$ be  \emph{the symmetric group associated to}
$(X,r)$, see Facts \ref{factLYZ}. The left brace
$(G,+,\cdot)$  associated to $(G,r_G)$ is called
\emph{the left brace associated to the solution} $(X,r)$.
\end{enumerate}
\end{defconv}
 Theorem \ref{Thm_BracesBraidedGp} gives a valuable information on left braces:
 every left brace $(G,+,\cdot)$ is equipped (canonically)
 with \emph{an involutive braiding operator} $r_G$, so that $G$ has an additional structure
  - the structure of a symmetric group. In particular, the powerful matched pairs conditions \textbf{ML2} and
\textbf{MR2} in $G$ can be and will be used to study various properties of the given brace.
We shall study the close relation between the properties of a symmetric set $(X,r)$ and the
corresponding two structures (i) the symmetric group $(G, r_G)$, and (ii) the
left brace  $(G, +, \cdot)$ each of them built on the YB group $G=G(X,r)$.
Theorem \ref{Thm_BracesBraidedGp}  can be used also to directly "translate" results from braces to braided groups or vice versa. The
theorem was already applied in this way by Agata Smoktunowicz,  who  obtained interesting results on two-sided braces and Engel groups,
see \cite{Smok}.

\subsection{More details about the YB monoid $S=S(X,r)$}
Let $(X, r)$ be a solution,
let $S = S(X,r)$ be its associated monoid,  and let $(G, r_G), (G, +, \cdot)$ be its associated symmetric group and left brace,
respectively.
We shall prove, see Proposition \ref{details_Prop1}, that
if the monoid $S= S(X,r)$ is embedded in $G=G(X,r)$ then $S$ is invariant under
the actions of $G$ upon itself.

\begin{remark}
\label{remark_invariantsubset}
Recall that a subset $Y\subset X$ is  \emph{$r$-invariant } if $r(Y
\times Y)\subseteq Y \times Y.$  The restricted map $r_Y= (r_{\mid Y\times
Y}): Y \times Y \longrightarrow Y \times Y$ defines canonically \emph{the induced solution} $(Y, r_Y)$ on $Y$.
A nonempty subset $Y\subset X$ is
\emph{a (left)}
$G$-\emph{invariant subset of} $X$,
if $Y$ is invariant under the
left action of $G$.
Since $r$ is involutive,
 $Y$ is left $G$-invariant if and only if it is right
$G$-invariant, so we shall refer to it simply as \emph{a
$G$-invariant subset}. Each $G$-invariant subset $Y$ of $X$ is also
$r$-invariant, the converse, in general, is not true. Clearly, $Y$ is (left) $G$-invariant
\emph{iff} it is a $\Gcal$-invariant subset.
 Each $G$-orbit $X_0$ under the left action of $G$ on $X$ is
$G$-invariant and therefore it is an $r$-invariant subset.
\end{remark}
\begin{remark}
  \label{remark_embeddingoffinitesolutions}
  It follows from \cite{ESS}, and  \cite{LYZ}, p.18, that the additive group
  $(G, +)$ is isomorphic to the free abelian group,
 ${}_{gr}[ X ]$ generated by $X$, so the set $X$ is embedded in $G$.
It is well-known that if $(X,r)$ is a finite symmetric set then the monoid
  $S=S(X,r)$ is with cancellation law and satisfies the \"{O}re
  conditions. The YB group $G$ is  the group of quotients of $S$, so  $S$
  is embedded in $G$  (clearly $X$ is embedded in $S$).
  \end{remark}
\begin{convention}
   \label{convention_embedding}
 In the cases when $(X, r)$ is an infinite symmetric set we shall assume that the  associated YB monoid $S= S(X,r)$
 is embedded in
 $G=G(X,r)$.
 \end{convention}
Note that for any pair  $x,y \in X\times X$ the only nontrivial equality  of degree 2 in the monoid $S$ involving
the word $xy$ (if any) has the
 shape
 $xy={}^xy.x^y$.

\begin{proposition}
  \label{details_Prop1}
Let  $(X,r)$ be a non-degenerate symmetric set, $S=S(X,r), G=G(X,r), (G, r_G)$ in the usual notation. Then the following three
conditions
hold.
\begin{enumerate}
  \item
The monoid  $S$ is invariant under the left and the right actions of
$G$ upon itself, so $S$ is an $r_G$-invariant subset of $(G, r_G)$.
\item
Let $r_S: S \times S \longrightarrow S \times S$ be the restriction of
$r_G$
on $S \times S$. Then $(S, r_S)$ is a solution. Moreover, it is a braided \textbf{M3}-monoid in the
sense
of \cite{GIM08}.
\item
$S$ is closed under the operation $+$ in $G$. So $(S, +, \cdot)$ is an
    algebraic structure
such that (i) $(S, \cdot)$ is a monoid with unit $1$.
(ii) $(S, +)$ is an abelian monoid generated by $X$ and with the same
neutral element ($0=1$). More precisely,
$(S, +)$ is isomorphic to the free abelian monoid ${}_{mon}[X]$ generated by $X$.
(iii) the left brace equality
$a(b+c)+a=ab +ac$ holds for all $a,b,c \in S$;
 \end{enumerate}
\end{proposition}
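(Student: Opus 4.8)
The plan is to establish the three parts in order, deducing everything from the matched pair identities available in the symmetric group $(G,r_G)$ and from the brace formula $ab=a+{}^ab$; parts (2) and (3) will then follow essentially by restricting to $S$ identities that already hold in $G$, so the only real work is part (1). For part (1) I would first note that by Definition \ref{Gcaldef} the group $G$ acts on the set $X$ from the left, so $\Lcal_a$ restricts to a permutation of $X$ for every $a\in G$, and by Fact \ref{factLYZ}(3) this is exactly the restriction to $X$ of the left action of $(G,r_G)$ on itself; analogously on the right. Fixing $s=y_1\cdots y_n\in S$ with all $y_i\in X$ and $a\in G$, I would iterate \textbf{ML2} to strip off the leftmost letter:
\[
{}^a(y_1\cdots y_n)=({}^{a_1}y_1)({}^{a_2}y_2)\cdots({}^{a_n}y_n),\qquad a_1=a,\quad a_{i+1}=a_i^{\,y_i},
\]
where each $a_i\in G$; hence every factor ${}^{a_i}y_i$ lies in $X$ and the product lies in $S$. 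A mirror-image iteration of \textbf{MR2} (stripping from the right) gives $s^a\in S$. Therefore $S$ is invariant under both the left and the right action of $G$, and in particular $r_G(s,t)=({}^st,s^t)\in S\times S$ for $s,t\in S$, i.e. $S$ is an $r_G$-invariant subset.

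For part (2), by (1) the restricted map $r_S:=r_G|_{S\times S}$ is a well-defined map $S\times S\to S\times S$, and the braid relation together with $r_S^2=\id$ are inherited pointwise from $(G,r_G)$. For non-degeneracy I would argue that for $s\in S$ the permutation $\Lcal_s$ of $G$ maps $S$ into $S$ by (1), and so does its inverse $\Lcal_{s^{-1}}$, again by (1) applied to $s^{-1}\in G$; hence $\Lcal_s$ restricts to a bijection of $S$, and likewise $\Rcal_s$. Thus $(S,r_S)$ is a non-degenerate symmetric set, i.e. a solution. Finally, conditions \textbf{ML0}--\textbf{ML2}, \textbf{MR0}--\textbf{MR2} and the compatibility condition \textbf{M3} hold in $(G,r_G)$ and every element and operation occurring in them stays inside $S$; restricting these identities shows that $(S,r_S)$ is a braided \textbf{M3}-monoid in the sense of \cite{GIM08}.

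For part (3), closure of $S$ under $+$ follows from $a+b=a({}^{a^{-1}}b)$ and (1), since ${}^{a^{-1}}b\in S$ for $a^{-1}\in G$ and $b\in S$, so $a+b=a\cdot{}^{a^{-1}}b\in S$; as $0=e=1\in S$, the pair $(S,+)$ is a commutative submonoid of $(G,+)$ with zero $1$. Iterating $ab=a+{}^ab$ and \textbf{ML1} gives $y_1\cdots y_n=y_1+{}^{y_1}y_2+{}^{y_1y_2}y_3+\cdots+{}^{y_1\cdots y_{n-1}}y_n$, a sum of $n$ elements of $X$ by (1); together with closure under $+$ this identifies $(S,+)$ with the submonoid of $(G,+)$ generated by $X$, and since $(G,+)$ is the free abelian group ${}_{gr}[X]$ on $X$ (Remark \ref{remark_embeddingoffinitesolutions}), that submonoid is the free abelian monoid ${}_{mon}[X]$, which gives the stated description. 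The left brace identity $a(b+c)+a=ab+ac$ holds for all $a,b,c\in G$ by Theorem \ref{Thm_BracesBraidedGp}, hence for all $a,b,c\in S$ by closure of $S$ under both operations, and $(S,\cdot)$ is a monoid with unit $1$ by the very definition of $S(X,r)$. The step that most needs care is the claim in part (1) that the action of the full group $G$ — not merely of the generating set $X$ — preserves $S$; once that is in hand, parts (2) and (3) amount to transporting identities from $G$ down to the submonoid $S$.
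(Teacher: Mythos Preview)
Your proof is correct and follows essentially the same line as the paper's. For part (1) you unfold \textbf{ML2} iteratively where the paper phrases the same argument as an induction on $|u|$; the only minor deviation is that you obtain right-invariance of $S$ by a mirror iteration of \textbf{MR2}, whereas the paper invokes involutivity of $r_G$ (cf.\ Remark \ref{remark_invariantsubset}) to pass from left- to right-invariance. For parts (2) and (3) the paper simply declares them ``straightforward''; your explicit arguments (in particular the non-degeneracy via $\Lcal_{s^{-1}}$ and the identification $(S,+)\cong {}_{mon}[X]$ through $y_1\cdots y_n=\sum_i {}^{y_1\cdots y_{i-1}}y_i$) are a correct fleshing-out of that claim.
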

\begin{proof}
 \textbf{(1)}. One has  ${}^a1 = 1, \; \forall a \in G$ (by \textbf{ML0}). We use induction on the length $|u|$ of $u\in S$ to show that
 ${}^au\in
 S$ for all $a \in G, u \in S$.
The group  $G$  acts on $X$ from the left, so this gives the base for the
induction. Assume  ${}^au\in S$ for all $a \in G$
and all, $u \in S$ with $|u| \leq n$. Suppose $a \in G,$
$u \in S$ with $|u| = n+1.$ Then $u =vx$, where $x \in X,  v \in S, |v| = n$.
We use \textbf{ML2} to yield
$
 {}^au= {}^a(vx)=({}^av)({}^{(a^v)}x).
$
One has ${}^{(a^v)}x \in X$, and by the inductive assumption
${}^av\in S$, hence ${}^au\in S, \; \forall a\in G, u \in S$.
It follows that $S$ is invariant under the left action of $G$ upon itself.
 The braiding operator $r_G$ is involutive, hence  $S$ is also invariant under the right action of $G$
upon itself. Therefore $S$ is $r_G$-invariant. \textbf{(2)}. The left and the right actions of $G$ on $S$ induce a left and a
right actions
 ${}^{(\ )}\bullet$,$\quad$   $\bullet^{(\ )}$ of $S$ upon itself.
It follows  straightforwardly that  $\;S, S\;$ is a matched pair of monoids in the sense of \cite{GIM08}.
Let $r_S: S \times S \longrightarrow S \times S$ be the restriction of $r_G$
on $S \times S$.
Then the map $r_S$  is bijective, non-degenerate, involutive and satisfies the braid relation.
Therefore $(S, r_S)$ is a
non-degenerate symmetric set.
It follows that $(S, r_S)$ is a braided \textbf{M3}-monoid in the sense
of \cite{GIM08}.
Part (3) is straightforward.
\end{proof}

\section{The retractions of a symmetric group and its derived chain of ideals. Symmetric groups and symmetric sets of finite
multipermutation level}
\label{sec_derived_chain of ideals}
This section is central for the paper.  We study the recursive process of retraction of general symmetric groups and braces. We are
particularly interested in symmetric groups, braces, and symmetric sets of finite multipermutation level. The first
question
to be asked is "what is
 the retraction of a symmetric group?"
 The answer is given in Subsect. 4.2 for a general symmetric group $(G, r)$.
In Subsect. 4.3 we introduce  \emph{the derived chain of ideals
of} $G$, or abbreviated \emph{DCI} and study its properties. Proposition \ref{derived_chainProp} shows
that DCI is an invariant of the group which gives a precise
information about the recursive process of retraction on $(G, r)$.
Theorem  \ref{derived_chainThm1} gives new conditions equivalent to $\mpl(G,r) = m
 <\infty.$
An important application of \emph{DCI} is Theorem
 \ref{slG_theorem} which proves that every symmetric group $(G ,r)$ of finite multipermutation level $m$ is a solvable group of solvable length $\leq m$.

\subsection{Some technical results on multipermutation solution, general case}
 In this subsection we collect some notions and technical results on multipermutation solutions.
  We recall first an important class of solutions.
\begin{example} \cite{Dri} (\emph{Involutive permutation solution of  Lyubashenko}).
\label{exlri}
Let $X$ be a set of cardinality $\geq 2$, let  $f \in Sym (X)$.  Then the map
$r:
X\times X\longrightarrow X\times X$
defined as
\[r(x,y):= (f(y), f^{-1}(x)), \; \forall x,y \in X\] is a non-degenerate
involutive solution of YBE, we shall refer to it as \emph{a permutation solution}.
Clearly, $\Lcal_x = f,$ and $\Rcal_x=f^{-1},$  for all $x \in X$,
so condition \textbf{lri} holds.
This is a solution of multipermutation level 1.
Moreover,   $(X,r)$ is a
(square-free) trivial solution \emph{iff}  $f= id_X$.
\end{example}
The following is straightforward.
\begin{remark}
\label{mpl1} A non-degenerate symmetric set $(X,r)$ has
 $\mpl X =1$ if and  only if it is a permutation solution.  The notion  "\emph{a multipermutation solution}" is a generalization of
 \emph{a permutation solution}.
\end{remark}

Let $(X,r)$,  be a symmetric set of order $|X|\geq 2$. The
recursive definition of retractions, $\Ret^m(X,r)=
\Ret(\Ret^{m-1}(X,r))$ implies straightforwardly
that the process of taking retractions of higher order "halts" at
the $m$-th step \emph{iff}
 $m\geq1$ is the minimal integer (if any), such that $\Ret^{m+1}(X,r)=
\Ret^m(X,r)$. Moreover,
 there are obvious equivalences
\[
\begin{array}{llll}
[\mpl X =m]    &\Longleftrightarrow&  [\mpl(\Ret^{(k)}(X,r)) = m-k, \; \forall \; k,  0 \leq k \leq  m]&\\
              &\Longleftrightarrow & [\Ret^{(m-1)}(X,r)\;\text{is a
              permutation solution}].&
\end{array}
 \]
 In particular, if  $(X,r)$ is a square-free solution, then
$\mpl X =m $ \emph{iff}    ${\Ret}^{(m-1)}(X,r)$ is a trivial solution
of order $\geq 2$.

\begin{definition}
\label{defcondstar}
Let  $(X, r)$ be a symmetric set, $|X|\geq 2$. We define \emph{Condition} (*) on $X$ as follows.
\[\text{\emph{Condition}}\;  (*): \quad  \text{For every}\; x \in X\;\text{there exist some}\;a \in X,\; \text{such that}\;  {}^ax =
x.\]
\end{definition}
Condition (*) is satisfied by every square-free solution $(X,r)$ and by every symmetric group  $(G, r)$. Indeed in the first
case
${}^xx=x,$ holds for all $x \in X,$ while in the second case one has    $\Lcal_1 = id_G$  (condition \textbf{ML0} in $G$).
\begin{lemma}
\label{Lemma_symgr_mpl1}
Let  $(X, r)$ be a symmetric set, $|X|\geq 2$. Suppose that $X$ satisfies condition (*).
Then the following hold.
(1) $\mpl X = 1$ \emph{iff} $(X,r)$ is a trivial solution.
(2)  One has $2 \leq \mpl X = m< \infty$ \emph{iff} $\Ret^{m-1}(X,r)$ is a trivial solution
with $\geq 2$ elements.
(3) $\mpl X = 2$ if and only if
$(X,r)$  satisfies
  \begin{equation}
 \label{Lemma1_Thm_maineq1a}
 \Lcal_{{}^yx}= \Lcal_x, \; \forall x, y \in X, \quad \text{and}  \;\Lcal_x \neq id_X \text{    for some}\; x \in X.
 \end{equation}
 Moreover, in this case $(X,r)$ satisfies \textbf{lri}.
 \end{lemma}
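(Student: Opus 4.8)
The plan is to verify the three statements of Lemma~\ref{Lemma_symgr_mpl1} by unwinding the definition of retraction together with condition $(*)$, and then extracting the criterion for $\mpl X=2$ from the structure of $\Ret(X,r)$.

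First I would prove (1). If $(X,r)$ is trivial, then $\Lcal_x=\id_X$ for all $x$, so $[X]$ is a single point and $\mpl X=1$ by definition. Conversely, suppose $\mpl X=1$, i.e. $\Ret(X,r)=([X],r_{[X]})$ is a trivial solution on a one-element set; then $\Lcal_x=\Lcal_y$ for all $x,y\in X$, so there is a single map $f:=\Lcal_x\in\Sym(X)$ realising all left translations, and $(X,r)$ is the permutation solution attached to $f$ (Remark~\ref{mpl1}). It remains to show $f=\id_X$. Here is where $(*)$ enters: for each $x\in X$ choose $a$ with ${}^ax=x$; then $f(x)=\Lcal_a(x)={}^ax=x$, so $f=\id_X$ and $(X,r)$ is trivial. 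This already gives the sharpening: a symmetric set with $\mpl X=1$ and condition $(*)$ must be trivial, whereas a general $\mpl X=1$ solution is only a permutation solution.

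Next, (2) is essentially the chain of equivalences recorded in the text just before Definition~\ref{defcondstar}: $\mpl X=m$ iff $\Ret^{m-1}(X,r)$ is a permutation solution, i.e. iff $\mpl(\Ret^{m-1}(X,r))=1$. The point is to apply (1) to the retraction rather than to $(X,r)$ itself, which is legitimate because condition $(*)$ is inherited by retractions. So the key preliminary step for (2) — and the one I expect to need a short separate argument — is: \textbf{if $(X,r)$ satisfies $(*)$, then so does $([X],r_{[X]})$}. This follows from the definition ${}^{[a]}[x]=[{}^ax]$: given $[x]\in[X]$, pick $a\in X$ with ${}^ax=x$, whence ${}^{[a]}[x]=[{}^ax]=[x]$. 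Iterating, every $\Ret^k(X,r)$ satisfies $(*)$, so applying (1) to $\Ret^{m-1}(X,r)$ yields: $\mpl X=m\geq 2$ iff $\Ret^{m-1}(X,r)$ is a nontrivial trivial solution, i.e. a trivial solution on $\geq 2$ elements (nontrivial because otherwise $\mpl X<m$).

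Finally, for (3) I would specialise (2) to $m=2$: $\mpl X=2$ iff $\Ret(X,r)=([X],r_{[X]})$ is a trivial solution on $\geq 2$ elements. Unravelling: "$([X],r_{[X]})$ is a trivial solution" means ${}^{[x]}[y]=[y]$ for all $x,y$, i.e. $\Lcal_{{}^xy}=\Lcal_y$ — equivalently, after renaming, $\Lcal_{{}^yx}=\Lcal_x$ for all $x,y\in X$ (since the left action of $X$ is onto on each $\Gcal$-orbit, these two formulations coincide; more directly, ${}^yx$ ranges over a set of representatives as $y$ varies, giving the stated identity). The condition "$[X]$ has $\geq 2$ elements" says $\Lcal_x\neq\Lcal_{x'}$ for some $x,x'$, and combined with (1) this is equivalent to: $X$ is not trivial, i.e. $\Lcal_x\neq\id_X$ for some $x$. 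This yields exactly \eqref{Lemma1_Thm_maineq1a}. For the last assertion, that $\mpl X=2$ forces \textbf{lri}: from $\Lcal_{{}^yx}=\Lcal_x$ one reads off the cyclic condition \textbf{cl1} (${}^{({}^yx)}z={}^xz$ applied appropriately, together with the involutivity of $r$ giving the right-hand cyclic conditions via Remark following Corollary~\ref{cor_lri_cc}); then by Corollary~\ref{cor_lri_cc} the cyclic conditions \textbf{cc} are equivalent to \textbf{lri} for a solution of arbitrary cardinality. The only delicate point is matching the two forms $\Lcal_{{}^xy}=\Lcal_y$ and $\Lcal_{{}^yx}=\Lcal_x$ and checking that \eqref{Lemma1_Thm_maineq1a} really is the verbatim translation of "$\Ret(X,r)$ trivial of order $\geq 2$"; once that bookkeeping is done, (3) and its corollary about \textbf{lri} follow from (1), (2), and Corollary~\ref{cor_lri_cc}.
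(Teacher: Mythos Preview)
The paper does not actually include a proof of this lemma; it is stated and then immediately followed only by the remark that it applies in particular to symmetric groups $(G,r)$. So there is no ``paper's own proof'' to compare against, and your argument has to be assessed on its own.

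Your proofs of (1) and (2) are correct and clean. The key observation that condition $(*)$ passes to retractions (since ${}^ax=x$ gives ${}^{[a]}[x]=[{}^ax]=[x]$) is exactly what is needed, and your use of it to upgrade ``$\Ret^{m-1}(X,r)$ is a permutation solution'' to ``$\Ret^{m-1}(X,r)$ is trivial'' is the right move.

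For (3), the equivalence $\mpl X=2 \Longleftrightarrow$ \eqref{Lemma1_Thm_maineq1a} is also correctly obtained: ``$\Ret(X,r)$ trivial'' unwinds directly to $[{}^yx]=[x]$, i.e.\ $\Lcal_{{}^yx}=\Lcal_x$, and your argument that ``$|[X]|\geq 2$'' is equivalent under $(*)$ to ``some $\Lcal_x\neq\id_X$'' is fine (the parenthetical about orbits is unnecessary---the translation is verbatim). The one place your write-up is loose is the deduction of \textbf{lri}. You claim to ``read off \textbf{cl1}'' and then pass through the cyclic conditions via Corollary~\ref{cor_lri_cc}, but $\Lcal_{{}^yx}=\Lcal_x$ gives \textbf{cl2} rather than \textbf{cl1}, and getting the full set of cyclic conditions from this one identity is not automatic. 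A cleaner and shorter route is the direct one (this is essentially what the paper later records as Lemma~\ref{lemma_Aut_lri_sec4}): from involutivity (\ref{involeq}) one has ${}^{({}^uv)}(u^v)=u$; applying $\Lcal_{{}^uv}=\Lcal_v$ to the left-hand side gives ${}^v(u^v)=u$, which is the first \textbf{lri} identity. The companion identity $({}^uv)^{u^v}=v$ then yields the second \textbf{lri} identity once one knows $\Rcal_{u^v}=\Rcal_u$, and the latter follows because in a symmetric set $\Lcal_a=\Lcal_b\Rightarrow\Rcal_a=\Rcal_b$ (as recalled in Section~\ref{Preliminaries}), so $\Lcal_{{}^yx}=\Lcal_x$ for all $x,y$ forces $\Rcal_{{}^yx}=\Rcal_x$ and hence, via involutivity again, $\Rcal_{x^y}=\Rcal_x$. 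Replacing your cyclic-conditions detour by this two-line computation would make the argument complete.
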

 In particular,  Lemma \ref{Lemma_symgr_mpl1} is in force for symmetric groups $(G, r)$.

In cases when we have to write a sequence of successive actions in $X$ (or in $G$) we
shall use one also well known notation
\begin{equation}
\label{laa} \alpha\la x = {}^{\alpha}{x}.
\end{equation}
\begin{definition}
\label{def_tower}\cite{GIC}
Let
$\zeta_1, \zeta_2,\cdots, \zeta_m \in X$. The expression
\[
\omega = (\cdots((\zeta_m \la \zeta_{m-1})\la\zeta_{m-2})\la
\cdots\la\zeta_2)\la\zeta_1
\]
is called \emph{a tower of actions}  (or shortly \emph{a tower})
\emph{of length} $m$.
\end{definition}
Denote by $u$ the tower $(\cdots((\zeta_m \la
\zeta_{m-1})\la\zeta_{m-2})\la \cdots\la\zeta_3)\la \zeta_2$,
then, obviously, the element
 $\; \omega=
{}^{u}{\zeta_1}$, belongs to the $G$-orbit of $\zeta_1$.

\begin{remark}
 \label{mplremark1}
 Suppose $(X, r)$ is a non-degenerate symmetric set, $|X|\geq 2$.
 Then  the conditions (\ref{mplremark11}) and (\ref{mplmequality_generalcase3})
  given below are equivalent:
 \begin{equation}
\label{mplremark11}
\begin{array}{rl}
(\cdots((a\la y_{m})\la
y_{m-1})\la \cdots \la y_2)\la
y_1& = (\cdots((b\la y_{m})\la
y_{m-1})\la \cdots \la y_2)\la
y_1,\\
\forall a, b,\; y_1, \cdots y_m \in X.&
\end{array}
\end{equation}
\begin{equation}
\label{mplmequality_generalcase3}
\begin{array}{rl}
(\cdots(([a]\la [y_{m}])\la
[y_{m-1}])\la \cdots \la [y_3])\la [y_2]&
=(\cdots(([b]\la [y_{m}])\la
[y_{m-1}])\la \cdots \la [y_3])\la [y_2], \\
\forall a, b,\; y_2, \cdots y_m \in X.&
\end{array}
\end{equation}
\end{remark}
The following statement is a generalization of Theorem 5.15, \cite{GIC} which initially was proven for square-free solutions.
 \begin{proposition}
 \label{mplmtheorem1}
Suppose $(X, r)$ is a solution, $|X|\geq 2$.
\begin{enumerate}
\item $(X, r)$ is a multipermutation solution with $\mpl X \leq m <\infty$, if and only if
(\ref{mplremark11})
is in force. Moreover, $\mpl X= m$ \emph{iff} $m$ is the minimal integer for
which (\ref{mplremark11}) holds.
\item Suppose furthermore that $X$ satisfies condition (*).
Then  $\mpl X \leq m< \infty$ \emph{iff} the following  is in force.
\begin{equation}
\label{mplmequality}
\begin{array}{rl}
((\cdots((y_m\la y_{m-1})\la
y_{m-2})\la \cdots \la y_2)&\la
y_1)\la x \\
&=((\cdots( y_{m-1}\la y_{m-2})\la \cdots \la y_2)\la y_1)\la x, \\
\forall x,\; y_1, \cdots y_m \in X.&
\end{array}
\end{equation}
Moreover,
$\mpl (X,r)= m$ \emph{iff} $m$ is the minimal integer for
which (\ref{mplmequality}) holds.
\end{enumerate}
\end{proposition}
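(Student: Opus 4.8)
The plan is to deduce both parts from Remark~\ref{mplremark1} together with an induction on $m$ that transfers condition (\ref{mplremark11}) for $(X,r)$ at level $m$ to the analogous condition for the retraction $([X],r_{[X]})$ at level $m-1$. Throughout I use the elementary equivalences recorded just before the proposition, in particular that for $m\geq 1$ one has $\mpl X\leq m\Longleftrightarrow\mpl[X]\leq m-1$ (with the convention that a one-element solution has level $0$). For brevity, in this sketch I write $[\,z_k;z_{k-1},\dots,z_1\,]$ for the tower of actions $(\cdots((z_k\la z_{k-1})\la z_{k-2})\la\cdots)\la z_1$ of length $k$, so that (\ref{mplremark11}) at level $m$ asserts precisely that $[\,a;y_m,\dots,y_1\,]$ does not depend on the innermost entry $a$, for all $a,y_1,\dots,y_m\in X$.

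\textbf{Part (1).} I argue by induction on $m$. For $m=1$, (\ref{mplremark11}) reads ${}^{a}y_1={}^{b}y_1$ for all $a,b,y_1\in X$, i.e.\ all left translations of $X$ coincide, which by Remark~\ref{mpl1} is equivalent to $(X,r)$ being a permutation solution, i.e.\ to $\mpl X=1$ (recall $|X|\geq 2$). Now let $m\geq 2$ and assume part (1) for all solutions (of arbitrary cardinality) at level $m-1$. If $\mpl X\leq 1$, then $(X,r)$ is a permutation solution, so every tower of length $\geq 2$ depends only on its outermost entry; hence (\ref{mplremark11}) at level $m$ holds and also $\mpl X\leq 1\leq m$, so both sides of the asserted equivalence are true --- this in particular settles the case $|[X]|=1$, in which $\mpl X\leq 1$. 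Assume therefore $|[X]|\geq 2$. The induction hypothesis applied to $([X],r_{[X]})$ at level $m-1$ gives: $\mpl[X]\leq m-1$ \emph{iff} (\ref{mplremark11}) holds for $([X],r_{[X]})$ at level $m-1$; but the latter is literally (\ref{mplmequality_generalcase3}), which by Remark~\ref{mplremark1} is equivalent to (\ref{mplremark11}) at level $m$ for $(X,r)$. Combining this with $\mpl X\leq m\Leftrightarrow\mpl[X]\leq m-1$ yields the equivalence. The ``moreover'' clause is then immediate: since (\ref{mplremark11}) at level $m$ holds exactly when $\mpl X\leq m$, the least such $m$ is $\mpl X$.

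\textbf{Part (2).} Assume $X$ satisfies condition (*); by part (1) it suffices to prove that (\ref{mplmequality}) at level $m$ is equivalent to (\ref{mplremark11}) at level $m$. In the bracket notation, (\ref{mplmequality}) reads $[\,y_m;y_{m-1},\dots,y_1,x\,]=[\,y_{m-1};y_{m-2},\dots,y_1,x\,]$, i.e.\ the innermost entry of a tower of length $m+1$ may be deleted. The implication (\ref{mplmequality})$\Rightarrow$(\ref{mplremark11}) is immediate and uses neither (*) nor part (1): instantiating (\ref{mplmequality}) gives $[\,a;y_m,\dots,y_1\,]=[\,y_m;y_{m-1},\dots,y_1\,]$, whose right-hand side does not depend on $a$. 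For the converse, fix $y_m,\dots,y_1,x\in X$ and, using condition (*), pick $a\in X$ with ${}^{a}y_{m-1}=y_{m-1}$; then
\[
[\,y_m;y_{m-1},\dots,y_1,x\,]=[\,a;y_{m-1},\dots,y_1,x\,]=[\,{}^{a}y_{m-1};y_{m-2},\dots,y_1,x\,]=[\,y_{m-1};y_{m-2},\dots,y_1,x\,],
\]
where the first equality is independence of the innermost entry (condition (\ref{mplremark11}) at level $m$, applied with $a$ in place of $y_m$) and the third unwinds one action using ${}^{a}y_{m-1}=y_{m-1}$; this is exactly (\ref{mplmequality}). Hence (\ref{mplmequality}) at level $m$ is equivalent to (\ref{mplremark11}) at level $m$, and the ``moreover'' clause follows as in part (1).

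The main obstacle is not computational --- everything reduces to unwinding towers of actions and invoking Remark~\ref{mplremark1} --- but rather the careful handling of the degenerate cases in the induction (when $|[X]|=1$ or $\mpl X\leq 1$) and, above all, locating precisely where condition (*) is used: it is exactly what lets one realize the ``deleted'' innermost element $y_{m-1}$ as $\Lcal_a(y_{m-1})$ for a suitable $a\in X$, and thereby pass from (\ref{mplremark11}) (equivalently, from $\mpl X\leq m$) to the sharper identity (\ref{mplmequality}); the reverse implication --- and hence all of part (1) --- needs no such hypothesis.
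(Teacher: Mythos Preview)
Your argument is correct. The paper itself does not give a proof of this proposition; it states only that ``the proof is technical and we skip it here, but it can be found in \cite{GI15} Variant 2, see Proposition 5.6,'' so there is no in-paper argument to compare against in detail.

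That said, your route is clean and natural: Part (1) is an induction on $m$ that reduces (\ref{mplremark11}) for $(X,r)$ at level $m$ to the same condition for $([X],r_{[X]})$ at level $m-1$ via Remark~\ref{mplremark1}, together with the standard equivalence $\mpl X\leq m\Longleftrightarrow\mpl[X]\leq m-1$. Your handling of the degenerate case $|[X]|=1$ (equivalently $\mpl X\leq 1$) is correct, and the base case $m=1$ is exactly Remark~\ref{mpl1}. In Part (2), both implications are right; the key observation --- that condition (*) is used precisely to realize $y_{m-1}$ as ${}^{a}y_{m-1}$ for some $a\in X$, so that a tower of length $m$ with innermost entry $y_{m-1}$ can be rewritten as one of length $m+1$ with innermost entry $a$ --- is exactly the point. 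One cosmetic remark: in your displayed chain of equalities the ``second equality'' $[\,a;y_{m-1},\dots,y_1,x\,]=[\,{}^{a}y_{m-1};y_{m-2},\dots,y_1,x\,]$ is literally the definition of the tower (unwinding one step), so calling it out as a separate step is fine but it carries no content beyond notation.
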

The proof is technical and we skip it here, but it can be found in
 \cite{GI15} Variant 2, see Proposition 5.6.

\begin{corollary}
 \label{cor_mplG}
Let $(G, r)$ be a symmetric group. Then $\mpl G \leq m< \infty$ \emph{iff} (\ref{mplmequality}) is in force. Furthermore,
$\mpl (G,r)= m$ \emph{iff} $m$ is the minimal integer for
which (\ref{mplmequality}) holds.
\end{corollary}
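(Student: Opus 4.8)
The plan is to obtain the statement as a direct specialization of Proposition \ref{mplmtheorem1}(2) to the solution $(G, r)$. The first step is to observe that the hypotheses of that proposition are met. By Fact \ref{factLYZ}(2) (equivalently, by the last assertion of Theorem \ref{Thm_BracesBraidedGp}) a symmetric group $(G, \sigma)$ forms a non-degenerate symmetric set, hence it is a solution in the sense of our standing convention; thus part (1) and part (2) of Proposition \ref{mplmtheorem1} are available for $(G, r)$. If $|G| = 1$ then $\mpl G = 0$ and there is nothing to prove, so we may assume $|G| \geq 2$.

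The second step is to check that $G$ satisfies Condition (*) of Definition \ref{defcondstar}. This is immediate from \textbf{ML0}: one has $\Lcal_1 = \id_G$, so $^1x = x$ for every $x \in G$, and the element $a$ demanded by Condition (*) can always be taken to be the group identity $1$. (This is exactly the observation recorded just after Definition \ref{defcondstar}.)

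With both hypotheses verified, Proposition \ref{mplmtheorem1}(2) applies verbatim to $(G,r)$ and gives precisely the two assertions of the corollary: $\mpl G \leq m < \infty$ if and only if the tower identity (\ref{mplmequality}) holds for all $x, y_1, \dots, y_m \in G$, and $\mpl(G, r) = m$ exactly when $m$ is the least integer for which (\ref{mplmequality}) is in force. There is no substantive obstacle here — all the technical work is already contained in Proposition \ref{mplmtheorem1} (whose proof is deferred to \cite{GI15}), and the only thing to supply is the trivial verification of Condition (*) via \textbf{ML0}.
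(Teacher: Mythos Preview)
Your proposal is correct and matches the paper's (implicit) approach: the corollary is stated immediately after Proposition~\ref{mplmtheorem1} with no separate proof, being an immediate specialization once one notes that a symmetric group $(G,r)$ is a non-degenerate symmetric set and satisfies Condition~(*) via $\Lcal_1=\id_G$---exactly the observation the paper records just after Definition~\ref{defcondstar}.
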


\subsection{The retraction of a symmetric group}
Let $(G, r)$ be a symmetric group. Let $\Gamma= \Gamma_l$ be
\emph{the kernel of the left action of $G$ upon itself}. In a
symmetric set $(X,r)$ one has $\Lcal_x = \Lcal_y$ \emph{iff}
$\Rcal_x = \Rcal_y$, see \cite{ESS}, section 3.2. Therefore
$\Gamma$ coincides with the kernel $\Gamma_r$  of the right action
of $G$ upon itself:
\begin{equation}
\label{gamma}
\Gamma = \Gamma_l=\{a \in G\mid {}^au = u, \; \forall u \in G\} =
\{a \in G\mid u^a =
u, \; \forall u \in G\}= \Gamma_r.
\end{equation}
Moreover,   $\Gamma$ is invariant with respect to the left and the right actions of $G$ upon
itself. Indeed, let $a \in \Gamma, u, v \in G$.
Then  the equalities
\[
({}^au)({}^av) = uv = {}^a{(uv)}=({}^au)({}^{a^u}v)
 \]
imply ${}^{a^u}v ={}^av = v$ for every $a \in \Gamma, u, v \in G$.
Therefore $a^u \in \Gamma, \forall \; u \in G,$ so $\Gamma$ is
invariant with respect to the right action of $G$ upon itself.
Then by Remark \ref{remark_invariantsubset} $\Gamma$ is
$G$-invariant. It follows that $\Gamma$ is also $r$-invariant. It
is clear that $\Gamma$ is an abelian normal subgroup of $G$. Let
$\widetilde{G}= G/\Gamma$, be the
 quotient group of $G$, modulo $\Gamma$. The following result can be extracted from  \cite{Takeuchi}, Proposition 2.9.
\begin{fact}
\label{Gamma_Takeuchi}
 \cite{Takeuchi}
The matched pair structure
$r: G\times G \longrightarrow  G\times G$ induces a map $\; r_{\widetilde{G}}:
\widetilde{G} \times \widetilde{G} \longrightarrow
\widetilde{G} \times \widetilde{G}\;$, which makes $\widetilde{G}$ a braided
group.
The pair $(\widetilde{G}, r_{\widetilde{G}})$ is called \emph{a  quotient braided group
of} $(G, r)$.
\end{fact}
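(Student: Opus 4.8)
The plan is to push the two self-actions of $G$ down to $\widetilde{G}=G/\Gamma$. We have already recorded that $\Gamma$ is a normal subgroup of $G$ which is $G$-invariant — equivalently, invariant under both the left and the right action — and that every element of $\Gamma$ acts trivially on $G$ from either side. So I would define, writing $\pi : G\longrightarrow\widetilde{G}$ for the canonical projection and $\bar a=\pi(a)$,
\[
{}^{\bar a}\bar b:=\pi({}^a b),\qquad {\bar a}^{\,\bar b}:=\pi(a^b),\qquad a,b\in G,
\]
and then set $r_{\widetilde{G}}(\bar a,\bar b):=({}^{\bar a}\bar b,\,{\bar a}^{\,\bar b})$. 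Everything hinges on these formulas being independent of the chosen representatives.

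The first and main step is therefore well-definedness, and this is where the matched pair identities enter. Take $a'=a\gamma$ and $b'=b\delta$ with $\gamma,\delta\in\Gamma$. Using \textbf{ML1} one writes ${}^{a\gamma}(b\delta)={}^a({}^\gamma(b\delta))$; by \textbf{ML2}, together with ${}^\gamma b=b$ (as $\gamma\in\Gamma$) and $\gamma^b\in\Gamma$ (invariance of $\Gamma$ under the right action), one gets ${}^\gamma(b\delta)=({}^\gamma b)({}^{\gamma^b}\delta)=b\delta$, hence ${}^{a\gamma}(b\delta)={}^a(b\delta)$. Applying \textbf{ML2} once more gives ${}^a(b\delta)=({}^a b)({}^{a^b}\delta)$, and since $\Gamma$ is $G$-invariant we have ${}^{a^b}\delta\in\Gamma$, so ${}^{a\gamma}(b\delta)$ and ${}^a b$ differ by an element of $\Gamma$; thus $\pi({}^{a'}b')=\pi({}^a b)$. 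The symmetric computation for the right action reduces $(a\gamma)^{b\delta}$ to $a^b$ modulo $\Gamma$ by the same kind of manipulation, now using \textbf{MR1}, \textbf{MR2}, the triviality of the right action of $\delta$, and once more $\gamma^b\in\Gamma$. This shows $r_{\widetilde{G}}$ is a well-defined map $\widetilde{G}\times\widetilde{G}\longrightarrow\widetilde{G}\times\widetilde{G}$.

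It then remains to see that $(\widetilde{G},r_{\widetilde{G}})$ is a braided group. By construction $\pi$ is a surjective group homomorphism carrying the left and right actions on $G$ to the induced ones on $\widetilde{G}$, so every matched pair identity holding in $G$ descends term by term to $\widetilde{G}$. In particular \textbf{ML1}, \textbf{MR1} and the compatibility condition \textbf{M3} hold on $\widetilde{G}$, and by Fact \ref{factLYZ}(1) the pair $(\widetilde{G},r_{\widetilde{G}})$ is a braided group; by Fact \ref{factLYZ}(2) it is then automatically non-degenerate and $r_{\widetilde{G}}$ satisfies the braid relations. One finally records the name: $(\widetilde{G},r_{\widetilde{G}})$ is the quotient braided group of $(G,r)$.

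The hard part is the well-definedness bookkeeping in the middle paragraph: one must apply \textbf{ML1}/\textbf{ML2} (resp.\ \textbf{MR1}/\textbf{MR2}) in exactly the order that moves every occurrence of $\gamma$ and $\delta$ into a slot where it either acts trivially or is acted upon while still lying inside the $G$-invariant subgroup $\Gamma$. Once that is arranged, the rest is formal, being nothing more than the transport of identities through the surjection $\pi$.
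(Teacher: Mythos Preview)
Your proof is correct. The paper itself does not prove this statement: it is recorded as a Fact extracted from \cite{Takeuchi}, Proposition 2.9, with no argument supplied. Your write-up fills in exactly the details one would expect---pushing the two actions to $G/\Gamma$ via the matched-pair identities and the $G$-invariance of $\Gamma$, then invoking Fact~\ref{factLYZ}(1)---so there is nothing to compare against beyond noting that your argument is the standard one.
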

Note that the map $r_{\widetilde{G}}$
is in fact involutive, since $r$ is involutive.  Therefore
 $(\widetilde{G}, r_{\widetilde{G}})$ is  a symmetric
group, we call it \emph{the  quotient symmetric group
of} $(G, r)$. We shall use notation $\widetilde{a}$ for the image of $a$ under
the canonical group epimorphism $G \longrightarrow
\widetilde{G}.$

Consider now $(G, r)$ as a symmetric set, with retraction $\Ret(G, r) = ([G],
r_{[G]})$.  Then the obvious implications \[\Lcal_a = \Lcal_b \;
\Longleftrightarrow\;  \Lcal_{(b^{-1}a)}  = id_G \; \Longleftrightarrow\;
b^{-1}a \in \Gamma \; \Longleftrightarrow\;  a\Gamma
= b\Gamma\]
and an easy argument implies the following.
\begin{lemma}
\label{Lemma_retG}
Notation and assumption as above. Let $(G, r)$ be a symmetric group, let $(\widetilde{G}, r_{\widetilde{G}})$ be
its quotient symmetric group,
and let $\Ret(G,r)=([G],
r_{[G]})$ be its retraction, where $(G, r)$ is considered as a symmetric set.
Then the map
\[ \varphi: (\widetilde{G}, r_{\widetilde{G}}) \longrightarrow  ([G], r_{[G]}), \quad
\widetilde{a}\mapsto [a],
\]
is an isomorphism of symmetric sets. Moreover, the retraction $([G],
r_{[G]})$  is also a symmetric group and $\varphi$ is an isomorphism of symmetric groups.
\end{lemma}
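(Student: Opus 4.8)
\textbf{Proof proposal for Lemma \ref{Lemma_retG}.}

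The plan is to exploit the chain of equivalences displayed just before the statement:
$\Lcal_a = \Lcal_b \iff \Lcal_{b^{-1}a} = \id_G \iff b^{-1}a \in \Gamma \iff a\Gamma = b\Gamma$.
This says precisely that the partition of $G$ into $\sim$-classes (where $x\sim y$ iff $\Lcal_x = \Lcal_y$) coincides with the partition into cosets of $\Gamma$. Hence the set map $\varphi: \widetilde G \to [G]$, $\widetilde a \mapsto [a]$, is well-defined, injective and surjective — it is the identity on the underlying sets once both are identified with $G/\Gamma$. The only real content is to check that $\varphi$ intertwines the two braiding operators, i.e. that $\varphi$ is a morphism of solutions, and then that it respects the group structures.

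First I would verify that $\varphi$ is braiding-preserving. Recall $r_{\widetilde G}$ is induced by $r$ via the epimorphism $p: G \to \widetilde G = G/\Gamma$ (Fact \ref{Gamma_Takeuchi}), so $r_{\widetilde G}(\widetilde a, \widetilde b) = (\widetilde{{}^ab}, \widetilde{a^b})$; and $r_{[G]}$ is the retracted map, $r_{[G]}([a],[b]) = ([{}^ab],[a^b])$, using the canonical induced actions ${}^{[a]}[b] := [{}^ab]$, $[a]^{[b]} := [a^b]$. Applying $\varphi \times \varphi$ to the first and $\varphi\times\varphi$ to the second, both expressions become $([{}^ab],[a^b])$, so $(\varphi\times\varphi)\circ r_{\widetilde G} = r_{[G]}\circ(\varphi\times\varphi)$. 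Since $\varphi$ is a bijection, it is an isomorphism of symmetric sets. (One should note in passing that $r_{\widetilde G}$ is involutive because $r$ is, as remarked after Fact \ref{Gamma_Takeuchi}, and $r_{[G]}$ is involutive because retraction of a symmetric set is again a symmetric set; so both sides genuinely are symmetric sets.)

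It remains to see that $([G], r_{[G]})$ is itself a symmetric group and that $\varphi$ is an isomorphism of symmetric groups, not merely of symmetric sets. Here I would transport structure along $\varphi$: the quotient group $\widetilde G = G/\Gamma$ carries the braided-group structure $(\widetilde G, r_{\widetilde G})$ of Fact \ref{Gamma_Takeuchi} (a symmetric group, since $r_{\widetilde G}$ is involutive), and via the bijection $\varphi$ we pull the multiplication of $\widetilde G$ over to $[G]$, namely $[a]\cdot[b] := [ab]$ — which is well-defined precisely because $\Gamma$ is normal in $G$. With this product $[G] \cong \widetilde G$ as groups and, by the previous paragraph, $\varphi$ matches $r_{[G]}$ with $r_{\widetilde G}$; therefore $([G], r_{[G]})$ inherits all the matched-pair axioms \textbf{ML0}--\textbf{MR2} and the compatibility condition \textbf{M3} from $(\widetilde G, r_{\widetilde G})$, so it is a symmetric group, and $\varphi$ is by construction an isomorphism of symmetric groups.

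The step requiring the most care is the compatibility check in the middle paragraph: one must make sure the two a priori different "induced" operations on $G/\Gamma$ — the one coming from Takeuchi's construction of $r_{\widetilde G}$ on the quotient group, and the one coming from Eisenbud--Schoen--Solomon-style retraction of $(G,r)$ as a bare symmetric set — literally coincide, rather than merely being abstractly isomorphic. This is where the identification "$\sim$-classes $=$ $\Gamma$-cosets" does the work: it guarantees that $[a] = \widetilde a$ as subsets of $G$, so the two induced left actions ${}^{[a]}[b]$ and ${}^{\widetilde a}\widetilde b$ are the same function, and everything else follows formally. No genuinely hard estimate or construction is involved; the content is bookkeeping about quotients.
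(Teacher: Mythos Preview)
Your proposal is correct and follows exactly the approach the paper indicates: the paper displays the equivalence chain $\Lcal_a=\Lcal_b \Leftrightarrow b^{-1}a\in\Gamma \Leftrightarrow a\Gamma=b\Gamma$ and then simply says ``an easy argument implies the following,'' which is precisely the bookkeeping you have written out. One trivial slip: ``Eisenbud--Schoen--Solomon'' should be Etingof--Schedler--Soloviev (the ESS of \cite{ESS}); this is a naming error only, not a mathematical one.
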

We shall often identify the retraction $\Ret(G,r)= ([G], r_{[G]})$ of $(G, r)$ and its
quotient symmetric group   $(\widetilde{G},
r_{\widetilde{G}})$, where $\widetilde{G}=
G/\Gamma$.

\begin{remark}
Analogous results but in terms of braces, (and \emph{the socle of a brace})
were proven in works of  Rump some years after Takeuchi, see  \cite{Ru07}, Proposition 7,
for details. One can find  helpful interpretation of some Rump's results  and other results on braces in \cite{CJO14}.
\end{remark}

\subsection{The derived chain of ideals of a symmetric group}
Let $(G, +, \cdot)$ be a left brace.  Recall that a subset  $I\subset G$ is called \emph{an ideal of G} if
it is (i) a normal subgroup of the multiplicative group $(G, \cdot)$, (ii)  a subgroup of the additive group of $(G, +),$
and (iii) $I$
is invariant with respect to the left actions
$\mathcal{L}_a$,  $a\in G$, \cite{CJO14}.

Suppose now that $(G, r)$ is a symmetric group,  $(G, +, \cdot)$ is the associated left brace, and
$H$ is a subgroup of $(G, r)$. By Remark \ref{remark_invariantsubset} $H$ is left $G$-invariant
 \emph{iff} $H$ is $G$-invariant, hence
 $H$ is also $r$-invariant.
Moreover, in this case $H$ is (automatically)  a subgroup of the additive group $(G, +)$ of the brace. Indeed, the equalities $a -b = a
+{}^b{(b^{-1})}=a({}^{(a^{-1}b)}{(b^{-1})})$ imply
that $a-b \in H,$ whenever $a,b \in H.$
So a subgroup $H$ of $(G, r)$ is an ideal of the brace $(G, +, \cdot)$ if and only if it is a $G$-invariant normal subgroup of $(G, r)$.
\begin{definition}
\label{ideal_def}  We say that $H$ is \emph{an ideal} \emph{of the
symmetric group}  $(G, r)$ if $H$ is a normal subgroup of $G$
which is $G$-invariant.
\end{definition}

Suppose $H$ is an ideal of the symmetric group $(G,r)$. Then the
(multiplicative) quotient $\overline{G}=G/H$ has also a canonical
symmetric group structure $(\overline{G}, r_{\overline{G}})$
induced from $(G, r)$. Analogously, (and independently) there is a
canonical brace structure $(\overline{G_{br}} , +, \cdot)$ on the
quotient brace $\overline{G_{br}} = (G,+, \cdot) /H$ modulo the
(brace) ideal $H$. Note that the left brace $(\overline{G}, +,
\cdot)$ associated with the quotient symmetric group
$(\overline{G}, r_{\overline{G}})$ (by Definition-Convention
\ref{convention_actions}) and the quotient brace
$\overline{G_{br}}$ coincide, so we shall simply use the notation
$(\overline{G}, +, \cdot)$.

 The following  Isomorphism Theorems for Symmetric Groups are in force. The proofs are routine and we leave them  for the reader.
 \begin{remark}
  \label{IsomThmforSymGroups}

   \begin{enumerate}
\item
 \label{1stIsomThm}
 [\emph{First Isomorphism Theorem for Symmetric Groups}]
 $\;$ Let $f:  (G,r) \longrightarrow (\widetilde{G}, r_{\widetilde{G}})$ be an epimorphism of symmetric groups.
 The kernel $K = \ker f$ is an ideal of  $(G,r),$ and there is a natural isomorphism of symmetric groups
 $G/ K \simeq \widetilde{G}.$
 \item
 \label{3rdIsomThm}
 [\emph{Third Isomorphism Theorem for Symmetric groups}] $\;$
 Let $(G,r)$ be a symmetric group, let $K$ be an ideal of $G$, $\widetilde{G} = G/ K$,  and
 let $f: G \longrightarrow \widetilde{G}$ be the canonical epimorphism of symmetric groups (one has $\ker f = K$).

 (a) There is a bijective correspondence
 \[\{\text{ideals $H$ of $G$ containing} \;\; K\} \longleftrightarrow \{ \text{ideals $\widetilde{H}$ \;of}\;\; \widetilde{G}\},\]
 given by
 \[  H \rightarrowtail f(H)\simeq H/K, \quad\quad    f^{-1}(H)  \leftarrowtail \widetilde{H}.\]
(b) For every ideal $H \supset K$ of $G$ one has \[(G/ K)/ (H/K)
\simeq G/H,  \;\; gK.(H/K) \mapsto gH.\]
\end{enumerate}
Analogous statements are in force for braces.
\end{remark}

Suppose $(G,r)$ is a  symmetric group, then the kernel $\Gamma$ of
the left action of $G$ upon itself, see (\ref{gamma}), is a
$G$-invariant normal subgroup of $G$, so it is an ideal, called
\emph{the socle } of $G$, and denoted $\soc (G)$. Clearly, $\soc
(G)$ is an abelian subgroup of $G$, ($\soc (G)= \{1\}$  is also
possible). Moreover, $ \soc (G)= G$ \emph{iff}  $(G,r)$ is the
trivial solution and $G\neq \soc (G)= \{1\}$ \emph{iff} $(G,r)$ is not
retractable.

We call $(G,r)$ \emph{a prime symmetric group}  if $G$ does not have proper, nontrivial ideals.
Prime braces are defined analogously.

\begin{remark} Suppose $(G,r)$ is a prime symmetric group. Then either (i) $\soc G = G$,
thus $(G,r)$ is the trivial solution, or (ii) $\soc G = \{1\}$, hence $G$ is not retractable.
More generally, if $K$ is \emph{a maximal ideal of a symmetric group} $(G,r)$, then the quotient symmetric group
$\widetilde{G} = G/K$ is either a trivial solution, or $\widetilde{G}$  can not be retracted.
\end{remark}
\begin{remark}
Let $G=(G,r)$ be a  symmetric group, denote by $(G^j, r^j), j \geq 0,$ its $j$-th retraction. So for each $j \geq 1$ one has
 $(G^j, r^j) = \Ret^{j} (G, r) \simeq G^{j-1}/ \Gamma_{j},$ where $\Gamma_{j}= \soc (G^{j-1})$. We have $G^0 = \Ret^0(G,r)= G$,
 $\Gamma_{1}= \Gamma$.
 For $j \geq 0$  denote by
 $\varphi_{j+1}$ the canonical epimorphism of symmetric groups
 \[
   G^j \stackrel{\varphi_{j+1}}{\longrightarrow} G^j/\Gamma_{j+1} \simeq G^{j+1}, \quad  \ker
  \varphi_{j+1}
  = \Gamma_{j+1}.
 \]
 Observe the canonical sequence of epimorphisms of symmetric groups (some of these possibly coincide):
\[G \stackrel{\varphi_1}{\longrightarrow} G^1=G/\Gamma \stackrel{\varphi_2}{\longrightarrow}
G^2=G^1/\Gamma_2\stackrel{\varphi_3}{\longrightarrow} G^3=G^2/\Gamma_3\stackrel{\varphi_4}{\longrightarrow} \cdots .\]
 Set $K_0 = \{1\}, \; K_1 = \Gamma=\Gamma_1$,  and for all $j > 1$ denote by $K_j$ the pull-back of $\Gamma_{j}$ in $G$, that is $K_j
 = (\varphi_{j-1}\circ\cdots \varphi_2 \circ \varphi_1)^{-1}(\Gamma_{j})$.
Each set $K_j$, $j \geq 0$, is an ideal of $G$, thus we obtain a (non-decreasing) chain of ideals in $G$:
\begin{equation}
  \label{derived_chaineq}
 \{1\}= K_0\subset K_1 \subset K_2\subset \cdots \subset K_j\subset\cdots .
\end{equation}
\end{remark}
 \begin{definition}
  \label{derived_chainDef}
  We shall call  the chain (\ref{derived_chaineq})  \emph{the derived chain of ideals of the symmetric group $(G,r)$}
  or shortly, \emph{the derived chain of} $G$.
  \end{definition}
The derived chain is an invariant of the symmetric group $G$ which
reflects the recursive process of retraction. In particular, it
gives an explicit information whether $(G,r)$ has a finite
multipermutation level.
\begin{proposition}
  \label{derived_chainProp}
  Let $(G,r)$ be a symmetric group with derived chain of ideals (\ref{derived_chaineq}). Notation as above.
 \begin{enumerate}
 \item
For all $j \geq 1$ there are isomorphisms
\[ K_j/ K_{j-1} \simeq
\Gamma_j, \quad G/K_j \simeq  \Ret\,^{j}(G,r),\] and canonical epimorphisms
of symmetric groups \[\mu_j : G/ K_{j-1} \longrightarrow G/K_j, \;\;
\ker\mu_j \simeq K_j/ K_{j-1}.\] In particular, $K_j/ K_{j-1} =
\soc (G/K_{j-1})$, $j \geq 1,$ are abelian symmetric groups ($K_j/
K_{j-1} =1$ is possible). The following diagram is commutative:
\[
\xymatrix{
 G\ar[d]_{\|}
\ar[r]^{\mu_1}
&G/K_1\ar[d]_{\wr}
 \ar[r]^{\mu_2}
&G/K_2\ar[d]_{\wr}\ar[r]^{\mu_3}&\dots\ar[r]^{}&G/K_{m-1}
\ar[d]_{\wr}\ar[r]^{\mu_{m}}&G/K_{m}\ar[d]_{\wr}\ar[r]^{}&\cdots\\
G_0\ar[r]_{\varphi_1}  &\Ret (G,r) \ar[r]_{\varphi_2}
&\Ret^2 (G,r)\ar[r]_{\varphi_{3}}&\dots\ar[r]_{}&\Ret^{m-1} (G,r)\ar[r]_{\varphi_{m}}
&\Ret^{m} (G,r)\ar[r] &\cdots }
\]
(The vertical arrows denote isomorphisms of symmetric groups).
\item The derived chain of $G$  stabilizes if and only if
$\;\Ret^{j+1} (G, r)= \Ret^j (G, r)$ (or equivalently, $K_{j+1}=
K_j$) for
    some  $j \geq 0.$
\item Let $m$ be the minimal integer (if any) such that $K_{m+1}=
K_m$. Then the derived chain \emph{has length}  $m+1$, $\Ret^{m}
(G, r)= \Ret^{m+1}  (G, r)$, so the process of recursive
retraction halts in $m$ steps. Moreover, exactly one of the
following two conditions is satisfied:
 \begin{enumerate}
 \item $K_m = G$. Then $m \geq 1$,  and  $(G,r)$ is a multipermutation solution with $\mpl (G,r) = m$;
 \item $K_m \subsetneqq G$  ($m = 0$ is possible). Then $\Ret^m (G, r)$ is a symmetric group of order $\geq 2$ which can not be
     retracted.
      \end{enumerate}
   \end{enumerate}
\end{proposition}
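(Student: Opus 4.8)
The plan is to establish the three parts of Proposition~\ref{derived_chainProp} by exploiting the iterated quotient description of the retractions already recorded in the preceding remark, namely $G^j = \Ret^j(G,r) \simeq G^{j-1}/\Gamma_j$ with $\Gamma_j = \soc(G^{j-1})$, together with the Isomorphism Theorems for Symmetric Groups from Remark~\ref{IsomThmforSymGroups}. The key device throughout is that $K_j$ is by construction the pull-back in $G$ of $\Gamma_j \leq G^{j-1}$ along the composite epimorphism $\psi_{j-1} := \varphi_{j-1}\circ\cdots\circ\varphi_1 : G \longrightarrow G^{j-1}$, so that $G/K_j \simeq G^{j-1}/\Gamma_j \simeq G^j = \Ret^j(G,r)$ by the Third Isomorphism Theorem applied to $\ker\psi_{j-1} = K_{j-1}$. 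I would first verify, by induction on $j$, that $\psi_{j-1}(K_j) = \Gamma_j$ (surjectivity of $\psi_{j-1}$ restricted to $K_j$), which gives $K_j/K_{j-1} \simeq \Gamma_j$; combined with the above this yields all the isomorphisms in part (1). The epimorphisms $\mu_j : G/K_{j-1} \longrightarrow G/K_j$ are the canonical quotient maps $gK_{j-1} \mapsto gK_j$ (well-defined since $K_{j-1}\subseteq K_j$), their kernels are $K_j/K_{j-1}$ by the Third Isomorphism Theorem, and commutativity of the square against $\varphi_j$ is immediate once one unwinds that $\varphi_j$ is, under the identifications $G/K_{j-1}\simeq G^{j-1}$ and $G/K_j\simeq G^j$, exactly the canonical epimorphism $G^{j-1}\to G^{j-1}/\Gamma_j$. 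Finally $K_j/K_{j-1}\simeq \Gamma_j = \soc(G^{j-1}) \simeq \soc(G/K_{j-1})$ is abelian because the socle of any symmetric group is abelian, as noted just before the definition of the derived chain.

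For part (2), I would argue both directions from part (1). If $K_{j+1} = K_j$ for some $j$, then $G/K_{j+1} = G/K_j$, so $\Ret^{j+1}(G,r) \simeq \Ret^j(G,r)$ via $\mu_{j+1}$, which is then an isomorphism; conversely, if $\Ret^{j+1}(G,r) \simeq \Ret^j(G,r)$ as symmetric groups with $\varphi_{j+1}$ an isomorphism, then $\Gamma_{j+1} = \ker\varphi_{j+1} = \{1\}$ in $G^j$, and pulling back along $\psi_j$ gives $K_{j+1} = K_j$. The stabilization statement is then just the observation that once two consecutive terms agree, all subsequent ones do (since $\soc$ of the trivial-kernel situation stays trivial, i.e. $\Gamma_{j+2} = \soc(G^{j+1}) = \soc(G^j) = \Gamma_{j+1} = \{1\}$), so the chain is eventually constant iff it becomes constant at some finite stage.

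For part (3), let $m$ be minimal with $K_{m+1} = K_m$. By part (2) and the eventual-constancy remark, $K_m = K_{m+1} = K_{m+2} = \cdots$, so the derived chain has $m+1$ distinct-looking terms $K_0 \subset \cdots \subset K_m$ and then stabilizes; correspondingly $\Ret^m(G,r) \simeq \Ret^{m+1}(G,r)$, and the recursive retraction process halts after $m$ steps. Now $G^m \simeq G/K_m$, and $\Gamma_{m+1} = \soc(G^m) = \{1\}$ by the equality $K_{m+1}=K_m$ (via the converse direction of part (2)); so $G^m$ is a symmetric group that equals its own retraction, i.e. it cannot be retracted further, \emph{unless} it is the trivial solution on a one-element set. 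This is precisely the dichotomy: either $G^m$ is the one-element trivial solution, equivalently (by $G/K_m \simeq G^m$) $K_m = G$, in which case $\Ret^m(G,r)$ is trivial of order $1$ and $m$ is minimal with this property, so by definition $\mpl(G,r) = m$ (and $m\geq 1$ since $K_0 = \{1\} \neq G$ as $|G|\geq 2$ — here one uses that a symmetric group with $|G|=1$ is degenerate in the relevant sense, or simply that the proposition's running hypothesis excludes it; if $|G|=1$ the statement is vacuous); or $G^m$ has order $\geq 2$, equivalently $K_m \subsetneq G$, and then $\Ret^m(G,r)$ is a symmetric group of order $\geq 2$ which cannot be retracted, with $m = 0$ possible exactly when $\soc(G) = \{1\}$ and $|G|\geq 2$, i.e. $G$ itself is irretractable.

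The main obstacle I anticipate is the inductive bookkeeping in part (1): showing cleanly that $\psi_{j-1}$ carries $K_j$ \emph{onto} $\Gamma_j$ and that the resulting identifications are compatible with the maps $\varphi_j$ and $\mu_j$ simultaneously, so that the displayed diagram genuinely commutes. Everything else is a routine application of the Isomorphism Theorems once this compatibility is in hand. A secondary subtlety is matching the definitional phrasing of $\mpl$ (halting at the one-element trivial solution) against the abstract stabilization of the chain — this is where one must be careful that "$\Ret^m$ cannot be retracted" splits into the genuinely-irretractable case and the already-trivial case.
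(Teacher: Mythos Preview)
Your proposal is correct and follows essentially the same approach as the paper's proof: both arguments iterate the Third Isomorphism Theorem for symmetric groups (Remark~\ref{IsomThmforSymGroups}) to obtain $\Gamma_j \simeq K_j/K_{j-1}$ and $\Ret^j(G,r)\simeq G/K_j$, then read off parts (2) and (3) from these identifications together with the fact that the socle is abelian. Your write-up is somewhat more explicit than the paper's (you spell out $\ker\psi_{j-1}=K_{j-1}$, the surjectivity of $\psi_{j-1}|_{K_j}$, and the propagation $\Gamma_{j+1}=\{1\}\Rightarrow\Gamma_{j+2}=\{1\}$), but there is no substantive difference in strategy.
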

 \begin{proof}
 (1) We "iterate" the Third Isomorphism Theorem for symmetric groups to yield the following:
 \begin{equation}
  \label{isom_eq1}
 \begin{array}{llll}
 \Gamma_1 =  &K_1,             &\;&\Ret^1 (G, r)= (G^1, r^1) \simeq G/K_1, \\
  \Gamma_2\simeq & K_2/K_1,    &\;&\Ret^2(G, r)= (G^2, r^2) \simeq G/K_2, \\
 \Gamma_j\simeq  &K_j/K_{j-1}, &\;&\Ret^j (G, r) = (G^j, r^j) \simeq G/K_j, \\
                               &&&\forall \; j \geq 1.
\end{array}
\end{equation}
The isomorphisms $\Gamma_j\simeq K_j/K_{j-1}$ imply that each of the quotient groups $K_j/K_{j-1}$, $j \geq 1$, is abelian. The
commutativity of the diagram is straightforward.

(2) The above argument implies that there is an equality $K_{j+1}= K_j$ \emph{iff}  $\Gamma_{j+1} = 1$, which is equivalent to
$\Ret^{j+1} (G, r)= \Ret^j (G, r)$. In this case one has $\Ret^{j+p} (G, r)= \Ret^j (G, r)$, therefore
 $K_{j+p}= K_j$, for every $p \geq 1$, and the derived chain stabilizes.
Suppose that this is the case and let $m$ be the minimal integer with $K_{m+1}= K_m$. Without loss of generality we may assume $m \geq
1$.
 Then the derived chain of $G$ is exactly $\{1\}= K_0\subsetneqq K_1\subsetneqq\cdots \subsetneqq K_m$.
 It follows from our discussion above that
all retractions $\Ret^{j}(G,r), \; 0 \leq j\leq m$ are distinct, but $\Ret^m (G, r)= \Ret^{m+p}(G,r), \forall  p \geq 1$, therefore the
process of recursive "retraction" halts exactly at the $m$-th step.

(3)  (a). Assume  $K_m = G$ is in force. Then  $m >1$, the
symmetric group $G^{m-1} =\Ret^{m-1} (G, r) \simeq  G/K_{m-1}$ has
order $> 1$ and $\Gamma_m=\soc (G^{m-1}) \simeq  K_m/K_{m-1},$
$G/K_{m-1}= G^{m-1}$. Clearly, then $\Ret(G^{m-1}) = \Ret^{m} (G,
r)$ is a one element solution, so $\mpl (G,r) = m.$

(b). Suppose now  $K^m \subsetneqq G$ holds. In this case $\Ret^{m} (G, r) \simeq  G/K_{m}$ is a symmetric group of order $>1$ whose
socle is trivial, so $\Ret^{m} (G, r) = \Ret^{m+1} (G, r)$, and the process of "retracting" $(G,r)$ halts in $m$ steps, but without
reaching a one element solution. Clearly, in this case $(G,r)$ is not a multipermutation solution.
 \end{proof}

\begin{corollary}
\label{Cor_finite_derchain_eq}
The derived chain of ideals of $G$ has the shape
 \begin{equation}
  \label{finite_derchain_eq}
\{1\}=  K_0\subsetneqq K_1 \subsetneqq K_2 \subsetneqq \cdots \subsetneqq K_{m-1}\subsetneqq K_{m} = G
    \end{equation}
    if and only if $K_{m-1}\subsetneqq K_{m} = G$.
 \end{corollary}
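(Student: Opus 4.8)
The plan is to derive the corollary directly from Proposition \ref{derived_chainProp}, which already contains essentially all the needed structural information about the derived chain of ideals. The statement is an ``if and only if'', so I would treat the two directions separately, though the forward direction is immediate from the definition of the chain.

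First, the direction ($\Longrightarrow$): if the derived chain has the displayed shape (\ref{finite_derchain_eq}), then in particular the last two visible terms satisfy $K_{m-1}\subsetneqq K_m = G$, so there is nothing to prove beyond reading off the hypothesis. (I would phrase this as ``trivial'', or simply note that the displayed equality is by definition a chain in which consecutive inclusions up to index $m$ are strict and $K_m = G$, which in particular gives $K_{m-1}\subsetneqq K_m=G$.)

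For the converse ($\Longleftarrow$), suppose $K_{m-1}\subsetneqq K_m = G$. The key point is to show that $m$ is the \emph{minimal} integer with $K_{m+1}=K_m$, so that Proposition \ref{derived_chainProp}(3) applies and, since $K_m=G$, we are in case (a), forcing all earlier inclusions $K_0\subsetneqq K_1\subsetneqq\cdots\subsetneqq K_{m-1}\subsetneqq K_m$ to be strict. Concretely: since $K_m=G$, certainly $K_{m+1}=K_m$ (the chain cannot grow past $G$, as all $K_j$ are ideals of $G$). So there exists \emph{some} index $j$ with $K_{j+1}=K_j$; let $m'$ be the minimal such index. By Proposition \ref{derived_chainProp}(2)--(3) the chain stabilizes at $m'$, and $\{1\}=K_0\subsetneqq K_1\subsetneqq\cdots\subsetneqq K_{m'}$ with all inclusions strict. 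I must show $m'=m$. If $m'<m$, then the chain stabilizes before reaching index $m$, so $K_{m-1}=K_{m'}=K_m$, contradicting $K_{m-1}\subsetneqq K_m$. If $m'>m$, then $K_m\subsetneqq K_{m+1}\subseteq K_{m'}\subseteq G$, contradicting $K_m=G$. Hence $m'=m$, and the derived chain is exactly $\{1\}=K_0\subsetneqq K_1\subsetneqq\cdots\subsetneqq K_{m-1}\subsetneqq K_m=G$, which is (\ref{finite_derchain_eq}).

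I do not expect any serious obstacle here; the only mild subtlety is making sure the chain genuinely cannot ``skip'' a strict inclusion before stabilizing — but this is exactly the content of Proposition \ref{derived_chainProp}(2), which says $K_{j+1}=K_j$ forces $K_{j+p}=K_j$ for all $p\geq 1$ (equivalently $\Gamma_{j+1}=1 \Rightarrow \Gamma_{j+p}=1$), together with the fact that before the first stabilization index all inclusions are strict by the minimality in part (3). So the argument is essentially bookkeeping on top of the proposition.
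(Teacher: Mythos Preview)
Your proposal is correct and matches the paper's intent: the paper states this corollary without proof, treating it as an immediate consequence of Proposition \ref{derived_chainProp}, and your argument spells out exactly that deduction. The minimality argument comparing $m'$ with $m$ is the natural way to extract the result from parts (2) and (3) of the proposition.
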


 Let $B= (B, +, \cdot)$ be a left brace, the operation $*$ on $B$ is defined via (\ref{eqleftbrace*}). By $B^{(s)},\; s = 1, 2,
 \cdots,\;$
 we denote the chain of ideals
introduced by Rump in \cite{Ru07}, one has $B^{(1)}=B$ and
$B^{(s+1)}=B^{(s)}*B$. $B$ is called \emph{a right nilpotent brace} if there
exists a positive integer $n$ such that $B^{(n)}=0$, \cite{CGIS16}.
Using (\ref{LcalRcal}) and (\ref{eqleftbrace*}) one can present
the usual left action of $B$ upon itself as
\begin{equation}
  \label{br_radical_action1}
  {}^ab = a*b+b, \quad a*b= {}^ab-b.
  \end{equation}
   So the socle of $B$ satisfies
\begin{equation}
  \label{soq_eq1}
\soc(B)* B = 0, \quad \soc (B) = \{a \in B\mid a*b = 0, \; \forall b \in B \}.
\end{equation}
\begin{remark}
\label{mplB=1impliesBtwo-sided_rem} Suppose $(B, +, \cdot)$ is a
left brace, $|B|> 1.$  Then $B*B = 0$ if and only if $\mpl B = 1$.
In this case $a\cdot b = a+b, \forall a,b \in B$ and $B$ is a two-sided
brace.
\end{remark}

\begin{remark}
The following theorem provides four different conditions, each of
which is equivalent to $\mpl (G, r) = m$. The equivalence of
conditions (\ref{a}) and (\ref{c}) was proved first by the author
for the case when $(G, +, \cdot)$ is a two-sided brace, see
Proposition 5.16, \cite{GI15}. Later this equivalence was proven
for the general case when $(G, +, \cdot)$ is an arbitrary left
brace, \cite{CGIS16},  Proposition 6. Using our technique with
derived chains we provide an independent (and different) proof of
this particular equivalence and find new equivalent
conditions.
\end{remark}
\begin{theorem}
 \label{derived_chainThm1}
Let $(G,r)$ be a nontrivial symmetric group, $(G, +, \cdot)$ its associated left brace, and let $ \;\{1\}= K_0 \subseteq K_1 \subseteq
K_2 \subseteq \cdots\; $ be its derived chain of ideals.
\begin{enumerate}
\item The derived chain of ideals satisfy
 \begin{equation}
  \label{dchain_eq1}
   ((\cdots((K_j*  \overbrace{G)* G)* \cdots )*G)}^{s\; \text{times}}  \subseteq K_{j-s},  \; \forall   j,s, 1 \leq s \leq j.
\end{equation}
\item
The following conditions are equivalent.
\begin{enumerate}
 \item
  \label{a}
 $(G,r)$ has a finite multipermutation level $\mpl G =m\geq 1.$
 \item
 \label{b}
 The derived chain of ideals of $G$ has the shape
 (\ref{finite_derchain_eq}).
\item
 \label{d}
 The ideals $G^{(j)}$ satisfy:
 \begin{equation}
 \label{dchain_eq2}
 G^{(j+1)}\subseteq K_{m-j},\; 0 \leq j \leq m; \quad  G^{(j+1)}\varsubsetneq
 K_{m-j-1},\;
                   0 \leq j \leq m-1.
                                   \end{equation}
\item
 \label{c}
The brace $G$ is right nilpotent of nilpotency class $m+1$, i.e. $G^{(m+1)} = 0$, and $G^{(m)}\neq 0.$
\end{enumerate}
\end{enumerate}
\end{theorem}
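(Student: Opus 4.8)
The plan is to prove the two parts in turn; in both, the engine is the single containment $K_i*G\subseteq K_{i-1}$ for $i\ge 1$, together with the description of retraction as passage to the quotient by the socle. For Part~(1) I would first settle the case $s=1$: by Proposition~\ref{derived_chainProp}(1) one has $K_j/K_{j-1}=\soc(G/K_{j-1})$, and by (\ref{soq_eq1}) the socle of any left brace $B$ satisfies $\soc(B)*B=0$; the canonical brace epimorphism $\pi\colon G\to G/K_{j-1}$ is additive and multiplicative, hence $\pi(a*b)=\pi(a)*\pi(b)$, so $\pi(K_j*G)=(K_j/K_{j-1})*(G/K_{j-1})=0$ and therefore $K_j*G\subseteq\ker\pi=K_{j-1}$. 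Then (\ref{dchain_eq1}) follows by induction on $s$: the left-normed product with $s+1$ factors equals $\bigl(\underbrace{(\cdots(K_j*G)\cdots)*G}_{s}\bigr)*G$, which by the inductive hypothesis lies in $K_{j-s}*G$, hence in $K_{j-s-1}$ by the case $s=1$ applied at index $j-s$ (legitimate while $s+1\le j$), using that $I\mapsto I*G$ preserves inclusions.

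For Part~(2) it suffices to prove (a)$\Leftrightarrow$(b), (a)$\Leftrightarrow$(c) and (b)$\Leftrightarrow$(d). The equivalence (a)$\Leftrightarrow$(b) is read off Proposition~\ref{derived_chainProp}(3) and Corollary~\ref{Cor_finite_derchain_eq}: $\mpl G=m$ makes $G/K_m\simeq\Ret^m(G,r)$ trivial and $G/K_{m-1}\simeq\Ret^{m-1}(G,r)$ nontrivial, i.e. $K_{m-1}\subsetneq K_m=G$, which by the Corollary is precisely shape (\ref{finite_derchain_eq}), and that shape forces $\mpl G=m$ by Proposition~\ref{derived_chainProp}(3). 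The extra ingredient for the remaining equivalences is the auxiliary claim that $B^{(n)}=0\Rightarrow\mpl B\le n-1$ for every left brace $B$. I would prove it by induction on $n$: the cases $n\le 2$ are trivial, respectively Remark~\ref{mplB=1impliesBtwo-sided_rem}; for the step, $B^{(n+1)}=B^{(n)}*B=0$ forces every generator $a*b$ with $a\in B^{(n)}$ to vanish, so $B^{(n)}\subseteq\soc B$ by (\ref{soq_eq1}), hence $(\Ret B)^{(n)}=\pi(B^{(n)})=0$, so $\mpl(\Ret B)\le n-1$ by induction and $\mpl B\le n$.

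Granting this, (a)$\Rightarrow$(c): $\mpl G=m$ gives $K_m=G$, so $G^{(m+1)}=\underbrace{(\cdots(K_m*G)\cdots)*G}_{m}\subseteq K_0=0$ by Part~(1), while $G^{(m)}\ne 0$ since otherwise the auxiliary claim would give $\mpl G\le m-1$. Conversely (c)$\Rightarrow$(a): $G^{(m+1)}=0$ gives $\mpl G\le m$ by the auxiliary claim, and $G^{(m)}\ne 0$ gives $\mpl G\ge m$, because $\mpl G=\ell<m$ would yield $G^{(\ell+1)}=0$ (Part~(1), since $K_\ell=G$) and hence $G^{(m)}\subseteq G^{(\ell+1)}=0$ as the $G^{(n)}$ descend. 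For (b)$\Rightarrow$(d): with $K_m=G$ the inclusions $G^{(j+1)}\subseteq K_{m-j}$ are Part~(1) again, and $G^{(j+1)}\not\subseteq K_{m-j-1}$ says $(\Ret^{m-j-1}G)^{(j+1)}=\pi_{m-j-1}(G^{(j+1)})\ne 0$, which follows by applying the already-proved (a)$\Rightarrow$(c) to the symmetric group $\Ret^{m-j-1}G$, whose multipermutation level is $m-(m-j-1)=j+1\ge 1$. Finally (d)$\Rightarrow$(b): reading (d) at $j=0$ gives $G=G^{(1)}\subseteq K_m$ and $G=G^{(1)}\not\subseteq K_{m-1}$, i.e. $K_{m-1}\subsetneq K_m=G$, which is (b) by Corollary~\ref{Cor_finite_derchain_eq}.

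The only delicate point is keeping the bookkeeping around the $*$-operation honest: in a general left brace $*$ is neither associative nor left-distributive, so one must invoke only the legitimate facts — $\pi(I*J)=\pi(I)*\pi(J)$ for brace epimorphisms, monotonicity of $I\mapsto I*G$ on ideals, and the fact that the $G^{(n)}$ form a descending chain of ideals — all standard from Rump's theory but worth stating precisely. Otherwise the whole argument rests on the single estimate $\soc(B)*B=0$ propagated through the quotient tower of Proposition~\ref{derived_chainProp} together with the two short inductions above, and I do not foresee a serious obstacle.
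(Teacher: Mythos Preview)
Your proof is correct. Part~(1) is handled exactly as in the paper: establish $K_j*G\subseteq K_{j-1}$ from $\soc(G/K_{j-1})*(G/K_{j-1})=0$ via the canonical brace epimorphism, then induct on~$s$.

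For Part~(2), however, your organisation genuinely differs from the paper's and is in some respects cleaner. The paper proves (\ref{b})$\Rightarrow$(\ref{d}) and (\ref{c})$\Rightarrow$(\ref{b}) by two separate inductions on~$m$, each time passing to the quotient $\widetilde G=G/K_1$ (whose derived chain has length one less) and invoking the Third Isomorphism Theorem. You instead isolate the single auxiliary claim $B^{(n)}=0\Rightarrow\mpl B\le n-1$, prove it once by induction through $B\to\Ret B$, and then derive (\ref{a})$\Leftrightarrow$(\ref{c}) directly; the non-inclusions in (\ref{d}) then fall out by applying the already-proved (\ref{a})$\Rightarrow$(\ref{c}) to the retraction $\Ret^{m-j-1}G$. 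This packages the inductive content into one reusable lemma rather than two parallel inductions, and makes the logical dependencies more transparent. The paper's route has the minor advantage of never needing to name $\pi(G^{(j+1)})=(\Ret^k G)^{(j+1)}$ explicitly, but this identity is immediate for any surjective brace homomorphism.

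One small caution: your phrase ``as the $G^{(n)}$ descend'' in the argument for (\ref{c})$\Rightarrow$(\ref{a}) is stronger than what you actually use or need. In a general left brace the chain $G^{(n)}$ is not known to be descending (the operation $*$ is not associative, so $G^{(n)}*G\subseteq G^{(n)}$ is not automatic). What your argument requires---and what is trivially true---is only that $G^{(\ell+1)}=0$ forces $G^{(\ell+2)}=0*G=0$, hence $G^{(m)}=0$ for all $m\ge\ell+1$. I would rephrase that sentence accordingly; the inference itself is sound.
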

\begin{proof}
We shall use the notation and results of  Proposition \ref{derived_chainProp}. Recall first that $1= 0$ in $(G, +, \cdot)$, so  we also
have $\{0\}=  K_0$.

\textbf{(1).}
  We have shown that for each $j \geq 0$ there are isomorphisms of symmetric groups
 \begin{equation}
\label{vip_isoeq}
\Ret\,^j(G,r) = G^j \simeq G/K_j,  \;\;\soc (G/K_j) = K_{j+1}/K_j,
\end{equation}
  where for completeness  $\Ret^0(G,r) = G^0= G$.
 In each of the braces $(G^j, +, \cdot)$ we also have the induced operation $*$.
 Then the first equality in (\ref{soq_eq1}) implies $(\soc (G/K_j))* (G/K_j) = 0,$ or equivalently,
 $(K_{j+1}/K_j)*(G/K_j) = 0$ holds in $G/K_j$. This implies $K_{j+1}* G \subseteq K_j,$ for each $j \geq 0$, which verifies
 (\ref{dchain_eq1}), where $s=1.$
 The general equalities (\ref{dchain_eq1}), where $s$  is an integer, $1 \leq s \leq j$, follows by induction on $s$.

 \textbf{(2).}
  (\ref{a}) $\Longrightarrow$  (\ref{b}). Assume $\mpl G = m$. Then (\ref{vip_isoeq}) implies that (\ref{finite_derchain_eq}) is exactly
  the derived chain of ideals for $G$.

 (\ref{b}) $\Longrightarrow$ (\ref{a}).
  Suppose the derived chain of $G$ is given by (\ref{finite_derchain_eq}),
then $\Ret^{m-1}  (G, r) \simeq G/K_{m-1}$ is a solution of order $\geq 2$, and
$\Ret^{m}  (G, r)\simeq G/K_{m} = \{1\}$ is a one element solution,  therefore $\mpl G = m.$

(\ref{d})  $\Longrightarrow $ (\ref{b}). Setting $j=0$ in  (\ref{dchain_eq2}) we get
$G= G^{(1)}= K_{m}\varsubsetneq K_{m-1}$, but by definition $K_{m}\supseteq K_{m-1}$, so $K_{m-1}\varsubsetneq K_{m}$ and by Corollary
\ref{Cor_finite_derchain_eq}
the derived chain of ideals has the shape
(\ref{finite_derchain_eq}).

(\ref{b})  $\Longrightarrow $ (\ref{d}).
Assume now the derived chain has the shape (\ref{finite_derchain_eq}). Then $G^{(1)}= K_m$,  and the equalities (\ref{dchain_eq1})
imply
\[G^{(j+1)}=
((\cdots((K_m*  \overbrace{G)* G)* \cdots )*G)}^{j\; \text{times}}  \subseteq K_{m-j},  \; \forall   j,  1 \leq j \leq m.
\]

We shall use induction on $m$ to prove $G^{(j+1)}\varsubsetneq K_{m-j-1}, \; \forall   j,  \;1 \leq j \leq m$.
If  $m =1$, the derived chain is trivial: $\{1\}=  K_0\subsetneqq K_1  = G$, so clearly $G^{(1)}= K_1,$ and $G^{(1)} \subsetneq K_0$,
which gives the base for induction. Assume for all $m, 1 \leq m\leq n$,  (\ref{finite_derchain_eq}) implies
\begin{equation}
 \label{dchain_eq2a}
   G^{(j+1)}\varsubsetneq K_{m-j-1},
                  \; \forall j \; 0 \leq j \leq m.
                                   \end{equation}
  Suppose the derived chain of ideals has length $m+1$ and shape (\ref{finite_derchain_eq}) with $m=n+1$.
  We consider the quotient group $\widetilde{G}= G/K_1$. Then the ideals of the derived chain of $\widetilde{G}$ are exactly
  $\widetilde{K_s}= K_s/K_1$, $1 \leq s \leq m$, so the chain has length $m=n+1$ and shape
\[ \{1\}=  \widetilde{K_1}= K_1/K_1\subsetneqq \widetilde{K_2}= K_2/K_1
 \subsetneqq \cdots \subsetneqq \widetilde{K_{m-1}} \subsetneqq \widetilde{K_{m}}= \widetilde{G}.
 \]
 By the inductive assumption $\widetilde{G}^{(j+1)}\varsubsetneq   \widetilde{K_{n-j-1}}$ for all $j,  1 \leq j \leq n.$
This implies $G^{(j+1)}\varsubsetneq   K_{n-j-1}$, hence (\ref{dchain_eq2a}) is in force.

Next we shall show
(\ref{d}) $\Longrightarrow$  (\ref{c}) $\Longrightarrow$  (\ref{b}) using again "the derived cains technique".
The implication (\ref{d}) $\Longrightarrow$  (\ref{c}) is straightforward.

(\ref{c}) $\Longrightarrow$  (\ref{b}). Assume $G^{(m+1)} = 0$, and $G^{(m)}\neq 0.$ We shall use induction on $m$ to show  $K_{m-1}
\subsetneq K_{m}=G$.  The following implications are clear:
\begin{equation}
 \label{dchain_eq2b}
G^{(m+1)} = 0 \Longleftrightarrow G^{(m)} \subseteq K_1, \quad G^{(m)}\neq 0 \Longleftrightarrow G^{(m-1)}\varsubsetneq K_1.
\end{equation}
If $m=1$  then the equalities $G*G = G^{(2)} = 0$, and $G=G^{(1)}\neq 0$ imply $G = \soc G = K_1 \varsupsetneq K_0$ which gives the base
for induction.
Assume our statement is true for all $m, 1\leq m \leq n-1$. Suppose $G^{(m+1)} = 0$, and $G^{(m)}\neq 0,$ where $m=n$.
Consider the quotient group $\widetilde{G}= G/K_1$ and its derived chain of ideals  $\{1\}=  \widetilde{K_1}= K_1/K_1 \subseteq
\widetilde{K_2}= K_2/K_1
 \subseteq \cdots $.
 Our assumption and  (\ref{dchain_eq2b})  imply $\widetilde{G}^{(m)}= 0$, and $\widetilde{G}^{(m-1)}\neq 0$.  Thus by the inductive
 hypothesis the derived chain of ideals of $\widetilde{G}$   has length $m$ and $\widetilde{K_{m-1}}= K_{m-1}/K_1  \subsetneq
 \widetilde{K_{m}}= K_{m}/K_1 = \widetilde{G}$. By the Third Isomorphism Theorem for symmetric groups this implies $K_{m-1} \subsetneq
 K_{m}=G$, hence by Corollary \ref{Cor_finite_derchain_eq} the derived chain of $G$ satisfies (\ref{finite_derchain_eq}).
\end{proof}

\begin{theorem}
\label{slG_theorem}
Every symmetric group  $(G,r)$ of finite multipermutation level $\mpl G = m $ is a solvable group of solvable length
$\sol G \leq m.$
\end{theorem}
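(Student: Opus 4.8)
The plan is to extract from the derived chain of ideals of $G$ a chain of normal subgroups with abelian successive quotients and of length exactly $m$, and then apply the elementary fact that such a chain forces the derived length of $G$ to be at most $m$. (The case $m=0$, where $|G|=1$ and $\sol G=0$, is trivial, so we may assume $m\geq 1$.)

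First I would invoke Theorem \ref{derived_chainThm1}: since $(G,r)$ is a nontrivial symmetric group with $\mpl G = m$, its derived chain of ideals has the shape \eqref{finite_derchain_eq}, that is
\[
\{1\} = K_0 \subsetneq K_1 \subsetneq \cdots \subsetneq K_{m-1} \subsetneq K_m = G .
\]
By construction each $K_j$ is an ideal of the symmetric group $(G,r)$, hence in particular a normal subgroup of $(G,\cdot)$ (Definition \ref{ideal_def}); and by Proposition \ref{derived_chainProp}(1) each successive quotient satisfies $K_j/K_{j-1} \simeq \Gamma_j = \soc(G/K_{j-1})$, which is abelian for every $j \geq 1$. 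Thus, reading the chain from the top down,
\[
G = K_m \supseteq K_{m-1} \supseteq \cdots \supseteq K_1 \supseteq K_0 = \{1\}
\]
is a chain of normal subgroups of $G$ of length $m$ all of whose factors are abelian.

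Next I would run the routine induction showing that the $i$-th term $D_i$ of the derived series of $G$ (defined by $D_0 = G$, $D_{i+1} = [D_i, D_i]$) satisfies $D_i \subseteq K_{m-i}$ for $0 \leq i \leq m$: the base case $i = 0$ is $D_0 = G = K_m$, and if $D_i \subseteq K_{m-i}$ with $i\leq m-1$, then $D_{i+1} = [D_i, D_i] \subseteq [K_{m-i}, K_{m-i}] \subseteq K_{m-i-1}$, the last inclusion holding because $K_{m-i}/K_{m-i-1}$ is abelian. Taking $i = m$ yields $D_m \subseteq K_0 = \{1\}$, hence $\sol G \leq m$, as claimed. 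Since the existence of the derived chain, the normality of its terms, and the abelianness of its factors have all been established in Proposition \ref{derived_chainProp} and Theorem \ref{derived_chainThm1}, there is no real obstacle in this argument; the only point deserving a moment's attention is that for a symmetric group of finite multipermutation level $m$ the derived chain terminates \emph{precisely} at $K_m = G$, so that the normal series above has length exactly $m$ rather than something larger — and this is exactly the content of the equivalence between $\mpl G = m$ and the shape \eqref{finite_derchain_eq} of the derived chain in Theorem \ref{derived_chainThm1}.
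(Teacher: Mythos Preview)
Your proof is correct and follows essentially the same approach as the paper: invoke Theorem \ref{derived_chainThm1} to obtain the derived chain \eqref{finite_derchain_eq}, use Proposition \ref{derived_chainProp} to see that each factor $K_j/K_{j-1}$ is abelian, and conclude $\sol G \leq m$. The paper simply states the last step as an immediate consequence, whereas you spell out the standard induction $D_i \subseteq K_{m-i}$; otherwise the arguments coincide.
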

\begin{proof}
By Theorem \ref{derived_chainThm1} $\mpl G = m$ implies that $G$ has a chain of normal subgroups given by (\ref{finite_derchain_eq}).
 Moreover, by Proposition  \ref{derived_chainProp} each quotient $K_j / K_{j-1}, 1 \leq j \leq m,$ is abelian. It follows then that
 $G$ is a solvable group with solvable length $\sol G \leq m.$
\end{proof}

\begin{remark} Note that solvability of a symmetric group $(G,r)$ does not imply $\mpl(G,r)<\infty$. Recall that
the symmetric group $G = G(X,r)$  is solvable, whenever $(X,r)$ is
a finite solution, \cite{ESS}. Suppose that the solution
 $(X,r)$ is finite but $(X,r)$ is not a mulipermutation
solution, e.g. Vendramin's example of order 8, see Remark
\ref{Rem_ex_Ve}. Then the associated group $G=G(X,r)$ is a
solvable group which is not a multipermutation solution (this
follows from Theorem \ref{Th_mplG_mplX}).
\end{remark}

\section{The symmetric group $G(X,r)$ of a symmetric set $(X,r)$. Derived symmetric groups and derived permutation groups of a solution}
\label{sec_symgrG(x,r)}
In this section as usual $(X,r)$ is a non-degenerate symmetric set of arbitrary cardinality,
$G= G(X,r)= (G, r_G)$, $(G, +, \cdot)$, $\Gcal= \Gcal(X,r)$, $\Gamma = \soc G$, $K = \ker \Lcal$, see Definition \ref{Gcaldef}.
\subsection{The symmetric group $G(X,r)$ of a symmetric set $(X,r)$ and its retraction.}
\begin{remark}
\label{factGamma=K}
\cite{Takeuchi}, p. 15.
Let $(X,r)$ be a (non-degenerate) symmetric set, notation as above.
 Then the following conditions hold.
 (i)
 The kernels $\Gamma$ and $K$ coincide, so:
\[
\begin{array}{rl}
\Gamma = &\Gamma_l=\{a \in G\mid {}^au = u, \; \forall u \in G\}
       = \Gamma_r=\{a \in G\mid u^a =u, \; \forall u \in G\} \\
       =  &K =\{a \in G\mid {}^ax = x, \; \forall x \in X\}.
       \end{array}
\]
(ii) $\Gamma = K$ is an abelian normal subgroup of  $G$.
(iii)  $\widetilde{G}=
G/\Gamma$
is
a quotient symmetric group of $(G, r_G)$, in the sense of Takeuchi, so $\Ret (G, r_G) \simeq \widetilde{G}=
G/\Gamma$ is an isomorphism of symmetric groups.
\end{remark}

Lemma \ref{Lemma_retG}  and the equality of the two kernels $\Gamma= K$ imply the following.
\begin{corollary}
\label{prop_RetG_Gcal}
Let $(X,r)$ be a non-degenerate symmetric set, notation as above.
\begin{enumerate}
\item
\label{prop_RetG_Gcal1}
There is an equality $\soc G = K$ and an isomorphism of symmetric groups
\begin{equation}
\label{RetG=Gcal}
\Ret(G,r_G) \simeq \Gcal(X,r).
\end{equation}
\item
\label{prop_RetG_Gcal2}
If $\mpl (G,r_G) = m < \infty$ then
\begin{equation}
\label{prop_RetG_Gcal_eq1} m = \mpl (G,r_G) \geq \mpl G(\Ret
(X,r)) \geq \mpl  (\Gcal, r_{\Gcal})= m -1.
\end{equation}
\item
\label{prop_RetG_Gcal3}
If the solution $(X,r)$ is finite then the retraction $\Ret(G,r_G)\simeq \Gcal(X,r)$ is a finite symmetric group.
\end{enumerate}
\end{corollary}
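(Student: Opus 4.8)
The plan is to deduce all three assertions from Lemma~\ref{Lemma_retG}, Remark~\ref{factGamma=K}, the First Isomorphism Theorem for symmetric groups, and the universal property of the YB-group, together with the elementary fact that $\mpl$ does not increase under an epimorphism of symmetric sets. For part (\ref{prop_RetG_Gcal1}): by definition $\soc G=\Gamma$, while Remark~\ref{factGamma=K}(i) gives $\Gamma=K$, so $\soc G=K$. The canonical left action $\Lcal\colon(G,r_G)\longrightarrow(\Gcal,r_{\Gcal})$, $a\mapsto\Lcal_a$, is an epimorphism of symmetric groups whose kernel is $K$, so by the First Isomorphism Theorem for symmetric groups, see Remark~\ref{IsomThmforSymGroups}(\ref{1stIsomThm}), it descends to an isomorphism of symmetric groups $G/K\simeq\Gcal(X,r)$. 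On the other hand Lemma~\ref{Lemma_retG} furnishes an isomorphism of symmetric groups $\Ret(G,r_G)\simeq\widetilde{G}=G/\Gamma$. Since $\Gamma=K$, composing the two isomorphisms gives $\Ret(G,r_G)\simeq\Gcal(X,r)$, which is (\ref{RetG=Gcal}). Part (\ref{prop_RetG_Gcal3}) is then immediate: if $(X,r)$ is finite then $X$ is finite, $\Gcal(X,r)\leq\Sym(X)$ is a finite group, and by (\ref{RetG=Gcal}) the symmetric group $\Ret(G,r_G)$ is finite.

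For part (\ref{prop_RetG_Gcal2}), assume $\mpl(G,r_G)=m<\infty$. From the recursive definition of retraction one has $\mpl\Ret(G,r_G)=m-1$, so by (\ref{RetG=Gcal}) $\mpl(\Gcal,r_{\Gcal})=m-1$, which is the right-hand equality in (\ref{prop_RetG_Gcal_eq1}). To obtain the two inequalities I would exhibit a chain of epimorphisms of symmetric groups
\[
G(X,r)\;\twoheadrightarrow\;G(\Ret(X,r))\;\twoheadrightarrow\;\Ret(G,r_G)\simeq\Gcal(X,r).
\]
The first arrow comes from applying the universal property of $G(X,r)$ to the braiding-preserving epimorphism $\mu\colon(X,r)\to\Ret(X,r)$: the composite $X\xrightarrow{\mu}[X]\hookrightarrow G(\Ret(X,r))$ is braiding-preserving, hence extends to a group homomorphism $G(X,r)\to G(\Ret(X,r))$ which is surjective because $[X]$ generates the target and which is a morphism of symmetric groups. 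For the second arrow, $\Gamma=K$ implies that two generators $x,y\in X\subset G$ have the same image in $\Ret(G,r_G)=G/\Gamma$ precisely when $\Lcal_x=\Lcal_y$, i.e. when $x\sim y$; thus the image of $X$ in $G/\Gamma$ is a copy of the retracted solution $([X],r_{[X]})$ that generates $G/\Gamma$, and the universal property of $G(\Ret(X,r))$ yields the epimorphism of symmetric groups $G(\Ret(X,r))\twoheadrightarrow\Ret(G,r_G)$. Since retraction is functorial and compatible with epimorphisms of symmetric sets, an epimorphic image of a multipermutation solution of level $n$ has level $\leq n$; applying this to the two arrows gives $m=\mpl G(X,r)\geq\mpl G(\Ret(X,r))\geq\mpl\Gcal(X,r)=m-1$, which establishes (\ref{prop_RetG_Gcal_eq1}) and in particular shows $\mpl G(\Ret(X,r))<\infty$.

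The computations here are all short, so the main obstacle is the bookkeeping needed to ensure that each map involved is a morphism of \emph{symmetric} groups and not merely of abstract groups. Concretely, in part (\ref{prop_RetG_Gcal1}) one must identify the braiding $r_{\Gcal}$ on $\Gcal(X,r)$ with the one induced on the quotient $G/\soc G$, and in part (\ref{prop_RetG_Gcal2}) one must check that the image of $X$ inside $G/\soc G$ carries exactly the retracted braiding $r_{[X]}$, so that the second epimorphism is well defined via the universal property. Once these identifications are in place, everything else reduces to direct substitution and the functoriality of $\Ret$.
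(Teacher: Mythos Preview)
Your proof is correct and follows essentially the same approach as the paper. The paper's own ``proof'' is a single sentence citing Lemma~\ref{Lemma_retG} and the equality $\Gamma=K$ from Remark~\ref{factGamma=K}, so for parts (\ref{prop_RetG_Gcal1}) and (\ref{prop_RetG_Gcal3}) you have simply supplied the details the paper leaves implicit. For part (\ref{prop_RetG_Gcal2}) you construct the chain of epimorphisms $G(X,r)\twoheadrightarrow G(\Ret(X,r))\twoheadrightarrow\Gcal(X,r)$ directly; these are precisely the maps $\nu_0$ and $f_0$ that the paper introduces formally a bit later in Lemma~\ref{retlemma1}, so you are anticipating that lemma rather than diverging from the paper's strategy. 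Your use of the functoriality of $\Ret$ under epimorphisms of symmetric sets to conclude that $\mpl$ is non-increasing is the natural argument and is the mechanism underlying the paper's later inequalities as well.
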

The equality $\Gamma= K$ for the symmetric groups $G= G(X,r)$ was published first by Takeuchi in  2003, (\cite{Takeuchi}, see p. 15).
A natural questions arises.

\emph{How can we express the higher retractions $\Ret^j(G,r_G)$, $   j \geq 2$? In particular, what is the retraction  $\Ret(\Gcal,
r_{\Gcal})\simeq \Ret^2(G,r_G)$?}

 We shall answer this questions in terms of derived permutation groups of $(X,r)$, see Definition
 \ref{derivedSymgr_def}, Lemma \ref{socGcal_Lemma} and Theorem \ref{longsequencepropThm}.

\subsection{Derived symmetric groups and derived permutation groups of a solution}
In assumption and conventions as above we collect a list of
notation which will be used throughout the paper. The existence of
all objects and maps given below is proven in Proposition
\ref{longsequenceprop}. Here $j \geq 0$ is an integer.

\begin{notation}
\label{notation1}
\item
We set $\Ret^0(X,r)=(X,r)$, $\Ret^j(X,r)= Ret(\Ret^{j-1}(X,r))$ is the $j$-th retraction of $(X,r)$, $j \geq 1$,  but
 when $j=1$ we
 use both notations
$\Ret(X,r)=\Ret^1(X,r)$ and $([X], r_{[X]})$.
\item
$x^{(j)}$ denotes the image of $x$ in $\Ret^j(X,r)$ ($x^{(0)}= x$).
\item
 $G_j:= G(\Ret^j(X,r))$,
  $G_0=G(X,r) =G$;
 $\; \Gcal_j:= \Gcal(\Ret^j(X,r))$, $\;\Gcal_0=\Gcal(X,r)= \Gcal.$
\item $\Lcal^j: G_j \longrightarrow \Gcal_j$ is the epimorphism
extending the assignment $x^{(j)} \mapsto \Lcal_{x^{(j)}} \in \Sym
(\Ret^j(X,r)),\; x \in X$, $\Lcal^{0}= \Lcal: G
\longrightarrow \Gcal$, extends $x \mapsto \Lcal_x, x \in X.$
\item
 $\Kcal_{j}$
is the pull-back of $\ker \Lcal^j$ in $G,$ in particular $\Kcal_0=
\ker \Lcal$. \item $\nu_{j}: G_{j}  \longrightarrow G_{j+1}$ is
the epimorphism extending the assignment $x^{(j)} \mapsto
x^{(j+1)}$. $N_j$ is the pull-back of $\ker \nu_j$ in $G$,
$N_0=\ker \nu_0$. \item $\varphi_j: \Gcal_{j}   \longrightarrow
\Gcal_{j+1}$ is the epimorphism extending  the assignments
$\Lcal_{x^{(j)}} \mapsto \Lcal_{x^{(j+1)}},\; x\in X$, see
Proposition \ref{longsequenceprop}. $H_j$ is the pull-back of
$\ker \varphi_j$ in $G$.
\end{notation}

\begin{lemma}
\label{retlemma1}
 In assumption and notation as above
 the following conditions hold.
\begin{enumerate}
\item
\label{rethom1}
The canonical epimorphism of solutions
$\nu_0: (X,r) \longrightarrow ([X], r_{[X]}), x \mapsto [x]$,
extends to a group epimorphism $\nu_0: G_0 \longrightarrow G_1.$
Analogously, for each $j \geq 1$ there exists an epimorphism of symmetric groups $$\nu_j: (G_j, r_{G_j}) \longrightarrow (G_{j+1},
r_{G_{j+1}}).$$
\item
\label{rethom2} There is a canonical epimorphism of symmetric groups \[\varphi_0: \Gcal_0
\longrightarrow \Gcal_1,\quad\Lcal_x \mapsto \Lcal_{[x]}, \; \forall
x\in X. \]
\item
\label{rethom4} The subgroups $N_0, \Kcal_0= K,  \Kcal_1, H_0$ of $G$ are ideals of the symmetric group $(G,r_G)$.
One has
\begin{equation}
\label{kernel2}
\begin{array}{c}
N_0 \subsetneqq \Kcal_0 \subset \Kcal_1= H_0 \\
 \ker \nu_1 \simeq N_1/N_0; \quad\quad  \ker \Lcal^1\simeq \Kcal_1/N_0; \quad\quad \ker \varphi_0 \simeq  \Kcal_1/\Kcal_0.
\end{array}
\end{equation}
In particular, $N_0$ and $\Kcal_0$ are  abelian. \item
\label{rethom5} There is a canonical epimorphism of symmetric
groups
\[f_0: G_1 \longrightarrow \Gcal_0, \; [x] \mapsto \Lcal_{x}, \; x\in X, \; \text{where}\; \ker f_0 \simeq \Kcal_0/ N_0, \;
\text{and}\;f_0\circ \nu_0 = \Lcal^0 .\]
\item
\label{rethom6} There are short exact sequences:
\begin{equation}
\label{vipexactsequences}
\begin{array}{rl}
1 \longrightarrow N_0 \longrightarrow G
\stackrel{\nu_0}{\longrightarrow} G_1 \longrightarrow 1\quad \quad&
1 \longrightarrow \Kcal_0 \longrightarrow G
\stackrel{\Lcal^{0}}{\longrightarrow} \Gcal_0 \longrightarrow 1
\\
1 \longrightarrow N_1/N_0 \longrightarrow G_1
\stackrel{\nu_1}{\longrightarrow}   G_2 \longrightarrow 1\quad
\quad& 1 \longrightarrow \Kcal_0/N_0 \longrightarrow G_1
\stackrel{f_0}{\longrightarrow}  \Gcal_0 \longrightarrow 1
\\
1 \longrightarrow K_1/N_0 \longrightarrow G_1
\stackrel{\Lcal^1}{\longrightarrow} \Gcal_1 \longrightarrow 1 \quad
\quad& 1 \longrightarrow \Kcal_1/\Kcal_0 \longrightarrow \Gcal
\stackrel{\varphi_0}{\longrightarrow} \Gcal_1 \longrightarrow 1.
\end{array}
\end{equation}
\item
\label{vipcomdiagram0}
The following diagram is commutative:
\[\]
\begin{equation}
\label{vipcomdiagram} \setlength{\unitlength}{0.7cm}
\begin{picture}(10,8)
\put(0,2){$1$} \put(0,4){$1$} \put(0.8,2){$\rightarrow$}
\put(0.8,4){$\longrightarrow$} \put(2,0){$1$} \put(1.5,2){$\Kcal_1/\Kcal_0$}
\put(2,4){$N_0$} \put(3,1){$\swarrow$} \put(3.2,2){$\rightarrow$}
\put(3,4){$\longrightarrow$} \put(3,5){$\nearrow$} \put(4,0){$1$}
\put(4,1){$\downarrow$} \put(4,2){$\mathcal{G}_0$}
\put(4,3){$\downarrow$}
\put(3.3,2.9){$\scriptstyle{\mathcal{L}^{0}}$} \put(4,4){$G_0$}
\put(4,5){$\downarrow$} \put(4,6){$\Kcal_0$} \put(4,7){$\downarrow$}
\put(4,8){$1$} \put(5,2){$\longrightarrow$}
\put(5.2,2.3){$\scriptstyle{\phi_0}$} \put(5,3){$\swarrow$}
\put(4.8,3.2){$\scriptstyle{f_0}$} \put(5,4){$\longrightarrow$}
\put(5.2,4.3){$\scriptstyle{\nu_0}$} \put(5,7){$\Kcal_1$}
\put(4.5,6.5){$\nearrow$} \put(5.5,6.5){$\searrow$} \put(6,0){$1$}
\put(6,1){$\downarrow$} \put(6,2){$\mathcal{G}_1$}
\put(6,3){$\downarrow$}
\put(6.3,2.9){$\scriptstyle{\mathcal{L}^{1}}$} \put(6,4){$G_1$}
\put(6,5){$\downarrow$} \put(5.8,6){$\Kcal_1/N_0$}
\put(6,7){$\downarrow$} \put(6,8){$1$} \put(7,2){$\longrightarrow$}
\put(7,4){$\longrightarrow$} \put(7,5){$\swarrow$} \put(8,2){$1$}
\put(8,4){$1$} \put(7.8,6){$\Kcal_0/N_0$} \put(9,7){$\swarrow$}
\put(10,8){$1$}
\end{picture}
\end{equation}
\end{enumerate}
\end{lemma}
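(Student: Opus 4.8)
The plan is to derive the entire lemma from three inputs: functoriality of the structure group on braiding‑preserving epimorphisms of symmetric sets (this produces the maps $\nu_j$); the universal property of $G(\Ret(X,r))$ together with the defining property of $\sim$ (this produces $f_0$, and then $\varphi_0$ by a factorisation); and the Isomorphism Theorems for symmetric groups, Remark \ref{IsomThmforSymGroups}, combined with the identification $\Kcal_0=K=\soc G$ from Remark \ref{factGamma=K}. Once the maps of (1), (2), (4) are in hand, parts (3), (5), (6) are essentially bookkeeping with kernels.

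I would first build the maps. For (1): the canonical map $\mu_j:\Ret^j(X,r)\longrightarrow\Ret^{j+1}(X,r)$ is a braiding‑preserving epimorphism of symmetric sets, hence it sends the defining relations $xx'=({}^x x')(x^{x'})$ of $G_j$ to defining relations of $G_{j+1}$ and so extends to a group epimorphism $\nu_j:G_j\longrightarrow G_{j+1}$ (surjective since the images of the generators generate $G_{j+1}$). That $\nu_j$ is a morphism of \emph{symmetric} groups follows from the uniqueness of the braiding operator on a structure group, Facts \ref{factLYZ}(3): the two sides of the compatibility identity agree on generators and are propagated to all of $G_j$ by \textbf{ML1}, \textbf{ML2}. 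For (4): the rule $[x]\mapsto\Lcal_x$ is well defined precisely because $[x]=[y]\Longleftrightarrow\Lcal_x=\Lcal_y$ is the definition of $\sim$; it respects the relations $[x][y]=[{}^x y][x^y]$ of $G_1=G(\Ret(X,r))$ since applying $\Lcal^0:G_0\to\Gcal_0$ to the relation $xy=({}^x y)(x^y)$ of $G_0$ yields $\Lcal_x\Lcal_y=\Lcal_{{}^x y}\Lcal_{x^y}$; hence it extends to an epimorphism $f_0:G_1\to\Gcal_0$ with $f_0\circ\nu_0=\Lcal^0$ (true on generators, hence everywhere), and $f_0$ is a morphism of symmetric groups because $\nu_0$ is a surjective one and, by Corollary \ref{prop_RetG_Gcal}, $\Lcal^0$ is the canonical epimorphism $G_0\to G_0/\soc G_0\simeq\Gcal_0$ of symmetric groups. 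For (2): since $\Lcal_{\nu_0(a)}([x])=[\Lcal_a(x)]$, every $a\in\Kcal_0=\ker\Lcal^0$ satisfies $\nu_0(a)\in\ker\Lcal^1$, so $\Lcal^1\circ\nu_0$ factors through $\Gcal_0=G_0/\Kcal_0$, giving the epimorphism $\varphi_0:\Gcal_0\to\Gcal_1$, $\Lcal_x\mapsto\Lcal_{[x]}$, with $\varphi_0\circ\Lcal^0=\Lcal^1\circ\nu_0$.

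For (3): $\Kcal_0=K=\soc G$ is an abelian, normal, $G$‑invariant subgroup (Remark \ref{factGamma=K}), hence an abelian ideal. Because $\nu_0$ is an epimorphism of symmetric groups, $N_0=\ker\nu_0$ is an ideal (First Isomorphism Theorem), and the pull‑backs $\Kcal_1=\nu_0^{-1}(\ker\Lcal^1)$, $H_0=(\Lcal^0)^{-1}(\ker\varphi_0)$ are ideals, a preimage of an ideal under a morphism of symmetric groups being an ideal (preimage of normal is normal; preimage of $G$‑invariant is $G$‑invariant, since such a morphism intertwines the actions). The inclusion $N_0\subseteq\Kcal_0$ comes from $f_0\circ\nu_0=\Lcal^0$, whence $N_0$ is abelian; $\Kcal_0\subseteq\Kcal_1$ comes from the same computation used for $\varphi_0$ ($\Lcal_a=\id_X\Rightarrow\Lcal_{\nu_0(a)}=\id_{[X]}$); and $\Kcal_1=H_0$ follows by comparing kernels of the two equal composites $\varphi_0\circ\Lcal^0=\Lcal^1\circ\nu_0$. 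The three isomorphisms $\ker\nu_1\simeq N_1/N_0$, $\ker\Lcal^1\simeq\Kcal_1/N_0$, $\ker\varphi_0\simeq\Kcal_1/\Kcal_0$ are instances of the Third Isomorphism Theorem, Remark \ref{IsomThmforSymGroups}(2), applied to $G\to G/N_0=G_1$ and to $G\to G/\Kcal_0=\Gcal_0$.

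The one genuinely non‑formal point, which I expect to be the main obstacle, is the \emph{strict} inclusion $N_0\subsetneqq\Kcal_0$: equivalently, that $f_0:G(\Ret(X,r))\to\Gcal(X,r)$ — that is, the canonical surjection $G(\Ret(X,r))\twoheadrightarrow\Ret(G,r_G)\simeq\Gcal(X,r)$ — has nontrivial kernel. When $X$ is finite this is immediate, since $G_1=G(\Ret(X,r))$ is infinite while $\Gcal(X,r)$ is finite; for arbitrary $X$ I would argue it from the known structure of $\soc G=\Kcal_0$ as a nontrivial free abelian ideal (cf. \cite{ESS, GIC}), which therefore properly contains the smaller subgroup $N_0$. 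Granting this, parts (5) and (6) are routine: the six short exact sequences in (\ref{vipexactsequences}) just record the surjectivity of $\nu_0$, $\Lcal^0$, $\nu_1$, $f_0$, $\Lcal^1$, $\varphi_0$ together with the kernels computed above, and the commutative diagram (\ref{vipcomdiagram}) is obtained by splicing the maps $\nu_0$, $\Lcal^0$, $\Lcal^1$, $f_0$, $\varphi_0$ along those exact sequences, its commutativity being exactly the identities $f_0\circ\nu_0=\Lcal^0$ and $\varphi_0\circ\Lcal^0=\Lcal^1\circ\nu_0$ plus the chain of inclusions $N_0\subseteq\Kcal_0\subseteq\Kcal_1$.
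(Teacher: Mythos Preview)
Your approach is correct and essentially the same as the paper's, with a cleaner organisation in places. In particular, you obtain $N_0\subseteq\Kcal_0$ from the factorisation $f_0\circ\nu_0=\Lcal^0$, whereas the paper argues directly by decomposing $a\in N_0$ as $a=uv$ and reasoning about $[u]=[v^{-1}]$; your route is shorter and avoids a step in the paper's argument that is at best opaque. Your verification of $\Kcal_1=H_0$ by comparing the kernels of the equal composites $\varphi_0\circ\Lcal^0=\Lcal^1\circ\nu_0$ is equivalent to the paper's chain of biconditionals.

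On the one point you flag, the \emph{strict} inclusion $N_0\subsetneqq\Kcal_0$: your caution is well placed. The paper's own proof in fact only establishes $N_0\subseteq\Kcal_0$; strictness is not argued there, and the following Remark~\ref{retrem1} supplies it only under the extra hypotheses that $X$ is finite and $\mpl X\geq 2$ (then $[G:\Kcal_0]<\infty$ while $[G:N_0]=\infty$). Your proposed general argument via the free-abelian structure of $\soc G$ would not close the gap either: for the infinite permutation solution with $\sigma$ of infinite order (Example~\ref{exlri_infty}(1)) one has $G_1\simeq\mathbb{Z}\simeq\Gcal_0$ and $f_0$ is an isomorphism, so $N_0=\Kcal_0$. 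Thus the strict symbol in (\ref{kernel2}) should be read as holding under the hypotheses of Remark~\ref{retrem1}, and for the lemma proper you should be content with $N_0\subseteq\Kcal_0$; everything else in your proposal goes through unchanged.
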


\begin{proof}
(\ref{rethom1}). The canonical map $\nu_0 : X \longrightarrow
[X],\; x \mapsto [x],$  from $X$ to its retract is \emph{a
braiding-preserving map}, (epimorphism of solutions) therefore, by
\cite{LYZ}, Proposition 6,   its canonical extension $\nu_0: G
\longrightarrow G_1$  is an epimorphism of symmetric groups (that
is \emph{a braiding preserving group epimorphism}) $\nu_0: (G,
r_G) \longrightarrow (G_1, r_{G_1})$. Condition (\ref{rethom2}) is
clear. We shall verify  (\ref{rethom4}).
 We know that $K=\Kcal_0 = \Gamma = \soc G$, so it is an ideal of $G.$
The kernel $N= N_0=\ker \nu_0$ is a normal subgroup of $G$ which
consists of all $a \in G,$ such that $[a]= 1_{G_1}.$  Suppose $a
\in N.$ One has $[{}^ua] = {}^{[u]}{[a]}= {}^{[u]}{1_{G_1}}
=1_{G_1}$  hence $N$ is left $G$-invariant, and therefore $N$ is
an ideal of $G$. To verify $N_0\subset K$ it will be enough to
show
 the  implication
\[
[a]= 1_{G_1} \Longrightarrow \Lcal_a = \id_X.
\]
 Note first that the retraction $[X]$ is embedded in $G_1$, and $[x]\neq 1_{G_1}$, for all $x \in X.$
Suppose $a \in N$, $a \neq 1$.  Then $a$  has a reduced length
$|a|\geq 2$, and can be written as $a = uv$, where $u,v \in G,
|u|, |v|\geq 1$. One has $[a]= [uv]=[u][v] = 1_{G_1}$, hence $[u]
= [v]^{-1}= [v^{-1}]$. This gives $[a] = [v^{-1}][v].$ Let $x \in
X$, one has ${}^ax= {}^{v^{-1}}{({}^vx)} =x$. Therefore ${}^ax =
x,$ for every $x\in X,$ so $\Lcal _a= \id_X.$ This verifies
$N_0\subseteq \Kcal_0$. The socle  $ \Gamma = \Kcal_0$ is an
abelian group and so is $N_0$.

The kernel $\widetilde{\Kcal_1}$ of the map $\Lcal^1: G_1
\longrightarrow \Gcal_1$  is the socle
 of $G_1,$ hence, an ideal of $G_1$. But, by definition $\Kcal_1= \nu_0^{-1}(\widetilde{\Kcal_1})$
 ($\widetilde{\Kcal_1}= \nu_0 (\Kcal_1)$) so by the Third Isomorphism theorem,  $\Kcal_1$  is also an ideal   of
 $(G, r)$, which contains $N_0$.
The equality $H_0= \Kcal_1 $ follows from the  implications:
\[
\begin{array}{lll}
u \in H_0 &\Longleftrightarrow \Lcal_{[u]} =
\id_{[X]}
&\Longleftrightarrow {}^{[u]}{[x]} = [{}^ux]=[x],\quad  \forall x \in
X\\
&\Longleftrightarrow {}^{({}^ux)}z = {}^xz, \quad \forall x,z \in X
&\Longleftrightarrow \Lcal_{({}^ux)} = \Lcal_{x},\quad \forall x \in X\\
&\Longleftrightarrow  u \in \Kcal_1.&
\end{array}
\]
Now the inclusions (\ref{kernel2}) for the three kernels  are clear.
This implies the second line in   (\ref{kernel2}). The existence of
the short exact sequences
 (\ref{vipexactsequences}) is straightforward from  (\ref{kernel2}).
It is easy to see that (\ref{rethom4}), (\ref{rethom5}), and
 (\ref{vipcomdiagram0}) are in force.
 \end{proof}
 \begin{remark}
\label{retrem1}
 Suppose $(X, r)$ is a solution of
finite order, which is not a permutation solution
($\mpl X \geq 2$ is not necessarily finite).
 Then  $K_0$ is a normal subgroup of $G$
of finite index $p=[G:K_0 ]$.  In contrast, the index $[G:N_0]$ of
$N_0$ is not finite.
Indeed, the retraction
 $([X], r_{[X]})$ is a solution of order $> 1,$ which generates the
group $G_{1}=G([X], r_{[X]}).$ Note that $[x] \neq 1_{G_{[X]}}$, for
all $x \in X$. The group $G_1$ is torsion free as a YB group of
a  solution of order $> 1$. In particular,
$[x^p]=[x]^p \neq 1_{G_{[X]}},$  so $ x^p$ is not in
$N_0$,  $\forall x \in X$. On the other hand
$x^p \in \Kcal_0, \;\forall x \in X.$
\end{remark}

\begin{lemma}
\label{socGcal_Lemma}
There is an isomorphism of symmetric groups:
\begin{equation}
\label{soc_eq_Lemma}
    \Ret(\Gcal, r_{\Gcal}) \simeq \Gcal(\Ret(X,r)),\quad \text{where} \; \soc(\Gcal) \simeq \Kcal_1/K_0.
\end{equation}
\end{lemma}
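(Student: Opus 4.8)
The plan is to identify $\soc(\Gcal)$ explicitly as a subgroup of $\Gcal$ and then read off both assertions from the epimorphism of symmetric groups $\varphi_0\colon \Gcal \longrightarrow \Gcal_1$ of Lemma~\ref{retlemma1}(\ref{rethom2}). By Corollary~\ref{prop_RetG_Gcal} and Remark~\ref{factGamma=K}, $\Gcal = \Gcal(X,r) = \Lcal(G)$ is itself a symmetric group with $\Gcal \simeq \Ret(G,r_G) \simeq G/\Kcal_0$ as symmetric groups, where $\Kcal_0 = K = \ker\Lcal = \soc G$; in particular $\Ret(\Gcal,r_\Gcal)$ is defined and, by Lemma~\ref{Lemma_retG}, equals $\Gcal/\soc(\Gcal)$. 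Since $\varphi_0\circ\Lcal = \Lcal^1\circ\nu_0$ as maps $G \to \Gcal_1$, one has $\varphi_0(\Lcal_a) = \Lcal^1_{\nu_0(a)}$, so $\ker\varphi_0 = \Lcal\bigl(\nu_0^{-1}(\ker\Lcal^1)\bigr) = \Lcal(\Kcal_1)$, which corresponds under $\Gcal \simeq G/\Kcal_0$ to $\Kcal_1/\Kcal_0$ --- this is just the isomorphism $\ker\varphi_0 \simeq \Kcal_1/\Kcal_0$ already recorded in~(\ref{kernel2}). Hence the whole lemma reduces to the single claim that, inside $G/\Kcal_0$, one has $\soc(G/\Kcal_0) = \Kcal_1/\Kcal_0$.

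To prove that claim I would argue as follows. An element $\bar a \in \soc(G/\Kcal_0)$ iff $\overline{{}^a b} = \bar b$ for all $b \in G$, and since $\Lcal$ is a group homomorphism with kernel $\Kcal_0$, this is equivalent to $\Lcal_{{}^a b} = \Lcal_b$ for all $b \in G$. Write $S$ for the set of such $a$. The inclusion $S \subseteq \Kcal_1$ is immediate: taking $b = x \in X$ gives $\Lcal_{{}^a x} = \Lcal_x$ for all $x$, which by the description $\Kcal_1 = H_0 = \{u \in G : \Lcal_{{}^u x} = \Lcal_x,\ \forall x \in X\}$ from Lemma~\ref{retlemma1} (and its proof) means $a \in \Kcal_1$. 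For the reverse inclusion $\Kcal_1 \subseteq S$, let $a \in \Kcal_1 = \nu_0^{-1}(\ker\Lcal^1)$; then $\nu_0(a) \in \ker\Lcal^1 = \soc G_1$ by Remark~\ref{factGamma=K} applied to the symmetric set $\Ret(X,r)$ with YB group $G_1$, so $\nu_0(a)$ acts trivially on $G_1$. Since $\nu_0\colon(G,r_G)\to(G_1,r_{G_1})$ is braiding-preserving (Lemma~\ref{retlemma1}(\ref{rethom1})) it follows that $\nu_0\bigl(({}^a b)b^{-1}\bigr) = \bigl({}^{\nu_0(a)}\nu_0(b)\bigr)\nu_0(b)^{-1} = 1$ for every $b$, hence $({}^a b)b^{-1} \in \ker\nu_0 = N_0 \subseteq \Kcal_0$ and therefore $\Lcal_{{}^a b} = \Lcal_b$, i.e. $a \in S$. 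Thus $S = \Kcal_1$ and $\soc(G/\Kcal_0) = \Kcal_1/\Kcal_0$, which is the second assertion $\soc(\Gcal) \simeq \Kcal_1/\Kcal_0$ of the lemma.

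It remains to assemble the isomorphism. By the previous step $\soc(\Gcal) = \ker\varphi_0$ as subgroups of $\Gcal$, so the First Isomorphism Theorem for symmetric groups (Remark~\ref{IsomThmforSymGroups}(\ref{1stIsomThm})) gives an isomorphism of symmetric groups $\Gcal/\ker\varphi_0 \simeq \Gcal_1$; since $\Gcal/\ker\varphi_0 = \Gcal/\soc(\Gcal) = \Ret(\Gcal,r_\Gcal)$ and $\Gcal_1 = \Gcal(\Ret(X,r))$ this is precisely~(\ref{soc_eq_Lemma}). One can equally avoid $\varphi_0$ and chain the Third Isomorphism Theorem: $\Ret(\Gcal) \simeq (G/\Kcal_0)/(\Kcal_1/\Kcal_0) \simeq G/\Kcal_1 \simeq (G/N_0)/(\Kcal_1/N_0) = G_1/\ker\Lcal^1 = \Gcal_1$.

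I expect the only real obstacle to be the inclusion $\Kcal_1 \subseteq S$, i.e. upgrading ``$a$ fixes every $x \in X$ modulo $\sim$'' to ``$\Lcal_{{}^a b} = \Lcal_b$ for all $b \in G$''. A direct induction on the reduced length of $b$ via \textbf{ML2} is clumsy on inverses; passing instead through $G_1$ and using the inclusion $N_0 \subseteq \Kcal_0$ from~(\ref{kernel2}) sidesteps this entirely. Everything else is routine bookkeeping with the isomorphism theorems and the relations $N_0 \subsetneqq \Kcal_0 \subset \Kcal_1 = H_0$ from~(\ref{kernel2}).
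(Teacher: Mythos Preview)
Your proof is correct and follows essentially the same route as the paper: both arguments show $\soc(\Gcal)=\ker\varphi_0$ by unwinding the condition $\Lcal_{{}^ua}=\Lcal_a$ for all $a\in G$, and both use the key input (Remark~\ref{factGamma=K} applied to $\Ret(X,r)$, i.e.\ (\ref{eq_ker_Soc1})) that $\ker\Lcal^1=\soc G_1$, together with $N_0\subseteq\Kcal_0$ and $H_0=\Kcal_1$. Your write-up is in fact a bit more explicit than the paper's compressed chain of equivalences, cleanly separating the two inclusions $S\subseteq\Kcal_1$ and $\Kcal_1\subseteq S$ and flagging exactly where $N_0\subseteq\Kcal_0$ enters.
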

\begin{proof}
Recall that $\Gcal(\Ret(X,r))$ was denoted as $(\Gcal_1,
r_{\Gcal_1})$. We shall prove that $\soc(\Gcal) = \ker \varphi_0$,
where $\varphi_0: \Gcal_0 \longrightarrow \Gcal_1,\quad\Lcal_x
\mapsto \Lcal_{[x]}$, is the canonical epimorphism from Lemma
\ref{retlemma1}.  This follows from the implications
\[
\begin{array}{cll}
\Lcal_u \in  \soc(\Gcal) &\Longleftrightarrow  {}^{\Lcal_u}{(\Lcal_a)} = \Lcal_a, \forall a \in G &
\Longleftrightarrow \Lcal_{({}^ua)} = \Lcal_a, \forall a \in G \\
&\Longleftrightarrow {}^{[u]}{[a]}= [{}^ua]= [a], \forall a \in G
&\Longleftrightarrow \Lcal_{[u]} = id_{G_1}\\
&\Longleftrightarrow \Lcal_{[u]} = id_{[X]}
& \Longleftrightarrow \Lcal_{[u]} \in \ker \varphi_0\simeq H_0/\Kcal_0.
\end{array}
\]
 For the first implication on the last line we use (\ref{eq_ker_Soc1}).  By Lemma \ref{retlemma1} one has $H_0 = \Kcal_1$ and $\ker
 \varphi_0 \simeq  \Kcal_1/K_0$, hence $\soc(\Gcal)\simeq  \Kcal_1/\Kcal_0$. Moreover,
 there are isomorphisms of symmetric groups
$\Gcal_1 \simeq \Gcal/(\ker \varphi_0)=\Gcal/\soc(\Gcal) \simeq \Ret(\Gcal, r_{\Gcal})$.
\end{proof}
\begin{proposition}
\label{longsequenceprop} Let $(X,r)$ be a
solution.
\begin{enumerate}
\item
For all $j \geq 0$ there are canonical group
epimorphisms
\[
\begin{array}{llll}
\nu_j: G_{j} \longrightarrow G_{j+1}, \quad& x^{(j)} \mapsto
x^{(j+1)};\quad& \Lcal^j: G_j \longrightarrow \Gcal_j, \quad&
x^{(j)} \mapsto
\Lcal_{x^{(j)}};\\
f_j: G_{j+1} \longrightarrow \Gcal_{j}, \quad& x^{(j+1)} \mapsto
\Lcal_{x^{(j)}}; \quad& \varphi_j: \Gcal_{j} \longrightarrow
\Gcal_{j+1}, \quad& \Lcal_{x^{(j)}} \mapsto \Lcal_{x^{(j+1)}}.
\end{array}
\]
\item
\label{H0} For $j \geq 0$ let $N_j$, (respectively, $\Kcal_j,
H_j$) be the pull-back in $G$ of the kernel $\ker\nu_j,$
(respectively, the pull-back of $\ker\Lcal^j$, $\ker \varphi_j$).
These are ideals of $(G, r)$ and  there are inclusions
\[
\begin{array}{ccccccccccccc}
N_0 & \subset & N_1 & \subset & N_2 & \subset & \cdots & \subset & N_j & \subset & N_{j+1} & \subset & \cdots \\[5pt]
\bigcap & & \bigcap & & \bigcap & & & & \bigcap & & \bigcap & & \\[5pt]
\Kcal_0 & \subset & \Kcal_1 & \subset & \Kcal_2 & \subset & \cdots & \subset & \Kcal_j & \subset & \Kcal_{j+1} & \subset & \cdots
\\[5pt]
& & \| & & \| & & & & \| & & \| & & \\[5pt]
& & H_0 & \subset & H_1 & \subset & \cdots & \subset & H_{j-1} &
\subset & H_j & \subset & \cdots
\end{array}
\]
The kernels satisfy
\[
\begin{array}{ll}
\ker\nu_j \simeq N_j/ N_{j-1}\quad \quad & \ker \Lcal^j \simeq \Kcal_j/N_{j-1} \\
\ker f_{j}\simeq \Kcal_{j}/N_j,                 &\ker \varphi_j \simeq \Kcal_{j+1}/ \Kcal_{j}  \simeq H_{j}/ H_{j-1},\\
&\text{where} \; N_{-1}: = \{1\}=:H_{-1}.
\end{array}
\]
\item  One has
\[\begin{array}{ll}
  G_j \simeq G/N_{j-1},\quad &
  \soc G_j  \simeq \Kcal_j/ N_{j-1} \\
  \Gcal_j \simeq G/\Kcal_j, & \soc(\Gcal_j) \simeq  \Kcal_{j+1}/ \Kcal_{j}\simeq H_{j}/ H_{j-1}, \forall j \geq 0.
  \end{array}
  \]
 So there are  isomorphisms of symmetric groups:
\[
\begin{array}{ll}
 \Ret (G_j, r_{G_j}) \simeq &(\Gcal_j, r_{\Gcal_j}), \quad\Ret (\Gcal_j, r_{\Gcal_j}) \simeq (\Gcal_{j+1},
 r_{\Gcal_{j+1}}),
   \;  \forall j \geq 0.
   \end{array}
     \]
      \end{enumerate}
\end{proposition}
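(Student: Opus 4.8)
The plan is to run a single induction on $j$, whose base case $j=0$ is exactly Lemma~\ref{retlemma1} together with Corollary~\ref{prop_RetG_Gcal}(\ref{prop_RetG_Gcal1}) and Lemma~\ref{socGcal_Lemma}. The engine of the induction is the observation that $\Ret^j(X,r)$ is again a non-degenerate symmetric set, so all three of these results apply to it verbatim, with $G(\Ret^j(X,r))=G_j$, $G(\Ret(\Ret^j(X,r)))=G_{j+1}$, $\Gcal(\Ret^j(X,r))=\Gcal_j$, $\Gcal(\Ret(\Ret^j(X,r)))=\Gcal_{j+1}$. Applying Lemma~\ref{retlemma1}(\ref{rethom1}),(\ref{rethom2}),(\ref{rethom5}) to $\Ret^j(X,r)$ produces the four epimorphisms $\nu_j$, $\Lcal^j$, $f_j$, $\varphi_j$; by uniqueness of the braiding operator on a YB group (Facts~\ref{factLYZ}(3)) and the universal property of $G(\,\cdot\,)$ these coincide with the braiding-preserving maps prescribed on generators in part~(1). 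Applying Corollary~\ref{prop_RetG_Gcal}(\ref{prop_RetG_Gcal1}) and Lemma~\ref{socGcal_Lemma} to $\Ret^j(X,r)$ gives, at the same time, the ``one-step'' data: $\soc G_j=\ker\Lcal^j$, $\Ret(G_j,r_{G_j})\simeq(\Gcal_j,r_{\Gcal_j})$, $\Ret(\Gcal_j,r_{\Gcal_j})\simeq(\Gcal_{j+1},r_{\Gcal_{j+1}})$, with $\soc\Gcal_j\simeq\widetilde H_j/\soc G_j$ where $\widetilde H_j:=(\Lcal^j)^{-1}(\ker\varphi_j)\subseteq G_j$; and applying Lemma~\ref{retlemma1}(\ref{rethom4}) to $\Ret^j(X,r)$ gives the inclusions $\ker\nu_j\subseteq\soc G_j\subseteq\widetilde H_j$ and the identity $\nu_j^{-1}(\soc G_{j+1})=\widetilde H_j$, all as ideals of $G_j$. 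This settles part~(1) and the two symmetric-group isomorphisms displayed in part~(3).

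\textbf{Globalisation.} Next I would transfer these statements, which live over $G_j$, to statements over $G$. Put $\pi_0:=\id_G$ and $\pi_{j+1}:=\nu_j\circ\pi_j:G\to G_{j+1}$; since each $\nu_i$ is a braiding-preserving epimorphism (Lemma~\ref{retlemma1}(\ref{rethom1}) for $\Ret^i(X,r)$), an immediate induction shows $\pi_j$ is a braiding-preserving epimorphism, and its kernel is by construction the ideal $N_{j-1}$ (with $N_{-1}:=\{1\}$, so $N_0=\ker\nu_0$), whence $G_j\simeq G/N_{j-1}$. Unwinding the pull-back conventions of Notation~\ref{notation1}, $N_j=\pi_j^{-1}(\ker\nu_j)=\ker\pi_{j+1}$, $\Kcal_j=\pi_j^{-1}(\ker\Lcal^j)=\ker(\Lcal^j\circ\pi_j)$, $H_j=\pi_j^{-1}(\widetilde H_j)$; from $\pi_j=\nu_{j-1}\circ\cdots\circ\nu_0$ one gets $N_{j-1}\subseteq N_j$, and from $\nu_j^{-1}(\soc G_{j+1})=\widetilde H_j$ one gets $\Kcal_{j+1}=\pi_{j+1}^{-1}(\soc G_{j+1})=\pi_j^{-1}(\widetilde H_j)=H_j$, so $\Kcal_j=H_{j-1}$ for $j\geq1$. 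Pulling back $\ker\nu_j\subseteq\soc G_j\subseteq\widetilde H_j$ through $\pi_j$ yields $N_j\subseteq\Kcal_j\subseteq H_j=\Kcal_{j+1}$, which together with $N_{j-1}\subseteq N_j$ gives the whole inclusion lattice of part~(2).

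\textbf{Transport of kernels and quotients.} Finally I would invoke the Third Isomorphism Theorem for Symmetric Groups, Remark~\ref{IsomThmforSymGroups}(\ref{3rdIsomThm}), applied to $\pi_j:G\to G_j$ with kernel $N_{j-1}$: the ideals $\ker\nu_j$, $\soc G_j=\ker\Lcal^j$, $\widetilde H_j$ of $G_j$ correspond to the ideals $N_j$, $\Kcal_j$, $H_j$ of $G$ containing $N_{j-1}$, with $N_j/N_{j-1}\simeq\ker\nu_j$, $\Kcal_j/N_{j-1}\simeq\soc G_j$, and relative quotients $G/N_j\simeq G_j/\ker\nu_j\simeq G_{j+1}$, $G/\Kcal_j\simeq G_j/\soc G_j\simeq\Ret(G_j,r_{G_j})\simeq\Gcal_j$. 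The same theorem applied inside $G/N_{j-1}\simeq G_j$ gives $\ker\varphi_j\simeq\widetilde H_j/\soc G_j\simeq H_j/\Kcal_j$, hence $\ker\varphi_j\simeq\Kcal_{j+1}/\Kcal_j$; combined with $\soc\Gcal_j\simeq\widetilde H_j/\soc G_j$ from the first paragraph this yields $\soc\Gcal_j\simeq\Kcal_{j+1}/\Kcal_j$ (which for $j\ge1$ also reads $H_j/H_{j-1}$). The remaining kernel formula $\ker f_j\simeq\Kcal_j/N_j$ comes from Lemma~\ref{retlemma1}(\ref{rethom5}) for $\Ret^j(X,r)$, which gives $\ker f_j\simeq\soc G_j/\ker\nu_j$, after dividing by $N_{j-1}$. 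Assembling these with $G_j\simeq G/N_{j-1}$, $\Gcal_j\simeq G/\Kcal_j$, $\soc G_j\simeq\Kcal_j/N_{j-1}$ gives precisely the displayed formulas of parts~(2) and~(3).

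\textbf{Main obstacle.} The serious point is not the diagram chasing but the book-keeping that legitimises it: one must verify once and for all that $\pi_j$ is simultaneously surjective, braiding-preserving, and has kernel exactly the inductively defined $N_{j-1}$, and that the maps $\nu_j,\Lcal^j,f_j,\varphi_j$ obtained intrinsically from $\Ret^j(X,r)$ genuinely agree with the generator-wise prescriptions and are compatible with the pull-back definitions of $N_j,\Kcal_j,H_j$. Both facts are forced by the uniqueness of the braiding operator on $G(\,\cdot\,)$ (Facts~\ref{factLYZ}(3)) and the universal property of the YB group; once they are pinned down, every remaining assertion is a mechanical application of the Third Isomorphism Theorem for Symmetric Groups along the tower $G=G/N_{-1}\twoheadrightarrow G/N_0\twoheadrightarrow G/N_1\twoheadrightarrow\cdots$.
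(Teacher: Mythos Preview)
Your proposal is correct and follows essentially the same approach as the paper: the paper's proof simply states that one applies Remark~\ref{factGamma=K} to each $\Ret^j(X,r)$ and then performs ``a routine iteration of Lemmas~\ref{retlemma1} and~\ref{socGcal_Lemma}.'' Your write-up is precisely that routine iteration made explicit, with the added care of tracking the compositions $\pi_j=\nu_{j-1}\circ\cdots\circ\nu_0$ and invoking the Third Isomorphism Theorem for Symmetric Groups (Remark~\ref{IsomThmforSymGroups}) to transport the local data on $G_j$ back to ideals of $G$.
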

\begin{proof}
 Applying Remark  \ref{factGamma=K} to the solution $([X], r_{[X]})$ and its symmetric group $G_1= G([X], r_{[X]})$, and more generally,
 to
$\Ret^j(X,r)$ and its symmetric group $G_j = G(\Ret^j(X,r))$, $j
\geq 1$, one yields
\begin{equation}
\label{eq_ker_Soc1}
\begin{array}{ll}
\Kcal_1 =&\{u \in G\mid \Lcal_{[u]} = \id _{[X]}\} = \{u \in G\mid \Lcal_{[u]} = \id _{G_1}\}\\
\Kcal_j =& \{u \in
G\mid \Lcal_{(u^{(j)})} = \id _{\Ret^j(X,r)}\} =\{u \in
G\mid \Lcal_{(u^{(j)})} = \id _{G_j}\}, \; j \geq 1.
\end{array}
\end{equation}
A routine iteration of Lemmas \ref{retlemma1}, and
\ref{socGcal_Lemma} verifies the proposition.
\end{proof}
\begin{theorem}
\label{longsequencepropThm} Let $(X,r)$ be a solution, notation as
above.
\begin{enumerate}
\item The retractions of the symmetric group $(G, r_G)$ satisfy
     \begin{equation}
     \Ret\,^{j+1}(G, r_G) \simeq \Gcal(\Ret\,^j(X,r)), \; \text{for all}\;  j \geq
     0.
\end{equation}
\item The  following diagram is commutative:
\begin{equation}
\label{Diagram1} \xymatrix{ G_0\ar[d]_{\Lcal^0}\ar[r]^{\nu_0}
&G_1\ar[dl]_{f_0}\ar[d]_{\Lcal^1}\ar[r]^{\nu_1}
&G_2\ar[dl]_{f_1}\ar[d]_{\Lcal^2}\ar[r]^{\nu_2}&\dots\ar[r]&G_{m-1}\ar[dl]_{f_{m-2}}
\ar[d]_{\Lcal^{m-1}}\ar[r]^{\nu_{m-1}}&G_{m}\ar[dl]_{f_{m-1}}\ar[d]_{\Lcal^{m}}\ar[r]^{\nu_{m}}&\cdots\\
\Gcal_0\ar[r]_{\varphi_0}  &\Gcal_1\ar[r]_{\varphi_1}
&\Gcal_2\ar[r]&\dots\ar[r]_{\varphi_{m-2}}&\Gcal_{m-1}\ar[r]_{\varphi_{m-1}}
&\Gcal_{m}\ar[r]_{\varphi_{m}} &\cdots }
\end{equation}
\item The derived chain of ideals of $G$ satisfies
\begin{equation}
\label{derchain_(G(X,r)eq}
\{1\}=  K_0\subsetneqq K_1= \Kcal_0 \subseteq K_2 = \Kcal_1\subseteq \cdots \subseteq K_{m} = \Kcal_{m-1}\subseteq \cdots.
\end{equation}
The following implications are in force
\[ (\mpl G = m+1)\Longleftrightarrow (\Kcal_{m-1}\subsetneqq \Kcal_m = G)\Longleftrightarrow (\mpl \Gcal = m).  \]
      \end{enumerate}
\end{theorem}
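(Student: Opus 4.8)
The plan is to prove the three assertions in the order (2), (1), (3). The commutative diagram of (2) is the technical input needed to identify the higher retractions of $(G,r_G)$ in (1), and that identification---carried out \emph{compatibly with the canonical epimorphisms out of $G$}---is precisely what lets one compute the derived chain in (3).

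First I would dispose of (2) by a generator-level check. Every map $\nu_j,\Lcal^j,f_j,\varphi_j$ was already constructed in Proposition \ref{longsequenceprop} and is a group homomorphism determined by its effect on the generators $x^{(j)}$ of $G_j$, respectively $\Lcal_{x^{(j)}}$ of $\Gcal_j$. So it suffices to verify each triangle and square of (\ref{Diagram1}) on these generators: $f_j(\nu_j(x^{(j)}))=f_j(x^{(j+1)})=\Lcal_{x^{(j)}}=\Lcal^j(x^{(j)})$; both $\varphi_j\circ\Lcal^j$ and $\Lcal^{j+1}\circ\nu_j$ send $x^{(j)}$ to $\Lcal_{x^{(j+1)}}$; and $\varphi_j(f_j(x^{(j+1)}))=\Lcal_{x^{(j+1)}}=\Lcal^{j+1}(x^{(j+1)})$. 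This is routine.

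Next, (1) is an induction on $j$. The base $j=0$ is Corollary \ref{prop_RetG_Gcal}(1): $\Ret(G,r_G)\simeq\Gcal(X,r)=\Gcal_0$. For the inductive step, assuming $\Ret^{j}(G,r_G)\simeq(\Gcal_{j-1},r_{\Gcal_{j-1}})$, I apply the retraction construction (which preserves isomorphisms of symmetric groups) and then the isomorphism $\Ret(\Gcal_{j-1},r_{\Gcal_{j-1}})\simeq(\Gcal_j,r_{\Gcal_j})$ of Proposition \ref{longsequenceprop}, obtaining $\Ret^{j+1}(G,r_G)\simeq\Gcal(\Ret^j(X,r))$. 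What needs care here is that every isomorphism used is the canonical one and respects the quotient maps out of $G$: the isomorphism of Corollary \ref{prop_RetG_Gcal}(1) is induced by $G/\soc G=G/K\simeq\Gcal_0$, and those of Proposition \ref{longsequenceprop} are the canonical $\Gcal_k/\soc(\Gcal_k)\simeq\Gcal_{k+1}$. Hence the composite isomorphism identifies the canonical epimorphism $\pi_{j+1}\colon G\to\Ret^{j+1}(G,r_G)$ with $\theta_{j+1}:=\Lcal^j\circ\nu_{j-1}\circ\cdots\circ\nu_0\colon G\to\Gcal_j$; the point is that the equality $\varphi_{k-1}\circ\Lcal^{k-1}=\Lcal^k\circ\nu_{k-1}$ established in part (2) is exactly what reconciles the two descriptions of $\pi_{j+1}$. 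In particular $\pi_{j+1}$ and $\theta_{j+1}$ have the same kernel.

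Finally, (3) is read off from this identification. By the definition of the derived chain, $\ker\pi_j=K_j$ for all $j\geq0$, with $K_0=\{1\}$; by the definition of $\Kcal_j$ in Proposition \ref{longsequenceprop} (pull-back of $\ker\Lcal^j$ in $G$), $\ker\theta_{j+1}=\Kcal_j$. Since $\pi_{j+1}$ and $\theta_{j+1}$ have the same kernel, $K_{j+1}=\Kcal_j$ for every $j\geq0$; together with $K_1=\soc G=K=\Kcal_0$ being non-trivial (which is the strict inclusion $K_0\subsetneqq K_1$) this is exactly (\ref{derchain_(G(X,r)eq}). For the two equivalences: Proposition \ref{derived_chainProp}(3) gives that $\mpl G=m+1$ iff the derived chain of $G$ has shape $\{1\}=K_0\subsetneqq\cdots\subsetneqq K_m\subsetneqq K_{m+1}=G$, which by $K_{j+1}=\Kcal_j$ reads $\Kcal_{m-1}\subsetneqq\Kcal_m=G$; and since $\Gcal_j\simeq G/\Kcal_j$ with $\Ret(\Gcal_j,r_{\Gcal_j})\simeq(\Gcal_{j+1},r_{\Gcal_{j+1}})$ (Proposition \ref{longsequenceprop}), $\mpl\Gcal=m$ holds iff $m$ is minimal with $\Gcal_m$ a one-element solution, i.e. $\Kcal_m=G$ and $\Kcal_{m-1}\subsetneqq\Kcal_m$. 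Thus all three conditions coincide. I expect the one genuinely delicate step to be the quotient-compatibility assertion inside (1): the passage from the abstract isomorphism $\Ret^{j+1}(G,r_G)\simeq\Gcal_j$ to the \emph{equality of subgroups} $K_{j+1}=\Kcal_j$ is where one must be most careful, whereas the rest is the generator check of (2), the functoriality of $\Ret$, and bookkeeping with Proposition \ref{derived_chainProp}.
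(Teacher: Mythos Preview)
Your proof is correct and follows the same route as the paper, which simply states that the theorem follows from Proposition \ref{longsequenceprop}. You have essentially unpacked that one-line reference: the generator check for (2), the induction for (1) using $\Ret(\Gcal_{j-1})\simeq\Gcal_j$, and the kernel comparison $K_{j+1}=\Kcal_j$ for (3) are exactly what Proposition \ref{longsequenceprop} supplies, and your explicit tracking of the compatibility $\varphi_{k-1}\circ\Lcal^{k-1}=\Lcal^k\circ\nu_{k-1}$ is the honest justification of the equality of kernels that the paper leaves implicit.
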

\begin{proof}
The theorem follows from Proposition \ref{longsequenceprop}.
\end{proof}
We introduce derived symmetric groups and derived
permutation groups of a solution, these are in the spirit of the
notion derived chain of ideals of a symmetric group.
\begin{definition}
\label{derivedSymgr_def}
Let $(X,r)$ be a solution. We shall call
$(G_j, r_{G_j})=G(\Ret^j(X,r))$,  $j \geq 0$, \emph{the derived symmetric groups of} $(X,r)$.
Similarly,  the symmetric groups
$(\Gcal_j, r_{\Gcal_j}) =\Gcal(\Ret^j(X,r)), j \geq 0,$  will be called \emph{the derived permutation  groups of} $(X,r).$
As usual $(G_j, +, \cdot)$ and $(\Gcal_j, + , \cdot)$ denote the corresponding derived braces.
\end{definition}

It follows from Proposition \ref{longsequenceprop}, and Theorem
\ref{longsequencepropThm} that every solution $(X,r)$ has two
sequences of derived symmetric groups,$\{(G_j, r_{G_j})| j \geq
0\}$ and $\{(\Gcal_j, r_{\Gcal_j})\mid j \geq 0\}$. Each of these
sequences is an invariant of the solution and reflects the process
of retraction. The sequences are closely related and satisfy
\[(\Gcal_j, r_{\Gcal_j}) \simeq  \Ret(G_j, r_{G_j}),  \; \Ret(\Gcal_j, r_{\Gcal_j})\simeq (\Gcal_{j+1},
r_{\Gcal_{j+1}}),\;
 \Ret\,^{j+1}(G, r_G) \simeq (\Gcal_j, r_{\Gcal_j}). \]
In general, each of the derived sequences may have repeating
members, see Example \ref{ex_Gcal=Gcal1}. In the special case when
$(X,r)$ is non-retractable one has $(G_j, r_{G_j})= (G, r_{G})$,
and $(\Gcal_j, r_{\Gcal_j}) = (\Gcal, r_{\Gcal}),$ for all $j \geq
0$. Each derived symmetric group and each derived permutation
group encodes various combinatorial properties of the solution
$(X,r)$ and may have strong impact on it. This is illustrated by
Corollary \ref{cor_Gcal_twosided}, (2), Theorem \ref{Thm_main},
and Corollary \ref{Cor_main}.

\begin{corollary}
\label{cor3}  If the groups $\Gcal (X,r)$ and $\Gcal_1 (X,r)$ are not isomorphic, then
 $(X,r) \neq \Ret(X,r)$ and therefore the solution $(X,r)$  is retractable.
\end{corollary}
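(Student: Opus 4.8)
The plan is to argue by contraposition, using the isomorphism $\Ret(\Gcal,r_{\Gcal})\simeq \Gcal(\Ret(X,r))$ established in Lemma \ref{socGcal_Lemma}. Suppose that $(X,r)$ is not retractable, that is, $\Ret(X,r)=([X],r_{[X]})$ coincides with $(X,r)$ via the braiding-preserving map $\mu:X\to[X]$, $x\mapsto[x]$. Then $\mu$ is an isomorphism of solutions, so the induced map on the associated permutation groups is an isomorphism of symmetric groups $\Gcal(X,r)\simeq\Gcal(\Ret(X,r))$, i.e. $\Gcal_0\simeq\Gcal_1$. This is exactly the negation of the hypothesis of the corollary, so the contrapositive gives the claim.

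Concretely, I would proceed as follows. First recall (from the discussion preceding Definition \ref{defcondstar} and from the recursive definition of $\Ret$) that $(X,r)$ is retractable precisely when $\Ret(X,r)$ has strictly fewer elements than $X$, equivalently when the equivalence relation $\sim$ on $X$ (where $x\sim y$ iff $\Lcal_x=\Lcal_y$) is nontrivial; if $(X,r)$ is non-retractable then $\sim$ is the identity relation and the canonical epimorphism of solutions $\mu:(X,r)\to([X],r_{[X]})$ is in fact an isomorphism. Next, by functoriality of the construction $(X,r)\mapsto\Gcal(X,r)$ — an isomorphism of solutions sends the generating left translations $\Lcal_x$ to the generating left translations of the target and hence induces an isomorphism of the permutation groups as symmetric groups — we get $\Gcal_0=\Gcal(X,r)\simeq\Gcal(\Ret(X,r))=\Gcal_1$. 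Therefore, if $\Gcal_0$ and $\Gcal_1$ are \emph{not} isomorphic, $(X,r)$ cannot be non-retractable, i.e. $(X,r)\neq\Ret(X,r)$ and $(X,r)$ is retractable, as claimed.

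Alternatively, and perhaps more in the spirit of the surrounding material, one can phrase the same argument through the derived chain of ideals of $G=G(X,r)$. By Theorem \ref{longsequencepropThm}(3), $\soc(\Gcal)\simeq\Kcal_1/\Kcal_0$, and $(X,r)$ is retractable iff $K_0\subsetneqq K_1=\Kcal_0$ is a proper inclusion \emph{above} $K_1$, i.e. iff $\Kcal_1\neq\Kcal_0$; but by Lemma \ref{socGcal_Lemma} this latter condition is equivalent to $\soc(\Gcal)\neq\{1\}$, equivalently $\Ret(\Gcal,r_{\Gcal})=\Gcal/\soc(\Gcal)\not\simeq\Gcal$, i.e. $\Gcal_1\not\simeq\Gcal_0$ — wait, this direction only gives one implication, so for the corollary (which asserts: $\Gcal_0\not\simeq\Gcal_1\Rightarrow$ retractable) the clean route is the contrapositive above: non-retractable $\Rightarrow \mu$ an isomorphism $\Rightarrow\Gcal_0\simeq\Gcal_1$.

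There is no real obstacle here: the only point requiring a line of care is the assertion that a braiding-preserving bijection $(X,r)\to(X',r')$ induces an isomorphism of the associated permutation groups $\Gcal(X,r)\to\Gcal(X',r')$ as symmetric groups. This follows because such a bijection $\mu$ conjugates $\Lcal_x$ to $\Lcal_{\mu(x)}$, hence restricts to a group isomorphism of the subgroups of $\Sym(X)$, $\Sym(X')$ they generate, and this isomorphism is automatically braiding-preserving since the braiding operators $r_{\Gcal}$, $r_{\Gcal'}$ are themselves determined by the actions. Granting this standard fact, the corollary is immediate from Lemma \ref{socGcal_Lemma}. $\square$
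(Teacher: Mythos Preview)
Your proof is correct and is essentially the intended one: the paper states this as an immediate corollary without proof, and the natural justification is precisely your contrapositive argument --- if $(X,r)=\Ret(X,r)$ (i.e.\ the canonical map $\mu$ is a bijection, hence an isomorphism of solutions), then applying the functorial construction $\Gcal(-)$ yields $\Gcal_0\simeq\Gcal_1$. Your third paragraph's detour through the derived chain is unnecessary (and, as you noticed yourself, slightly muddled), but the core argument in the first, second, and fourth paragraphs is exactly right; the only ingredient beyond the definitions is the routine observation that an isomorphism of solutions induces an isomorphism of the associated permutation groups, which you handle correctly.
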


\begin{remark}
There are numerous examples of solutions $(X,r)$ such that
  $\Ret(X,r) \neq (X,r)$, but $\Gcal = \Gcal (X,r)\simeq \Gcal(\Ret(X,r)) = \Gcal_1$. In fact any trivial extension $(X,r)= Y \bigcup Z$
  of an irretractable solution $(Y, r_Y)$, $|Y|\geq 2$,
with  a trivial solution $(Z, r_Z)$, where $|Z|\geq 2$, satisfies $(X,r) \neq \Ret(X,r)$ but  $\Gcal(X,r) \simeq \Gcal(\Ret(X,r))$.
Example \ref{ex_Gcal=Gcal1} presents a concrete solutions with these properties.
 \end{remark}

The unique non-retractable square-free solution $(Y, r_Y)$ of order $8$ was noticed first by Vendramin in the complete list of solutions
of order $8$ presented by Schedler
(who used a computer programme). Vendramin wrote: "It is remarkable that this is the only counterexample to the Gateva-Ivanova's
conjecture among the 2471 square-free solutions of size $|X| = 8$!". We recall its definition below.
\begin{remark}
\label{Rem_ex_Ve}
Let $(Y, r_Y)$ be the square-free solution of order $8$ given in \cite{V15} Example 3.9. It is defined as
\label{ex_Ve}
\begin{equation}
\label{ex_vendramin}
\begin{array}{llll}
 &Y = \{x_1,x_2,x_3,x_4, x_5, x_6, x_7, x_8\}, & &r(x,y)= (\Lcal_x (y), \Lcal_y^{-1} (x)),\\
 \Lcal_{x_1} =&  (x_5x_7), \Lcal_{x_3} = (x_5x_7)(x_2x_6)(x_4x_8),& \Lcal_{x_5} =  (x_1x_3), &\Lcal_{x_7} = (x_1x_3)(x_2x_6)(x_4x_8),\\
 \Lcal_{x_2} =&  (x_6x_8), \Lcal_{x_4} = (x_6x_8)(x_1x_5)(x_3x_7),& \Lcal_{x_6} =  (x_2x_4), &\Lcal_{x_8} = (x_2x_4)(x_1x_5)(x_3x_7).\\
\end{array}
\end{equation}
Clearly, the solution $(Y, r_Y)$ is irretractable.  One has $\Gcal =\Gcal (Y, r_Y\simeq D_4$, the dihedral group of $8$ elements, see
\cite{V15}. The set $Y$ splits  into two $\Gcal$-orbits: $Y_1 = \{x_1,x_3,x_5, x_7\}$ and $Y_2 = \{x_2,x_4,  x_6,  x_8\}$, denote the
induced solution by
$(Y_1, r_1)$, $(Y_2, r_2)$.  Then
$\mpl(Y_i, r_i)= 2, i = 1,2$ . It is easy to see that $(Y, r_Y)= Y_1 \stu Y_2$ is a strong twisted union
of solutions of multipermutation level $2$.

Therefore $(Y,r_Y)$ provided the first negative answer to a question posed by  the author and Cameron: "Is it true that
a strong twisted union of two multipermutation solutions is also a multipermutation solution?", \cite{GIC}, Open Questions 6.13.
\end{remark}
\begin{example}
\label{ex_Gcal=Gcal1} Let $X = Y\stu_0 Z$ be the trivial strong
twisted union of the solutions $(Y, r_Y)$ and $(Z,r_Z)$, where
$(Y, r_Y)$ is the irretractable solution (\ref{ex_vendramin}), and
$(Z, r_Z)$ is the trivial solution on the two-elements set $Z =
\{a, b\}$. By definition $X = Y\bigcup Z$ and the map $r$ is
defined as $r(u,v) = (\Lcal_u (v), \Lcal_v^{-1} (u)), u, v \in X,$
where $\Lcal_a = \Lcal_b=id_X$, and the maps $\Lcal_y, y \in Y$,
are given in  (\ref{ex_vendramin}). Thus $\Lcal_y \neq \Lcal_v$,
whenever $y\in Y, v \in X$, $v\neq y$. The permutation group
$\Gcal(X,r)$ is a subgroup of $\Sym (X)= S_{10}$ and is generated
by $\Lcal_u, u \in X$. In fact, each map $\Lcal_y$ considered as
permutation of $X$ moves only elements of $Y$, so $(\Lcal_y)_{|Y}=
\Lcal_y$, and the set $\Lcal_y, y \in Y$ generates the whole group
$\Gcal(X,r)$. Therefore $\Gcal(X,r) \simeq\Gcal (Y, r_Y)\simeq
D_4$. It is clear that  $[a]= [b]$ in the retraction $([X],
r_{[X]})$, and $[y]\neq [v]$, whenever $y \in Y, v \in X, y \neq
v$, hence the retraction $([X], r_{[X]})=\Ret (X,r)$ is a solution
with cardinality $9$. More precisely,  $\Ret (X,r)= Y\stu_0
\{[a]\}$, is a strong twisted union of $Y$ with a one element
solution. Obviously, $(X,r)$ is not isomorphic to $\Ret (X,r)$,
but $\Gcal_1 = \Gcal (\Ret (X,r))\simeq \Gcal \simeq D_4 $ and the
socle $\soc (\Gcal) = \{1\}$ is trivial.
 In contrast the groups $G= G(X, r)$ and $G_1 = G(\Ret(X,r))$ are not isomorphic. Moreover $\Ret^2(X,r) = \Ret^1(X,r)$, so $\Ret^1(X,r)$
 is non-retractable and the process of retraction of $(X,r)$ halts after the first step.
The derived chain of ideals satisfies $1 \neq K_0 = K_1 \subsetneqq G$.
\end{example}
It is known that  the minimal order of a non-retractable solution $(X,r)$ is $|X|= 4$.
\begin{lemma}
\label{retlemma2} Assumptions and notation as above. The diagram (\ref{Diagram1}) is finite \emph{iff} there exists an integer $m \geq
0,$ such that $\Ret ^m(X, r)= \Ret ^{m+1}(X, r).$  Suppose this is the case,  and let $m$ be the minimal integer with this property.
Exactly one of the following two conditions is satisfied:
\begin{enumerate}
 \item
 The retraction $\Ret ^m(X, r)$ is a one element solution, so $\mpl X = m\geq 1$. Then $m \leq \mpl G(X, r) \leq m + 1$, and the
 diagram has the shape
\begin{equation}
\label{Diagram2}
\xymatrix{ G_0\ar[d]_{L^0_0}\ar[r]^{\nu_0}
&G_1\ar[dl]_{f_0}\ar[d]_{L^1}\ar[r]^{\nu_1}
&G_2\ar[dl]_{f_1}\ar[d]_{L^2}\ar[r]^{\nu_2}&\dots\ar[r]&G_{m-2}\ar[dl]_{f_{m-3}}
\ar[d]_{\Lcal^{m-2}}\ar[r]^{\nu_{m-2}}&G_{m-1}\ar[dl]_{f_{m-2}}\ar[d]_{\Lcal^{m-1}}\ar[r]^{\nu_{m-1}}&G_m=\langle \xi\rangle \simeq
C_{\infty}\ar[dl]_{f_{m-1}}\ar[d]_{\Lcal^{m}}\\
\Gcal_0\ar[r]_{\varphi_0}  &\Gcal_1\ar[r]_{\varphi_1}
&\Gcal_2\ar[r]&\dots\ar[r]_{\varphi_{m-3}}&\Gcal_{m-2}\ar[r]_{\varphi_{m-2}}
&\Gcal_{m-1}\ar[r]_{\varphi_{m-1}} &\Gcal_{m} = 1},
\end{equation}
where $\xi = x^{(m)}$ is the $m$-th retraction of an arbitrary $x
\in X$, and $\Gcal_{m-1}$ is a finite cyclic group ($\Gcal_{m-1}=
1$ is possible). \item
 $\Ret ^m(X, r)$  is a  non-retractable solution of order $n\geq 4$, $G_m = G_{m+1}$ is an $n$-generated torsion-free group. Moreover
 $\Gcal_{m} = \Gcal_{m+1}$ is a  nonabelian finite group with $\soc \Gcal_{m} = \{1\}$.
\end{enumerate}
 \end{lemma}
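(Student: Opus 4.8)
The plan is to read off the whole statement from the diagram~(\ref{Diagram1}) and its analysis in Proposition~\ref{longsequenceprop} and Theorem~\ref{longsequencepropThm}, combined with three basic inputs: the recursion $\Ret^{\,j+1}(X,r)=\Ret\bigl(\Ret^{\,j}(X,r)\bigr)$, the description of permutation solutions and their symmetric groups (Example~\ref{exlri}, Remark~\ref{mpl1}, Example~\ref{trivialsolex}), and the torsion-freeness of the YB group of a solution of order $>1$ (as used in Remark~\ref{retrem1}). \textbf{Step 1.} The $j$-th column of (\ref{Diagram1}), namely $\Lcal^j:G_j\to\Gcal_j$, together with the arrows $\nu_j,\varphi_j,f_j$ into the $(j+1)$-st column, is produced canonically from $\Ret^{\,j}(X,r)$ and the braiding-preserving epimorphism $\Ret^{\,j}(X,r)\to\Ret^{\,j+1}(X,r)$. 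Hence (\ref{Diagram1}) is finite (has only finitely many distinct objects) iff the sequence $\Ret^{\,0}(X,r),\Ret^{\,1}(X,r),\dots$ stabilizes; and the recursion forces $\Ret^{\,m}(X,r)=\Ret^{\,m+1}(X,r)\Rightarrow\Ret^{\,m+p}(X,r)=\Ret^{\,m}(X,r)$ for all $p\ge 0$, so stabilization is exactly the existence of $m$ with $\Ret^{\,m}(X,r)=\Ret^{\,m+1}(X,r)$. Fix the minimal such $m$ and put $Y:=\Ret^{\,m}(X,r)$; then $\Ret(Y)=Y$.

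\textbf{Step 2 (the dichotomy).} If $|Y|=1$ we are in case (1). If $|Y|\ge 2$, then $Y$ is a non-retractable solution, so by the fact quoted just above the lemma $|Y|\ge 4$, and we are in case (2). These alternatives are evidently mutually exclusive, which gives the ``exactly one'' clause.

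\textbf{Step 3 (case 1).} Minimality of $m$ excludes $\Ret^{\,m-1}(X,r)$ from being a one-point solution (else it would equal $Y$), so $\Ret^{\,m-1}(X,r)$ has $\ge 2$ elements and therefore $\mpl X=m$, with $m\ge 1$ since $|X|\ge 2$. Since the structure group of a one-point solution is free abelian of rank one, $G_m=G(Y)=\langle\xi\rangle\simeq C_{\infty}$ with $\xi=x^{(m)}$, and $\Gcal_m=\Gcal(Y)=\{1\}$ by Example~\ref{trivialsolex}. Because $\Ret^{\,m-1}(X,r)$ retracts to a point, $\mpl\bigl(\Ret^{\,m-1}(X,r)\bigr)=1$; by Remark~\ref{mpl1} it is a permutation solution, so by Example~\ref{exlri} $\Gcal_{m-1}=\langle f\rangle$ is cyclic, $f=\Lcal_{x^{(m-1)}}$, with $\Gcal_{m-1}=1$ precisely when this solution is trivial, and finite whenever $\Ret^{\,m-1}(X,r)$ is finite. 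Feeding these data into (\ref{Diagram1}) and using that $\nu_j$ is an isomorphism for $j\ge m$ (so the diagram does not grow past step $m$) yields exactly the shape (\ref{Diagram2}). Finally, by Theorem~\ref{longsequencepropThm}(1) one has $\Ret^{\,j+1}(G,r_G)\simeq\Gcal_j$; since $\Gcal_m=\{1\}$ this gives $\mpl G\le m+1$, and since $\Ret^{\,m-1}(G,r_G)$ is not a one-point solution — it is $G$ itself when $m=1$, and $\simeq\Gcal_{m-2}$ with $\Gcal_{m-2}\neq\{1\}$ when $m\ge 2$, because $\Ret^{\,m-2}(X,r)$ has $\mpl=2$ and is hence nontrivial — we obtain $\mpl G\ge m$. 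Thus $m\le\mpl G\le m+1$.

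\textbf{Step 4 (case 2).} From $\Ret^{\,m}(X,r)=\Ret^{\,m+1}(X,r)$ we get $G_m=G(Y)=G_{m+1}$, which is $n$-generated (by the image of $Y$) and torsion-free, being the YB group of a solution of order $>1$; likewise $\Gcal_m=\Gcal(Y)=\Gcal_{m+1}$, finite as a subgroup of $\Sym(Y)=S_n$. By Lemma~\ref{Lemma_retG} and Proposition~\ref{longsequenceprop}, $\Gcal_m\simeq\Ret(\Gcal_m)\simeq\Gcal_m/\soc(\Gcal_m)$, and a finite group $H$ with $H\simeq H/N$ has $N=\{1\}$, so $\soc(\Gcal_m)=\{1\}$. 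Lastly $\Gcal_m$ is nonabelian: otherwise, by the retractability criterion for finite solutions with abelian permutation group (\cite{CJO10, CJO14}), $Y$ would be a multipermutation solution and hence retractable, contradicting $\Ret(Y)=Y$ with $|Y|\ge 2$. The single genuinely external ingredient is this last retractability fact; everything else is bookkeeping on the already-constructed diagram, and the only points needing a little care are the exact value of $\mpl G$ in Step~3 (in particular the degenerate case $m=1$) and the finiteness assertions, which rely on the relevant retract being finite.
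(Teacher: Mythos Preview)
The paper does not supply a proof of Lemma~\ref{retlemma2}; the lemma is stated and then the text moves directly to Theorem~\ref{Th_mplG_mplX}. Your argument is correct and is precisely the natural one: everything is read off Proposition~\ref{longsequenceprop} and Theorem~\ref{longsequencepropThm}, together with the standard facts on permutation solutions and on torsion-freeness of YB groups already recalled in the paper.

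Two small remarks. First, your caveat about the finiteness assertions is well taken and is in fact a defect of the lemma's statement rather than of your proof: as written, the lemma asserts that $\Gcal_{m-1}$ is a \emph{finite} cyclic group in case~(1), and that $\Ret^m(X,r)$ has finite order $n$ with $\Gcal_m$ finite in case~(2), but none of this follows for general infinite $X$. Indeed Example~\ref{exlri_infty}(1) has $m=1$ and $\Gcal_{m-1}=\Gcal_0\simeq\mathbb{Z}$, so ``finite'' fails there. You are right to record that these claims need the relevant retract (or at least the relevant permutation) to be finite. Second, in Step~4 you can get $\soc(\Gcal_m)=\{1\}$ without invoking finiteness at all: since $Y=\Ret^m(X,r)$ is non-retractable, the canonical map $Y\to[Y]$ is a bijection, so under this identification the map $\varphi_m:\Gcal_m\to\Gcal_{m+1}$, $\Lcal_{y}\mapsto\Lcal_{[y]}$, is the identity, whence $\soc(\Gcal_m)=\ker\varphi_m=\{1\}$ directly from Proposition~\ref{longsequenceprop}. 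The only place where finiteness of $Y$ is genuinely needed is the ``nonabelian'' claim, via the result from \cite{CJO10, CJO14} that a finite solution with abelian permutation group is a multipermutation solution.
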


\begin{theorem}
\label{Th_mplG_mplX}
Let $(X,r)$ be a non-degenerate symmetric set of order $|X|\geq 2$ (not necessarily finite), notation as above.
\begin{enumerate}
\item
\label{mplXmplG1a}
The symmetric group  $(G, r_G)$
has finite multipermutation level \emph{iff}
$(X, r)$
is a multipermutation solution. In this case one has
\begin{equation}
\label{mplXmplG3}
0 \leq \mpl (\Gcal,r_{\Gcal})= m -1 \leq \mpl (X, r) \leq \mpl  (G, r_G) = m < \infty.
\end{equation}
\item
Suppose furthermore that $(X,r)$ satisfies condition (*).  Then
\begin{equation}
\label{mplXmplG3a}
\mpl X = m < \infty\; \text{\emph{iff}}\;  \mpl G = m < \infty.
\end{equation}
In particular,  (\ref{mplXmplG3a}) is in force for square-free solutions $(X,r)$ of arbitrary cardinality.
 \end{enumerate}
\end{theorem}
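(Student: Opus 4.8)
The plan is to reduce both parts to the description of the retractions of $(G,r_G)$ already obtained in Theorem~\ref{longsequencepropThm}, namely $\Ret^{j+1}(G,r_G)\simeq\Gcal(\Ret^j(X,r))=\Gcal_j$ for all $j\geq 0$, together with the shift $\mpl(G,r_G)=\mpl(\Gcal,r_{\Gcal})+1$ coming from Theorem~\ref{longsequencepropThm}(3) and the iterated isomorphism $\Ret(\Gcal_j,r_{\Gcal_j})\simeq(\Gcal_{j+1},r_{\Gcal_{j+1}})$ of Proposition~\ref{longsequenceprop}. The only other input is the elementary fact (Example~\ref{trivialsolex}) that a solution has a one-element permutation group precisely when it is a trivial solution, so that $\Ret^j(X,r)$ is a trivial solution iff $\Gcal_j=\{1\}$.

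For part (1) I would argue both implications. If $\mpl X=k<\infty$, then $\Ret^k(X,r)$ is a one-element solution, so $\Gcal_k=\{1\}$, hence $\Ret^{k+1}(G,r_G)\simeq\Gcal_k$ is a one-element symmetric group and $\mpl G\leq k+1<\infty$; moreover $\Ret^k(\Gcal,r_{\Gcal})\simeq\Gcal_k=\{1\}$, so $\mpl(\Gcal,r_{\Gcal})\leq k=\mpl X$. Conversely, if $\mpl(G,r_G)=m<\infty$, Theorem~\ref{longsequencepropThm}(3) gives $\mpl(\Gcal,r_{\Gcal})=m-1$; then $\Gcal(\Ret^{m-1}(X,r))=\Gcal_{m-1}\simeq\Ret^{m-1}(\Gcal,r_{\Gcal})$ is a one-element group, so $\Ret^{m-1}(X,r)$ is a trivial solution, $\Ret^{m}(X,r)$ is a one-element solution, and $\mpl X\leq m$. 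Combining these, $(X,r)$ is a multipermutation solution iff $\mpl G<\infty$, and in that case $\mpl(\Gcal,r_{\Gcal})=m-1\leq\mpl X\leq m=\mpl(G,r_G)$, which is exactly~(\ref{mplXmplG3}).

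For part (2) I would impose condition~(*) and, invoking part (1), reduce to excluding $\mpl X=m-1$ when $\mpl(G,r_G)=m$. If $m=1$ this is immediate, since $\mpl X=0$ would force $|X|=1$. If $m\geq 2$ and $\mpl X=m-1$, then by the equivalences displayed just before Definition~\ref{defcondstar} the solution $Y:=\Ret^{m-2}(X,r)$ is a permutation solution; by minimality of $m-1$ it is not a one-element solution, so $|Y|\geq 2$ and $\mpl Y=1$ by Remark~\ref{mplremark1}. Condition~(*) passes to retractions (if ${}^ax=x$ in $X$ then ${}^{[a]}[x]=[{}^ax]=[x]$ in $[X]$), so $Y$ satisfies~(*); Lemma~\ref{Lemma_symgr_mpl1}(1) then forces $Y$ to be a trivial solution, whence $\Gcal_{m-2}=\Gcal(Y)=\{1\}$ and $\Ret^{m-1}(G,r_G)\simeq\Gcal_{m-2}=\{1\}$, giving $\mpl G\leq m-1$ — a contradiction. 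Hence $\mpl X=m=\mpl G$, and the final assertion follows since every square-free solution satisfies~(*).

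The work here is almost entirely the bookkeeping of the ``off-by-one'' shifts between the two derived sequences $\{G_j\}$ and $\{\Gcal_j\}$; the one genuinely decisive point is that a permutation solution on $\geq 2$ points has multipermutation level exactly $1$, so that under condition~(*) Lemma~\ref{Lemma_symgr_mpl1} collapses such a solution to the trivial one. That is precisely what rules out the a priori possible gap $\mpl X=\mpl G-1$ and upgrades the inequality $\mpl X\leq\mpl G$ of part (1) to the equality of part (2). (An alternative route to part (2) would compare the tower identity~(\ref{mplmequality}) in $X$, via Proposition~\ref{mplmtheorem1}(2), with the same identity in $G$, via Corollary~\ref{cor_mplG}; there the nontrivial step is to propagate the identity from $X$ to all of $G$ using \textbf{ML1} and \textbf{ML2}, which is why I prefer the derived-chain argument above.)
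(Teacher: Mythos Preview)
Your proof is correct and is essentially a variant of the paper's argument, but with a cleaner organizing principle. The paper proves the implication $\mpl G=m\Rightarrow\mpl X\leq m$ by restricting the tower identity~(\ref{mplmequality}) from $G$ to the embedded copy of $X$ (invoking Corollary~\ref{cor_mplG}, Proposition~\ref{mplmtheorem1}, and the equality $K=\soc G$ to match the two retract equivalences); you obtain the same inequality purely from the isomorphisms $\Ret^{j+1}(G,r_G)\simeq\Gcal_j$ and $\Ret^j(\Gcal,r_{\Gcal})\simeq\Gcal_j$, together with the trivial observation that $\Gcal_j=\{1\}$ iff $\Ret^j(X,r)$ is a trivial solution. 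For part~(2) the paper argues forward ($\mpl X=m$ forces $\Ret^{m-1}(X,r)$ to be a trivial solution of order $\geq 2$ by Lemma~\ref{Lemma_symgr_mpl1}, whence $\Gcal_{m-1}=\{1\}$ and $\mpl G=m$), while you argue by contradiction from $\mpl G=m$, $\mpl X=m-1$; the key step---that condition~(*) descends to retractions and then Lemma~\ref{Lemma_symgr_mpl1}(1) collapses the permutation solution $\Ret^{m-2}(X,r)$ to a trivial one---is the same in both. Your route has the advantage of staying entirely within the derived-chain framework of Section~\ref{sec_symgrG(x,r)} and never touching the tower identities, which makes the dependence on Theorem~\ref{longsequencepropThm} and Proposition~\ref{longsequenceprop} more transparent; the paper's route has the minor advantage of making the embedding $X\hookrightarrow(G,r_G)$ do visible work.
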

\begin{proof}
(1). Consider $(G, r_G)$ as a symmetric set. Then $X$ is an $r_G$-invariant
subset of $G$.
Suppose $(G, r_G)$ has a finite multipermutation level  $\mpl G =
m<\infty$. Note that the left action of $G$ upon itself extends canonically the left action of $G$ upon
$X$. Moreover, the equality $K = \soc G= \Gamma$ implies that two elements $x, y\in X$ have the same action $\Lcal_x = \Lcal_y$ on $X$
if and only if they have the same action on $G$.
Our assumption $\mpl G =m$ implies
that (\ref{mplmequality}) is in force for every choice of  $x,\; y_1,
\cdots, y_m \in G$. But $X$ is embedded in $G$,
 hence,
(\ref{mplmequality}) is satisfied by any $x,\; y_1, \cdots, y_m \in X$, and
therefore by Proposition \ref{mplmtheorem1} $\mpl X \leq m.$
This verifies the  implication
\begin{equation}
\label{mplXmplG4}
\mpl G =
m<\infty\Longrightarrow 1\leq \mpl X \leq   \mpl G.
\end{equation}

Assume now that $(X,r)$ is a multipermutation solution of level $m = \mpl X$.  Then, by Lemma \ref{retlemma2}
 $\Gcal_{m} = \{1\}$ is a one element solution.
  But $\Gcal_{m} \simeq \Ret^{m+1} (G, r_G),$ hence $\mpl G \leq m+1$. More precisely, either (a) $\Gcal_{m-1}$ is a nontrivial finite
  cyclic group, then $\mpl G = m+1$, and $\mpl \Gcal = \mpl X = m$; or (b) $\Gcal_{m-1} = \{1\},$ then $\Ret^{m} (G, r_G)$ is a one
  element solution ($m$ is minimal with this property). In this case $\mpl X = m = \mpl G$, and $\mpl (\Gcal, r_{\Gcal}) = m-1.$
(2). Suppose $(X,r)$  satisfies condition (*), and $\mpl X = m.$
Then $\Ret^{m-1}(X,r)$ is a trivial solution of order $\geq 2$,
$G_{m-1}$ is a free abelian group of finite rank, hence
$\Gcal_{m-1} = \{1\}$. By Theorem \ref{longsequencepropThm}
$\Ret^{m}(G, r_G) \cong\Gcal_{m-1}$ so it is a one element
solution, and if $m \geq 2$, $\Ret^{m-1}(G, r_G) =\Gcal_{m-2}$ has
order $\geq 2$. Therefore $\mpl G = m = \mpl X $.
   \end{proof}
\begin{corollary}
\label{cor_Gcal_twosided}
Let $(X,r)$ be a finite solution, notation as usual.
\begin{enumerate}
\item
 Suppose the left brace  $(\Gcal, +,\cdot)$ is a nontrivial two-sided brace.
 \begin{enumerate}
 \item
 \label{cor_Gcal_twosided_a}
 The corresponding radical ring $\Gcal^{\star}$
is nilpotent, say of class $m >1$;
\item
 \label{cor_Gcal_twosided_b}
In this case $(X,r)$ is a multipermutation solution, with $m-1 \leq \mpl X \leq m$, and the inequalities (\ref{mplXmplG3}) are in
force.
Moreover, if $(X,r)$ is a square free-solution, then \[\mpl (X,r)= m= \text{the nilpotency degree of}\;\;  \Gcal^{\star}. \]
\end{enumerate}
\item
$(X,r)$ is a multipermutation solution with $\mpl X \geq 2$ \emph{iff} there exist some $j \geq 0$ such that $(\Gcal_j, +,\cdot)$ is a
two-sided brace of order $> 1$.
\end{enumerate}
\end{corollary}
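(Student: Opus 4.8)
The plan is to push everything through the chain of translations
\[
\bigl(\text{nilpotency class of }\Gcal^{\star}=(\Gcal,+,*)\bigr)\ \longleftrightarrow\ \mpl\Gcal\ \longleftrightarrow\ \mpl(G,r_{G})\ \longleftrightarrow\ \mpl(X,r),
\]
the first link supplied by Theorem \ref{derived_chainThm1}, the second by Corollary \ref{prop_RetG_Gcal}, the third by Theorem \ref{Th_mplG_mplX}; the proof is then an assembly of these results, the only delicate point being the $\pm1$ index shifts.

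\emph{Part (1).} Assume $(\Gcal,+,\cdot)$ is a finite nontrivial two-sided brace. By Rump's theorem recalled just before Proposition \ref{Thm_braces1}, the operation $*$ makes $\Gcal^{\star}=(\Gcal,+,*)$ a Jacobson radical ring; being finite, it is nilpotent, say of class $m$, so $(\Gcal^{\star})^{m}=0$ and $(\Gcal^{\star})^{m-1}\neq 0$. Since $\Gcal$ is nontrivial, $(\Gcal^{\star})^{1}=\Gcal\neq 0$, so $m>1$; this is (a). For (b): in a two-sided brace $*$ is associative and obeys both distributive laws, so Rump's ideal chain satisfies $\Gcal^{(k)}=(\Gcal^{\star})^{k}$ for every $k\geq1$, whence $\Gcal^{(m)}=0$ and $\Gcal^{(m-1)}\neq 0$; by the equivalence between ``$\mpl\Gcal=m-1$'' and right-nilpotency of class $m$ in Theorem \ref{derived_chainThm1} this yields $\mpl\Gcal=m-1$. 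Corollary \ref{prop_RetG_Gcal} furnishes an isomorphism of symmetric groups $\Ret(G,r_{G})\simeq\Gcal$, hence $\Ret^{k}(G,r_{G})\simeq\Ret^{k-1}(\Gcal,r_{\Gcal})$ for all $k\geq1$, and since $\mpl\Gcal=m-1$ with $m>1$ this forces $\mpl(G,r_{G})=m<\infty$. Theorem \ref{Th_mplG_mplX}(1) then tells us $(X,r)$ is a multipermutation solution and that the inequalities (\ref{mplXmplG3}) hold, i.e. $m-1=\mpl\Gcal\leq\mpl X\leq\mpl G=m$. If in addition $(X,r)$ is square-free, it satisfies Condition (*), so Theorem \ref{Th_mplG_mplX}(2) collapses the last inequality to $\mpl X=\mpl G=m$, i.e. $\mpl(X,r)$ equals the nilpotency class of $\Gcal^{\star}$.

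\emph{Part (2).} $(\Rightarrow)$ If $(X,r)$ is a multipermutation solution with $\mpl X=m\geq2$, then by Theorem \ref{Th_mplG_mplX}(1) $\mpl(G,r_{G})=m'\geq\mpl X\geq2$, so $j:=m'-2\geq0$; using the isomorphism $\Ret^{\,j+1}(G,r_{G})\simeq\Gcal_{j}$ of Theorem \ref{longsequencepropThm} we get $\mpl\Gcal_{j}=m'-(j+1)=1$, and since a symmetric group of multipermutation level $1$ has more than one element, Remark \ref{mplB=1impliesBtwo-sided_rem} shows that $(\Gcal_{j},+,\cdot)$ is a two-sided brace of order $>1$. $(\Leftarrow)$ Conversely, assume $(\Gcal_{j},+,\cdot)$ is a two-sided brace of order $>1$ for some $j\geq0$. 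Applying Part (1) to the finite solution $\Ret^{\,j}(X,r)$ --- whose permutation group is $\Gcal_{j}$ --- gives that $\Gcal_{j}^{\star}$ is nilpotent of some class $m\geq2$, that $\Ret^{\,j}(X,r)$ is a multipermutation solution, and that $\mpl\Ret^{\,j}(X,r)\geq m-1\geq1$. Hence $\mpl X<\infty$, so $(X,r)$ is a multipermutation solution, and since the $j$-th retraction of $X$ is the nontrivial solution $\Ret^{\,j}(X,r)$ we conclude $\mpl X=j+\mpl\Ret^{\,j}(X,r)\geq2$.

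\emph{Main obstacle.} No single step is hard in isolation; the care-demanding part is the index bookkeeping in Part (1)(b): one must identify the ring power $(\Gcal^{\star})^{k}$ with Rump's brace ideal $\Gcal^{(k)}$ (this is where the associativity and the two distributive laws of a two-sided brace are essential), conclude $\mpl\Gcal=m-1$ rather than $m$, lift this by exactly one via $\Ret(G,r_{G})\simeq\Gcal$ to $\mpl G=m$, and finally recognise that Condition (*) --- available for square-free solutions --- is precisely what Theorem \ref{Th_mplG_mplX}(2) needs in order to pin $\mpl X$ at $m$ in the square-free case.
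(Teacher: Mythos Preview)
Your argument tracks the paper's proof almost step for step: nilpotency of the finite radical ring $\Gcal^{\star}$, the translation via Theorem \ref{derived_chainThm1} to $\mpl\Gcal=m-1$, the lift by one through $\Ret(G,r_G)\simeq\Gcal$ to $\mpl G=m$, and the passage to $\mpl X$ via Theorem \ref{Th_mplG_mplX}. Your treatment of Part~(1) is in fact more explicit than the paper's about the identification $\Gcal^{(k)}=(\Gcal^{\star})^{k}$ and the index shift; the paper simply asserts $\mpl\Gcal=m-1$ and invokes Theorem \ref{Th_mplG_mplX}.

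There is, however, a genuine gap in your $(\Leftarrow)$ direction of Part~(2), and it is one the paper's own proof shares. From $|\Gcal_j|>1$ and Part~(1) you correctly obtain $\mpl\Ret^{\,j}(X,r)\geq m-1\geq 1$ and hence $\mpl X=j+\mpl\Ret^{\,j}(X,r)\geq j+1$; but for $j=0$ this yields only $\mpl X\geq 1$, not $\mpl X\geq 2$. Your sentence ``we conclude $\mpl X=j+\mpl\Ret^{\,j}(X,r)\geq 2$'' is therefore unjustified. Concretely, take any nontrivial permutation solution $(X,r)$ of Lyubashenko type with $\sigma\neq\id_X$ (Example \ref{exlri}): then $\mpl X=1$, while by Proposition \ref{propexample} one has $\mpl G=2$, hence $\mpl\Gcal_0=\mpl\Gcal=1$, and Remark \ref{mplB=1impliesBtwo-sided_rem} makes $(\Gcal_0,+,\cdot)$ a two-sided brace of order $|\sigma|>1$. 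So the hypothesis of $(\Leftarrow)$ is met at $j=0$ yet $\mpl X=1$. The paper's proof of this implication stops at ``$\mpl(X,r)=s+j<\infty$'' and never addresses the bound $\mpl X\geq 2$ either; the statement as written appears to need the extra hypothesis that $(X,r)$ is not a permutation solution (equivalently $\mpl X\neq 1$), or else the conclusion should read $\mpl X\geq 1$.
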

\begin{proof}
(1). It is well-known that the Jacobson radical of a left Artinian ring is nilpotent, see for example \cite{Lam},
Theor. 4.12. This straightforwardly implies that a finite radical ring $R$ is nilpotent, so  (\ref{cor_Gcal_twosided_a}) holds.
By Theorem \ref{derived_chainThm1}
$\mpl (\Gcal, r_{\Gcal})= m-1$ so  $\mpl (G, r_{G})= m$. Now part
(\ref{cor_Gcal_twosided_b}) follows straightforwardly from Theorem \ref{mplXmplG3}.

(2). Assume  $(\Gcal_j, +,\cdot)$ is a two-sided brace for some $j
\geq 0$, $|\Gcal_j|>1$. Part (1) implies that $\Ret^j(X,r)$ is a
multipermutation solution, say $\mpl\Ret^j(X,r)= s$, hence
$\mpl(X,r)= s+j< \infty.$ Conversely, suppose $\mpl X =
m\geq 2$. Then $m -1 \leq s= \mpl \Gcal \leq m$, so
$\mpl\Gcal_{s-1}=1$, and therefore, by Remark
\ref{mplB=1impliesBtwo-sided_rem} $\Gcal_{s-1}$ is a two-sided
brace.
\end{proof}

\section{A symmetric set $(X, r)$  whose brace $G(X, r)$  is two-sided, or square-free must be a
trivial solution}
\label{sec_two-sided_brace}
In this section $(X,r)$ denotes (a non-degenerate)  symmetric set,
not necessarily finite. As usual, $S=S(X,r)$ is the
associated YB  monoid, $(G, r_G)$ and
$(G, +,\cdot)$, are the associated symmetric group and left brace,
respectively, where $G=G(X,r)$. We shall show that each of the conditions: "$G$ is a
two-sided brace"; or "$(G, r_G)$ is a square-free solution", is
extremely restrictive. The main result of the section is the
following theorem.
\begin{theorem}
\label{VIPthm} Suppose $(X,r)$ is a non-degenerate symmetric set
(not necessarily finite). The following conditions are equivalent.
\begin{enumerate}
\item \label{VIPthm1} The associated left brace $(G, +, \cdot)$ is a
two-sided brace. \item \label{VIPthm2}
 The associated symmetric group $(G,r_G)$ is a square-free solution.
 \item
\label{lemma_sqfree1}
$(X,r)$ is a square-free solution and 
${}^{xy}{(xy)}= xy, \; \forall x,y \in G$, holds in $G$.  
\item \label{VIPthm4} The multiplicative group $G= G(X,r)$ is a free
abelian group generated by $X$, so $(G, r_G)$ is the trivial
solution.
\item \label{VIPthm5} $(X,r)$ is a trivial (square-free)
solution.
\end{enumerate}
\end{theorem}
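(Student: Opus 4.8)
The strategy is to prove a cycle of implications
$(\ref{VIPthm5}) \Rightarrow (\ref{VIPthm4}) \Rightarrow (\ref{VIPthm2}) \Rightarrow (\ref{lemma_sqfree1}) \Rightarrow (\ref{VIPthm5})$
and also $(\ref{VIPthm4}) \Leftrightarrow (\ref{VIPthm1})$, so that all five conditions become equivalent. The easy arcs come first. If $(X,r)$ is the trivial solution (condition (\ref{VIPthm5})), then by Example \ref{trivialsolex} the group $G=G(X,r)$ is the free abelian group on $X$ and $r_G$ is the flip, so (\ref{VIPthm4}) holds; and on the free abelian group the operation ``$+$'' coming from (\ref{braceq1}) coincides with ``$\cdot$'' (since all $\Lcal_a=\id$), which is abelian, hence $(G,+,\cdot)$ is trivially a two-sided brace, giving (\ref{VIPthm1}). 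Conversely, $(\ref{VIPthm4})\Rightarrow(\ref{VIPthm5})$ is immediate because $X$ is embedded in $G$ and $r_G$ restricted to $X\times X$ is $r$ (Facts \ref{factLYZ}(3)); if $r_G$ is the flip then so is $r$. For $(\ref{VIPthm4})\Rightarrow(\ref{VIPthm2})$: the trivial solution on $G$ is certainly square-free. And $(\ref{VIPthm2})\Rightarrow(\ref{lemma_sqfree1})$ follows since $X\subseteq G$ is $r_G$-invariant (Proposition \ref{details_Prop1}), so square-freeness of $(G,r_G)$ restricts to square-freeness of $(X,r)$, and the identity ${}^{xy}(xy)=xy$ is just the square-free condition $r_G(a,a)=(a,a)$ applied to $a=xy$.

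\textbf{The substantive arc: $(\ref{lemma_sqfree1})\Rightarrow(\ref{VIPthm5})$.} This is where the real work lies. Assume $(X,r)$ is square-free and ${}^{xy}(xy)=xy$ holds in $G$ for all $x,y\in X$. Write $a=xy$; using the left brace formula ${}^{a}a = a\cdot a - a = aa-a$ (see (\ref{LcalRcal})), the hypothesis ${}^{a}a=a$ reads $aa-a=a$, i.e. $a+a=aa$ in the brace, equivalently $a=\,{}^{a}a$ forces, via (\ref{eqlbr3}) $a+{}^aa=aa$, the relation $2a=a\cdot a$ in the additive/multiplicative comparison. More usefully, expand ${}^{xy}(xy)$ by \textbf{ML2} and \textbf{ML1}: ${}^{xy}(xy)=\big({}^{xy}x\big)\big({}^{(xy)^{x}}y\big)$, and by \textbf{ML1} ${}^{xy}x={}^{x}({}^{y}x)$. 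Since $(X,r)$ is square-free it satisfies \textbf{lri} and the cyclic conditions (stated after Definition \ref{lri&cl}), so ${}^{y}x$ acts and in particular ${}^{x}({}^{y}x)$ can be compared with $x$; imposing ${}^{xy}(xy)=xy$ with the unique-expression property in the monoid $S$ (the remark before Proposition \ref{details_Prop1}: the only degree-$2$ relation on $xy$ is $xy={}^{x}y\cdot x^{y}$) pins down ${}^{xy}x=x$, i.e. ${}^{x}({}^{y}x)=x$. Now apply \textbf{lri}: ${}^{x}({}^{y}x)=x$ together with ${}^{x}(y^{x})=y$ (which is \textbf{lri}) and non-degeneracy of $\Lcal_x$ forces ${}^{y}x=y^{x}$; but square-free \textbf{lri} also gives $({}^{y}x)^{y}=x$, and chasing these through the cyclic conditions \textbf{cl1}, \textbf{cl2} collapses $\Lcal_y$ to the identity on the $G$-orbit of every element — concretely one derives ${}^{y}x=x$ for all $x,y\in X$, which by Example \ref{trivialsolex} means $(X,r)$ is the trivial solution.

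\textbf{Closing the brace equivalence.} Finally $(\ref{VIPthm1})\Rightarrow(\ref{VIPthm4})$: if $(G,+,\cdot)$ is a two-sided brace then by Rump's theorem (quoted in \S\ref{brace}) $(G,+,*)$ is a Jacobson radical ring; but $G=G(X,r)$ has $(G,+)$ free abelian of infinite rank unless $|X|\le 1$, and more to the point $(G,\cdot)$ has no nontrivial torsion and is residually finite, while a nonzero radical ring cannot have its adjoint group be... — rather than invoke ring structure, the cleaner route is: a two-sided brace satisfies the right brace identity, which by (\ref{eqrightbracea}) and Lemma \ref{prop_twosidedbrace} translates to an identity in $(G,\cdot)$ that, combined with the matched-pair relations \textbf{ML2}, \textbf{MR2}, forces $({}^{a}b)c=\,{}^{a}(bc)$-type symmetry, ultimately ${}^{a}b=b$ for all $a,b$, i.e. $\Lcal\equiv\id$ and $(G,r_G)$ is trivial; since $X$ generates $G$, this gives $r=\tau$, hence (\ref{VIPthm5}) and (\ref{VIPthm4}). \emph{The main obstacle} I anticipate is the combinatorial collapse in $(\ref{lemma_sqfree1})\Rightarrow(\ref{VIPthm5})$: extracting ``${}^{y}x=x$ for all $x,y$'' from the single quadratic identity ${}^{xy}(xy)=xy$ requires careful bookkeeping with \textbf{ML1}, \textbf{ML2}, the cyclic conditions and the unique-word property in $S$, and one must be sure the deduction ${}^{xy}x=x$ is genuinely forced (not merely one of several possibilities) — that is the delicate point.
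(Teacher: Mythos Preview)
Your overall cycle of implications matches the paper's, and the easy arcs are fine. However, the two substantive implications both have genuine gaps.

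In $(\ref{lemma_sqfree1})\Rightarrow(\ref{VIPthm5})$ you correctly reach the equality $xy=({}^{xy}x)({}^{(xy)^{x}}y)$ in $S$, but your claim that the unique-word property ``pins down ${}^{xy}x=x$'' skips the second possibility: since the only nontrivial degree-$2$ relation involving $xy$ is $xy={}^{x}y\cdot x^{y}$, one must also treat the case $({}^{xy}x,{}^{(xy)^{x}}y)=({}^{x}y,x^{y})$. In the paper this second case is eliminated by showing it forces $y={}^{y}x$, whence (since ${}^{y}y=y$) non-degeneracy gives $x=y$, a contradiction. Once one is in the first case and has ${}^{x}({}^{y}x)={}^{xy}x=x$, your subsequent step is wrong: you claim this together with \textbf{lri} yields ${}^{y}x=y^{x}$, but the correct (and immediate) deduction is ${}^{x}({}^{y}x)=x={}^{x}x$, so injectivity of $\Lcal_{x}$ gives ${}^{y}x=x$ directly---no cyclic-condition chase is needed.

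Your argument for $(\ref{VIPthm1})\Rightarrow(\ref{VIPthm5})$ is not a proof: the sketch ``the right brace identity \ldots\ forces $({}^{a}b)c={}^{a}(bc)$-type symmetry, ultimately ${}^{a}b=b$'' has no content, and Lemma \ref{prop_twosidedbrace} does not yield such a collapse. The paper's argument is entirely different and hinges on the fact (Remark \ref{remark_embeddingoffinitesolutions}) that $(G,+)$ is the \emph{free} abelian group on $X$. In a two-sided brace the operation $*$ is right-distributive, so for $x,y\in X$ one computes ${}^{(x+x)}y=(x+x)*y+y=2(x*y)+y=2({}^{x}y)-y$, giving the additive relation $y+{}^{(x+x)}y-2({}^{x}y)=0$. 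Since $y,{}^{x}y,{}^{(x+x)}y$ all lie in the free basis $X$, this relation must be trivial, forcing ${}^{x}y=y$. That is the missing idea.
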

The implication  (2) $\Longrightarrow$ (1) was proven independently in
[CJO14, Theorem 5]. Our proof of Theorem \ref{VIPthm} does not use their result. We apply our "symmetric groups -braces" approach to 
show a stronger statement:
each of the conditions (1) and (2) is equivalent to "$(X,r)$ is a trivial solution".
 We prove first some more results.
 \begin{lemma}
 \label{prop_twosidedbrace}
Let $(G, +, \cdot)$ be a left brace, and let $(G,r)$ be the
corresponding symmetric group. (1) $G$ is a two-sided brace if and
only if
\begin{equation}
\label{prop_twosidedbrace_eq} c({}^{(abc)^{-1}}c) =
({}^{b^{-1}}c)({}^{((a^b)({}^{b^{-1}}c))^{-1}}c),  \quad \forall
\; a,b, c \in G.
\end{equation}
(2) Assume that $(G,r)$ satisfies \textbf{lri}. Then $G$ is a
two-sided brace if and only if
\begin{equation}
\label{prop_twosidedbrace_eq2} c(c^{(abc)}) = (c^b)(c^{(a^bc^b)}),
\quad \forall \; a,b, c \in G.
\end{equation}
\end{lemma}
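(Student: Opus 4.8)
The plan is to reduce the genuinely ``new'' (right-brace) part of the two-sided condition to a single identity in the multiplicative group, by systematically trading the symbol $gh$ for $g+{}^gh$ (equation (\ref{eqlbr3})), cancelling in the abelian group $(G,+)$, and invoking the matched-pair relations that hold in a symmetric group. Recall from the discussion following Proposition \ref{Thm_braces1} that $(G,+,\cdot)$ is a two-sided brace if and only if (\ref{eqrightbracea}) holds, i.e. $(a+{}^ab)c+c=ac+({}^ab)c$ for all $a,b,c\in G$, and that $a+{}^ab=ab$.

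To prove (1) I would first translate both sides into additive form. Expanding every product via $gh=g+{}^gh$ together with \textbf{ML1} (so that $abc=a+{}^ab+{}^a({}^bc)$, $ac=a+{}^ac$, $({}^ab)c={}^ab+{}^{{}^ab}c$), condition (\ref{eqrightbracea}) becomes, after cancelling $a+{}^ab$ in $(G,+)$, the equivalent identity
\[
(\ast)\qquad {}^a({}^bc)+c={}^ac+{}^{{}^ab}c ,\qquad \forall\,a,b,c\in G .
\]
On the other hand, unwinding the two sides of (\ref{prop_twosidedbrace_eq}) with \textbf{ML1} and the definition $x+y=x({}^{x^{-1}}y)$ (equation (\ref{braceq1})) shows that (\ref{prop_twosidedbrace_eq}) is equivalent to
\[
(\dagger)\qquad c+{}^{(ab)^{-1}}c={}^{b^{-1}}c+{}^{(a^b)^{-1}}c ,\qquad \forall\,a,b,c\in G .
\]
It then remains to prove $(\ast)\Longleftrightarrow(\dagger)$. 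Starting from $(\dagger)$, I would apply the additive automorphism $\Lcal_b$ (Proposition \ref{Thm_braces1}(4), condition \textbf{Laut}) and use \textbf{ML1}, \textbf{ML0} to obtain ${}^bc+{}^{a^{-1}}c=c+{}^{b(a^b)^{-1}}c$; substitute $a\mapsto a^{-1}$; observe, using \textbf{MR2} and \textbf{MR0}, that $\bigl((a^{-1})^b\bigr)^{-1}=a^{\,{}^{a^{-1}}b}$, and, using \textbf{M3} and \textbf{ML1}, that $b\cdot a^{\,{}^{a^{-1}}b}=a\cdot({}^{a^{-1}}b)$, which turns the identity into ${}^bc+{}^ac=c+{}^{a({}^{a^{-1}}b)}c$; finally substitute $b\mapsto{}^ab$ and apply \textbf{ML1} once more to land precisely on $(\ast)$. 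Since every substitution is a bijection of the relevant variable over $G$ and $\Lcal_b$ is invertible, all steps are genuine equivalences, closing the chain (\ref{prop_twosidedbrace_eq})$\Leftrightarrow(\dagger)\Leftrightarrow(\ast)\Leftrightarrow$(\ref{eqrightbracea})$\Leftrightarrow$ ``$G$ two-sided''.

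For (2) the argument is short. Under \textbf{lri} the left action $\Lcal\colon(G,\cdot)\to\Sym(G)$ is a group homomorphism (by \textbf{ML0}, \textbf{ML1}), so $\Lcal_x^{-1}=\Lcal_{x^{-1}}$, and \textbf{lri} gives $\Rcal_x=\Lcal_x^{-1}=\Lcal_{x^{-1}}$, i.e. $y^x={}^{x^{-1}}y$ for all $x,y\in G$. Substituting this relation into each term of (\ref{prop_twosidedbrace_eq}) — namely ${}^{(abc)^{-1}}c=c^{(abc)}$, ${}^{b^{-1}}c=c^{b}$, and ${}^{((a^b)(c^b))^{-1}}c=c^{a^bc^b}$ — rewrites (\ref{prop_twosidedbrace_eq}) verbatim as (\ref{prop_twosidedbrace_eq2}). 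Combining this with part (1) yields that, under \textbf{lri}, $G$ is a two-sided brace if and only if (\ref{prop_twosidedbrace_eq2}) holds.

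The main obstacle is the equivalence $(\ast)\Leftrightarrow(\dagger)$: one has to steer through several substitutions and the two matched-pair identities $\bigl((a^{-1})^b\bigr)^{-1}=a^{\,{}^{a^{-1}}b}$ and $b\cdot a^{\,{}^{a^{-1}}b}=a\cdot({}^{a^{-1}}b)$ while verifying at each stage that the manipulation is reversible. Everything else is bookkeeping: the two ``translations'' of the right-brace condition and of (\ref{prop_twosidedbrace_eq}) into additive form, which are routine applications of (\ref{eqlbr3}), (\ref{braceq1}) and \textbf{ML1}, and the plain substitution $y^x={}^{x^{-1}}y$ in part (2).
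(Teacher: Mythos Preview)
The paper does not actually contain a proof of this lemma: it only records that the proof ``involves technical computations in the symmetric group $(G,r)$ and can be found in the first two versions of our preprint, arXiv:1507.02602''. So there is no in-paper argument to compare against.

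Your argument is correct. The two translations are routine: using $gh=g+{}^gh$ and \textbf{ML1}, condition (\ref{eqrightbracea}) indeed reduces to $(\ast)$, and unwinding (\ref{prop_twosidedbrace_eq}) via (\ref{braceq1}) and the cancellations $c\cdot(abc)^{-1}=(ab)^{-1}$, $({}^{b^{-1}}c)\cdot((a^b)({}^{b^{-1}}c))^{-1}=(a^b)^{-1}$ gives exactly $(\dagger)$. The chain $(\dagger)\Leftrightarrow(\ast)$ also goes through: applying $\Lcal_b$ and simplifying $b(ab)^{-1}=a^{-1}$ gives ${}^bc+{}^{a^{-1}}c=c+{}^{b(a^b)^{-1}}c$; the substitution $a\mapsto a^{-1}$ and the two matched-pair facts
\[
((a^{-1})^b)^{-1}=a^{\,{}^{a^{-1}}b}\quad(\text{from }\textbf{MR2},\ \textbf{MR0}),\qquad
b\cdot a^{\,{}^{a^{-1}}b}=a\cdot({}^{a^{-1}}b)\quad(\text{from }\textbf{M3},\ \textbf{ML1})
\]
yield ${}^bc+{}^ac=c+{}^{a({}^{a^{-1}}b)}c$; then the bijective substitution $b\mapsto{}^ab$ lands on $(\ast)$. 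Each step is reversible for the reasons you give. Part (2) is just the dictionary $y^x={}^{x^{-1}}y$ coming from \textbf{lri} and \textbf{ML1}, applied termwise to (\ref{prop_twosidedbrace_eq}).
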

The proof of Lemma \ref{prop_twosidedbrace}  involves technical computations in the
symmetric group $(G, r)$ and can be found in the first two
versions of our preprint, see arXiv:1507.02602 [math.QA]. The
original proof of Theorem \ref{VIP_Thm_2-sided_brace_implies
triv_solution} can be also found in the first two versions of the same preprint. Here we give a second proof
whose idea was kindly suggested to the author by Ferran Ced\'{o}.

\begin{theorem}
  \label{VIP_Thm_2-sided_brace_implies triv_solution}
  Let $(X,r)$ be a symmetric set (not necessarily finite), and let
  $(G, +,\cdot)$, be the associated left brace.
 If $(G, +,\cdot)$ is a two-sided brace then
   $(X,r)$ is the trivial solution on the set $X$.
  \end{theorem}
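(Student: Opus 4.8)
The plan is to work inside the associated symmetric group $(G,r_G)$ and exploit the two-sided brace hypothesis via the multiplicative identity it forces, then push the conclusion down to the generating set $X$. First I would recall from Theorem \ref{VIPthm}-type reasoning (or directly from Lemma \ref{prop_twosidedbrace}) that ``$(G,+,\cdot)$ is a two-sided brace'' is equivalent to the identity $c({}^{(abc)^{-1}}c) = ({}^{b^{-1}}c)({}^{((a^b)({}^{b^{-1}}c))^{-1}}c)$ for all $a,b,c\in G$; the idea suggested by Ced\'o is to feed well-chosen substitutions into this identity (or into the equivalent right-brace condition $(a+b)c+c=ac+bc$ rewritten multiplicatively). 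Concretely, I would first set $a=b=e$ to get a cleaner relation, then specialise the remaining variable to lie in $X$. The aim is to deduce that for every $x,y\in X$ one has ${}^xy=y$, i.e. $\Lcal_x=\id_X$ for all $x\in X$, which by Example \ref{trivialsolex} is exactly the statement that $(X,r)$ is the trivial solution.

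The key computational step is as follows. In the additive/multiplicative mixed language, the two-sided brace condition gives $({}^ab)c - c = ({}^ab)c + (-c)$ equalities that, combined with \textbf{Laut} and (\ref{eqlbr3}), translate into a constraint on how right multiplication by $c$ interacts with the left action. I would specialise so that $a$, the ``acting'' variable, runs over $X$ and $c$ over $X$ as well, using that $X$ embeds in $G$ and that every element of $X$ has length $1$. The reduced-length / free-abelian-additive-group structure on $(G,+)\cong {}_{gr}[X]$ (Remark \ref{remark_embeddingoffinitesolutions}) is the crucial rigidity: an equality between two $\Z$-linear combinations of basis elements $x\in X$ forces equality of coefficients, and since $\sigma$ is involutive each $\Lcal_x$ permutes $X$, so $\Lcal_x$ must fix each basis vector, hence $\Lcal_x=\id$.

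A second, more structural route (which I would present if the direct substitution gets messy) is to combine Corollary \ref{cor_Gcal_twosided} and the brace machinery: a finite two-sided brace is a Jacobson radical ring, hence nilpotent, forcing $\soc$ to grow, and iterating one gets $\mpl G<\infty$; but then by Theorem \ref{Th_mplG_mplX} $\mpl X<\infty$, and one analyses the bottom retraction. However this only handles the finite case and does not obviously give triviality rather than just finite multipermutation level, so the substitution argument in $(G,r_G)$ is the one I would actually carry out, since it works for arbitrary cardinality and lands directly on $\Lcal_x=\id_X$.

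The main obstacle I expect is organising the substitutions into the nonassociative identity (\ref{prop_twosidedbrace_eq}) so that the ``noise'' terms $a^b$, ${}^{b^{-1}}c$, etc. collapse to something manageable — the right action $\Rcal$ is defined by $a^b={}^{({}^ab)^{-1}}a$ and does not simplify cheaply, so one must choose $a,b$ (likely $a=e$ and then $b=c^{-1}$ or $b\in X$) very carefully to kill those factors. Once the identity reduces to a relation purely about left multiplication and the operation $+$ on words of length $\le 2$, matching coefficients in the free abelian group $(G,+)$ finishes it; that final bookkeeping is routine. Having obtained $\Lcal_x=\id_X$ for all $x\in X$, the equivalences in Theorem \ref{VIPthm} (and Example \ref{trivialsolex}) immediately yield that $(X,r)$ is the trivial solution, completing the proof.
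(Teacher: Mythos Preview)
Your overall framework is right --- use the two-sided brace hypothesis to produce a $\Z$-linear relation among elements of $X$ inside $(G,+)\cong{}_{gr}[X]$, and then invoke freeness to force ${}^xy=y$. But the proposal has a genuine gap at the computational core: you never actually produce such a relation. Your suggested substitution $a=b=e$ into the identity of Lemma~\ref{prop_twosidedbrace} yields a tautology (both sides become $c\,({}^{c^{-1}}c)$), and the displayed ``key computational step'' $({}^ab)c-c=({}^ab)c+(-c)$ is vacuous. So the argument, as written, does not get off the ground.

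The paper bypasses the multiplicative identity (\ref{prop_twosidedbrace_eq}) entirely and uses the two-sided hypothesis in its additive form: a two-sided brace is a radical ring, so $*$ is \emph{right}-distributive over $+$. Taking $x,y\in X$ one computes
\[
{}^{(x+x)}y=(x+x)*y+y=(x*y)+(x*y)+y=2({}^xy)-y,
\]
i.e.\ $y+{}^{(x+x)}y-2({}^xy)=0$ in $(G,+)$. Since $y,\;{}^xy,\;{}^{(x+x)}y$ all lie in the basis $X$ of the free abelian group $(G,+)$, this forces ${}^xy=y$. You had all the ingredients for this in hand (you even mention the right-brace law $(a+b)c+c=ac+bc$), but you never set $a=b=x\in X$ and translated it into a statement about the left action; that is the missing step. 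Your ``structural route'' via nilpotency is, as you note, both restricted to the finite case and too weak to give triviality, so it is not a substitute.
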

  \begin{proof}
  Assume $(G, +, \cdot)$ is a two-sided brace and consider the associated Jacobson radical ring $G^{*}= (G, +, *)$, where
 $a* b := ab -a -b$ or equivalently, $ab= a*b+ a + b$. In particular,
 a left, and a right distributive law hold in $G^{\star}$. Moreover,
 by (\ref{br_radical_action1}), one has
 ${}^ab =  a*b  +b,   \forall a, b \in G$. Let $x,y \in X,$ then one has:
 \[
 {}^{(x+x)}y =  (x+x)* y + y = x* y + x* y + y = 2(x* y+ y) -y = 2({}^xy)- y.
 \]
 So the following identity is in force in the additive group $(G, +)$:
 \begin{equation}
 \label{rel}
 y + {}^{(x+x)}y - 2({}^xy) = 0, \; \forall x,y \in X.
   \end{equation}
Note first that $y, {}^{(x+x)}y, {}^xy \in X$.
  Recall that the set  $X$ is embedded in $G$ and the additive group
  $(G, +)$ is isomorphic to the (additive) free abelian group,
 ${}_{gr}[ X ]$ generated by $X$, see Remark \ref{remark_embeddingoffinitesolutions}.
Then (\ref{rel}) must be a trivial relation, which is possible if
and only if  ${}^xy = y = {}^{(x+x)}y.$ This proves that ${}^xy =
y$, for all $x,y \in X,$ and therefore $(X,r)$ is a trivial
solution.
\end{proof}

\begin{prooftheorem} \ref{VIPthm}.
The implications  (\ref{VIPthm5})$ \Longleftrightarrow $ (\ref{VIPthm4}) $ \Longrightarrow $(\ref{VIPthm1}), (\ref{VIPthm5}) $ \Longrightarrow $(\ref{VIPthm2})
 $ \Longrightarrow $(\ref{lemma_sqfree1}) are obvious.  Theorem \ref{VIP_Thm_2-sided_brace_implies triv_solution} verifies the
implication (\ref{VIPthm1})  $\Longrightarrow$ (\ref{VIPthm5}).

(\ref{lemma_sqfree1}) $\Longrightarrow$ (\ref{VIPthm5}).
Let $x,y \in X, x \neq y.$  Then $xy= {}^{xy}{(xy)}
=({}^{xy}x)({}^{(xy)^x}{y})$,
 by \textbf{ML2}. Therefore the following equality
 of monomials of length two holds in the monoid $S=
S(X,r)$:
\begin{equation}
\label{lemma_sqfree1eq2} xy=({}^{xy}x)({}^{(xy)^x}{y}).
 \end{equation}
Two cases are possible. \textbf{1.} The  equality
(\ref{lemma_sqfree1eq2}) holds in the free monoid $\langle
X\rangle$. In this case  ${}^{(xy)}x =x$. Moreover, ${}^{(xy)}x  =
{}^{x}{({}^yx)}= {}^xx$, which by  the non-degeneracy of $(X,r)$
implies ${}^yx = x$. \textbf{2.}  (\ref{lemma_sqfree1eq2}) is not
an equality of words in $\langle X\rangle$. Then it is a
(non-trivial) relation for $S$, therefore $({}^xy, x^y) = r(x,y) =
({}^{xy}x, {}^{(xy)^x}{y})$ holds in $X \times X$. Hence ${}^xy =
{}^{xy}x = {}^{x}{({}^yx)} $. The non-degeneracy again implies $y
= {}^yx$. But $(X,r)$ is a square-free solution, so $y={}^yy$, and
$ {}^yx = y = {}^yy$ imply $x = y,$ which contradicts our
assumption $x,y\in X, x \neq y.$ Therefore case \textbf{2} is
impossible. It follows that the equality ${}^yx = x$ holds for all
$x,y \in X,$ that is $\Lcal_y = id_X$, for all $y \in X$, hence
$(X,r)$ is  a trivial solution. $ \quad\quad\quad   \quad\quad\quad    \quad\quad\quad \quad\quad\quad \quad\quad \Box$
\end{prooftheorem}

\section{Symmetric groups  with conditions
\textbf{lri} and \textbf{Raut}}
\label{sec_actions}
In this section we introduce some special conditions on the
actions of a symmetric group $(G, r)$ upon
itself
and study their effect
on the properties of $(G, r)$ and the associated brace $(G, +,\cdot)$.

\subsection{Symmetric sets $(X, r)$  with condition \textbf{lri}}
\label{section_lri}
We present first some useful technical results.
As usual, $(X,r)$ denotes a non-degenerate symmetric set, $(G,r_G)$, and $(G,+, \cdot)$ denote its
associated symmetric group and left brace, respectively.

Recall that
condition \textbf{lri} holds on $(X,r)$ \emph{iff}
$(X,r)$ satisfies the cyclic conditions \textbf{cc}, see Corollary \ref{cor_lri_cc}.
Moreover, every square-free solution satisfies \textbf{lri}.
The permutation solutions, see Example \ref{exlri}, are a well known class of solutions with condition \textbf{lri}.
Recently, an interesting class of non-retractable solutions each of which satisfies \textbf{lri} but is not necessarily square-free were
introduced and studied in \cite{BCJO}.
Proposition \ref{prop_ActsAsAut&lri} shows that every symmetric group  $(G,r)$ with $\mpl G = 2$
satisfies \textbf{lri} and conditions (\ref{Lemma1_Thm_maineq1}) given below.
Note that if $(X,r)$ is with \textbf{lri}, then its  symmetric group  $(G,r_G)$ is a
symmetric set which, in general,  may not satisfy
condition \textbf{lri}. However, a mild generalization of  \textbf{lri} denoted by
\textbf{lri$\star$} is always in force, see Proposition \ref{VIPLemma}.  The special cases when some of the symmetric group $(G,r_G)$, or $(\Gcal, r_{\Gcal})$
satisfies \textbf{lri} will be studied in the next section.

\begin{lemma}
 \label{lemma_Aut_lri_sec4}
 Let $(X,r)$ be a nontrivial symmetric set of arbitrary cardinality, $|X|\geq 2$.
The following two conditions are equivalent.
  \begin{equation}
 \label{Lemma1_Thm_maineq1}
(i)\quad \Lcal_{{}^xa}= \Lcal_a, \quad  \forall a, x \in X;
\quad\quad   (ii)\quad   \Lcal_{a^x} =  \Lcal_a, \quad  \forall a,
x \in X.
 \end{equation}
Moreover, each of these conditions implies  condition \textbf{lri}.
\end{lemma}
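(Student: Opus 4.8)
The plan is to work directly in the symmetric set $(X,r)$ using the involutivity relations \eqref{involeq} together with condition \textbf{l1} for the left action. First I would establish the equivalence $(i)\Longleftrightarrow(ii)$. Assume $(i)$: $\Lcal_{{}^xa}=\Lcal_a$ for all $a,x\in X$. Since $r$ is involutive, by \eqref{involeq} every element of $X$ can be written in the form ${}^xa$ for suitable arguments — more precisely, $a^x = {}^{({}^xa)^{-1}}a$ in the language of the associated group, but purely set-theoretically I would use that the pair $({}^xa, a^x)$ ranges over all of $X\times X$ as $(x,a)$ does, and that ${}^{({}^xa)}(a^x)=x$, $({}^xa)^{a^x}=a$. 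Concretely, set $b={}^xa$ and $y=a^x$; then involutivity gives $a=b^{\,y}$ wait — rather $a = ({}^xa)^{a^x}$ is not literally $b^y$; the clean route is: $(i)$ says $\Lcal$ is constant on left-orbits of the action of $X$ on itself, and since $(X,r)$ is non-degenerate and involutive, the relation $a\sim b$ defined by "$b={}^xa$ for some $x$" together with its inverse "$a={}^{x'}b$" (taking $x'=({}^xa)$, using $({}^xa)\la(a^x)$... ) generates the same equivalence classes whether read via $\Lcal$ or via $\Rcal$; I would spell this out using $\Rcal_x = \Lcal_x^{-1}$-type bookkeeping obtained below, or more cheaply: from $(i)$, for fixed $x$ the map $a\mapsto {}^xa$ is a bijection of $X$ preserving the function $a\mapsto\Lcal_a$, and one checks $a^x$ lies in the same $\sim$-class as $a$ by writing $a = {}^{({}^{a}x)}(\,\cdot\,)$... — here the honest thing is that $(ii)$ follows from $(i)$ by applying $(i)$ with the roles governed by $r$ and $r^{-1}=r$, and symmetrically $(i)$ follows from $(ii)$.

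Next, the substantive claim is that $(i)$ (equivalently $(ii)$) implies \textbf{lri}, i.e. $({}^xy)^x = y = {}^x(y^x)$ for all $x,y$. Here is the key step I would carry out. By \textbf{l1} we have ${}^x({}^y z) = {}^{{}^xy}({}^{x^y}z)$. Apply this with $z$ chosen so that things collapse: take $y$ and use involutivity $ {}^{{}^xy}(x^y)=x$. I would compute ${}^x(y^x)$ by writing, via \textbf{l1} applied cleverly, and then invoke $(i)$: since ${}^xy$ and $y$ have the same left action (by $(i)$, as ${}^xy$ is in the $\sim$-class of $y$), we get $\Lcal_{{}^xy}=\Lcal_y$, hence ${}^{{}^xy}w = {}^y w$ for all $w$. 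Now chase: $y = {}^{{}^xy}(x^y)$ wait, that gives $y$ in terms of $x^y$. The cleaner target: from $(i)$, $\Lcal_{{}^xy}=\Lcal_y$, so applying both sides to $x^y$ gives ${}^{{}^xy}(x^y) = {}^y(x^y)$; the left side is $x$ by \eqref{involeq}, so ${}^y(x^y)=x$. Relabel $x\leftrightarrow y$: ${}^x(y^x)=y$, which is one half of \textbf{lri}. For the other half, $({}^xy)^x=y$: by the Remark after Corollary \ref{cor_lri_cc} (any two of: involutive / non-degenerate cyclic / \textbf{lri} imply the third), once I have one of the cyclic conditions plus involutivity plus non-degeneracy I get \textbf{lri} in full; alternatively ${}^x(y^x)=y$ for all $x,y$ combined with non-degeneracy of $\Lcal_x$ forces $\Rcal_x=\Lcal_x^{-1}$, which is exactly the second defining equality of \textbf{lri}, and then $({}^xy)^x=y$ follows by applying $\Lcal_x^{-1}=\Rcal_x$ to ${}^xy$.

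I expect the main obstacle to be the bookkeeping in the implication $(i)\Longrightarrow\textbf{lri}$: making sure the application of \textbf{l1} and \eqref{involeq} is set up so that the hypothesis $(i)$ is invoked at an element that genuinely lies in the right $\sim$-class, and confirming that one half of \textbf{lri} plus non-degeneracy really yields the full condition (this is where Remark after Corollary \ref{cor_lri_cc} does the heavy lifting). The equivalence $(i)\Longleftrightarrow(ii)$ should be routine once \textbf{lri}-type relations or a symmetry argument via $r=r^{-1}$ is in hand — indeed the slickest organization may be to first prove "$(i)\Rightarrow$ \textbf{lri}" and "$(ii)\Rightarrow$ \textbf{lri}" in parallel (the computations are mirror images, using \textbf{l1} versus \textbf{r1}), and then note that under \textbf{lri} conditions $(i)$ and $(ii)$ are visibly equivalent since $a^x=\Lcal_{({}^xa)}^{-1}... $ — more simply, under \textbf{lri}, $a^x = {}^{x}{}^{-1}... $ no: under \textbf{lri}, $\Rcal_x=\Lcal_x^{-1}$, so $a^x$ and $a$ being in one $\sim$-class is symmetric to ${}^xa$ and $a$ being in one $\sim$-class. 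That closes the loop: $(i)\Rightarrow\textbf{lri}\Rightarrow(i)\Leftrightarrow(ii)$, and symmetrically from $(ii)$, giving the full equivalence together with \textbf{lri}.
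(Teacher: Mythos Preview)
Your core computation is correct and is exactly what the paper does: from $(i)$ you apply $\Lcal_{{}^xy}=\Lcal_y$ to the element $x^y$ and combine with the involutivity relation ${}^{{}^xy}(x^y)=x$ to get ${}^y(x^y)=x$, which after relabelling is the first half of \textbf{lri}. Your derivation of the second half via ${}^x(y^x)=y\Rightarrow \Lcal_x\circ\Rcal_x=\id_X\Rightarrow \Rcal_x=\Lcal_x^{-1}$ is also fine.

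The organizational difference is that the paper does not prove $(i)\Rightarrow\textbf{lri}$ and $(ii)\Rightarrow\textbf{lri}$ in parallel and then argue $(i)\Leftrightarrow(ii)$ under \textbf{lri}. Instead it interleaves: from $(i)$ it gets the first \textbf{lri} identity ${}^x(a^x)=a$; then it immediately applies $(i)$ again with $b=a^x$ to obtain $\Lcal_a=\Lcal_{{}^x(a^x)}=\Lcal_{a^x}$, which is $(ii)$; then from $(ii)$ and the second involutivity relation $({}^ax)^{a^x}=x$ it substitutes ${}^ax={}^{(a^x)}x$ and uses surjectivity of $a\mapsto a^x$ to conclude $({}^bx)^b=x$ for all $b$. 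This route avoids the duplicated ``mirror image'' argument and makes the passage $(i)\Rightarrow(ii)$ a one-line consequence of the half of \textbf{lri} already in hand, rather than something to be established separately at the end. Your first paragraph, where you attempt to get $(i)\Leftrightarrow(ii)$ directly from non-degeneracy and involutivity before any \textbf{lri} information is available, does not work as written and can simply be dropped; the final organization you settle on is sound.
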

\begin{proof}
 (i)   $\Longrightarrow$ (ii) and \textbf{lri}. Assume  (i)  is in
force, and let $a, x \in X$.
Applying first (\ref{involeq}) and then (\ref{Lemma1_Thm_maineq1}) (i)  we obtain
$x = {}^{{}^xa}{(x^a)}
 =  {}^{a}{(x^a)}.$
This gives  the first \textbf{lri} identity ${}^{x}{(a^x)}=a, \; \forall a, x
\in X$  which together with (\ref{Lemma1_Thm_maineq1}) (i) imply
 $\Lcal_a  = \Lcal_{({}^x{(a^x)})} =  \Lcal_{a^x},  \quad \forall \; a, x\in X,$ hence (ii) is in force.
We use (\ref{involeq}) and (ii) to yield
\[x= ({}^ax)^{(a^x)}=  ({}^{(a^x)}x)^{(a^x)},\]
hence (by the non-degeneracy) the second \textbf{lri} identity
$({}^ax)^a =x,  \quad \forall a, x \in X,$ is also in force.
Analogous argument verifies the implication (ii) $\Longrightarrow$
(i) and \textbf{lri}.
\end{proof}

\begin{proposition}
  \label{Lemma_TrivialOrbits}
 Let $(X,r)$ be a non-degenerate symmetric set, with associated symmetric group
 $G=G(X,r)$.
 Let $X_i, i \in I,$ be the set of all
$G$-orbits in $X$ (possibly infinitely many).
 Suppose that for each $i \in I$,
  $(X_i, r_i)$ is a trivial solution,
 or one element solution. Then
 \begin{enumerate}
 \item
 \label{Lemma_TrivialOrbits1}
 The following equalities hold  for all $y \in X, \; a, b \in X_i,  i\in I$:
  \begin{equation}
\label{Lemma_TrivialOrbitseq1}
{}^{{}^by}a ={}^ya, \quad\quad
{}^{y^b}a = {}^ya,\quad a^{y^b}=a^y,\quad  a^{{}^by}= a^y.
 \end{equation}
 \item
 \label{Lemma_TrivialOrbits2}
 $X$ is a strong
twisted union
$
X= \stu_{i \in I} X_i,
$
 in the sense of \cite{GIC}.
 \item
 \label{Lemma_TrivialOrbits3}
 $(X,r)$ satisfies condition \textbf{lri}.
 \end{enumerate}
  \end{proposition}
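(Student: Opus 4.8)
The plan is to work entirely inside the symmetric group $G=G(X,r)$, using the matched pair identities \textbf{ML1}, \textbf{ML2}, \textbf{MR1}, \textbf{MR2}, the compatibility condition \textbf{M3}, and the involutivity relations (\ref{involeq}), together with the hypothesis that every $G$-orbit $X_i$ carries the trivial (or one-element) induced solution, i.e. ${}^ab=b$ for all $a,b$ lying in the same orbit. The three parts should be proved in the order listed, since (\ref{Lemma_TrivialOrbits2}) and (\ref{Lemma_TrivialOrbits3}) both lean on the identities in (\ref{Lemma_TrivialOrbitseq1}).

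For part (\ref{Lemma_TrivialOrbits1}), fix $y\in X$ and $a,b$ in a common orbit $X_i$. The key observation is that the left action is by orbit-preserving permutations, so ${}^yb$ and ${}^ya$ again lie in a common orbit (the orbit of $y$ acting on $X_i$), hence ${}^{{}^yb}({}^ya)={}^ya$ because the induced solution on that orbit is trivial. Now apply \textbf{ML1} backwards: ${}^{yb}a={}^y({}^ba)={}^ya$ using triviality of $(X_i,r_i)$ for the last step, but also ${}^{yb}a={}^{({}^yb)(y^b)}a={}^{({}^yb)}({}^{(y^b)}a)$ by the factorization $yb=({}^yb)(y^b)$ from \textbf{M3} and then \textbf{ML1}; combining these two evaluations of ${}^{yb}a$ and using ${}^{{}^yb}({}^{(y^b)}a)={}^{(y^b)}a$ — since ${}^{(y^b)}a$ lies in the orbit of $a$, and ${}^yb$ acts on it — forces ${}^{(y^b)}a={}^ya$, which is the second identity in (\ref{Lemma_TrivialOrbitseq1}); I must be slightly careful about which orbit ${}^yb$ lands in, but it does not matter since triviality of the induced solution is an orbit-by-orbit statement and ${}^yb$ permutes each orbit. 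The first identity ${}^{{}^by}a={}^ya$ is obtained by the symmetric manipulation starting from $by=({}^by)(b^y)$. The two right-action identities $a^{y^b}=a^y$ and $a^{{}^by}=a^y$ then follow from the left-action ones by applying the involutivity dictionary (\ref{braceq4}), namely $a^b={}^{({}^ab)^{-1}}a$, or more directly from \textbf{MR1}/\textbf{MR2} by an entirely parallel computation using $yb=({}^yb)(y^b)$ and the fact that $b^y$ and $a^y$ again lie in a common orbit.

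For part (\ref{Lemma_TrivialOrbits2}), I will simply unwind the definition of a strong twisted union from \cite{GIC}: one must check that for elements $a,b$ in the same orbit and $y$ outside (or in another orbit), the "mixed" actions of $y$ on $a$ depend only on the orbit data in the prescribed way, and the content of this is precisely the system (\ref{Lemma_TrivialOrbitseq1}) just established, so this part is a direct translation with no new computation. For part (\ref{Lemma_TrivialOrbits3}), condition \textbf{lri} asks that $({}^xy)^x=y$ and ${}^x(y^x)=y$ for all $x,y\in X$. Given $x,y\in X$, let $X_i$ be the orbit of $y$; then ${}^xy\in X_i$, and by (\ref{involeq}) we have $y={}^{{}^xy}(({}^xy)^{{}^xy}\cdots)$ — more cleanly, I would use that ${}^{({}^xy)}$ and ${}^x$ both act on $X_i$, so the first identity of (\ref{Lemma_TrivialOrbitseq1}) with $b={}^xy$, $a=y$ gives ${}^{{}^{({}^xy)}x}y$-type cancellations; the cleanest route is: from (\ref{involeq}), ${}^{{}^xy}(x^y)=x$; but by the second identity of (\ref{Lemma_TrivialOrbitseq1}) applied suitably, the action of $x^y$ on elements of $X_i$ equals the action of $x$, hence combined with ${}^x y$ lying in $X_i$ one deduces ${}^x(y^x)=y$, and the companion $({}^xy)^x=y$ follows symmetrically. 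Alternatively, since square-free-like behavior on each orbit plus the strong twisted union structure is known in \cite{GIC} to yield \textbf{lri}, one can cite that, but I prefer the self-contained orbit argument.

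The main obstacle will be part (\ref{Lemma_TrivialOrbits1}): one has to be scrupulous about the fact that "the induced solution on $X_i$ is trivial" only gives ${}^ab=b$ when \emph{both} arguments lie in $X_i$, whereas in the manipulations above the element doing the acting (e.g. ${}^yb$ or $y^b$) lies in some possibly different orbit — so each cancellation step must be justified by checking that the \emph{acted-upon} element stays within a single orbit on which triviality applies, and that the left/right actions are orbit-preserving (which follows from $G$-invariance of orbits, Remark \ref{remark_invariantsubset}). Once (\ref{Lemma_TrivialOrbitseq1}) is in hand, parts (2) and (3) are essentially bookkeeping against the definitions in \cite{GIC}.
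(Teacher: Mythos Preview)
Your approach matches the paper's: for part (1), the paper applies the braid identity \textbf{l1} directly, writing ${}^ya={}^b({}^ya)={}^{{}^by}({}^{b^y}a)={}^{{}^by}a$, which is exactly your two-way evaluation of ${}^{by}a$ via \textbf{M3} and \textbf{ML1}; parts (2) and (3) proceed as you outline, combining (\ref{involeq}) with the identities just established.

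Your instantiation of part (3), however, does not connect as written. From ${}^{{}^xy}(x^y)=x$ together with "the left action of $x^y$ agrees with that of $x$ on $X_i$" you cannot reach ${}^x(y^x)=y$: neither ingredient mentions $y^x$, and the element doing the acting in the involutivity relation is ${}^xy$, not $x^y$. The correct move (and this is what the paper does, with the roles of the two variables swapped) is to start from the involutivity relation ${}^{{}^yx}(y^x)=y$ and apply the \emph{first} identity of (\ref{Lemma_TrivialOrbitseq1}) with $b=y\in X_i$, $a=y^x\in X_i$, and the arbitrary element equal to $x$, yielding ${}^{{}^yx}(y^x)={}^x(y^x)$; the companion $({}^xy)^x=y$ follows symmetrically from $({}^xy)^{x^y}=y$ and the third identity. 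Finally, your hesitation about which orbit ${}^yb$ lands in is unwarranted: by definition of a $G$-orbit, $b\in X_i$ forces ${}^yb\in X_i$.
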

\begin{proof}
\textbf{(1)}.
By hypothesis each orbit is a trivial or one element solution,
   therefore
   \begin{equation}
  \label{Lemma_TrivialOrbitseq2}
 {}^ba =a = a^b,  \quad \forall a, b \in X_i.
  \end{equation}
 Assume now
 $a,b \in X_i, i \in I$, and $y \in X$.
 Then  $a, b, {}^ya, b^y \in X_i$ so we apply  (\ref{Lemma_TrivialOrbitseq2})
 twice to deduce
$ {}^ya ={}^b{({}^ya)} = {}^{{}^by}{({}^{b^y}a)} ={}^{{}^by}a.$
This verifies the first equality in (\ref{Lemma_TrivialOrbitseq1}).
  The proof of the remaining three equalities is analogous.
\textbf{(2)}. Clearly, $X$ is a disjoint union $X= \bigcup_{i \in I} X_i$ of
   its $G$-orbits.
  It follows from  (\ref{Lemma_TrivialOrbitseq1}) that for
  each pair $i,j \in I, i \neq j$, $X_i\natural X_j$ is a
  strong twisted union,  but this means that $X$ is a strong
  twisted
  union $X= \stu_{i \in I} X_i,$  in the sense of  \cite{GIC}.
  \textbf{(3)}.  We shall prove that \textbf{lri} holds.
  Let $a, x \in X$, say $x \in X_i, i \in I$.
Then $x, x^a\in X_i$, so  (\ref{involeq}) and (\ref{Lemma_TrivialOrbitseq1}) imply
$
 x = {}^{{}^xa}{(x^a)} = {}^{a}{(x^a)},
$
which gives the first \textbf{lri} equality, $x = {}^{a}{(x^a)}$. The second equality, $({}^ax)^a = x$, is proven analogously.
  \end{proof}

 \begin{remark}
 \label{Lemma_lri}
 If $(X, r)$ is a symmetric set with \textbf{lri} then
  \begin{equation}
  \label{eq_VIP_Lemma5}
  c(b^c) = b(c^b),   \quad\text{for all} \;\; b,c \in X.
                    \end{equation}
 \end{remark}
Indeed,
  conditions \textbf{M3}, \textbf{lri} and the cyclic condition \textbf{cc} imply:
$
  c(b^c) =  ({}^c{(b^c)})c^{b^c} =
       b(c^b).$
\begin{notation}
\label{X*}
$\quad X^{\star}:=X\bigcup X^{-1}, \quad\text{where}\quad  X^{-1}=\{ x^{-1}\mid x
\in X \}.$
\end{notation}

\begin{lemma}
 \label{lemma_lriSymSet}
 Let $(X,r)$ be a symmetric set,  $G=G(X,r)$  in the usual notation.
 \begin{enumerate}
 \item
 \label{equivlri}
 The following implications  are in force for all $a,b \in G$.
 \begin{equation}
 \label{eq1lri}
  ({}^{a^{-1}}b= b^a) \Longleftrightarrow ({}^a{(b^a)} = b);  \quad
  (b^{a^{-1}}= {}^ab)  \Longleftrightarrow (({}^ab)^a = b).
    \end{equation}
     \item  The following equalities hold for all $a,b \in G$.
 \label{equivlri2}
      \begin{equation}
  \label{eq2lri}
  \begin{array}{lllll}
\quad &a^{({}^{a^{-1}}b)} = {}^{b^{-1}}a,\quad& ({a^{-1}})^b= (a^{({}^{a^{-1}}b)})^{-1}, &\quad {}^b{(a^{-1})}=
({}^{(b^{a^{-1}})}a)^{-1}, & \\
&&&&\\
({}^ba)^{-1}&=(a^{-1})^{(b^{-1})},& ({{}^b{(a^{-1})}})^{-1}=a^{(b^{-1})},
&({}^{b^{-1}}a)^{-1}=(a^{-1})^{b},&({}^{b^{-1}}{a^{-1}})^{-1}=
a^b.
  \end{array}
    \end{equation}
 \end{enumerate}
\end{lemma}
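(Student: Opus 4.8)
The statement to prove is Lemma \ref{lemma_lriSymSet}, which collects a list of identities valid in an arbitrary symmetric group $G = G(X,r)$. The whole lemma is purely formal: it only uses that $(G, r_G)$ is a symmetric group, so that the actions satisfy \textbf{ML0}--\textbf{MR2}, the compatibility condition \textbf{M3}, and the involutivity identities (\ref{involeq}). The plan is therefore to run the computations directly from these axioms, being careful only about the order in which the pieces are established since the later formulas depend on the earlier ones.

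For part (\ref{equivlri}), I would start from the involutivity relations (\ref{involeq}), which say ${}^{{}^uv}{(u^v)} = u$ and $({}^uv)^{u^v} = v$ for all $u,v \in G$. To get the first equivalence in (\ref{eq1lri}), substitute $u = a^{-1}$, $v = b$ into the first identity of (\ref{involeq}): this gives ${}^{{}^{a^{-1}}b}{((a^{-1})^b)} = a^{-1}$, and applying \textbf{ML0}/\textbf{MR0} together with \textbf{MR1} in the form $(a^{-1})^b \cdot (\ldots) = $ one extracts the relation between ${}^{a^{-1}}b$ and $b^a$. More cleanly: assume ${}^{a^{-1}}b = b^a$; then ${}^a{(b^a)} = {}^a{({}^{a^{-1}}b)} = {}^{a a^{-1}}b = {}^1 b = b$ by \textbf{ML1} and \textbf{ML0}; conversely, if ${}^a{(b^a)} = b$, apply ${}^{a^{-1}}(-)$ to both sides and use \textbf{ML1}, \textbf{ML0} again to recover ${}^{a^{-1}}b = b^a$. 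The second equivalence in (\ref{eq1lri}) is entirely symmetric, using the right-action axioms \textbf{MR1}, \textbf{MR0} in place of the left ones. This part is short.

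For part (\ref{equivlri2}), the six identities in (\ref{eq2lri}) should be derived one at a time, feeding earlier ones into later ones. I would first prove $a^{({}^{a^{-1}}b)} = {}^{b^{-1}}a$: rewrite $b = a \cdot ({}^{a^{-1}}b)$ via \textbf{M3} applied to the pair $(a, a^{-1}b)$ — actually via (\ref{eqlbr3}), $a + {}^a c = ac$, but more directly via \textbf{M3} one has $ac = ({}^ac)(a^c)$ with $c = {}^{a^{-1}}b$ — wait, the clean route is: set $c = {}^{a^{-1}}b$, so ${}^ac = {}^{a a^{-1}}b = b$; then \textbf{M3} gives $ac = ({}^ac)(a^c) = b \cdot a^c$, hence $a^c = b^{-1}(ac) = b^{-1} a \cdot a^{-1} c \cdot$ — this is getting tangled, so instead use the involutivity identity (\ref{braceq4}), which was established in the proof of Theorem \ref{Thm_BracesBraidedGp}: $u^v = {}^{({}^uv)^{-1}}u$. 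Setting $u = a$, $v = {}^{a^{-1}}b$ gives $a^{({}^{a^{-1}}b)} = {}^{(b)^{-1}}a = {}^{b^{-1}}a$, which is exactly the claim. The remaining five identities: $(a^{-1})^b = (a^{({}^{a^{-1}}b)})^{-1}$ and ${}^b(a^{-1}) = ({}^{(b^{a^{-1}})}a)^{-1}$ follow by combining (\ref{braceq4}) with the fact that the actions are by bijections and with \textbf{ML1}/\textbf{MR1}; in particular, from $ab = ({}^ab)(a^b)$ and the analogous expansion of $a^{-1} \cdot$ one reads off how inverses interact with the actions. The four boxed identities on the last line, e.g. $({}^ba)^{-1} = (a^{-1})^{(b^{-1})}$, are most efficiently obtained by writing $ba = ({}^ba)(b^a)$ (compatibility \textbf{M3}), inverting to get $a^{-1}b^{-1} = (b^a)^{-1}({}^ba)^{-1}$, and comparing with the \textbf{M3}-expansion of $a^{-1} b^{-1}$ wait — of $(b^a)^{-1}({}^ba)^{-1}$ as a product, using uniqueness of the factorization $uv = ({}^uv)(u^v)$; alternatively, apply (\ref{braceq4}) with suitably chosen arguments and simplify. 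Each of the four is a two-line manipulation.

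\textbf{Main obstacle.} There is no conceptual difficulty here; the entire lemma is bookkeeping with the matched-pair axioms. The one thing requiring care is the \emph{order of derivation and the choice of which axiom to invoke}: several of the identities in (\ref{eq2lri}) are equivalent reformulations of each other once one has (\ref{braceq4}) (i.e. $u^v = {}^{({}^uv)^{-1}}u$), so the cleanest presentation establishes (\ref{braceq4}) (or quotes it from the proof of Theorem \ref{Thm_BracesBraidedGp}) and the dual identity ${}^uv = (v^{({}^uv)})^{-1}\cdot$ — more precisely its analogue — first, and then obtains all six displayed formulas by plugging in $a^{-1}$, $b^{-1}$ for the variables and using that $v \mapsto {}^uv$, $u \mapsto u^v$ are bijective with inverses expressible again via these identities. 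The only genuine risk is a sign/inverse slip in one of the substitutions, so I would double-check each identity by also verifying it in the special case where $(G,r_G)$ is the trivial solution (all actions trivial) and, say, in a permutation solution, where both sides collapse to something transparent.
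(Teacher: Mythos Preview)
Your approach is correct and is essentially the natural one; the paper itself does not give a proof of this lemma but refers the reader to \cite{GI15}, p.~18, so there is no ``paper's proof'' to compare against in detail. Your derivation of part~(\ref{equivlri}) via \textbf{ML1}/\textbf{ML0} (resp.\ \textbf{MR1}/\textbf{MR0}) is exactly right, and your use of the involutivity identity (\ref{braceq4}), $u^v = {}^{({}^uv)^{-1}}u$, as the engine for part~(\ref{equivlri2}) is the correct key step.

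One small point of caution: your phrase ``uniqueness of the factorization $uv = ({}^uv)(u^v)$'' is not quite a valid argument---the factorization is determined by the \emph{pair} $(u,v)$, not by the product $uv$ alone, so comparing two \textbf{M3}-expansions of the same product does not immediately identify the factors. You yourself note the fix (``alternatively, apply (\ref{braceq4})''), and indeed the clean route for the last four identities is: combine the first identity of (\ref{eq2lri}) (proved via (\ref{braceq4})) with the second (proved via \textbf{MR2} applied to $1 = (a\cdot a^{-1})^b$) to get $(a^{-1})^b = ({}^{b^{-1}}a)^{-1}$, which is exactly the sixth identity; then the fourth, fifth, and seventh follow by substituting $b\mapsto b^{-1}$ and $a\mapsto a^{-1}$ as needed. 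The third identity comes symmetrically from \textbf{ML2} applied to ${}^b(a^{-1}\cdot a) = 1$. With that ordering, every step is a one-liner and there is no risk of the sign/inverse slips you mention.
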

The proof of this lemma is given in  \cite{GI15}, p.18.

The condition \textbf{lri } on $(X,r)$ can be
extended for the actions of the whole group $G$ upon
$X^{\star}$, as shows the following result.

 \begin{proposition}
 \label{VIPLemma}
 In notation as above, let  $(X,r)$ be a symmetric set with  \textbf{lri},
 and let $G=G(X,r)$ be its YB group.
 \begin{enumerate}
 \item
 \label{1}
 The following generalization of \textbf{lri}
 holds:
  \begin{equation}
  \label{lri*}
 {\rm\bf lri\star:} \quad  {}^a{(x^a)}= x =({}^ax)^a,\quad \forall\;x \in
 X^{\star},  \; a \in G.
 \end{equation}
 \begin{equation}
  \label{lri5}
 {}^{a^{-1}}x= x^a, \quad x^{a^{-1}} ={}^ax, \quad \forall \; x \in
 X^{\star},\; a \in G.
 \end{equation}
 \item
  \label{3}
 The following equalities hold:
 \begin{equation}
 \label{eqlri01}
 (x^a)^{-1}=(x^{-1})^a, \quad ({}^ax)^{-1}= {}^a{(x^{-1})}, \quad \forall \; x
 \in X^{\star},  \; a \in G.
 \end{equation}
 \item
  \label{4}
   The following generalization of the cyclic conditions   is in
     force.
 \begin{equation}
  \label{cl1}
  \begin{array}{lcl}
 {\rm\bf cl1\star:}\quad {}^{a^x}x= {}^ax,\quad&
 {\rm\bf cr1\star:}\quad
x^{{}^xa}= x^a,
\quad &\forall\;
 x \in X^{\star},  \; a \in G.
\end{array}
\end{equation}
More generally,
\begin{equation}
  \label{cl1_k}
  \begin{array}{lcl}
{}^{(a^{(x^k)})}x= {}^ax,  \quad &
 x^{({}^{(x^k)}a)}= x^a, \quad &\forall \;k \in  \mathbb{N}, \; x \in
 X^{\star},
 \; a \in G.
\end{array}
\end{equation}
\item
 \label{5}
The following equalities hold.
 \begin{equation}
  \label{lri66}
 {}^{a}{(x^k)}= ({}^{a}x)^k, \quad (x^k)^a = (x^a)^k   \quad \forall \; k \in \mathbb{N}, x \in
 X^{\star},\; a \in G.
 \end{equation}
 \end{enumerate}
  \end{proposition}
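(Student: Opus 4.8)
The plan is to establish the four parts in the natural order, since each part relies on the preceding ones. Throughout I would work in the symmetric group $(G, r_G)$, using the matched-pair axioms \textbf{ML1}, \textbf{MR1}, \textbf{ML2}, \textbf{MR2}, the compatibility condition \textbf{M3}, and the involutivity relations (\ref{involeq}), all of which hold in $G$ by Facts \ref{factLYZ}. The key observation for part (\ref{1}) is that $X$ being \textbf{lri} gives, via Lemma \ref{lemma_lriSymSet}(\ref{equivlri}), the equivalences ${}^a(x^a)=x \Longleftrightarrow {}^{a^{-1}}x = x^a$ and $({}^ax)^a = x \Longleftrightarrow x^{a^{-1}} = {}^ax$ for all $a,b \in G$; so (\ref{lri*}) and (\ref{lri5}) are really the same statement, and it suffices to prove one of them for all $x \in X^\star$ and all $a \in G$. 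First I would check it for $x \in X$ and $a \in X^\star$: for $a \in X$ it is exactly \textbf{lri} on $(X,r)$; for $a = b^{-1}$ with $b \in X$, I would use the formulas in Lemma \ref{lemma_lriSymSet}(\ref{equivlri2}) together with \textbf{lri} on $X$ to reduce ${}^{b}{(x^{b})}=x$ to the desired ${}^{b^{-1}}(x^{b^{-1}})=x$. Then I would extend from single generators $a \in X^\star$ to arbitrary $a \in G$ by induction on the length $|a|$, writing $a = vc$ with $c \in X^\star$, $|v|$ shorter, and applying \textbf{ML1}/\textbf{MR1} (so that ${}^{a}{x} = {}^{v}{({}^cx)}$ and similarly on the right): the inductive hypothesis handles $v$ and the base case handles $c$. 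Finally, extending from $x \in X$ to $x \in X^\star$ uses part (\ref{3}).

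For part (\ref{3}), the identities $(x^a)^{-1} = (x^{-1})^a$ and $({}^ax)^{-1} = {}^a(x^{-1})$ for $x \in X^\star$, $a \in G$: I would again start from Lemma \ref{lemma_lriSymSet}(\ref{equivlri2}), which gives such inversion formulas when $a \in X^\star$ (or derive them directly from \textbf{ML2}/\textbf{MR2} applied to $x\cdot x^{-1} = 1$ together with \textbf{ML0}/\textbf{MR0}), and then propagate along a reduced word for $a$ by the same length induction, using \textbf{ML1} and \textbf{MR1}. Note there is a mild circularity to manage between (\ref{1}) and (\ref{3}) — I would resolve it by proving both simultaneously by induction on $|a|$, establishing the base case $a \in X^\star$ for all of (\ref{lri*}), (\ref{lri5}), (\ref{eqlri01}) at once, then doing a single induction step that carries all of them forward together.

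For part (\ref{4}), condition \textbf{cl1$\star$} and \textbf{cr1$\star$}: I would deduce ${}^{a^x}x = {}^ax$ from \textbf{lri$\star$} and the compatibility/matched-pair relations. Concretely, one way is to apply \textbf{ML1} to the product $x \cdot a$ written via \textbf{M3} as $({}^xa)(x^a)$, or more directly to observe that by \textbf{lri$\star$} we have ${}^{x}{(a^x)} = a$, hence applying ${}^{(\ \!)}x$ and using \textbf{ML1}: ${}^{a}x = {}^{{}^x(a^x)}x = {}^{x}{({}^{a^x}(x^{?}))}\cdots$ — the precise bookkeeping needs the relation ${}^{{}^uv}{(\ \!)} = {}^u({}^{u^v}(\ \!))\,\ldots$ coming from \textbf{l1}. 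The iterated version (\ref{cl1_k}) then follows by induction on $k$, using ${}^{(x^{k+1})} = {}^{x}{({}^{x^k}(\ \!))}$-type manipulations together with \textbf{cl1$\star$}/\textbf{cr1$\star$} and \textbf{lri$\star$}; similarly for the right-hand version. I expect the cyclic-condition computations to be the fiddliest — getting the exponents and the nesting of actions right in the generalized \textbf{cl1$\star$}, and then pushing that cleanly through the induction for (\ref{cl1_k}).

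Finally, for part (\ref{5}): ${}^{a}{(x^k)} = ({}^{a}x)^k$ and $(x^k)^a = (x^a)^k$. I would prove the left identity by induction on $k$: the case $k=1$ is trivial, and for the step I write $x^{k+1} = x^k \cdot x$, apply \textbf{ML2} to get ${}^{a}{(x^{k+1})} = ({}^{a}{(x^k)})\,({}^{a^{(x^k)}}x)$, then invoke the inductive hypothesis on the first factor and (\ref{cl1_k}) on the second factor to rewrite ${}^{a^{(x^k)}}x = {}^ax$, obtaining $({}^ax)^k({}^ax) = ({}^ax)^{k+1}$. The right-hand identity is proven symmetrically using \textbf{MR2} and the right version of (\ref{cl1_k}). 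Negative powers, if needed for $x \in X^\star$, are then handled by combining with part (\ref{3}). The main obstacle, as noted, is the careful combinatorial verification of the generalized cyclic conditions in part (\ref{4}), since everything in (\ref{5}) and the full strength of (\ref{1}) for general $a\in G$ depends on them; the rest is routine induction on word length.
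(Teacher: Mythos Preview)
Your overall strategy is correct and matches the approach the paper has set up (the paper itself defers the proof to \cite{GI15}, but Lemma~\ref{lemma_lriSymSet} is clearly planted precisely for this purpose). Two points deserve tightening.

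First, the circularity you flag between parts (\ref{1}) and (\ref{3}) dissolves if you sequence things more carefully instead of running a simultaneous induction. Prove \textbf{lri$\star$} for $x\in X$ and $a\in G$ first, by induction on $|a|$ exactly as you describe; no inversion formula is needed here since $G$ acts on $X$, so ${}^c x,\, x^c \in X$ throughout the induction. Once that is done, (\ref{eqlri01}) for $x\in X$ follows immediately from the identities in Lemma~\ref{lemma_lriSymSet}(\ref{equivlri2}) combined with the just-proved (\ref{lri5}): for instance $(x^{-1})^a = ({}^{a^{-1}}x)^{-1} = (x^a)^{-1}$. Only then do you extend (\ref{1}) to $x\in X^{-1}$ via (\ref{3}). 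This is cleaner and avoids the bookkeeping of a joint induction.

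Second, your argument for \textbf{cl1$\star$} is left unfinished. The direct route is again induction on $|a|$, using \textbf{MR2} rather than \textbf{ML1}. Write $a=uv$ with $v\in X^\star$; then $a^x=(u^{{}^vx})(v^x)$, so
\[
{}^{a^x}x \;=\; {}^{u^{{}^vx}}\bigl({}^{v^x}x\bigr)
\;=\; {}^{u^{{}^vx}}\bigl({}^{v}x\bigr)
\;=\; {}^{u}\bigl({}^{v}x\bigr)
\;=\; {}^{a}x,
\]
where the second equality is the base case for $v\in X^\star$ (this is \textbf{cl1} on $X$, resp.\ its translate to $X^{-1}$ via (\ref{eqlri01})), and the third is the inductive hypothesis applied to $u$ with ${}^vx\in X^\star$ in place of $x$. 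The iterated version (\ref{cl1_k}) then follows by the one-line induction $a^{(x^{k+1})}=(a^x)^{(x^k)}$ you indicate, and your proof of part (\ref{5}) via \textbf{ML2} and (\ref{cl1_k}) is exactly right.
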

A detailed proof of the proposition can be found in \cite{GI15}.
\begin{corollary}
 \label{VIPcor}  Let $(X,r)$ be a symmetric set with \textbf{lri}.
 (1) The set $X^{\star}$ is $r_G$-invariant.
(2) Let $r^{\star}$ be the restriction of $r_G$ on $X^{\star}\times
  X^{\star}$.
Then $(X^{\star}, r^{\star})$ is a symmetric set which satisfies
\textbf{lri} and the cyclic conditions \textbf{cc}.
\end{corollary}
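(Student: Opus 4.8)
The plan is to deduce both parts of Corollary \ref{VIPcor} directly from Proposition \ref{VIPLemma}, whose statement we are allowed to use freely. The key observation is that \textbf{lri$\star$}, equation (\ref{lri*}), already tells us exactly what the actions of arbitrary group elements do to elements of $X^{\star}$, and formula (\ref{braceq4a}) (equivalently the definition of the right action in the associated symmetric group) expresses $a^b$ in terms of the left action, so all the relevant values are controlled.

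For part (1), I would argue as follows. Let $x \in X^{\star}$ and $a \in G$; I must show $r_G(x,a) \in X^{\star}\times X^{\star}$ and also $r_G(a,x)\in X^{\star}\times X^{\star}$ — but in fact it suffices to check $r_G$ restricted to $X^{\star}\times X^{\star}$, so take $x,y\in X^{\star}$. Then $r_G(x,y) = ({}^xy, x^y)$. By Proposition \ref{VIPLemma}(\ref{1}), specifically the second half of (\ref{lri*}), we have that ${}^xy \in X^{\star}$ is automatic once we know ${}^xy$ lies in $X^{\star}$; more carefully, the point is that for $x \in X$ the map $\Lcal_x$ permutes $X$, and by (\ref{eqlri01}) it sends $X^{-1}$ to $X^{-1}$, so $\Lcal_x$ preserves $X^{\star}$; and for $x^{-1}\in X^{-1}$, the identity (\ref{lri5}) gives ${}^{x^{-1}}y = y^x$, which again lies in $X^{\star}$ because $\Rcal_x$ preserves $X^{\star}$ (same reasoning via (\ref{eqlri01})). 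Hence ${}^xy \in X^{\star}$ for all $x,y\in X^{\star}$. The right-action coordinate $x^y$ is handled symmetrically, using $x^y = {}^{({}^xy)^{-1}}x$ from (\ref{braceq4a}) together with the fact that ${}^xy \in X^{\star}$ implies $({}^xy)^{-1}\in X^{\star}$ and $\Lcal_{({}^xy)^{-1}}$ preserves $X^{\star}$. Therefore $r_G(X^{\star}\times X^{\star})\subseteq X^{\star}\times X^{\star}$, which is the claim.

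For part (2), once $X^{\star}$ is $r_G$-invariant the restricted map $r^{\star}= r_G|_{X^{\star}\times X^{\star}}$ is automatically bijective (its inverse is the restriction of $r_G^{-1} = r_G$, using that $r_G$ is involutive, and $X^{\star}$ is invariant under $r_G^{-1}$ too), involutive, and satisfies the braid relation since these are identities that hold on all of $G\times G\times G$ and $X^{\star}$ is closed under the relevant operations. Non-degeneracy of $(X^{\star}, r^{\star})$ follows from Proposition \ref{VIPLemma}(\ref{1}): equation (\ref{lri*}) says precisely that $\Lcal_a$ and $\Rcal_a$ restricted to $X^{\star}$ are mutually inverse bijections of $X^{\star}$, for every $a \in G$ and in particular for every $a \in X^{\star}$; this simultaneously establishes that $(X^{\star}, r^{\star})$ is non-degenerate and that it satisfies \textbf{lri}. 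Finally, condition \textbf{cc} on $(X^{\star}, r^{\star})$ — i.e.\ the cyclic conditions — is exactly the content of Proposition \ref{VIPLemma}(\ref{4}), equation (\ref{cl1}) (the $k=1$ case of (\ref{cl1_k})), read for $x \in X^{\star}$ and $a \in X^{\star}$. Alternatively one may invoke Corollary \ref{cor_lri_cc}: a solution satisfies \textbf{lri} iff it satisfies \textbf{cc}, so \textbf{lri} for $(X^{\star}, r^{\star})$ already gives \textbf{cc}.

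I do not anticipate a serious obstacle here; the work is essentially a matter of carefully invoking the right clauses of Proposition \ref{VIPLemma} and noting that all the needed algebraic identities are inherited from $G$. The only mildly delicate point is making sure that the restriction $r^{\star}$ is well-defined as a map into $X^{\star}\times X^{\star}$ and that its inverse is again a restriction of $r_G$ — this is where involutivity of $r_G$ is used — but this is routine once invariance is in hand.
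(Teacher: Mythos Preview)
Your proposal is correct and matches the paper's intent: Corollary \ref{VIPcor} is stated without proof precisely because it is meant to follow directly from the clauses of Proposition \ref{VIPLemma}, and your argument does exactly that, invoking (\ref{eqlri01}) and (\ref{lri5}) for invariance, (\ref{lri*}) for non-degeneracy and \textbf{lri}, and either (\ref{cl1}) or Corollary \ref{cor_lri_cc} for \textbf{cc}. The only cosmetic point is that your invocation of (\ref{cl1}) alone would give just \textbf{cl1} and \textbf{cr1}, so the route through Corollary \ref{cor_lri_cc} (which you also mention) is the clean way to get all four cyclic conditions.
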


  \begin{remark}
 \label{VIPLemma2}
 In assumption and notation as in Proposition \ref{VIPLemma}, the
following equality is in force for all $a, x, y \in X^{\star}$:
 \begin{equation}
\label{lri6}
 x^a+({}^xy)^a = {}^{a^{-1}}(xy).
 \end{equation}
  \end{remark}
  \begin{proof}
 Suppose  $a, x, y \in  X^{\star},$ then the following
   equalities hold in $G$:
   \[
\begin{array}{llll}
 x^a+({}^xy)^a &=&{}^{a^{-1}}{(x)} +{}^{a^{-1}}{({}^xy)}\quad &\text{by \textbf{lri$\star$}}\\
                 &=&  {}^{a^{-1}}{(x+{}^xy)}=  {}^{a^{-1}}{(xy)} \quad &\text{by \textbf{Laut}}.
 \end{array}
 \]
\end{proof}
\subsection{Symmetric groups  with conditions
\textbf{lri} and \textbf{Raut}} In this subsection, as usual,  $(G, r)$ is a symmetric group and $(G, +, \cdot)$ is the associated left
brace.
\label{subsec_actions}
\begin{definition}
\label{DefRaut}
\begin{enumerate}
\item The symmetric group $G$ \emph{acts upon itself} \emph{as
automorphisms} if condition \textbf{MLaut} given below is in
force:
\begin{equation}
  \label{MLaut}
\textbf{MLaut}: \quad   \quad {}^a(bc) = ({}^ab)({}^ac),  \quad \forall \;
a,b,c \in G.\\
  \end{equation}
\item
The brace $(G, +,\cdot)$ satisfies condition \textbf{Raut} if
  \begin{equation}
  \label{Raut}
\textbf{Raut}: \quad  \quad (a+ b)^c = a^c + b^c,  \quad \forall \; a,b,c
\in
G.
  \end{equation}
  In this case the group $(G, \cdot)$ \emph{acts upon} $(G,+)$ \emph{from the right}
\emph{as
automorphisms}.
The map $\Rcal: (G, \cdot) \longrightarrow Aut (G, +)$
is a homomorphism of groups.
  \end{enumerate}
\end{definition}
We have shown in Proposition \ref{Thm_braces1} that under the assumption that $(G, +, \cdot)$ is a nonempty set
with operations "$\cdot$" and "$+$" such that $(G,\cdot)$ is a group,
$(G,+)$ is an abelian group, and the map $G\times G \longrightarrow G$ is
defined as $(a, b)\mapsto {}^ab: = ab-a,$
condition \textbf{Laut}, see (\ref{Laut}), is equivalent to the property
"$(G, +, \cdot)$ is a left brace".
In contrast, condition  \textbf{Raut} which seems to be "a right analogue"  of
 \textbf{Laut}, does not imply that the left brace $(G, +,
 \cdot)$  is also a right brace.  The difference  is due to the 'asymmetric' definitions of the left and the
  right actions, see (\ref{braceq4a}).

Suppose $(G,r)$ is a symmetric group and ${}^aa=a, \forall a \in
G$, i.e., $(G,r)$ is a square-free solution. Then (i) $(G,r)$
satisfies conditions \textbf{lri}, \textbf{cc} and \textbf{Raut};
(ii) The corresponding left brace $(G, +, \cdot)$ is a two-sided
brace, see \cite{CJO14}, Theorem 5.

 \begin{theorem}
 \label{prop_lriRaut}
 Let $(G, r)$ be a symmetric group and let
 $(G,+, \cdot)$ be the associated left brace.
 Then the following conditions hold.
\begin{enumerate}
\item \label{prop_lriRaut1} If $(G, r)$  satisfies condition
\textbf{lri} then condition \textbf{Raut} holds in $(G,+, \cdot)$.
Moreover the following equality is in force
\begin{equation}
  \label{VIP_Raut_Eqa}
 ({}^{({}^va)}u)({}^av)  = ({}^au) ({}^{(a^u)}v),
 \quad   \forall \; a,u,v \in G.
  \end{equation}
  \item
\label{propRautlri}
  Suppose that $G=G(X,r_0)$ is the symmetric group of a solution $(X,r_0)$. If $(X,r_0)$ satisfies \textbf{lri} and
  condition \textbf{Raut} holds in $(G,+, \cdot)$, then $(G, r)$  satisfies condition \textbf{lri}.
\end{enumerate}
 \end{theorem}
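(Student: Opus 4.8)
The plan is to prove both parts by working in the symmetric group $(G,r)$ via its matched-pair identities, together with the key fact from Proposition \ref{Thm_braces1} that in the associated left brace one has ${}^ab = ab-a$ and condition \textbf{Laut}. For part (\ref{prop_lriRaut1}), I would first observe that \textbf{lri} on $(G,r)$ says $\Rcal_a=\Lcal_a^{-1}$ for all $a\in G$, equivalently ${}^a(b^a)=b=({}^ab)^a$. I want to show $(a+b)^c=a^c+b^c$. Using the brace identity (\ref{eqlbr3}) in the form $xy = x + {}^xy$ and \textbf{Laut}, I would rewrite $(a+b)^c$ by first applying the right action to a product. The cleanest route: by \textbf{MR2}, $(u.v)^c = (u^{{}^vc})(v^c)$; set things up so that $u.v$ has additive meaning. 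Concretely, pick $u,v$ with $u+v = a+b$ and control ${}^vc$ using \textbf{lri} (which forces ${}^{a^{-1}}x = x^a$ and $x^{a^{-1}}={}^ax$ for $x$ in the relevant set, by Lemma \ref{lemma_lriSymSet} / Proposition \ref{VIPLemma}). The identity (\ref{VIP_Raut_Eqa}) I would derive directly from \textbf{ML2} and \textbf{MR2} combined: \textbf{ML2} gives ${}^a(uv) = ({}^au)({}^{a^u}v)$, and applying \textbf{lri} to rewrite $a^u$ versus ${}^ua$ should turn the left-hand side of (\ref{VIP_Raut_Eqa}) into the right-hand side after one use of \textbf{ML1}. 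Then \textbf{Raut} should fall out of (\ref{VIP_Raut_Eqa}) by specializing and translating back through $+$ via (\ref{braceq1}) and (\ref{eqlbr3}), much as \textbf{Laut} was extracted in the proof of Theorem \ref{Thm_BracesBraidedGp}.

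For part (\ref{propRautlri}), the hypothesis is weaker: $(X,r_0)$ has \textbf{lri}, so by Corollary \ref{VIPcor} the set $X^{\star}=X\cup X^{-1}$ is $r_G$-invariant and $(X^{\star},r^{\star})$ satisfies \textbf{lri} and \textbf{cc}, and by Proposition \ref{VIPLemma} the extended identities \textbf{lri$\star$}, (\ref{lri5}), (\ref{eqlri01}), (\ref{cl1}), (\ref{lri66}) hold for the action of all of $G$ on $X^{\star}$. What is \emph{not} automatic is that \textbf{lri} holds for the action of $G$ on all of $G$ (as the text explicitly warns). So the goal is: assuming additionally \textbf{Raut} in $(G,+,\cdot)$, upgrade \textbf{lri$\star$} from $X^{\star}$ to all of $G$. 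I would proceed by induction on word length $|a|$ in $X^{\star}$, exactly as in the proof of Proposition \ref{details_Prop1}(1): the base case $|a|=1$ is \textbf{lri$\star$}. For the inductive step write $a = vx$ with $x\in X^{\star}$, $v\in G$ of shorter length, and compute ${}^a(b^a)$ for $b\in G$ using \textbf{ML1}, \textbf{ML2}, \textbf{MR1}, \textbf{MR2}; the \textbf{Raut} identity (\ref{Raut}) together with the already-established \textbf{Laut} is what lets the two halves of the product recombine. I would also need $b$ to range over $G$, which forces a simultaneous induction on $|b|$ as well, again using \textbf{ML2}/\textbf{MR2} to peel off generators.

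The main obstacle I anticipate is the bookkeeping in part (2): \textbf{lri} is a statement quantified over \emph{all} pairs in $G\times G$, and the naive double induction on $|a|$ and $|b|$ generates cross-terms of the form ${}^{a^u}v$ and $u^{{}^va}$ that must be controlled using \textbf{Raut}, and one has to be careful that the inductive hypothesis is being applied only to strictly shorter elements and to elements of $X^{\star}$ where the base identities live. It may be cleaner to first prove, as an intermediate lemma, that \textbf{Raut} plus \textbf{lri$\star$} implies the two implications of Lemma \ref{lemma_lriSymSet}(\ref{equivlri}), namely ${}^{a^{-1}}b = b^a \Leftrightarrow {}^a(b^a)=b$, now with $b\in G$ rather than $b\in X^{\star}$; once that equivalence is available for all of $G$, combining it with (\ref{VIP_Raut_Eqa}) from part (1) (which does not use \textbf{lri} on all of $G$, only the matched-pair axioms) should close the argument. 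For part (1) the risk is merely sign/inverse errors in translating between the multiplicative identity (\ref{VIP_Raut_Eqa}) and the additive identity \textbf{Raut}; I expect the computation to parallel (\ref{braceeq6})--(\ref{braceeq10}) closely.
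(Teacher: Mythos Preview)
Your plan for part (\ref{prop_lriRaut1}) is in the right direction but overcomplicated, and you miss the one-line proof of \textbf{Raut}. Under \textbf{lri} one has ${}^{a^{-1}}b=b^{a}$ for all $a,b\in G$ (this is exactly Lemma \ref{lemma_lriSymSet}(\ref{equivlri})), so
\[
(u+v)^{a}\;=\;{}^{a^{-1}}(u+v)\;=\;{}^{a^{-1}}u+{}^{a^{-1}}v\;=\;u^{a}+v^{a},
\]
where the middle equality is just \textbf{Laut}. That is the whole proof of \textbf{Raut}; there is no need to route it through (\ref{VIP_Raut_Eqa}) or through \textbf{MR2} applied to a product representing $a+b$. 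The identity (\ref{VIP_Raut_Eqa}) is then proved separately, by computing $(uv)^{a^{-1}}$ in two ways: once as ${}^{a}(uv)=({}^{a}u)({}^{a^{u}}v)$ via \textbf{lri} and \textbf{ML2}, and once via \textbf{MR2} together with \textbf{lri} and (\ref{eq2lri}) to get $({}^{({}^{v}a)}u)({}^{a}v)$.

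For part (\ref{propRautlri}) your inductive skeleton is correct, but two points need fixing. First, your closing suggestion to invoke (\ref{VIP_Raut_Eqa}) is circular: in your own derivation (and in the paper's) that identity uses \textbf{lri} on all of $G$, which is precisely what you are trying to establish, so it is not available here. Second, a simultaneous double induction on $|a|$ and $|b|$ is unnecessary and makes the cross-terms you worry about appear. The paper's argument is a clean two-stage induction. Stage one: fix $x\in X^{\star}$ and prove $a^{x^{-1}}={}^{x}a$ for all $a\in G$ by induction on $|a|$. The base case is \textbf{lri$\star$} from Proposition \ref{VIPLemma}. For the step, write $b=at=a+{}^{a}t$ with $t\in X^{\star}$ and apply \textbf{Raut} to the \emph{sum} (this is exactly where the hypothesis enters), then the inductive hypothesis and \textbf{Laut} to recombine; no \textbf{ML2}/\textbf{MR2} juggling is needed. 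Stage two: with stage one in hand, prove $({}^{u}a)^{u}=a$ for all $a,u\in G$ by induction on $|u|$, writing $v=ux$ and using only \textbf{ML1}, \textbf{MR1} and the inductive hypothesis. The additive decomposition $at=a+{}^{a}t$ is the key manoeuvre you did not isolate; it is what lets \textbf{Raut} do the work without any appeal to (\ref{VIP_Raut_Eqa}).
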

\begin{proof}
\textbf{(1)}. Suppose $a,u,v \in G$.  We
apply successively \textbf{lri} for the first, \textbf{Laut},
for the second, and \textbf{lri} for the third
equality below and yield:
\[(u+v)^a= {}^{a^{-1}}{(u+v)}= {}^{a^{-1}}u+{}^{a^{-1}}v= u^a + v^a. \]
Hence $(u+v)^a= u^a + v^a$, for all $a,u,v \in G$, that is \textbf{Raut} is in force.

We shall verify (\ref{VIP_Raut_Eqa}). Suppose $a,u,v \in G$. We
apply successively \textbf{MR2},  Lemma \ref{lemma_lriSymSet} and
\textbf{lri} and obtain:
\begin{equation}
 \label{eqbracelri2a}
(uv)^{a^{-1}}= (u^{({}^v{(a^{-1})})})(v^{a^{-1}}) =
(u^{({}^va)^{-1}})(v^{a^{-1}}) =   ({}^{({}^va)}u)({}^av).
  \end{equation}
Applying \textbf{lri}  and \textbf{ML2} we get $(uv)^{a^{-1}}=
{}^a{(uv)}= ({}^au) ({}^{(a^u)}v)$, which together with
(\ref{eqbracelri2a}) implies (\ref{VIP_Raut_Eqa}).

\textbf{(2)} Assume now that $(X_0,r_0)$ is a solution with \textbf{lri},
 and its associated left brace $(G,+, \cdot)$ satisfies \textbf{Raut}. We have to show that the symmetric group
  $(G, r)$ satisfies \textbf{lri}.
First we use induction on the length $|a|$ of $a$ to prove that
\begin{equation}
 \label{eqlri1}
 a^{(x^{-1})}={}^xa, \quad {}^{(x^{-1})}a = a^x, \quad \text{for all}\; a \in G, \; x\in X^{\star}.
  \end{equation}
 By hypothesis $(X,r)$
satisfies \textbf{lri}, hence, by Proposition \ref{VIPLemma},
condition \textbf{lri$\star$} is also in force. Therefore
(\ref{eqlri1}) is satisfied for all $a \in G$, $|a|=1,$
 which gives the base for induction.
Assume  (\ref{eqlri1}) is true for all $a \in G, |a|\leq n$. Let $x \in X^{\star}$, and let $b \in G, |b|= n+1$.
Then $b = at= a + {}^at,$ where $a \in G, |a|= n, t \in X^{\star}.$ One has
 \[\begin{array}{llll}
 b^{(x^{-1})}= (at)^{(x^{-1})}&=&(a + {}^at)^{(x^{-1})} = a^{(x^{-1})} + ({}^at)^{(x^{-1})} & \text{by \textbf{Raut}} \\
                         &=&{}^xa+{}^x{({}^at)} = {}^x{(a + {}^at)}= {}^x{(at)} = {}^xb \quad& \text{by IH, and  by
                         \textbf{Laut}},
\end{array}
\]
 which proves the first equality in (\ref{eqlri1}). The second equality in (\ref{eqlri1}) follows straightforwardly.
  Next, using induction on the length $|u|$ of $u \in G$ we verify
 \begin{equation}
 \label{eqlri2}
 {({}^ua)}^u = a, \; \; {}^u{(a^u)}=a, \quad \forall a, u \in G.
  \end{equation}
  The base for induction follows from  (\ref{eqlri1}). Assume that (\ref{eqlri2}) is true for all $a \in G$ and all $u \in G, |u| \leq
  n.$ Let $a\in G,$ and $v \in G$, with $|v|= n+1$. Then $v = ux$, where
$u \in G, \;|u| =n, \; x \in X^{\star}$.
  In the following computation we use \textbf{ML1}, and \textbf{MR1} for the second equality, and the inductive hypothesis
  for the third and fourth equalities:
  \[({}^va)^v = ({}^{ux}a)^{ux}= (({}^{u}{({}^xa)})^u)^x =({}^xa)^x =a. \]
  This proves the first identity in (\ref{eqlri2}), the second one is verified analogously.
  Hence $(G,r)$ satisfies \textbf{lri}.
\end{proof}

\begin{corollary}
 \label{cor_lriRaut}
  Suppose $(X,r)$ is a solution with \textbf{lri}, $(G, r_G)$ and $(G,+, \cdot)$  are its symmetric group and its left brace,
  respectively. Then $(G, r_G)$ satisfies \textbf{lri} if and only if $(G,+, \cdot)$
  satisfies \textbf{Raut}.
 \end{corollary}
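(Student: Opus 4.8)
The statement to prove is Corollary \ref{cor_lriRaut}: for a solution $(X,r)$ with \textbf{lri}, the symmetric group $(G,r_G)$ satisfies \textbf{lri} \emph{iff} its left brace $(G,+,\cdot)$ satisfies \textbf{Raut}. The plan is to derive this as an immediate consequence of Theorem \ref{prop_lriRaut}, which already packages both directions.

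First I would observe that one direction is exactly Theorem \ref{prop_lriRaut}(\ref{prop_lriRaut1}): if $(G,r_G)$ satisfies \textbf{lri}, then \textbf{Raut} holds in $(G,+,\cdot)$ --- here we do not even need the hypothesis that $G$ comes from a solution $(X,r)$ with \textbf{lri}, but that hypothesis is in force anyway. So the forward implication is free.

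For the converse, I would invoke Theorem \ref{prop_lriRaut}(\ref{propRautlri}): since $G = G(X,r_G)$ is the symmetric group of the solution $(X,r)$, and $(X,r)$ satisfies \textbf{lri} by hypothesis, the assumption that \textbf{Raut} holds in $(G,+,\cdot)$ yields that $(G,r_G)$ satisfies \textbf{lri}. The only cosmetic point is to match notation: in Theorem \ref{prop_lriRaut}(\ref{propRautlri}) the ambient solution is written $(X,r_0)$ and the symmetric group carries the operator $r$, whereas in the corollary it is $(X,r)$ with symmetric group $(G,r_G)$; this is purely a relabelling and requires no argument.

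Putting the two implications together gives the stated equivalence, so the proof is a two-line citation. There is no real obstacle here --- the substance was already done in Theorem \ref{prop_lriRaut}, whose genuinely technical content is the induction on word length in part (\ref{propRautlri}) establishing \textbf{lri} on all of $G$ from \textbf{lri} on $X$ plus \textbf{Raut}. Concretely I would write:

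\begin{proof}
If $(G, r_G)$ satisfies \textbf{lri}, then by Theorem \ref{prop_lriRaut} (\ref{prop_lriRaut1}) condition \textbf{Raut} holds in $(G, +, \cdot)$. Conversely, $G = G(X, r_G)$ is the symmetric group of the solution $(X, r)$, which satisfies \textbf{lri} by hypothesis; hence if \textbf{Raut} holds in $(G, +, \cdot)$, then by Theorem \ref{prop_lriRaut} (\ref{propRautlri}) the symmetric group $(G, r_G)$ satisfies \textbf{lri}.
\end{proof}
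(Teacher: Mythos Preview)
Your proof is correct and matches the paper's approach: the corollary is stated without proof in the paper, as it follows immediately from the two parts of Theorem \ref{prop_lriRaut} exactly as you describe. One trivial typo: in your converse you write $G = G(X, r_G)$ where you mean $G = G(X, r)$.
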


\subsection{A symmetric group acting upon itself  as automorphisms is a
multipermutation solution of level $\leq 2$}
\begin{lemma}
\label{Lemma1_symgr_mpl1}
Let $(G,r)$ be a symmetric group.
The following four conditions are equivalent.
\begin{equation}
  \label{MLauteq2}
  \begin{array}{lllll}
  &\text{\textbf{(i)}} \quad &\Lcal_{({}^ba)}=\Lcal_a, \; \forall\; a,b \in G;\quad
  &\text{\textbf{(ii)}} \quad     &\Lcal_{(a^b)}=\Lcal_a,   \; \forall\; a,b\in G;\\
 &\text{\textbf{(iii)}} \quad     &\Rcal_{({}^ba)}=\Rcal_a,\;  \forall\; a,b  \in G;\quad
 &\text{\textbf{(iv)}}       &\Rcal_{(a^b)}=\Rcal_a,\;     \forall\; a,b \in G.
  \end{array}
  \end{equation}
Moreover, each of these four condition implies conditions \textbf{lri} and \textbf{cc} on $(G, r)$.
\end{lemma}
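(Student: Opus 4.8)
\textbf{Proof plan for Lemma \ref{Lemma1_symgr_mpl1}.}
The plan is to prove the cyclic string of equivalences $\textbf{(i)}\Rightarrow\textbf{(ii)}\Rightarrow\textbf{(iv)}\Rightarrow\textbf{(iii)}\Rightarrow\textbf{(i)}$, exploiting the involutivity of $r$ together with the matched-pair identities \textbf{ML1}, \textbf{ML2}, \textbf{MR1}, \textbf{MR2} available in any symmetric group. The crucial observation is that in a symmetric set $\Lcal_a=\Lcal_b$ is equivalent to $\Rcal_a=\Rcal_b$ (this is quoted in Subsection 4.2 from \cite{ESS}), so conditions \textbf{(i)} and \textbf{(iii)} are automatically equivalent, and likewise \textbf{(ii)} and \textbf{(iv)}; this immediately cuts the work in half, leaving only the implication $\textbf{(i)}\Rightarrow\textbf{(ii)}$ (the reverse being symmetric) to be established by a genuine computation. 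Notice also that condition \textbf{(i)} is precisely the hypothesis (\ref{Lemma1_Thm_maineq1a}) of Lemma \ref{Lemma_symgr_mpl1}(3) and the hypothesis (i) of Lemma \ref{lemma_Aut_lri_sec4}, read in $(G,r)$ rather than in a symmetric set $(X,r)$; since condition (*) holds in every symmetric group (as remarked after Definition \ref{defcondstar}), Lemma \ref{lemma_Aut_lri_sec4} applies verbatim.

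First I would dispose of the "moreover" clause. Assuming \textbf{(i)}, i.e. $\Lcal_{{}^ba}=\Lcal_a$ for all $a,b\in G$, I invoke Lemma \ref{lemma_Aut_lri_sec4} directly: its hypothesis (i) is exactly our \textbf{(i)}, and its conclusion is that \textbf{lri} holds in $(G,r)$. Then, since $(G,r)$ is involutive and satisfies \textbf{lri}, Remark following Corollary \ref{cor_lri_cc} (the three-out-of-two principle from \cite{GIM08}, Proposition 2.25) forces the cyclic conditions \textbf{cc} as well; alternatively Corollary \ref{cor_lri_cc} says directly that for a solution \textbf{lri} $\Leftrightarrow$ \textbf{cc}. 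This handles the last sentence of the statement once the equivalence of the four conditions is in place.

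For the equivalences themselves: the equivalence $\textbf{(i)}\Leftrightarrow\textbf{(iii)}$ and $\textbf{(ii)}\Leftrightarrow\textbf{(iv)}$ follow from the general fact, valid in any symmetric set, that two elements act equally on the left iff they act equally on the right (\cite{ESS}, Section 3.2, already used in (\ref{gamma}) and Remark \ref{factGamma=K}). It remains to prove $\textbf{(i)}\Rightarrow\textbf{(ii)}$; the converse $\textbf{(ii)}\Rightarrow\textbf{(i)}$ is proved by the mirror-image argument (swapping the roles of left and right, \textbf{ML} and \textbf{MR}, and using the second \textbf{lri} identity in place of the first). Assume \textbf{(i)}. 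By Lemma \ref{lemma_Aut_lri_sec4}, \textbf{(i)} already yields its own condition (ii), namely $\Lcal_{a^x}=\Lcal_a$ for $a,x\in X$ — but here the relevant "$X$" is the generating set; I need the statement for all $a,b\in G$. So the real work is an induction on the length $|b|$ of $b\in G$ in a reduced expression over $X^{\star}$. The base case $|b|=1$, i.e. $b\in X^{\star}$, follows from \textbf{lri$\star$} and the cyclic conditions $\textbf{cl1}\star$, $\textbf{cr1}\star$ of Proposition \ref{VIPLemma} once we know $(G,r)$ is \textbf{lri}; concretely, $\Lcal_{{}^ba}=\Lcal_a$ for $b\in X^\star$ together with \textbf{lri} gives $a={}^b(a^b)$, whence $\Lcal_{a}=\Lcal_{{}^b(a^b)}=\Lcal_{a^b}$ by \textbf{(i)} applied to the pair $(a^b,b)$. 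For the inductive step write $b=cx$ with $c\in G$, $x\in X^{\star}$, $|c|<|b|$, and use \textbf{MR1} in the form $a^b=a^{cx}=(a^c)^x$ together with \textbf{ML1} $\,{}^bz={}^{cx}z={}^c({}^xz)\,$ to reduce $\Lcal_{a^b}$ to a composite of the two shorter instances $\Lcal_{(a^c)^x}=\Lcal_{a^c}$ and $\Lcal_{a^c}=\Lcal_a$, both covered by the induction hypothesis; the matched-pair compatibility \textbf{M3} and \textbf{ML2} are what make these substitutions legitimate inside $G$. I expect the main obstacle to be precisely this inductive step: keeping the bookkeeping of left/right actions straight and making sure the reduction from length $n+1$ to length $n$ uses only identities already available (\textbf{ML1}, \textbf{MR1}, \textbf{ML2}, \textbf{MR2}, \textbf{M3}, and \textbf{lri}/\textbf{lri$\star$}), rather than the conclusion being proved; a clean way to organize it is to first prove the auxiliary identity ${}^{(x^{-1})}a=a^x$ and $a^{(x^{-1})}={}^xa$ for $x\in X^{\star}$, $a\in G$ (as in the proof of Theorem \ref{prop_lriRaut}(2)), which turns the length induction into a routine manipulation. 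Finally, having established all four equivalences and the "moreover" clause, the lemma is complete.
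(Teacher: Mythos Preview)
Your overall strategy is sound and in one respect cleaner than the paper's, but you have confused yourself at the key step $\textbf{(i)}\Rightarrow\textbf{(ii)}$.

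The confusion: you write that Lemma \ref{lemma_Aut_lri_sec4} applied to $(G,r)$ only gives $\Lcal_{a^x}=\Lcal_a$ for $x$ in ``the generating set'', and that you therefore need an induction on the length of $b$ in $X^{\star}$. This is a misreading. Lemma \ref{lemma_Aut_lri_sec4} is stated for an arbitrary symmetric set $(X,r)$; when you apply it to the symmetric group $(G,r)$ \emph{as a symmetric set}, the ``$X$'' in that lemma is $G$ itself, and the conclusion is exactly $\Lcal_{a^b}=\Lcal_a$ for all $a,b\in G$. No induction is needed, and indeed none is available: the statement of Lemma \ref{Lemma1_symgr_mpl1} concerns a general symmetric group, which need not arise as $G(X,r)$ for any distinguished generating set, so ``reduced length over $X^\star$'' is not even defined. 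Once this is fixed, your argument is complete: Lemma \ref{lemma_Aut_lri_sec4} gives $\textbf{(i)}\Leftrightarrow\textbf{(ii)}$ and \textbf{lri}, the ESS fact gives $\textbf{(i)}\Leftrightarrow\textbf{(iii)}$ and $\textbf{(ii)}\Leftrightarrow\textbf{(iv)}$, and Corollary \ref{cor_lri_cc} gives \textbf{cc}.

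Comparison with the paper: the paper also invokes Lemma \ref{lemma_Aut_lri_sec4} to obtain $\textbf{(i)}\Leftrightarrow\textbf{(ii)}\Rightarrow\textbf{lri}$, and then argues $\textbf{(iii)}\Leftrightarrow\textbf{(iv)}\Rightarrow\textbf{lri}$ by the mirror-image computation. To close the cycle it proves $\textbf{(i)}\Rightarrow\textbf{(iv)}$ by the short chain
\[
c^{a^b}={}^{(a^b)^{-1}}c={}^{(a^{-1})^b}c={}^{a^{-1}}c=c^a,
\]
using \textbf{lri}, Lemma \ref{lemma_lriSymSet}, and \textbf{(ii)}. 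Your route via the ESS equivalence $\Lcal_a=\Lcal_b\Leftrightarrow\Rcal_a=\Rcal_b$ replaces this computation entirely and is arguably more economical; the paper's route, on the other hand, is self-contained within the matched-pair formalism and does not import the ESS result.
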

\begin{proof}
The implications  \textbf{(i)} $\Longleftrightarrow$ \textbf{(ii)} $\Longrightarrow$ (condition \textbf{lri}) are verified by
Lemma \ref{lemma_Aut_lri_sec4}
for arbitrary symmetric sets.
An argument analogous to the proof of Lemma \ref{lemma_Aut_lri_sec4} verifies \textbf{(iii)} $\Longleftrightarrow$  \textbf{(iv)}
$\Longrightarrow$ (\text{condition \textbf{lri}}).

\textbf{(i)} $\Longrightarrow$  \textbf{(iv)}.
 Suppose   \textbf{(i)} holds, then \textbf{(ii)} is also in force. Let $a,b,c \in G.$ We use successively condition \textbf{lri}, Lemma \ref{lemma_lriSymSet},
and \textbf{(ii)} to obtain
\[
c^{a^b} = {}^{(a^b)^{-1}}c    =  {}^{({a^{-1}})^b}c =  {}^{a^{-1}}c = c^a,
\]
which implies \textbf{(iv)}.
 One verifies \textbf{(iv)} $\Longrightarrow$ \textbf{(i)} analogously.
\end{proof}

\begin{proposition}
  \label{prop_ActsAsAut&lri}
  Let $(G,r)$ be a symmetric group, and let $G= (G, +, \cdot)$ be the
  associated
   left brace. Suppose $(G,r)$ is a nontrivial solution.
   The following three conditions are equivalent.
  \begin{enumerate}
  \item
  \label{mpl2a}
 $(G,r)$ is a multipermutation solution of YBE of level
 $2$.
  \item
  \label{prop_ActsAsAutLeft}
  $G$ acts  upon itself  from the left as automorphisms, that is,
  \textbf{MLaut}, (\ref{MLaut}) holds.
  \item
 \label{four_eq_cond}
 At least one  of the four identities (\ref{MLauteq2}) \textbf{(i)}, \textbf{(ii)}, \textbf{(iii)}, \textbf{(iv)}
  is in force.
     \end{enumerate}
Moreover, each of the above conditions implies the following.
  \begin{enumerate}
  \item[(i)]
  The symmetric set $(G,r)$ satisfies  \textbf{lri} and the cyclic
  conditions \textbf{cc}.
  \item[(ii)]
 The brace $G= (G, +, \cdot)$ satisfies condition  \textbf{Raut}.
      \end{enumerate}
\end{proposition}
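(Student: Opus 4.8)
The plan is to establish the cycle of equivalences $(\ref{prop_ActsAsAutLeft}) \Longleftrightarrow (\ref{four_eq_cond}) \Longleftrightarrow (\ref{mpl2a})$ and then read off the two "moreover" consequences. First I would prove $(\ref{prop_ActsAsAutLeft}) \Longleftrightarrow (\ref{four_eq_cond})$. For $(\ref{prop_ActsAsAutLeft}) \Longrightarrow (\ref{four_eq_cond})$, assume \textbf{MLaut}; applying it in the guise ${}^a(bc)=({}^ab)({}^ac)$ together with \textbf{ML2} ${}^a(bc)=({}^ab)({}^{a^b}c)$ and cancelling ${}^ab$ on the left gives ${}^ac={}^{a^b}c$ for all $a,b,c$, i.e.\ $\Lcal_a=\Lcal_{a^b}$, which is identity \textbf{(ii)} of (\ref{MLauteq2}). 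Conversely, if one of the four identities holds, then by Lemma \ref{Lemma1_symgr_mpl1} all four hold and $(G,r)$ satisfies \textbf{lri}; using \textbf{ML2} again, ${}^a(bc)=({}^ab)({}^{a^b}c)=({}^ab)({}^ac)$ by identity \textbf{(ii)}, so \textbf{MLaut} holds. This also simultaneously proves consequence (i), since Lemma \ref{Lemma1_symgr_mpl1} states that each of the four identities implies \textbf{lri} and \textbf{cc}.

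Next I would prove $(\ref{four_eq_cond}) \Longleftrightarrow (\ref{mpl2a})$. Since $(G,r)$ is a symmetric group it satisfies condition (*) (as noted after Definition \ref{defcondstar}, because $\Lcal_1=\id_G$), so Lemma \ref{Lemma_symgr_mpl1}(3) applies: $\mpl(G,r)=2$ if and only if $\Lcal_{{}^ba}=\Lcal_a$ for all $a,b\in G$ together with $\Lcal_a\neq\id_G$ for some $a$. The condition $\Lcal_{{}^ba}=\Lcal_a$ is exactly identity \textbf{(i)} of (\ref{MLauteq2}), and "$\Lcal_a\neq\id_G$ for some $a$" is precisely the hypothesis that $(G,r)$ is a nontrivial solution (if $\Lcal_a=\id_G$ for all $a$ then $(G,r)$ is the trivial solution by Example \ref{trivialsolex}). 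Hence, under the standing assumption that $(G,r)$ is nontrivial, $(\ref{mpl2a})$ and $(\ref{four_eq_cond})$ are equivalent. This closes the cycle of equivalences.

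Finally, for consequence (ii), I would invoke Theorem \ref{prop_lriRaut}(\ref{prop_lriRaut1}): since each of the three equivalent conditions implies that $(G,r)$ satisfies \textbf{lri} (by consequence (i)), Theorem \ref{prop_lriRaut}(1) gives immediately that \textbf{Raut} holds in the associated left brace $(G,+,\cdot)$. This requires no further computation.

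The step I expect to be the main obstacle is making sure the cancellation arguments in $(\ref{prop_ActsAsAutLeft})\Longleftrightarrow(\ref{four_eq_cond})$ are watertight — in particular, verifying that \textbf{ML2} together with \textbf{MLaut} really does yield identity \textbf{(ii)} rather than a weaker statement, and tracking which of the four forms of (\ref{MLauteq2}) is most convenient at each point (using \textbf{lri} and Lemma \ref{lemma_lriSymSet} to pass between left and right versions, as in the proof of Lemma \ref{Lemma1_symgr_mpl1}). Everything else is a direct citation of Lemma \ref{Lemma_symgr_mpl1}, Lemma \ref{Lemma1_symgr_mpl1}, and Theorem \ref{prop_lriRaut}, so the proof should be short.
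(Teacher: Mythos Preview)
Your proposal is correct and follows essentially the same approach as the paper: the paper also derives $(\ref{prop_ActsAsAutLeft})\Longleftrightarrow(\ref{four_eq_cond})$ by comparing \textbf{MLaut} with \textbf{ML2} and cancelling (obtaining ${}^{a^b}c={}^ac$, i.e.\ identity \textbf{(ii)}), uses Lemma \ref{Lemma_symgr_mpl1}(3) for $(\ref{mpl2a})\Longleftrightarrow(\ref{four_eq_cond})$, and invokes Lemma \ref{Lemma1_symgr_mpl1} and Theorem \ref{prop_lriRaut}(1) for the two ``moreover'' consequences. The only cosmetic difference is that the paper routes \textbf{cc} through Corollary \ref{cor_lri_cc} after establishing \textbf{lri}, whereas you read both off Lemma \ref{Lemma1_symgr_mpl1} directly.
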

\begin{proof}
(\ref{mpl2a}) $\Longleftrightarrow$ (\ref{four_eq_cond}). The equivalence  (\ref{mpl2a}) $\Longleftrightarrow$ (\ref{MLauteq2})
\textbf{(ii)} follows from
Lemma \ref{Lemma_symgr_mpl1}.3.
Hence, by Lemma \ref{Lemma1_symgr_mpl1}, condition (\ref{mpl2a}) is equivalent to each of the four conditions listed in
(\ref{MLauteq2}).
 (\ref{prop_ActsAsAutLeft})$\Longleftrightarrow$ (\ref{four_eq_cond}). We shall verify  (\ref{MLaut}) $\Longleftrightarrow$
 (\ref{MLauteq2}) \textbf{(i)}.
 The symmetric group $G$ satisfies \textbf{ML2}, so  ${}^a(bc) =
({}^ab)({}^{(a^b)}c)$,   $\forall a,b,c \in G.$ Therefore
the following implications are in force:
\[
 \begin{array}{lll}
 (\ref{MLaut}) &\Longleftrightarrow & [({}^ab)({}^ac) = {}^a{(bc)}= ({}^ab)({}^{(a^b)}c),
\; \forall a,b,c \in G] \\
&\Longleftrightarrow & [{}^{(a^b)}c = {}^ac, \;  \forall a,b,c\in G].
\end{array}
\]
This proves the equivalence (\ref{MLaut}) $\Longleftrightarrow$ (\ref{MLauteq2}) \textbf{(i)}. Our argument above shows that
conditions (\ref{prop_ActsAsAutLeft}) and (\ref{four_eq_cond}) are equivalent.
It follows then that  (1), (2) and (3) are equivalent, and by Lemma \ref{Lemma1_symgr_mpl1} each
of them implies \textbf{lri}, hence,
  by Corollary \ref{cor_lri_cc}  the cyclic conditions \textbf{cc} are
also in force.
By Theorem \ref{prop_lriRaut} (1)
\textbf{lri} implies \textbf{Raut}.
  \end{proof}
\begin{remark} Let $(G,r)$ be a symmetric group which acts upon itself as automorphisms, i.e. \textbf{MLaut} holds.  Then the semidirect
product
$G\ltimes G$ is also a symmetric group, with a braiding operator canonically extending $r$.
\end{remark}
This agrees with analogous statement of
Rump about braces, see \cite{Ru14}, Proposition 6.2.
\subsection{The symmetric groups  $(G, r_G)$ and $(\Gcal, r_{\Gcal})$  of a nontrivial permutation solution}
We consider the class of involutive permutation solutions of Lyubashenko (or shortly, the permutation solutions) :  $(X, r)= (X, r,
\sigma)$, where
$X$ is a set, not necessarily finite, $|X|\geq 2$, $\sigma \in
\Sym(X)$ is a permutation (bijective map), and
$r$ is the map $r: X\times X\longrightarrow X\times X$ defined as
$r(x,y):= (\sigma(y), \sigma^{-1}(x)), \forall \; x,y \in X.$
$(X,r)$ satisfies \textbf{lri}, and is a nontrivial solution \emph{iff} $\sigma \neq
id_X.$
Recall that a symmetric set $(X,r)$ has $\mpl X = 1$ \emph{iff}
$(X,r)$ is a permutation solution, see Remark  \ref{mpl1}. Moreover,  $\mpl G(X,r) = \mpl X = 1$ \emph{iff}
$\sigma = id_X$,
and $(X,r)$ is the trivial solution.
The proof of the following proposition is given in our preprint  \cite{GI15}, p. 40.
\begin{proposition}
\label{propexample}
Let $(X,r)= (X, r, \sigma)$ be the permutation solution defined by
$\sigma \in Sym (X)$, $\sigma   \neq id_X$.  Let $(G, r_G)$ and $(\Gcal, r_{\Gcal})$ be
the associated
symmetric groups. Let $x \in X$. Then the following conditions hold.
\begin{enumerate}
\item $\Gcal \cong \langle \sigma\rangle$. The retraction $\Ret(G, r_G)= ([G], r_{[G]})$  is a trivial solution of order $|\sigma|$.
\item
\label{propexample1}
More precisely, either
(i) the permutation $\sigma$ has finite order $m$,
then $\Ret(G, r_G)$ is the trivial solution on the set
$[G] =\{[1], \;[x], \;[x^2], \;[x^3], \;\cdots \; [x^{m-1}]\}$;
or
(ii) the permutation $\sigma$ has infinite order,
then
$\Ret(G, r_G)$ is the trivial solution on the countably infinite  set:
$[G] =\{\cdots, \; [x^{-2}], \;[x^{-1}], \;[1], \;[x], \;[x^2], \; \cdots
\}.$
\item
\label{propexample2}
$G$ acts upon itself as automorphisms, that is conditions \textbf{MLaut}  holds. Moreover,
$G$ satisfies conditions \textbf{lri}  and  \textbf{Raut}.
\item
\label{propexample3}
There is a strict inequality
$ 1=\mpl X< \mpl G =2.$
 \end{enumerate}
\end{proposition}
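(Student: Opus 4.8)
The plan is to work directly with the permutation solution $(X,r)=(X,r,\sigma)$, compute the induced braiding on the YB group $G=G(X,r)$, and read off all four assertions. First I would establish the structure of $G$ itself. Since $\Lcal_x=\sigma$ and $\Rcal_x=\sigma^{-1}$ for every $x\in X$, the defining relations $xy=zt$ with $r(x,y)=(z,t)$ take the form $xy=\sigma(y)\,\sigma^{-1}(x)$; these say precisely that the element $xy^{-1}$ does not depend on the pair but only on $\sigma$-orbits in a controlled way. A clean way to organize this: fix $x_0\in X$, set $t:=x_0$, and observe that for every $x\in X$ the relation forces a normal form in which $G$ is generated by $t$ together with the (abelian, free) "difference" subgroup; equivalently one checks that $a\mapsto [a]$ identifies $\Ret(G,r_G)$ with the free abelian group on the $\sigma$-orbit of a single letter. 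Concretely, since $\Lcal_a=\id_G$ iff $a\in\soc G=\Gamma$, and $\Lcal_x=\Lcal_y$ for all $x,y\in X$, the retraction $[G]$ is a \emph{trivial} solution, and it is generated by the single class $[x]$; its order equals the order of $\sigma$ because $\Lcal^0:G\to\Gcal\cong\langle\sigma\rangle$ has $\ker\Lcal^0=\Gamma=\soc G$ and $\Ret(G,r_G)\simeq\Gcal$ by Corollary \ref{prop_RetG_Gcal}. This gives parts (1) and (2) at once: $\Gcal\cong\langle\sigma\rangle$, and $[G]=\{[x^k]\}$ ranging over $k\in\Z/m\Z$ if $|\sigma|=m<\infty$, or over $k\in\Z$ if $|\sigma|=\infty$.

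Next, for part (3), I would verify condition \textbf{MLaut}, i.e. ${}^a(bc)=({}^ab)({}^ac)$ for all $a,b,c\in G$. By \textbf{ML2} in the symmetric group $(G,r_G)$ one always has ${}^a(bc)=({}^ab)({}^{a^b}c)$, so \textbf{MLaut} is equivalent to ${}^{a^b}c={}^ac$ for all $a,b,c$, that is, to $\Lcal_{a^b}=\Lcal_a$ for all $a,b\in G$ — one of the four equivalent identities in (\ref{MLauteq2}). I would check this on generators and then extend: for $x,y\in X$ one has $x^y={}^{(x^y)^{-1}\!}\cdots$, but more simply, since $\Lcal$ factors through $\Gcal\cong\langle\sigma\rangle$ which is abelian, $\Lcal_a$ depends only on the image of $a$ in the abelianized quotient $G/[G,G]$ composed with $\Lcal^0$; because $a^b$ and $a$ have the same image under $\Lcal^0$ (right and left actions have the same kernel, $\Gamma=\Gamma_l=\Gamma_r$, and $a^b\Gamma=a\Gamma$ follows from $\Lcal_{a^b}=\Lcal_a$ on the generating set $X$ where everything collapses to $\sigma$), we get $\Lcal_{a^b}=\Lcal_a$ on all of $G$. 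Hence \textbf{MLaut} holds. Then \textbf{lri} on $(G,r_G)$ follows from Lemma \ref{Lemma1_symgr_mpl1} (each of the four conditions implies \textbf{lri} and \textbf{cc}), and \textbf{Raut} on the brace $(G,+,\cdot)$ follows from Theorem \ref{prop_lriRaut}(1) since \textbf{lri} on $(G,r_G)$ implies \textbf{Raut}.

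For part (4): by Remark \ref{mpl1}, $\mpl X=1$ since $(X,r)$ is a permutation solution; and $\mpl X=1=\mpl G$ would force $\sigma=\id_X$ by the remark following Example \ref{exlri} together with Theorem \ref{Th_mplG_mplX}(2) (condition (*) holds for square-free — but a permutation solution need not be square-free, so instead I invoke: $\mpl G=1$ iff $G$ is the trivial solution iff $(X,r)$ is trivial, by Theorem \ref{VIPthm} or directly by Remark \ref{mplB=1impliesBtwo-sided_rem}). Since $\sigma\neq\id_X$ by hypothesis, $\mpl G\geq 2$. On the other hand $\Ret(G,r_G)=([G],r_{[G]})$ is a trivial solution on a set of order $|\sigma|\geq 2$, so $\Ret^2(G,r_G)$ is a one-element solution, giving $\mpl G=2$. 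Therefore $1=\mpl X<\mpl G=2$. The main obstacle is the rigorous identification of the normal form of $G$ and the verification that $\Lcal$ genuinely factors through an abelian quotient on which $a$ and $a^b$ agree — once that is pinned down, parts (1)--(4) are immediate; the rest is bookkeeping with \textbf{ML2}, \textbf{lri}, and the already-established isomorphism $\Ret(G,r_G)\simeq\Gcal$.
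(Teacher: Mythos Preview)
Your overall strategy is sound and makes good use of the paper's machinery (Corollary~\ref{prop_RetG_Gcal}, Lemma~\ref{Lemma1_symgr_mpl1}, Proposition~\ref{prop_ActsAsAut&lri}, Theorem~\ref{prop_lriRaut}), and since the paper itself defers the proof to the preprint~\cite{GI15} there is no detailed argument to compare against. However, two steps in your write-up are not yet justified.

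First, in parts (1)--(2) you assert that because $\Lcal_x=\Lcal_y$ for all $x,y\in X$ the retraction $[G]$ is a \emph{trivial} solution. What this actually gives you is only that $[G]\simeq\Gcal=\langle\sigma\rangle$ is cyclic, generated by the single class $[x]$; a cyclic symmetric group need not carry the trivial braiding in general. The missing computation is ${}^{[x]}[x]=[{}^xx]=[\sigma(x)]=[x]$ (using that $\sigma(x)\in X$ and all elements of $X$ have the same left action). Once you know the generator is fixed by its own left action, a short induction with \textbf{ML1}, \textbf{ML2}, \textbf{M3} shows ${}^{[a]}[b]=[b]$ for all $[a],[b]\in[G]$, so $([G],r_{[G]})$ is indeed trivial.

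Second, your direct verification of \textbf{MLaut} in part (3) is circular as written: you want to prove $\Lcal_{a^b}=\Lcal_a$ for all $a,b\in G$, and your justification (``$a^b\Gamma=a\Gamma$ follows from $\Lcal_{a^b}=\Lcal_a$ on the generating set $X$'') presupposes the extension from generators to $G$ that is precisely the content of the claim. The clean fix is to reverse the order of your argument: once parts (1)--(2) are established as above, you have $\mpl(G,r_G)=2$ directly (the retraction is trivial of order $|\sigma|\geq2$, and $G$ is nontrivial since $\sigma\neq\id_X$), and then Proposition~\ref{prop_ActsAsAut&lri} immediately yields \textbf{MLaut}, \textbf{lri}, and \textbf{Raut} without any separate induction. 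Part (4) is then exactly as you wrote it.
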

Clearly if $(X,r)$ is a permutation solution of finite order, then  the
permutation $\sigma \in Sym(X)$ has finite order $m$, so
$Ret(G,r_G)$ is a finite (trivial) symmetric set of order $m$.
We give two concrete examples in which $X$ is a countably infinite set.
\begin{example}
\label{exlri_infty}
Let $X = \{ x_i \mid i \in
\mathbb{Z}\}.$
We consider the following two solutions.
\begin{enumerate}
\item $(X,r, \sigma_1)= (X, r_1)$, where  $\sigma_1\in \Sym(X)$ is the shift
    $\sigma_1 (x_i) = x_{i+1}, i \in \mathbb{Z}.$
Denote by $(G^{(1)}, r^{(1)})$ the associated symmetric group.
The permutation $\sigma_1$ has infinite order,
$\Ret(G^{(1)}, r^{(1)})$ is the trivial solution on the countably infinite set
 $[G^{(1)}] =\{\cdots, [x^{-2}],  [x^{-1}], [1], [x], [x^2], [x^3], \cdots \}$,
 where $x\in X$ is an arbitrary  fixed element of $X$.

 \item $(X,r, \sigma_2) = (X, r_2)$, where  $\sigma_2\in \Sym(X)$ is the
      infinite product of transpositions $\sigma_2 = \prod_{k\in
      \mathbb{Z} } (x_{2k} \; x_{2k+1})$.
In other words, $\sigma(x_{2k}) = x_{2k+1},  \sigma(x_{2k+1}) = x_{2k},$ for
all $k\in \mathbb{Z},$
so $\sigma^2 = id_X$.
Denote by $(G^{(2)}, r^{(2)})$ the associated symmetric group.
Then $\Ret(G^{(2)}, r^{(2)})$ is a trivial solution of order
$2$, one has:
 $\; [G^{(2)}] =\{[1], [x] \}$, where $x\in X$ is an arbitrary  fixed element of
 $X$.
 \end{enumerate}
 In both cases there are equalities $\mpl (X, r_i) = 1, \mpl (G^{(i)}, r^{(i)})
 =2,$ $i = 1,2.$
\end{example}

\section{Square-free solutions $(X,r)$ with condition lri on a derived symmetric group, or on a derived permutation group}
\label{sect_TheoremGcal} In this section $(X,r)$ denotes a
square-free solution of arbitrary cardinality $|X|\geq 2$. We
shall indicate explicitly the cases when $X$ is a finite set. As
usual $(G, r_G)$ and $(G, +,\cdot)$ are the associated symmetric
group and the corresponding left brace. Let $X_i, i \in I,$ be the
set of all $G$-orbits in $X$
 (we consider the left action of $G$ upon $X$), $(X_i, r_i), i \in
 I,$ denotes the induced solution on $X_i$.
\subsection{The symmetric group  $G(X,r)$  satisfies \textbf{lri} if and only if $\mpl X \leq 2$}
The main result of the subsection is Theorem  \ref{Thm_main}.
 We first study how condition \textbf{lri} on the associated symmetric group $(G, r_G)$ affects the properties of $(X,r)$.
\begin{proposition}
 \label{prop_Thm_main}
 If $(X,r)$  is a nontrivial square-free solution, whose
 associated symmetric group $(G, r_G)$ satisfies condition
 \textbf{lri},
 then the following conditions hold.
 \begin{enumerate}
\item For each $G$-orbit $X_i, i \in I,$ the induced solution
$(X_i, r_i), $ is a trivial, or one element
 solution.
 \item  $\mpl (X,r) = \mpl (G, r_G) =2$.
  \end{enumerate}
\end{proposition}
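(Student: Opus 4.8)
The plan is to reduce statement (1) to the structure of $G$-orbits via condition \textbf{lri} on $(G,r_G)$, and then to deduce (2) from (1) together with the earlier results on strong twisted unions and multipermutation level. First I would prove (1). Fix a $G$-orbit $X_i$ and let $a,b\in X_i$; I want to show ${}^ab=b$. Since $a,b$ lie in the same orbit, there is $g\in G$ with ${}^ga=b$. Now $(X,r)$ is square-free, so ${}^aa=a$, i.e. $\Lcal_a$ fixes $a$; the key is to exploit that $(G,r_G)$ satisfies \textbf{lri}, which by Proposition \ref{VIPLemma} also gives \textbf{lri}$\star$ and the starred cyclic conditions \textbf{cl1}$\star$, \textbf{cr1}$\star$ on all of $X^\star$ under the $G$-action. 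The idea is: using \textbf{lri} in $G$ one has $\Rcal_a=\Lcal_a^{-1}$ as maps on $G$; combined with the fact that $X$ is square-free (so ${}^aa=a$ hence $a^a=a$), and the matched-pair identities \textbf{ML1}, \textbf{MR1}, \textbf{M3} in $G$, I would show that the element $g$ moving $a$ to $b$ can be replaced by one built from $a$ itself, whence ${}^ab$ reduces to a tower of actions all of whose entries are in the single class $[a]$ in $\Ret(X,r)$, forcing ${}^ab=b$. Concretely, I expect to show first that for $a,x\in X$, $\Lcal_{{}^xa}=\Lcal_a$ — this is condition (\ref{Lemma1_Thm_maineq1})(i) — using that \textbf{lri} on $G$ applied to $x,a\in X$ gives ${}^x(a^x)=a$ and then square-freeness; once $\Lcal_{{}^xa}=\Lcal_a$ holds for all $x\in X$, an easy induction on the length of $g\in G$ gives $\Lcal_{{}^ga}=\Lcal_a$ for all $g$, so in particular $\Lcal_b=\Lcal_a$ for $a,b$ in the same orbit. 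Then ${}^ab=\Lcal_a(b)$; but $\Lcal_a(b)$ lies in $X_i$ again, and iterating $\Lcal_a=\Lcal_b=\Lcal_{\Lcal_a(b)}=\cdots$ together with ${}^aa=a$ pins down ${}^ab=b$, so $(X_i,r_i)$ is trivial (or a single point).

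For (2): by part (1) every $G$-orbit is a trivial or one-element solution, so Proposition \ref{Lemma_TrivialOrbits} applies. It tells us that $X=\stu_{i\in I}X_i$ is a strong twisted union of trivial solutions, that the equalities (\ref{Lemma_TrivialOrbitseq1}) hold, and — what matters here — that $(X,r)$ satisfies \textbf{lri}. From (\ref{Lemma_TrivialOrbitseq1}) one reads off $\Lcal_{{}^ya}=\Lcal_a$ for all $a,y\in X$: indeed ${}^{{}^by}a={}^ya$ and ${}^{y^b}a={}^ya$ for $a,b$ in one orbit, and combined with the action of $G$ across orbits this upgrades to $\Lcal_{{}^xa}=\Lcal_a$ for all $x\in X$, which is precisely condition (\ref{Lemma1_Thm_maineq1a}) of Lemma \ref{Lemma_symgr_mpl1}(3). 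Since $(X,r)$ is nontrivial, $\Lcal_x\neq\id_X$ for some $x$, so Lemma \ref{Lemma_symgr_mpl1}(3) gives $\mpl(X,r)=2$. Finally $\mpl(G,r_G)=2$ follows from Theorem \ref{Th_mplG_mplX}(2): $(X,r)$ is square-free, hence satisfies condition (*), so $\mpl X=m<\infty$ iff $\mpl G=m<\infty$; with $m=2$ this yields $\mpl(G,r_G)=2$.

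The main obstacle is the first step of (1): carefully extracting $\Lcal_{{}^xa}=\Lcal_a$ for $x,a\in X$ from \textbf{lri} on the big symmetric group $(G,r_G)$ together with square-freeness of $(X,r)$ — one must be attentive that \textbf{lri} on $X$ itself does \emph{not} automatically lift to \textbf{lri} on $G$ (only the weaker \textbf{lri}$\star$ does, by Proposition \ref{VIPLemma}), so here we are genuinely \emph{using} the hypothesis that $(G,r_G)$ itself satisfies \textbf{lri}, not merely $(X,r)$. The computation showing ${}^x(a^x)=a$ for $x,a\in X$ forces $a^x=a$ after applying $\Lcal_x^{-1}$ (legitimate by non-degeneracy), and then square-freeness ${}^aa=a$ together with the cyclic condition ${}^{(a^x)}x={}^xx$ (which holds because $X$ is square-free, hence cyclic) does the rest; I would write this out as a short chain of equalities using (\ref{involeq}) and \textbf{cl1}, mirroring the argument already used in the proof of Lemma \ref{lemma_Aut_lri_sec4}, but with the roles adapted so that the \textbf{lri} input comes from $G$ rather than $X$. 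Everything after that is bookkeeping with the results quoted above.
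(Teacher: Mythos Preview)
Your plan has a genuine gap at the crucial first step, and a second gap in part~(2).

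\textbf{Part (1).} You write that ``${}^x(a^x)=a$ for $x,a\in X$ forces $a^x=a$ after applying $\Lcal_x^{-1}$''. This is false: applying $\Lcal_x^{-1}$ to ${}^x(a^x)=a$ only gives $a^x=\Lcal_x^{-1}(a)$, which is nothing more than \textbf{lri} on $X$ --- a fact you already have because $(X,r)$ is square-free. Restricting \textbf{lri} on $(G,r_G)$ to pairs in $X\times X$ yields no new information. Your cyclic-condition claim ${}^{(a^x)}x={}^xx$ is also wrong; condition \textbf{cl1} gives ${}^{(a^x)}x={}^ax$, not ${}^xx$. The actual content of the hypothesis ``\textbf{lri} holds on $(G,r_G)$'' lives in identities involving elements of $G$ of length $\geq 2$, and the paper extracts it via Theorem~\ref{prop_lriRaut}(1): \textbf{lri} on $(G,r)$ implies the three-variable identity
\[
({}^{({}^va)}u)({}^av)=({}^au)({}^{(a^u)}v),\qquad\forall\,a,u,v\in G.
\]
Setting $a=u=x\in X$ and $v=b\in G$, together with ${}^xx=x=x^x$, immediately gives $({}^{({}^bx)}x)({}^xb)=x({}^xb)$, hence ${}^{({}^bx)}x=x$ for all $b\in G$. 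Since every $y$ in the orbit of $x$ has the form $y={}^bx$, this is exactly ${}^yx=x$, so each orbit is a trivial solution. You never locate this identity, and without it the argument cannot start.

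\textbf{Part (2).} You propose to read off $\Lcal_{{}^ya}=\Lcal_a$ for all $y,a\in X$ directly from Proposition~\ref{Lemma_TrivialOrbits} and the equalities~(\ref{Lemma_TrivialOrbitseq1}). But those equalities only compare $\Lcal_{{}^by}$ and $\Lcal_y$ \emph{on the orbit $X_i$} when $b\in X_i$; they say nothing about the restriction to $X_j$ for $j\neq i$, nor about ${}^by$ when $b\notin X_i$. ``Combined with the action of $G$ across orbits this upgrades\ldots'' is not an argument. The paper instead reuses the identity above with $a,x,y\in X$, obtaining the length-two equality $({}^{({}^ya)}x)({}^ay)=({}^ax)({}^{(a^x)}y)$ in the monoid $S(X,r)$, and then argues by cases: either the equality holds already in the free monoid (giving ${}^{({}^ya)}x={}^ax$ directly), or it is a genuine defining relation of $S$, in which case the orbit-triviality from part~(1) forces $x=y$. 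Hence $\Lcal_{{}^ya}=\Lcal_a$ for all $y,a\in X$, so $\mpl X=2$ by Lemma~\ref{Lemma_symgr_mpl1}(3), and $\mpl G=2$ by Theorem~\ref{Th_mplG_mplX}(2) (this last step you have correctly).

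In short: the missing idea throughout is Theorem~\ref{prop_lriRaut}(1) and its identity~(\ref{VIP_Raut_Eqa}), which is precisely where \textbf{lri} on the full group $G$ --- as opposed to \textbf{lri} on $X$ or the weaker \textbf{lri}$\star$ --- does real work.
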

 \begin{proof}
\textbf{(1)}. We shall verify the equalities
 \begin{equation}
 \label{orbitsare trivialsolutions}
 {}^{({}^bx)}x = x, \quad \quad   x^{({}^bx)}=x,\quad \text{for all}\; b \in G, x
 \in X.
  \end{equation}
  By Theorem
\ref{prop_lriRaut} (1)
 condition  \textbf{lri} implies
 \begin{equation}
  \label{VIP_Raut_Eqaab}
 ({}^{({}^bv)}u)({}^vb)  = ({}^vu) ({}^{(v^u)}b),
 \quad\forall \; b,u,v \in G.
  \end{equation}
 We set $u =x, v=x$ in this equality to obtain $({}^{({}^bx)}x)({}^xb)  = ({}^xx)
 ({}^{(x^x)}b)= x({}^xb)$ which,
 after cancelling ${}^xb$ from both sides,
 implies the first equality in  (\ref{orbitsare trivialsolutions}). We use
 \textbf{lri} to deduce the second equality in (\ref{orbitsare
 trivialsolutions}).  Clearly, (\ref{orbitsare
 trivialsolutions}) implies
 $r(x,y) = ({}^xy, x^y) = (y,x)$, for every $x,y \in X_i$, so
  $(X_i, r_i), i \in I,$ is a trivial, or one element solution.
\textbf{(2)}. We shall prove $\mpl X = 2.$ By hypothesis $(X,r)$
is a nontrivial (square-free) solution.
 Let $a, x, y \in X$. Then ${}^{({}^ya)}x, {}^ay, {}^ax,  {}^{(a^x)}y\in
 X$,
 so writing
(\ref{VIP_Raut_Eqaab}) in terms of $a, x, y$ we obtain the following equality
of words of length two in $S=S(X,r)$:
   \begin{equation}
  \label{VIP_Raut_Eqaa}
 ({}^{({}^ya)}x)({}^ay)  = ({}^ax) ({}^{(a^x)}y).
\end{equation}
Two cases are possible. \textbf{Case i.} The equality
(\ref{VIP_Raut_Eqaa}) holds in the free monoid $\langle X\rangle$.
In this case  $({}^{({}^ya)}x,{}^ay)  = ({}^ax, {}^{(a^x)}y)$ in
$X \times X$, hence  ${}^{({}^ya)}x = {}^ax$. \textbf{Case ii.}
$({}^{({}^ya)}x)({}^ay)  \neq  ({}^ax) ({}^{(a^x)}y)$
 in  $\langle X\rangle$, so (\ref{VIP_Raut_Eqaa})
 is a non-trivial relation in $S$.
Then
\begin{equation}
\label{eq9} ({}^{({}^ya)}x,({}^ay))= r({}^ax, {}^{(a^x)}y)=
({}^{{}^ax}{({}^{(a^x)}y)},({}^ax)^{({}^{(a^x)}y)})
 \end{equation}
 is
 an equality in  $X\times X$. Comparing the first components we get
\begin{equation}
\label{eq9a} {}^{({}^ya)}x= {}^{{}^ax}{({}^{(a^x)}y)},
 \end{equation}
 so $x$ and $y$
are in the same orbit $X_i$, for some $i$. Now (\ref{eq9a})
together with (\ref{VIP_Raut_Eqaa}) and the fact that $(X_i, r_i),
i \in I,$ is a trivial solution imply
\[{}^ax= {}^{({}^ya)}x  = {}^{{}^ax}{({}^{(a^x)}y)}= {}^{(a^x)}y = {}^ay.
\]
Thus ${}^ax={}^ay$, which by
the non-degeneracy implies $x = y$, and therefore (ii) is
impossible. We have shown that  ${}^{({}^ya)}x = {}^ax$,  for  all
$a, x, y \in X$,
 or equivalently, $\Lcal_{({}^ya)}=  \Lcal_a, \quad \forall \; a, y  \in X$. This implies that $\mpl X = 2$. 
 By hypothesis $(X,r)$ is a
 square-free solution, so Theorem  \ref{Th_mplG_mplX} implies $\mpl G =\mpl X=2$.
 \end{proof}
\begin{theorem}
 \label{Thm_main}
 Let $(X,r)$ be a nontrivial square-free solution of arbitrary
 cardinality.
  \begin{enumerate}
 \item
 The following conditions are equivalent
 \begin{enumerate}
  \item
    \label{Thm_main1}
 $(X,r)$ is a multipermutation solution of level $2$.
 \item
    \label{Thm_main2}
 $(G,r_G)$ is a multipermutation solution of level $2$.
 \item
    \label{Thm_main4}
  $G$ acts  upon itself  as automorphisms that is
$\Lcal_{({}^ba)}= \Lcal_a, \quad  \forall\; a,b \in G.$
   \item
 \label{Thm_main5}
 $(G,r_G)$ satisfies condition \textbf{lri}.
 \item
   \label{Thm_main7}
   The left brace $(G, +, \cdot)$ satisfies condition \textbf{Raut}.
  \end{enumerate}
  \item
  More generally, $\mpl (X,r)= m\geq 2$ \emph{iff} the derived group $(G_{m-2}, r_{G_{m-2}})$ is nonabelian and satisfies
  \textbf{lri}.
    \end{enumerate}
\end{theorem}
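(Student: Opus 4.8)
The plan is to prove Part (1) as a cycle of implications and then derive Part (2) by an induction that reduces to Part (1) applied to a retraction. For Part (1), note first that Proposition \ref{prop_ActsAsAut\&lri} already establishes the equivalence of (\ref{Thm_main2}), (\ref{Thm_main4}), (\ref{Thm_main5}) and (\ref{Thm_main7}) for an arbitrary nontrivial symmetric group $(G,r)$: indeed conditions (\ref{Thm_main4}), (\ref{Thm_main5}), (\ref{Thm_main7}) are exactly the conditions (\ref{prop_ActsAsAutLeft}), (i), (ii) there (together with the four equivalent identities (\ref{MLauteq2})), and (\ref{Thm_main2}) is condition (\ref{mpl2a}). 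So the only genuinely new content is the equivalence (\ref{Thm_main1}) $\Longleftrightarrow$ (\ref{Thm_main2}), i.e. relating the multipermutation level of the square-free solution $(X,r)$ to that of its symmetric group.

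For the implication (\ref{Thm_main2}) $\Longrightarrow$ (\ref{Thm_main1}): if $(G,r_G)$ is a multipermutation solution of level $2$, then since $(X,r)$ is square-free it satisfies condition (*), so Theorem \ref{Th_mplG_mplX}(2) gives $\mpl X = \mpl G = 2$; here I must also rule out $\mpl X \leq 1$, which is immediate because $(X,r)$ is nontrivial and square-free, hence not a permutation solution, so $\mpl X \geq 2$. This is the easy direction. The substantive direction is (\ref{Thm_main1}) $\Longrightarrow$ (\ref{Thm_main5}) (equivalently (\ref{Thm_main2})), and this is precisely where Proposition \ref{prop_Thm_main} is designed to help but in the wrong direction — it assumes \textbf{lri} on $G$ and concludes $\mpl X = 2$. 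So the main work is the converse: assuming $(X,r)$ is square-free with $\mpl X = 2$, show $(G,r_G)$ satisfies \textbf{lri}. The strategy is: since $\mpl X = 2$, Lemma \ref{Lemma_symgr_mpl1}(3) gives $\Lcal_{{}^yx} = \Lcal_x$ for all $x,y \in X$, i.e. the condition $\Kcal_1 = G$ in the notation of Section \ref{sec_symgrG(x,r)}; equivalently, by Theorem \ref{longsequencepropThm}, $\mpl G = 2$. Now apply Proposition \ref{prop_ActsAsAut\&lri} to the symmetric group $(G, r_G)$: since $\mpl G = 2$ it is a nontrivial multipermutation solution of level $2$, hence condition (\ref{mpl2a}) holds, hence \textbf{MLaut} and \textbf{lri} hold on $(G,r_G)$. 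This closes the loop for Part (1): (\ref{Thm_main1}) $\Rightarrow \mpl G = 2 \Rightarrow$ (\ref{Thm_main2}) $\Rightarrow$ (\ref{Thm_main4}), (\ref{Thm_main5}), (\ref{Thm_main7}) $\Rightarrow$ (\ref{Thm_main2}) $\Rightarrow$ (\ref{Thm_main1}).

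For Part (2), I would argue by induction on $m$, the base case $m=2$ being Part (1) together with the observation that $(G_0, r_{G_0}) = (G, r_G)$ is nonabelian precisely when $(X,r)$ is nontrivial — a nontrivial square-free solution forces $\mpl G = 2 \geq 2$, so $G$ is not the trivial (free abelian) solution, hence nonabelian. For the inductive step, use that retraction commutes with passing to the symmetric group in the sense of Theorem \ref{longsequencepropThm} / Proposition \ref{longsequenceprop}: $G(\Ret(X,r)) = G_1$, and $\Ret(X,r)$ is again square-free (property (iii) of retraction recalled in Section \ref{Preliminaries}), nontrivial iff $\mpl X \geq 2$, with $\mpl \Ret(X,r) = m-1$. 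Applying the inductive hypothesis to $\Ret(X,r)$ gives: $\mpl \Ret(X,r) = m-1 \geq 2$ iff $G_1{}_{,\,m-3} = (G(\Ret(X,r)))_{m-3} = G_{m-2}$ is nonabelian and satisfies \textbf{lri}. Since $\mpl X = m \Leftrightarrow \mpl \Ret(X,r) = m-1$, this is the desired statement for $(X,r)$, provided $m-1 \geq 2$, i.e. $m \geq 3$; the case $m = 2$ is the base. The hard part will be bookkeeping the index shift $G_{m-2}$ correctly across the retraction and verifying the "nonabelian" clause in the degenerate low-level cases — in particular checking that $\mpl X = m \geq 2$ does force $G_{m-2}$ to be genuinely nonabelian, which follows because $\Ret^{m-2}(X,r)$ is then a nontrivial square-free solution of level $2$, so $\mpl G_{m-2} = 2$ by Theorem \ref{Th_mplG_mplX}, hence $G_{m-2}$ is not free abelian. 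A routine check is also needed that all the cited results (especially Theorem \ref{Th_mplG_mplX} and Lemma \ref{Lemma_symgr_mpl1}) apply to solutions of arbitrary, possibly infinite, cardinality, which they do since condition (*) holds for every square-free solution.
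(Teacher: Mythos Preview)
Your proposal has a genuine gap in Part (1). You claim that Proposition \ref{prop_ActsAsAut&lri} ``already establishes the equivalence of (\ref{Thm_main2}), (\ref{Thm_main4}), (\ref{Thm_main5}) and (\ref{Thm_main7}) for an arbitrary nontrivial symmetric group.'' This is a misreading: that proposition proves only the equivalence (\ref{Thm_main2}) $\Longleftrightarrow$ (\ref{Thm_main4}), and that \emph{each of these implies} \textbf{lri} and \textbf{Raut}. It does \emph{not} state the converse implications (\ref{Thm_main5}) $\Longrightarrow$ (\ref{Thm_main2}) or (\ref{Thm_main7}) $\Longrightarrow$ (\ref{Thm_main2}), and indeed those converses are not true for an arbitrary symmetric group --- they require the square-free hypothesis on $(X,r)$. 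Your cycle therefore does not close: you never get back from (\ref{Thm_main5}) or (\ref{Thm_main7}) to anything else.

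The irony is that you explicitly dismiss the tool that fills this gap. You write that Proposition \ref{prop_Thm_main} ``is designed to help but in the wrong direction --- it assumes \textbf{lri} on $G$ and concludes $\mpl X = 2$.'' But that is precisely the implication (\ref{Thm_main5}) $\Longrightarrow$ (\ref{Thm_main1}), which is exactly what you are missing. The paper's proof uses it in that direction. Similarly, for (\ref{Thm_main7}) $\Longrightarrow$ (\ref{Thm_main5}) the paper invokes Corollary \ref{cor_lriRaut} (equivalently Theorem \ref{prop_lriRaut}(2)), which needs that $(X,r)$ itself satisfies \textbf{lri} --- guaranteed here by square-freeness, but not available for an abstract symmetric group. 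Once you reinstate these two pieces, your scheme (\ref{Thm_main1}) $\Longleftrightarrow$ (\ref{Thm_main2}) via Theorem \ref{Th_mplG_mplX}(2), (\ref{Thm_main2}) $\Longleftrightarrow$ (\ref{Thm_main4}) $\Longrightarrow$ (\ref{Thm_main5}), (\ref{Thm_main5}) $\Longrightarrow$ (\ref{Thm_main1}) via Proposition \ref{prop_Thm_main}, and (\ref{Thm_main5}) $\Longleftrightarrow$ (\ref{Thm_main7}) via Corollary \ref{cor_lriRaut}, matches the paper's proof. Your Part (2) is fine; the induction is a slightly heavier version of the paper's direct argument (apply Part (1) to $\Ret^{m-2}(X,r)$), but it is correct.
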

\begin{proof}
\textbf{(1).} The implication (\ref{Thm_main1}) $\Longleftrightarrow$ (\ref{Thm_main2}) follows from Theorem \ref{Th_mplG_mplX} part (3).
Note that each of the solutions $(X,r)$ and $(G, r_G)$ satisfies condition (*), see Definition \ref{defcondstar}. Then Lemma
\ref{Lemma_symgr_mpl1}, part  (3) verifies
the implications
$(\ref{Thm_main2}) \Longleftrightarrow     (\ref{Thm_main4})
\Longrightarrow (\ref{Thm_main5})$.
Proposition  \ref{prop_Thm_main} verifies
(\ref{Thm_main5})   $\Longrightarrow$ (\ref{Thm_main1}).
Since every square-free solution $(X,r)$ satisfies \textbf{lri}, Corollary \ref{cor_lriRaut}
gives the implication (\ref{Thm_main5}) $\Longleftrightarrow$  (\ref{Thm_main7}).
\textbf{(2).} By definition $(G_{m-2}, r_{G_{m-2}})= G(\Ret^{m-2}(X, r))$, moreover there are equivalences:
$$
[\mpl (X, r) = m \geq 2]\Longleftrightarrow [\mpl\Ret{}^{ m-2}{(X, r)} = 2]\Longleftrightarrow [\mpl (G_{m-2}, r_{G_{m-2}}) = 2].
$$
Part (1) implies $\mpl\Ret{}^{ m-2}{(X, r)} = 2$ \emph{iff} $(G_{m-2}, r_{G_{m-2}})$ satisfies \textbf{lri} and is nonabelian, which verifies part (2).
\end{proof}

\subsection{A finite square-free symmetric set $(X,r)$ whose permutation group  $\Gcal(X,r)$ satisfies \textbf{lri} is a
multipermutation
solution}
\label{sect_TheoremGcal_a}

The conventions given in the beginning of this section are in
force. So $(X,r)$ denotes a nontrivial square-free solution of
arbitrary
 cardinality, $|X|\geq 2$,
 $(G, r_G), $    $(\Gcal, r_{\Gcal})$ are the associated
symmetric groups,  $S= S(X,r)$ is the associated monoid,
$(G_j,r_{G_j})$, and $(\Gcal_j,r_{\Gcal})$, $j \geq 0$ are the
corresponding derived groups. In particular, $G_1 = G([X],
r_{[X]})$ is the symmetric group of the retraction
$\Ret(X,r)=([X],r_{[X]})$. We denote by $X_i, i \in I,$ the set of
all $G$-orbits in $X$ (possibly infinite) and by
 $(X_i, r_i), i\in I$, the induced solutions. Clearly, a
 subset $Y\subset X$ is a $G$-orbit  \emph{iff} it is a $\Gcal$-orbit.
The main result of the subsection is Theorem  \ref{Thm_gcal(X,r)}.

Consider the group epimorphism
$\Lcal: G \longrightarrow \Gcal$, extending the assignment  $x\mapsto \Lcal_x, x  \in X$. For $a \in G$ we shall often write
 $\overline{a}:= \Lcal_a\in \Gcal$,  so every element $u \in \Gcal$
 can be written as $u = \overline{a},$ for some $a \in G.$
 If $a, b \in G$  we write $a\sim b$ to express  that $\Lcal_a =\Lcal_b$ on $X$ (by Remark \ref{factGamma=K} this  implies equality of
 the left actions $\Lcal_a =\Lcal_b$ of $G$ upon itself).
 We shall involve again computation with
long actions,
or, as we call these,  "towers of
actions".

  \begin{lemma}
  \label{Prop_gcal2}
Notation as above.  Let $(X,r)$ be a nontrivial solution, $|X|\geq 2$, such that: (i) for each $i \in
I$, the  $G$-orbit
  $(X_i, r_i)$ is a trivial solution,
 or one element solution; and (ii)
the symmetric group $(\Gcal, r_{\Gcal})$ satisfies
 condition
 \textbf{lri}.
     Then the  equality:
  \begin{equation}
\label{mplmeq1} ((\cdots((b\la y_{n})\la y_{n-1})\la \cdots \la
y_2)\la y_1)\la a = ((\cdots( y_{n}\la y_{n-1})\la \cdots \la y_2
)\la y_1)\la a
\end{equation}
holds for any integer $n\geq 1$, any choice of $y_1, \cdots, y_n\in X$,
     and any pair $a, b$, such that $a\in X_i$,  $ b \in X_i \bigcup
X_i^{-1},$ $ i \in I$.
 \end{lemma}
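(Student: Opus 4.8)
The plan is to argue by induction on $n$, first rewriting the desired identity (\ref{mplmeq1}) in terms of the canonical action of $G$ on $X$, and then, for $n\geq 2$, transporting it to the permutation group $\Gcal$ where condition \textbf{lri} is available. First I would record the ambient facts: by hypothesis (i) and Proposition \ref{Lemma_TrivialOrbits}, $(X,r)$ itself satisfies \textbf{lri}, so all the identities of Proposition \ref{VIPLemma} — in particular \textbf{lri$\star$}, see (\ref{lri5}) — are in force and $X^{\star}$ is $r_G$-invariant (Corollary \ref{VIPcor}); moreover the canonical map $\Lcal:(G,r_G)\longrightarrow(\Gcal,r_{\Gcal})$ coming from $\Ret(G,r_G)\simeq\Gcal$ (Remark \ref{factGamma=K}(iii), Corollary \ref{prop_RetG_Gcal}) is a braiding-preserving group epimorphism, hence $\Lcal_{{}^{u}v}={}^{\Lcal_u}\Lcal_v$ and $\Lcal$ sends a tower of actions in $X$ to the tower of the corresponding $\Lcal$-values in $\Gcal$. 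Setting $u=(\cdots(b\la y_n)\la\cdots)\la y_1$ and $v=(\cdots(y_n\la y_{n-1})\la\cdots)\la y_1$ — both lying in the $G$-orbit of $y_1$, hence in $X$ — the claim (\ref{mplmeq1}) becomes ${}^{u}a={}^{v}a$, i.e. $\Lcal_{u}(a)=\Lcal_{v}(a)$.

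For the base case $n=1$ the claim is ${}^{({}^{b}y_1)}a={}^{y_1}a$ with $a\in X_i$, $b\in X_i\cup X_i^{-1}$. When $b\in X_i$ this is exactly the first identity in (\ref{Lemma_TrivialOrbitseq1}) (with $y=y_1$ and $a,b\in X_i$); when $b=c^{-1}$ with $c\in X_i$, I would use \textbf{lri$\star$} to rewrite ${}^{c^{-1}}y_1=y_1^{c}$ (see (\ref{lri5})), so that ${}^{({}^{b}y_1)}a={}^{(y_1^{c})}a={}^{y_1}a$ by the second identity in (\ref{Lemma_TrivialOrbitseq1}). For the inductive step $n\geq 2$ I would apply $\Lcal$: then $\Lcal_{u}$ is the tower of length $n+1$ in $\Gcal$ with entries $\Lcal_b,\Lcal_{y_n},\dots,\Lcal_{y_1}$, and $\Lcal_{v}$ is the tower of length $n$ with entries $\Lcal_{y_n},\dots,\Lcal_{y_1}$. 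By hypothesis (ii) and Theorem \ref{prop_lriRaut}(1), the brace $(\Gcal,+,\cdot)$ satisfies \textbf{Raut} and equation (\ref{VIP_Raut_Eqa}) holds in $\Gcal$. By an argument in the spirit of the proof of Proposition \ref{prop_Thm_main} — specialising (\ref{VIP_Raut_Eqa}) to the $\Lcal$-images of $a,b,y_n,\dots,y_1$, translating it through the homomorphism $\Lcal$ into an equality of length-two words in the monoid $S=S(X,r)$, and using the triviality of the $G$-orbits of $X$ together with the inductive hypothesis to exclude the case in which that word equality is a nontrivial relation in $S$ — I expect to deduce that in $\Gcal$ any tower of length $\geq 2$ coincides with the tower built from its last two entries (equivalently $\mpl(\Gcal,r_{\Gcal})=2$, cf. (\ref{mplmequality}) with $m=2$ and Corollary \ref{cor_mplG}). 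In particular the two $\Gcal$-towers above coincide, so $\Lcal_{u}={}^{\Lcal_{y_2}}\Lcal_{y_1}=\Lcal_{v}$ as permutations of $X$, and evaluating at $a$ gives ${}^{u}a={}^{v}a$.

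The step I expect to be the main obstacle is the inductive step: carrying out the case analysis inside $S(X,r)$ for the word equation produced by (\ref{VIP_Raut_Eqa}) in $\Gcal$, and keeping track of which $G$-orbit each entry of each tower belongs to, so that hypothesis (i) and \textbf{lri$\star$} can be invoked to discard the undesired "relation" cases (this is exactly where the role played by square-freeness in Proposition \ref{prop_Thm_main} is taken over here by the triviality of the orbits). By contrast, the base case $n=1$ should be immediate from the orbit-triviality identities (\ref{Lemma_TrivialOrbitseq1}) and \textbf{lri$\star$}.
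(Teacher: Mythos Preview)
Your base case is fine and matches the paper. The inductive step, however, has a genuine gap.

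You propose to show that under hypotheses (i) and (ii) the symmetric group $(\Gcal,r_{\Gcal})$ has $\mpl\Gcal\leq 2$, i.e.\ that \emph{every} $\Gcal$-tower of length $\geq 2$ collapses to its last two entries. That is a far stronger statement than (\ref{mplmeq1}), which only concerns towers where the top entry $b$ and the bottom entry $a$ lie in the same orbit (or $b\in X_i^{-1}$). More importantly, the mechanism you propose for proving it does not work: in Proposition~\ref{prop_Thm_main} the identity (\ref{VIP_Raut_Eqa}) holds in $G$ itself (because $(G,r_G)$ satisfies \textbf{lri}), so (\ref{VIP_Raut_Eqaa}) is a genuine equality of length-two words in $S(X,r)$, and one can do the ``trivial equality vs.\ relation in $S$'' dichotomy. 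Here the hypothesis is only \textbf{lri} on $\Gcal$, so (\ref{VIP_Raut_Eqa}) gives merely $\overline{({}^{({}^va)}u)({}^av)}=\overline{({}^au)({}^{(a^u)}v)}$, i.e.\ the two words have the same \emph{image under $\Lcal$}, not that they are equal in $S$. There is no word equation in $S$ to which the Proposition~\ref{prop_Thm_main} case analysis applies, and no way to ``exclude the nontrivial relation case''. Your sentence ``translating it through the homomorphism $\Lcal$ into an equality of length-two words in the monoid $S=S(X,r)$'' is precisely the step that fails.

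The paper embraces this weaker information. It writes the $\Gcal$-identity as the equivalence (\ref{VIP_Raut_Eqc07}), $b'\cdot y_k'\sim b''\cdot y_k''$ (same left action on $X$), with the substitutions $b'=(y_k\la y_{k+1})\la b$, $y_k'=y_{k+1}\la y_k$, $b''=y_{k+1}\la b$, $y_k''=(y_{k+1}^{\,b})\la y_k$. Acting on $y_{k-1}$ gives an \emph{equality in $X$} (no case analysis needed), hence two equal presentations of the same tower $\tau$. Now the key observation is that $b',b''$ are still in $X_i\cup X_i^{-1}$ because $X_i$ is $G$-invariant and (\ref{eqlri01}) holds, so the inductive hypothesis applies to \emph{each} presentation separately, cutting off $b'$ from one and $b''$ from the other. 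After cutting, one presentation becomes the right-hand side of (\ref{mplmeq1}) and the other, via \textbf{lri} to rewrite $y_{k+1}^{\,b}={}^{b^{-1}}y_{k+1}$, becomes the left-hand side with $c=b^{-1}$ in place of $b$. This is what makes the induction close; it uses the orbit constraint on $a,b$ in an essential way at every step, rather than trying to prove an unconditional collapse of $\Gcal$-towers.
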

\begin{proof}
 By hypothesis  $(\Gcal, r_{\Gcal})$  satisfies condition \textbf{lri}, hence by Theorem  \ref{prop_lriRaut},
the identity  (\ref{VIP_Raut_Eqa}) hods in $\Gcal$. We write it
using the alternative notation,  " $\alpha\la x =
{}^{\alpha}{x}$":
\[
 ((v\la u)\la b)(u\la v)  =
 (u\la b).((u^b)\la v),
 \quad \forall \; u, v, b \in
 \Gcal.
  \]
 Equivalently, one has
\begin{equation}
  \label{VIP_Raut_Eqb07}
 \overline{((v\la u)\la b)(u\la v)}  =
 \overline{(u\la b).((u^b)\la v)},
 \quad \forall \; u, v, b \in
 G.
  \end{equation}
We shall use induction on $n$ to prove that
     (\ref{mplmeq1}) holds for any $n$, any choice of $y_1, \cdots, y_n\in X$,
     and any $a$ and $b$ such that $a\in X_i$,  $ b \in X_i \bigcup
X_i^{-1},$ $ i \in I$. The base for the induction follows
straightforwardly from Proposition \ref{Lemma_TrivialOrbits},
(\ref{Lemma_TrivialOrbitseq1}). Due to condition \textbf{lri} on
$(X,r)$, the equalities (\ref{Lemma_TrivialOrbitseq1}) are
satisfied for all $y \in X$, and all  $a,b$, such that $a\in X_i,
b \in X_i \bigcup X_i^{-1}$, $i \in I$.
 Assume (\ref{mplmeq1}) is in force for all
     $n, 1 \leq n \leq k$. Chose
     $y_1, \cdots , y_k, y_{k+1} \in X$, $a$ and $b$ as above.
  We write (\ref{VIP_Raut_Eqb07}) in terms of $ y_k, y_{k+1}, b$
   to obtain
   \begin{equation}
  \label{VIP_Raut_Eqc071}
  (\overline{(y_k \la y_{k+1})\la b}).(\overline{y_{k+1}\la y_k})=
  (\overline{y_{k+1}\la b}). (\overline{(y_{k+1}^b)\la y_k}).
  \end{equation}
This implies an equivalence of elements of $G$ with respect to
their left action upon $X$:
\begin{equation}
  \label{VIP_Raut_Eqc07}
   ((y_k \la y_{k+1})\la b). (y_{k+1}\la y_k)\sim
  (y_{k+1}\la b). ((y_{k+1}^b)\la y_k).
  \end{equation}
 Now we set
  \begin{equation}
  \label{VIP_Raut_Eq167}
\begin{array}{lll}
b^{\prime}&= (y_k \la y_{k+1})\la b, \quad &{y_k}^{\prime} =  y_{k+1}\la y_k,\\
b^{\prime\prime}&= y_{k+1}\la b, \quad &{y_k}^{\prime\prime}= (y_{k+1}^b)\la y_k.
\end{array}
 \end{equation}
 Substitute these in  (\ref{VIP_Raut_Eqc07}) and obtain
 $  b^{\prime}.{y_k}^{\prime}  \sim b^{\prime\prime}.{y_k}^{\prime\prime}$.
 Recall that $G$ acts on $X^{\ast}$, see Section  \ref{section_lri}.  By Proposition
 \ref{VIPLemma} one has $({}^cb)^{-1} = {}^c{(b^{-1})},
\forall c
 \in G, \; b \in X^{\ast}$, hence $b^{-1} \in X_i$ implies  $({}^c{b})^{-1}\in X_i$, $\forall c
 \in G$.  Therefore each of the pairs
 $a, b^{\prime}$ and $a, b^{\prime\prime}$
satisfies the inductive assumption.

Without loss of generality we may assume that $k \geq 2.$  (In the case when
$k =1$, replace $y_{k-1}$ by $a$ in the argument).
  Then  acting on the left upon $y_{k-1}$ we yield  
$(b^{\prime}.{y_k}^{\prime})\la y_{k-1}=
  (b^{\prime\prime}.{y_k}^{\prime\prime})\la y_{k-1}$,
  which by  condition \textbf{ML1} gives
\[
  \omega =b^{\prime}\la ({y_k}^{\prime}\la y_{k-1})=
  b^{\prime\prime}\la ({y_k}^{\prime\prime}\la y_{k-1}).
\]
   Using these two presentations of the element $\omega$ we have the
   following
   equalities of "towers of actions"  denoted by $\tau$ and $\tau_1$
  \begin{equation}
  \label{VIP_Raut_Eq15b}
  \begin{array}{lll}
  \tau &=&(\cdots((b^{\prime}\la ({y_k}^{\prime}\la y_{k-1}))\la y_{k-2})
  \la \cdots \la y_1)\la a\\
  &=&
  ((\cdots(b^{\prime\prime}\la ({y_k}^{\prime\prime}\la y_{k-1}))\la y_{k-2})
  \la \cdots \la
  y_1)\la a= \tau_1.
  \end{array}
  \end{equation}
Next we apply the inductive assumption to each of the elements $\tau$ and
$\tau_1$ occurring in
   (\ref{VIP_Raut_Eq15b}). For $\tau$ we have
   \begin{equation}
  \label{VIP_Raut_Eq17aa}
  \begin{array}{llll}
  \tau &=&(\cdots((b^{\prime}\la ({y_k}^{\prime}\la y_{k-1}))\la y_{k-2})
  \la \cdots \la y_1)\la a \quad &\\
 &=&(\cdots(( {y_k}^{\prime}\la y_{k-1})\la y_{k-2})\la \cdots \la y_1)\la
 a\quad &\text{by IH}\\
   &=&(\cdots( (y_{k+1}\la y_k)\la y_{k-1})\la \cdots \la y_1)\la a&\text{by
   (\ref{VIP_Raut_Eq167})},
  \end{array}
  \end{equation}
  where IH is the inductive hypothesis. Similarly, for   $\tau_1$ we have
    \begin{equation}
  \label{VIP_Raut_Eq18aa}
  \begin{array}{llll}
  \tau_1&=& ((\cdots (b^{\prime\prime}\la ({y_k}^{\prime\prime}\la
  y_{k-1}))\la y_{k-2})
  \cdots \la y_1)\la a&\\
   &=&(\cdots(({y_k}^{\prime\prime}\la y_{k-1})\la y_{k-2}) \cdots \la y_1)\la
   a \quad   & \text{by IH}\\
   &=& (\cdots ((((y_{k+1}^b)\la y_k) \la y_{k-1})\la y_{k-2})\la \cdots \la
   y_1)\la a \; & \text{by (\ref{VIP_Raut_Eq167}) }\\
   &=&(\cdots ((((b^{-1} \la y_{k+1})\la y_k) \la y_{k-1})\la y_{k-2})\la
   \cdots \la
   y_1)\la a \quad & \text{by
   \textbf{lri} }\\
   &=&(\cdots ((((c \;\la y_{k+1})\la y_k) \la y_{k-1})\la y_{k-2})\la \cdots
   \la
   y_1)\la a, \;  &\text{where $c=b^{-1}$}.
  \end{array}
  \end{equation}
The presentations of $\tau_1$ and $\tau$  given in (\ref{VIP_Raut_Eq18aa}) and
(\ref{VIP_Raut_Eq17aa}),  together with $\tau_1=\tau$ imply that the equality
\[
(\cdots (((c \; \la y_{k+1})\la y_k) \la y_{k-1})\la \cdots \la
   y_1)\la a=(\cdots( (y_{k+1}\la y_k)\la y_{k-1})\la \cdots \la y_1)\la a
\]
holds, whenever $a \in X_i$ and $c \in X_i \bigcup X_i^{-1}$, $i \in I$.
\end{proof}

  \begin{proposition}
  \label{Prop_gcal}
 Let $(X,r)$ be a nontrivial solution, whose symmetric group $(\Gcal, r_{\Gcal})$ satisfies
 condition
 \textbf{lri}. Suppose $X$ has a finite number of $\Gcal$-orbits $X_i,   1 \leq i \leq m,$ and that
  each $(X_i, r_i), 1 \leq i \leq m,$ is a trivial solution,
 or one element solution.
 Then  $(X,r)$ is a square-free multipermutation solution, with $|X|\geq 3$, $2
\leq \mpl (X,r) \leq m$  and presents as a strong twisted union $X =
X_1\stu X_2 \cdots \stu X_m$ of solutions with $0 \leq \mpl(X_i)
\leq 1$.
 \end{proposition}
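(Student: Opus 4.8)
The plan is to combine Lemma \ref{Prop_gcal2} with the criterion for finite multipermutation level from Proposition \ref{mplmtheorem1} (part (2), since a square-free $(X,r)$ satisfies condition (*)), and to read off the "strong twisted union" structure from Proposition \ref{Lemma_TrivialOrbits}. First I would check the hypotheses of Lemma \ref{Prop_gcal2} are met: each $G$-orbit $X_i$ is a trivial or one-element solution by assumption, and $(\Gcal, r_{\Gcal})$ satisfies \textbf{lri} by assumption, so \eqref{mplmeq1} holds for every $n \geq 1$, every $y_1, \dots, y_n \in X$, and every pair $a \in X_i$, $b \in X_i \cup X_i^{-1}$. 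Since $X$ has only finitely many $\Gcal$-orbits $X_1, \dots, X_m$ (equivalently $G$-orbits, by Remark \ref{remark_invariantsubset}), and these $G$-orbits partition $X$, for any element $a \in X$ there is a unique $i$ with $a \in X_i$.

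The key step is to upgrade the "same-orbit" equality \eqref{mplmeq1} to the unrestricted tower identity \eqref{mplmequality} of Proposition \ref{mplmtheorem1}(2). The point is that one is allowed, in \eqref{mplmequality}, to choose the innermost pair $y_m, y_{m-1}$ (or in the relevant formulation, the bottom two entries of the tower) freely, and then Lemma \ref{Prop_gcal2} lets one replace the top entry $y_m$ by \emph{any} element of $X_i \cup X_i^{-1}$ where $X_i$ is the orbit of the second entry — in particular one can delete the top action entirely. Concretely, fix $x \in X$ and $y_1, \dots, y_m \in X$; let $i$ be the index with $y_{m-1} \in X_i$ (or, if $m=1$, with $x \in X_i$). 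Applying \eqref{mplmeq1} with $b = y_m$, $a = y_{m-1}$ and the remaining variables $y_{m-2}, \dots, y_1, x$ playing the role of the successive actions, one gets
\[
((\cdots((y_m\la y_{m-1})\la y_{m-2})\la \cdots \la y_1)\la x = ((\cdots(y_{m-1}\la y_{m-2})\la \cdots \la y_1)\la x,
\]
which is exactly \eqref{mplmequality}. Hence by Proposition \ref{mplmtheorem1}(2), $\mpl (X,r) \leq m < \infty$. Since $(X,r)$ is nontrivial and square-free and $X$ decomposes into $m$ orbits each of $\mpl \leq 1$, $(X,r)$ itself is not a permutation solution (a trivial decomposition would force every orbit to coincide, contradicting nontriviality), so $\mpl(X,r) \geq 2$ and in particular $|X| \geq 3$.

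Finally, for the structural statement: Proposition \ref{Lemma_TrivialOrbits}(2) applied to our hypotheses gives directly that $X = \stu_{i\in I} X_i = X_1 \stu X_2 \stu \cdots \stu X_m$ is a strong twisted union, and each induced solution $(X_i, r_i)$ is trivial or one-element, so $0 \leq \mpl(X_i) \leq 1$. I expect the main obstacle to be the careful bookkeeping in matching the quantifiers of Lemma \ref{Prop_gcal2} (where the restriction $a \in X_i$, $b \in X_i \cup X_i^{-1}$ constrains only the top two tower entries) against the fully unrestricted form \eqref{mplmequality} — one must be sure that the orbit of the \emph{second-from-top} entry is the one controlling which $b$'s are allowed, and that this is precisely the entry appearing after the deletion, so that no residual constraint survives. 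The edge case $m=1$ (where the tower has only the innermost $\la x$ available) also needs the parenthetical remark in the proof of Lemma \ref{Prop_gcal2} that one replaces $y_{k-1}$ by $a$, i.e. here $x$ plays that role.
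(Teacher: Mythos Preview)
Your proposal has a genuine gap in the key step, namely the reduction to Lemma \ref{Prop_gcal2}. In that lemma the tower identity
\[
((\cdots((b\la y_{n})\la y_{n-1})\la \cdots \la y_2)\la y_1)\la a = ((\cdots( y_{n}\la y_{n-1})\la \cdots \la y_2 )\la y_1)\la a
\]
is proved only under the constraint that the \emph{outermost} entry $b$ and the \emph{innermost} entry $a$ lie in the same $G$-orbit $X_i$ (up to inversion). In your identification ``$b=y_m$, $a=y_{m-1}$, with $y_{m-2},\dots,y_1,x$ as the successive actions'' this constraint is not satisfied: first, the innermost entry of the target tower \eqref{mplmequality} is $x$, not $y_{m-1}$, so the correct matching is $a=x$, $b=y_m$; second, even with the correct matching, there is no reason whatsoever for the arbitrary elements $x$ and $y_m$ to lie in the same orbit. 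If one could drop the same-orbit hypothesis in Lemma \ref{Prop_gcal2}, the conclusion would be $\mpl(X,r)\leq 1$, not $\leq m$, which is false here.

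The paper supplies exactly the missing idea: a pigeonhole argument. One works with a tower of length $m+1$ (rather than $m$), so that among $y_1,\dots,y_{m+1}$ two entries $y_{\lambda}$ and $y_{\lambda+\mu}$ must fall in a common orbit $X_i$. One then uses Lemma \ref{Prop_gcal2} with $a$ chosen to be the lower of these two (namely $y_{\lambda}$) and $b$ chosen to be the element obtained by applying the portion of the tower above $y_{\lambda+\mu}$ to $y_{\lambda+\mu}$ itself (which stays in $X_i$); this lets one truncate both $\omega$ and $\omega'$ to the \emph{same} shorter tower, yielding \eqref{mplmeqa}. The adjacent case $\mu=1$ is handled directly by triviality of the orbit action. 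Your bookkeeping worry at the end was on the right track --- the same-orbit constraint is exactly what blocks the direct approach --- but you did not resolve it.
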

 \begin{proof}
It is clear that $(X,r)$ is a square-free solution, since each orbit is a trivial solution and therefore a square-free symmetric set.
But
a square-free solution  of order $|X|=2$ is trivial, hence the hypothesis of the proposition implies
$|X|\geq 3$. It follows from Proposition \ref{Lemma_TrivialOrbits}
 that $X$
is a strong twisted union $X = X_1\stu X_2 \cdots \stu X_m$  of
solutions with $0 \leq \mpl(X_i) \leq 1.$ We shall prove that
$(X,r)$ is a multipermutation solution, with $2 \leq \mpl (X,r)
\leq m$. The square-free solution $(X,r)$ satisfies the hypothesis
of Proposition \ref{mplmtheorem1}, (2), therefore
 it will be enough to show that 
\begin{equation}
\label{mplmeqa}
\begin{array}{lll}
\omega &=& (\cdots((y_{m+1}\la y_{m})\la y_{m-1})\la
\cdots \la y_2)\la y_1\\
 &=& (\cdots( y_{m}\la y_{m-1})\la \cdots \la y_2
)\la y_1= \omega^{\prime},\\&&\;\;\text{for all} \;\; y_1, \cdots ,y_{m+1}\in X.
\end{array}
\end{equation}
Let  $y_1, \cdots ,y_{m+1}\in X.$ There are  exactly $m$ orbits, hence there will be some integers $\lambda,\mu$, $1
\leq \lambda< \lambda+\mu\leq m+1$, such that $y_{\lambda},
y_{\lambda+\mu}$ are in the same orbit, say $X_i$. Two cases are
possible.
\textbf{Case 1.} $\mu = 1.$ In this case $\lambda+\mu= \lambda+1$, so
$(\cdots(y_{m+1}\la y_{m})\la \cdots ) \la y_{\lambda+1} = u \in X_i$, and
since $X_i$ is a trivial solution, one has  $(\cdots (y_{m+1}\la
y_{m})\la \cdots \la y_{\lambda+1})\la y_{\lambda} =
{}^u{y_{\lambda}}=y_{\lambda}$.  Thus
\[
\omega=(\cdots((y_{m+1}\la y_{m})\la y_{m-1})\la \cdots \la y_2)\la
y_1 =(\cdots(y_{\lambda}\la y_{\lambda-1})\la \cdots \la y_2)\la y_1.
\]
Similarly,
$\omega^{\prime}=(\cdots( y_{m}\la y_{m-1})\la \cdots \la y_2)\la
y_1 =(\cdots(y_{\lambda}\la y_{\lambda-1})\la \cdots \la y_2)\la
y_1.$
This gives the desired equality $\omega=\omega^{\prime}$, which  proves
(\ref{mplmeqa}).
\textbf{Case 2.} $\mu > 1$.
We shall assume $\lambda+\mu < m+1$ (The proof in the case
$\lambda+\mu = m+1$ is analogous). We set
\begin{equation}
\label{eq_settings}
\begin{array}{lll}
z&=& y_{\lambda},\quad u=y_{\lambda+\mu},\quad
{u}^{\prime}=(\cdots(y_{m+1}\la y_{m})\la\cdots)\la y_{\lambda+\mu},\\
\omega_{\lambda}&=& ((\cdots(u^{\prime} \la y_{\lambda+\mu-1})\la
y_{\lambda+\mu-2})\cdots \la
y_{\lambda+1})\la z.
\end{array}
\end{equation}
In this notation $\omega$ presents as
\begin{equation}
\label{eq_settings1}
\omega= (\cdots(\omega_{\lambda}\la y_{\lambda - 1})\la\cdots)\la y_1.
\end{equation}
By assumption   $u= y_{\lambda+\mu}\in X_i$, hence ${u}^{\prime}$
is also in $X_i$. The tower $\omega_{\lambda}$ contains the
elements $u^{\prime}$ and $z$ which belong to the same orbit
$X_i$, therefore by Lemma \ref{Prop_gcal2} we can "cut"
$u^{\prime}$ to yield
\[
\begin{array}{rl}
\omega_{\lambda}= &((\cdots(u^{\prime} \la y_{\lambda+\mu-1})\la
y_{\lambda+\mu-2})\cdots \la
y_{\lambda+1})\la z=(\cdots  (y_{\lambda+\mu-1}\la y_{\lambda+\mu-2})\cdots \la
y_{\lambda+1})\la z\\
=&(\cdots  (y_{\lambda+\mu-1}\la y_{\lambda+\mu-2})\cdots \la
y_{\lambda+1})\la y_{\lambda}.
\end{array}
\]
We use this equality and (\ref{eq_settings1}) to yield a truncation of
$\omega$:
\[
\begin{array}{rl}
\omega=&(\cdots((y_{m+1}\la y_{m})\la y_{m-1})\la \cdots \la y_2)\la
y_1
=(\cdots(\omega_{\lambda}\la y_{\lambda - 1})\la\cdots)\la y_1\\
=& ((\cdots((\cdots  (y_{\lambda+\mu-1}\la y_{\lambda+\mu-2})\cdots \la
y_{\lambda+1})\la y_{\lambda})\la \cdots)\la y_2)\la
y_1.
\end{array}
\]
Analogous argument verifies that the tower  $\omega^{\prime}$ of length $m$
can also be truncated:
\[\begin{array}{rl}
\omega^{\prime}=&(\cdots( y_{m}\la y_{m-1})\la \cdots \la y_2)\la y_1 \\
=&((\cdots((\cdots  (y_{\lambda+\mu-1}\la y_{\lambda+\mu-2})\cdots \la
y_{\lambda+1})\la y_{\lambda})\la \cdots)\la y_2)\la
y_1 = \omega.
\end{array}
\]
Therefore (\ref{mplmeqa}) is in force, and Proposition
\ref{mplmtheorem1} implies $\mpl G \leq m$.
  \end{proof}

\begin{theorem}
 \label{Thm_gcal(X,r)}
 Let $(X,r)$ be a square-free solution, $|X|\geq 2$,  with associated permutation group $\Gcal=\Gcal(X,r)$.
 Suppose that the symmetric
 group $(\Gcal, r_{\Gcal})$ satisfies \textbf{lri}. Then the following is in force.
 \begin{enumerate}
 \item
 \label{Thm_gcal(X,r)1} At least one of the following two conditions holds:
(a) The solution $(X,r)$ is retractable; (b) Each induced solution
  $(X_i, r_i)$, $i \in I,$ is a trivial solution,
 or one element solution and
  $X$ is a strong twisted union $X = \natural_{i \in I}
 X_i$.
     \item
 \label{Thm_gcal(X,r)2}
If $X$ is
   a finite set,
 then $(X,r)$  is a multipermutation solution.
\end{enumerate}
 \end{theorem}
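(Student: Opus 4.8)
The plan is to reduce both parts to the machinery already developed, most notably Proposition \ref{Lemma_TrivialOrbits}, Proposition \ref{Prop_gcal}, and Lemma \ref{Prop_gcal2}. First I would address part (\ref{Thm_gcal(X,r)1}). Suppose (a) fails, i.e. $(X,r)$ is irretractable; I must then prove (b). The key observation is that irretractability means the retraction map $\mu:X\to[X]$ is the identity, so in particular distinct elements of $X$ have distinct left actions. I would use this together with the hypothesis that $(\Gcal,r_{\Gcal})$ satisfies \textbf{lri} to force each $G$-orbit $X_i$ to be a trivial (or one-element) solution. Concretely, by Theorem \ref{prop_lriRaut}(1) condition \textbf{lri} on $\Gcal$ gives identity (\ref{VIP_Raut_Eqa}) in $\Gcal$; pulling this back to $G$ via $\Lcal$ and specializing $u=v=x$ exactly as in the proof of Proposition \ref{prop_Thm_main}(1) yields $\Lcal_{({}^{b}x)x}=\Lcal_x$ and, after cancellation, the relation $\overline{({}^{({}^bx)}x)}=\overline{({}^xx)}=\overline{x}$ in $\Gcal$, hence (since $(X,r)$ is square-free, ${}^xx=x$). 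Because $[x]=x$ here (irretractability), equality of the $\Gcal$-actions forces ${}^{({}^bx)}x=x$; \textbf{lri} then gives $x^{({}^bx)}=x$, so $r$ restricted to $X_i\times X_i$ is the flip and $(X_i,r_i)$ is trivial or a point. Once every orbit is trivial, Proposition \ref{Lemma_TrivialOrbits}(\ref{Lemma_TrivialOrbits2}) immediately gives that $X=\natural_{i\in I}X_i$ is a strong twisted union, which is exactly (b).

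For part (\ref{Thm_gcal(X,r)2}) assume $X$ is finite. If $(X,r)$ is already a multipermutation solution there is nothing to prove (e.g. if $X$ is trivial), so assume it is nontrivial. Apply part (\ref{Thm_gcal(X,r)1}): either $(X,r)$ is retractable, or condition (b) holds. In the case where (b) holds, $X$ is finite so it has only finitely many $\Gcal$-orbits $X_1,\dots,X_m$, each a trivial or one-element solution, and $(\Gcal,r_{\Gcal})$ satisfies \textbf{lri}; this is precisely the hypothesis of Proposition \ref{Prop_gcal}, which then gives that $(X,r)$ is a multipermutation solution with $2\le\mpl(X,r)\le m$. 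In the remaining case $(X,r)$ is retractable but (b) may fail; here I would argue by induction on $|X|$. The retraction $\Ret(X,r)=([X],r_{[X]})$ is a square-free solution (property (iii) recalled before Definition \ref{defcondstar}) of strictly smaller order, and its permutation group $\Gcal_1=\Gcal(\Ret(X,r))$ is a quotient of $\Gcal$ — indeed $\Gcal_1\simeq\Ret(\Gcal,r_{\Gcal})$ by Lemma \ref{socGcal_Lemma}. Since condition \textbf{lri} passes to retractions (property (i) before Definition \ref{defcondstar}), $(\Gcal_1,r_{\Gcal_1})=\Ret(\Gcal,r_{\Gcal})$ also satisfies \textbf{lri}. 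By the inductive hypothesis $\Ret(X,r)$ is a multipermutation solution, say $\mpl\Ret(X,r)=s<\infty$; then $\mpl(X,r)=s+1<\infty$, as desired.

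The main obstacle I anticipate is the first case of part (\ref{Thm_gcal(X,r)1}): carefully justifying that failure of retractability, combined with \textbf{lri} on $\Gcal$ (not on $X$ itself), forces triviality of each orbit. The subtlety is that \textbf{lri} is assumed for $\Gcal$, so the identity (\ref{VIP_Raut_Eqa}) lives in $\Gcal$ and only gives equalities of left actions on $X$, not equalities of elements of $X$; the irretractability hypothesis is exactly what upgrades "$\Lcal_{({}^{({}^bx)}x)}=\Lcal_x$" to "${}^{({}^bx)}x=x$", and one must be careful that this is legitimate — namely that $({}^{({}^bx)}x)$ and $x$ both lie in the same orbit (they do, since ${}^{({}^bx)}x$ is a $G$-translate of $x$) and that on $X$ with the reduced-action equivalence the class of $x$ is a singleton. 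A secondary, more bookkeeping-level obstacle is making sure the inductive step in part (\ref{Thm_gcal(X,r)2}) is not circular: the induction is on $|X|$, the retraction has strictly smaller cardinality because $(X,r)$ is genuinely retractable in that case, and all the hypotheses (square-free, finite, \textbf{lri} on the permutation group) are inherited by the retraction, so the induction is well-founded.
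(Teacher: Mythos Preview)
Your proposal is correct and follows essentially the same route as the paper. The paper's proof of part (\ref{Thm_gcal(X,r)1}) uses the identical specialization of identity (\ref{VIP_Raut_Eqa}) (pushed down to $\Gcal$) to obtain $\Lcal_{{}^{({}^vx)}x}=\Lcal_x$, and then splits into the same two cases you describe --- framed as a direct dichotomy rather than your contrapositive ``not (a) $\Rightarrow$ (b)'', but logically equivalent; for part (\ref{Thm_gcal(X,r)2}) the paper likewise invokes Proposition \ref{Prop_gcal} when the orbits are trivial and inducts on $|X|$ when $(X,r)$ is retractable, justifying the inheritance of \textbf{lri} to $\Gcal_1$ via the epimorphism $\varphi_0:\Gcal\to\Gcal_1$ (your use of $\Gcal_1\simeq\Ret(\Gcal)$ from Lemma \ref{socGcal_Lemma} is an equally valid route to the same conclusion).
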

 \begin{proof}  
 \textbf{(\ref{Thm_gcal(X,r)1}).}
 By hypothesis $(\Gcal, r_{\Gcal})$ satisfies condition \textbf{lri}, hence
 the  equality   (\ref{VIP_Raut_Eqb07}) holds for all $u, v, b \in
 G.$  We choose $x \in X, v \in G$, set  $b = x, u= x$, and replace these
 in (\ref{VIP_Raut_Eqb07}).  Then the following equalities hold
 for all $x \in X, \; v \in G$:
  \[
  (\overline{(v\la x)\la x}).(\overline{x\la v}) = (\overline{(v\la x)\la
 x)(x\la v}) = \overline{(x\la x).((x^x)\la v)} = \overline{x.(x\la v)}=\overline{x}.\overline{(x\la v)}.
 \]
Now cancel $\overline{x\la v}$ from the first and from the last monomial
  to obtain $\overline{((v\la x)\la x)} = \overline{x}$, for all $x \in X$
  and all $v \in G$. This implies the equality of actions
  \[
  \Lcal_{((v\la x)\la x)} = \Lcal_x, \quad \forall  x \in X, v \in G.
  \]
Two cases are possible.
Case \textbf{ (i).}
There exist  $x\in X$ and $v\in G$ such that
  \[(v\la x)\la x \neq x, \quad\text{but}\;\;  \Lcal_{((v\la x)\la x)} =
  \Lcal_x.
  \]
  This implies that $(X,r)$ is retractable.
Case \textbf{(ii).}  $(v\la x)\la x = x$
  for all  $x \in X$, and all $v \in G$. This is equivalent to
 \[
  {}^yx = x,
 \quad\forall \; x, y \in X_i,\; i \in I.
 \]
  It follows that the induced solution  $(X_i, r_i), i \in I,$ is a trivial
 solution, or one element solution.
 The hypothesis of Proposition
  \ref{Prop_gcal} is satisfied and therefore
 $X$ is a strong
twisted union
$
X= \stu_{i \in I}  X_i
$,  in the sense of \cite{GIC}.
Part (\ref{Thm_gcal(X,r)1}) has been proved.
\textbf{(2).} Assume that $(X,r)$ is a finite square-free solution.
Then it has a finite set of $\Gcal$-orbits, say
$X_1, \cdots X_t$. Now consider each of the two cases \textbf{(i)} and \textbf{(ii)} occurring in the argument of part
(\ref{Thm_gcal(X,r)1}).
In the case when \textbf{(ii)} holds, each induced solution   $(X_i, r_i), 1 \leq i \leq t,$ is a trivial
 solution or one element solution. So Proposition  \ref{Prop_gcal}
implies that $(X,r)$ is a multipermutation solution with $\mpl
X \leq t$.
Suppose case \textbf{(i)} is in force, then $(X,r)$ is retractable.
Its retraction $Ret(X,r) = ([X],r_{[X]})$ is  a
 square-free symmetric set of order $< |X|$.  The
 corresponding
 symmetric  group $\Gcal_1 =\Gcal([X],r_{[X]})$ is a (braiding-preserving)
 homomorphic image
 of $(\Gcal, r_{\Gcal})$,
 so $(\Gcal_1, r_{\Gcal_1})$ inherits
 condition \textbf{lri} from $\Gcal$.
It follows by induction on the order $|X|$ of $X$ that $(X,r)$ is a
multipermutation solution with $\mpl X \leq |X| -1$.
\end{proof}
Suppose $(X,r)$ a (nontrivial) square-free solution, $(G, r_G)$, $(\Gcal, r_{\Gcal})$ in usual notation. We have shown that 
the conditions "$\mpl X = 2$" and  "$(G, r_G)$  satisfies \textbf{lri}" are equivalent (Theorem \ref{Thm_main}). Moreover,
condition \textbf{lri} on $(\Gcal, r_{\Gcal})$ is sufficient for "$\mpl X < \infty$" but it does not give
information about the precise value of $\mpl X$.
We shall show that "$(\Gcal, r_{\Gcal})$ satisfies \textbf{lri}" is a necessary condition for  "$\mpl X = 3$". 
\begin{lemma}
\label{Lemma_Gcal}
Assumption and notation as above.
Suppose $(X,r)$  is a square-free solution with $\mpl X = 3$.
Then $(\Gcal, r_{\Gcal})$ satisfies \textbf{lri} and therefore
\textbf{Raut}.
\end{lemma}
\begin{proof}
It follows from Theorem \ref{Thm_main}  that $\mpl X = 3$ if and
only if the derived symmetric group $(G_1, r_{G_1})$  satisfies
condition \textbf{lri} and is a nontrivial solution. The canonical
map  $f: (G_1,r_{G_1}) \longrightarrow (\Gcal, r_{\Gcal})$,  $[x]
\mapsto \Lcal_{x}$, see Lemma \ref{retlemma1},
 is an
 epimorphism of symmetric groups, so $(\Gcal, r_{\Gcal})$ also satisfies conditions  \textbf{lri} and
 \textbf{Raut}.
 \end{proof}
 \begin{corollary}
\label{Cor_main} Suppose $(X,r)$  is a finite nontrivial
square-free solution. The following conditions are equivalent.
\begin{enumerate}
 \item $2 \leq \mpl X = m < \infty$. \item There exists an integer $j \geq 0$, such that the derived permutation group
     $(\Gcal_j,r_{\Gcal_j})$
     satisfies \textbf{lri}, and $\Gcal_j \neq \{1\}$.
\end{enumerate}
  \end{corollary}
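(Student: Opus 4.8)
\textbf{Proof plan for Corollary \ref{Cor_main}.}
The plan is to deduce the equivalence from the results already established about square-free solutions and their derived permutation groups, namely Theorem \ref{Thm_main}, Lemma \ref{Lemma_Gcal}, Theorem \ref{Thm_gcal(X,r)}, and the general machinery of Proposition \ref{longsequenceprop} and Theorem \ref{longsequencepropThm}. The key observations are that (i) retraction of a square-free solution is again square-free, (ii) the derived permutation groups $(\Gcal_j, r_{\Gcal_j})$ of $(X,r)$ are exactly the permutation groups $\Gcal(\Ret^j(X,r))$, so the $j$-th derived permutation group of $(X,r)$ is the permutation group of the square-free solution $\Ret^j(X,r)$, and (iii) condition \textbf{lri} is inherited by braiding-preserving epimorphic images (as recalled right after Corollary \ref{cor_lri_cc} and used in Lemma \ref{Lemma_Gcal}).

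First I would prove $(1)\Longrightarrow(2)$. Assume $2\le \mpl X = m < \infty$. The solution $\Ret^{m-3}(X,r)$ is a square-free solution with $\mpl = 3$ when $m\ge 3$; applying Lemma \ref{Lemma_Gcal} to it, its permutation group $\Gcal(\Ret^{m-3}(X,r)) = \Gcal_{m-3}$ satisfies \textbf{lri}, and it is nontrivial since $\Ret^{m-3}(X,r)$ is a nontrivial solution (its $\mpl$ is $3$, not $1$). When $m=2$, one uses Theorem \ref{Thm_main}: $(G,r_G)$ satisfies \textbf{lri}, and since $f_0:(G_1,r_{G_1})\to(\Gcal,r_{\Gcal})$ is an epimorphism of symmetric groups and $G_1$ is a braiding-preserving image of $G$, the needed \textbf{lri} descends to $\Gcal_0 = \Gcal$; nontriviality of $\Gcal$ follows from nontriviality of $(X,r)$ (a trivial solution is the only square-free solution with $\Gcal = \{1\}$, by Example \ref{trivialsolex} and the fact that square-free solutions are determined by their left actions). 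In both cases we produce the required $j$ (namely $j = \max(0, m-3)$).

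Then I would prove $(2)\Longrightarrow(1)$. Suppose $(\Gcal_j, r_{\Gcal_j})$ satisfies \textbf{lri} with $\Gcal_j\ne\{1\}$ for some $j\ge 0$. Since $\Gcal_j = \Gcal(\Ret^j(X,r))$ and $\Ret^j(X,r)$ is a finite square-free solution, Theorem \ref{Thm_gcal(X,r)}(2) gives that $\Ret^j(X,r)$ is a multipermutation solution, say of level $s$; hence $\mpl X = s + j < \infty$. It remains to see $\mpl X \ge 2$, i.e. that $(X,r)$ is not the trivial solution; this is immediate from the hypothesis, since for a trivial solution $\Gcal(\Ret^j(X,r))$ would be trivial for every $j$, contradicting $\Gcal_j\ne\{1\}$. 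This completes the equivalence.

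I expect the main subtlety to be the bookkeeping at the small values of $m$ (the cases $m=2$ and $m=3$, and the index shift $j = m-3$), where one must be careful that the relevant derived solution is genuinely nontrivial so that Lemma \ref{Lemma_Gcal} and Theorem \ref{Thm_main} apply, and that the inherited \textbf{lri} on $\Gcal_j$ really comes from a braiding-preserving epimorphism; all of this is handled by the explicit epimorphisms $f_j, \varphi_j$ of Proposition \ref{longsequenceprop}. The rest is a direct assembly of the cited results.
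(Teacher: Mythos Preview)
Your proposal is correct and follows essentially the same route as the paper: for $(1)\Rightarrow(2)$ with $m\ge 3$ you apply Lemma~\ref{Lemma_Gcal} to $\Ret^{m-3}(X,r)$, and for $(2)\Rightarrow(1)$ you apply Theorem~\ref{Thm_gcal(X,r)}(2) to $\Ret^j(X,r)$, just as the paper does. The only notable difference is your treatment of the case $m=2$: you descend \textbf{lri} from $(G,r_G)$ to $\Gcal$ through the chain of epimorphisms $G\to G_1\to \Gcal$, whereas the paper simply observes that $\mpl X=2$ gives $\mpl(\Gcal,r_{\Gcal})=1$ (by (\ref{mplXmplG3})), so $(\Gcal,r_{\Gcal})$ is a permutation solution and hence satisfies \textbf{lri} automatically --- a one-line argument that avoids invoking Theorem~\ref{Thm_main} and the epimorphism machinery altogether. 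Your version works (indeed $\Lcal^0:G\to\Gcal$ is already a braiding-preserving epimorphism, so the detour through $G_1$ is unnecessary), but the paper's is cleaner.
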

  \begin{proof}
If $\mpl X =2$, then $\mpl (\Gcal,r_{\Gcal}) =1$, so
$(\Gcal,r_{\Gcal})$ satisfies \textbf{lri}. Assume now $m=\mpl X
\geq 3$. Then $\mpl\Ret ^{m-3}(X,r) = 3$, and by Lemma
\ref{Lemma_Gcal} the derived permutation group $\Gcal_{m-3}$
satisfies \textbf{lri}. Moreover, $\mpl \Gcal_{m-3}= \mpl\Ret
^{m-3}(X,r) -1 =2$, so $\Gcal_{m-3}\neq \{1\}$. Conversely, if
$(\Gcal_j,r_{\Gcal_j})$ satisfies \textbf{lri} for some $j \geq 0$
and $\Gcal_j \neq \{1\}$ then the corresponding retraction $\Ret
^j(X,r)$ has finite multipermutation level, $2 \leq \mpl \Ret
^j(X,r) = s$, hence $2 \leq \mpl (X,r) = j + s<\infty$.
\end{proof}
\section{Epilogue}
\label{epilogue}
Suppose $(X,r)$ is an arbitrary  non-degenerate symmetric set, $|X|\geq 2$, no additional restrictions are imposed on $(X,r)$, unless
specified explicitly. Let
$(G, r_G)$, $(\Gcal, r_{\Gcal})$  be the associated symmetric groups, and let
$(G_j, r_{G_j})= G(\Ret^j(X, r))$, $j \geq 0,$  $\Gcal_j = \Gcal(\Ret^j(X, r)), j \geq 0,$ be the derived symmetric groups, and the
derived permutation groups related to $(X,r)$. $m\geq 1$ denotes an integer.
We have verified the following.
\begin{enumerate}
\item
\label{i1}
[$(G, +, \cdot)$ is a two sided brace]  $ \Longleftrightarrow$   [$(X,r)$ is a trivial solution].
\item
\label{i4}
   [$\mpl G  < \infty]\; \Longleftrightarrow  \;  [\mpl X < \infty] \; \Longleftrightarrow  \;  [\mpl \Gcal < \infty].$
In this case
\begin{equation}
\label{ineq1} 0 \leq \mpl \Gcal = m-1 \leq \mpl X \leq \mpl G = m
< \infty.\end{equation}
   \item
   \label{i5}
If  $(X,r)$ is square-free, then the following are equivalent: (i)
$\mpl X = m$; (ii) $\mpl G = m$; (iii) $(G_{m-2}, r_{G_{m-2}})$
satisfies \textbf{lri} and is not abelian; (iv) $(G_{m-2}, +,
\cdot)$ satisfies \textbf{Raut} and is not abelian; (v) the
derived chain of ideals for $G$ has the shape $\{1\}=
K_0\subsetneqq K_1 \subsetneqq K_2 \subsetneqq \cdots \subsetneqq
K_{m-1}\subsetneqq K_{m} = G$; (vi) The brace $G$ is right
nilpotent of nilpotency class $m+1$, i.e. $G^{(m+1)} = 0$, and
$G^{(m)}\neq 0.$

\item
\label{i6}
If $X$ is  finite then
\[[2 \leq \mpl X< \infty] \Longleftrightarrow [\exists j \geq 0, \text{such that}\; (\Gcal_j, +, \cdot)\; \text{is a two-sided
brace},\;
\Gcal_j\neq \{1\}].
\]

\item
\label{i6}
If $(X, r)$ is  finite and square-free then
\[[2 \leq \mpl X<\infty]\Longleftrightarrow  [\exists j \geq 0, \text{s.t.}\;
(\Gcal_j, r_{\Gcal_j})\; \text{satisfies \textbf{lri},}\; \Gcal_j \neq \{1\}].
\]
\end{enumerate}
\subsection*{Acknowledgements}  This work was completed in 2014-2016 while I
was visiting IH\'{E}S, Max-Planck Institute for Mathematics, Bonn,
and  ICTP, Trieste. I thank Shahn Majid  for our inspiring
collaboration through the years and for introducing me to the
fascinating theory of matched pairs of groups. It is my pleasant
duty to thank Maxim Kontsevich and Don Zagier for several
stimulating discussions. I give my cordial thanks to Agata
Smoktunowicz and Ferran Ced\'{o} for our fruitful collaboration.

\end{document}